\definecolor{darkgreen}{rgb}{0.1,0.7,0.1}
\definecolor{darkred}{rgb}{0.7,0.1,0.1}
\newcommand{\averag}[2]{\bar{#1}^{\,^{#2}}}
\def\${|\!|\!|}
\newcommand{\tun}{\mathbf{1}}
\newcommand{\s}{\mathfrak{s}}
\newcommand{\cA}{\mathcal{A}}
\newcommand{\cB}{\mathcal{B}}
\newcommand{\cC}{\mathcal{C}}
\newcommand{\cD}{\mathcal{D}}
\newcommand{\cE}{\mathcal{E}}
\newcommand{\cF}{\mathcal{F}}
\newcommand{\cG}{\mathcal{G}}
\newcommand{\cH}{\mathcal{H}}
\newcommand{\cI}{\mathcal{I}}
\newcommand{\tcI}{\tilde{\mathcal{I}}}
\newcommand{\cM}{\mathcal{M}}
\newcommand{\cP}{\mathcal{P}}
\newcommand{\cQ}{\mathcal{Q}}
\newcommand{\cR}{\mathcal{R}}
\newcommand{\cS}{\mathcal{S}}
\newcommand{\cT}{\mathcal{T}}
\newcommand{\cU}{\mathcal{U}}
\newcommand{\cW}{\mathcal{W}}
\newcommand{\cY}{\mathcal{Y}}
\newcommand{\ccB}{\mathscr{B}}
\newcommand{\ccN}{\mathscr{N}}
\newcommand{\N}{ {\mathbb{N}} }
\newcommand{\Z}{ {\mathbb{Z}} }
\newcommand{\R}{ {\mathbb{R}} }
\newcommand{\T}{ {\mathbb{T}} }
\newcommand{\E}{ {\mathbb{E}} }
\renewcommand{\P}{ {\mathbb{P}} }
\newcommand{\eps}{\varepsilon}
\newtheorem{theorem}{Theorem}[section]
\theoremstyle{plain}
\newtheorem{assumption}{Assumption}
\newtheorem{definition}[theorem]{Definition}
\newtheorem{lemma}[theorem]{Lemma}
\newtheorem{proposition}[theorem]{Proposition}
\newtheorem{remark}[theorem]{Remark}
\numberwithin{equation}{section}
\def\drawx{\draw[-,solid] (-3pt,-3pt) -- (3pt,3pt);\draw[-,solid] (-3pt,3pt) -- (3pt,-3pt);}
\tikzset{
	root/.style={circle,fill=testcolor,inner sep=0pt, minimum size=2mm},
	%dot/.style={circle,fill=black,inner sep=0pt, minimum size=0.5mm},		
	dot/.style={circle,fill=black,draw=black, solid,inner sep=0pt,minimum size=0.5mm},
	square/.style={rectangle,fill=black,draw=black, solid,inner sep=0pt,minimum size=1mm},
	empty/.style={circle,fill=white,draw=white, solid,inner sep=0pt,minimum size=0.5mm},
	var/.style={circle,fill=black!10,draw=black,inner sep=0pt, minimum size=
	2mm},
	symb/.style={circle,fill=symbols,draw=symbols, solid,inner sep=0pt,minimum size=0.5mm},
	yy/.style={circle,fill=gray!20,draw=black,inner sep=0pt,minimum size=0.8mm},
	>=stealth,
	dotred/.style={circle,fill=black!50,inner sep=0pt, minimum size=2mm},
	generic/.style={semithick,shorten >=1pt,shorten <=1pt},
	dist/.style={ultra thick,draw=testcolor,shorten >=1pt,shorten <=1pt},
	testfcn/.style={ultra thick,testcolor,shorten >=1pt,shorten <=1pt,<-},
	testfcnx/.style={ultra thick,testcolor,shorten >=1pt,shorten <=1pt,<-,
		postaction={decorate,decoration={markings,mark=at position 0.6 with {\drawx}}}},
	kprime/.style={semithick,shorten >=1pt,shorten <=1pt,densely dashed,->},
	kprimex/.style={semithick,shorten >=1pt,shorten <=1pt,densely dashed,->,
		postaction={decorate,decoration={markings,mark=at position 0.4 with {\drawx}}}},
	kernel/.style={semithick,shorten >=1pt,shorten <=1pt,->},
	multx/.style={shorten >=1pt,shorten <=1pt,
		postaction={decorate,decoration={markings,mark=at position 0.5 with {\drawx}}}},
	kernelx/.style={semithick,shorten >=1pt,shorten <=1pt,->,
		postaction={decorate,decoration={markings,mark=at position 0.4 with {\drawx}}}},
	kernel1/.style={->,semithick,shorten >=1pt,shorten <=1pt,postaction={decorate,decoration={markings,mark=at position 0.45 with {\draw[-] (0,-0.1) -- (0,0.1);}}}},
	kernel2/.style={->,semithick,shorten >=1pt,shorten <=1pt,postaction={decorate,decoration={markings,mark=at position 0.45 with {\draw[-] (0.05,-0.1) -- (0.05,0.1);\draw[-] (-0.05,-0.1) -- (-0.05,0.1);}}}},
	kernelBig/.style={semithick,shorten >=1pt,shorten <=1pt,decorate, decoration={zigzag,amplitude=1.5pt,segment length = 3pt,pre length=2pt,post length=2pt}},
	rho/.style={dotted,semithick,shorten >=1pt,shorten <=1pt},
	renorm/.style={shape=circle,fill=white,inner sep=1pt},
	labl/.style={shape=rectangle,fill=white,inner sep=1pt},
	xi/.style={circle,fill=symbols!10,draw=symbols,inner sep=0pt,minimum size=1.2mm},
	xix/.style={crosscircle,fill=symbols!10,draw=symbols,inner sep=0pt,minimum size=1.2mm},
	xib/.style={circle,fill=symbols!10,draw=symbols,inner sep=0pt,minimum size=1.6mm},
	xibx/.style={crosscircle,fill=symbols!10,draw=symbols,inner sep=0pt,minimum size=1.6mm},
	not/.style={circle,fill=symbols,draw=symbols,inner sep=0pt,minimum size=0.5mm},
	>=stealth,
	}
\colorlet{symbols}{blue!90!black}
\def\DeclareSymbol#1#2#3{\expandafter\gdef\csname MH@symb@#1\endcsname{\tikz[baseline=#2,scale=0.15]{#3}}%
\expandafter\gdef\csname MH@symb@#1s\endcsname{\scalebox{0.6}{\tikz[baseline=#2,scale=0.15]{#3}}}}
\def\<#1>{\csname MH@symb@#1\endcsname}
\begin{document}

\title[Existence of densities for $\Phi^4_3$]{Existence of densities for the dynamic $\Phi^4_3$ model}

\author{Paul Gassiat}
\address{
Universit\'e Paris-Dauphine, PSL University, UMR 7534, CNRS, CEREMADE, 75016 Paris, France}
\email{gassiat@ceremade.dauphine.fr}

\author{Cyril Labb\'e}
\address{
Universit\'e Paris-Dauphine, PSL University, UMR 7534, CNRS, CEREMADE, 75016 Paris, France}
\email{labbe@ceremade.dauphine.fr}

\begin{abstract}
We apply Malliavin calculus to the $\Phi^4_3$ equation on the torus and prove existence of densities for the solution of the equation evaluated at regular enough test functions. We work in the framework of regularity structures and rely on Besov-type spaces of modelled distributions in order to prove Malliavin differentiability of the solution.  Our result applies to a large family of Gaussian space-time noises including white noise, in particular the noise may be degenerate as long as it is sufficiently rough on small scales.\\
\medskip
\noindent
{\bf AMS 2010 subject classifications}: Primary 60H07, 60H15. \\
\noindent
{\bf Keywords}: {\it Malliavin calculus; Regularity structures; stochastic quantization equation; singular SPDE.}
\end{abstract}

\maketitle

\setcounter{tocdepth}{1}
\tableofcontents

\section{Introduction}

Consider the so-called dynamic $\Phi^4_d$ model
\begin{equation}\label{Eq:Phi4} \partial_t u = \Delta u - u^3 + \xi,\; \;\;\; u(0,\cdot)=u_0,\end{equation}
on the $d$-dimensional torus $\T^d$ of size $1$ and driven by a Gaussian noise $\xi$. In this paper, we focus on $d=3$ and investigate the existence of densities for the solution. Our main result applies to a large family of noises that includes, in particular, the space-time white noise, the precise assumptions on the noise will be specified later on.\\

This equation has been the object of several recent works in the fields of stochastic PDEs, let us give a very brief survey of this literature. In dimension $2$ and when $\xi$ is a space-time white noise, the solution of the equation was constructed by means of Dirichlet forms by Albeverio and R\"ockner~\cite{AlbRoc} and via a change-of-unknown by Da Prato and Debussche~\cite{dPD}. Among several subsequent results, let us mention that solutions were shown to be global-in-time~\cite{MouWeb2} and that existence and uniqueness of an invariant measure together with convergence to equilibrium were studied in~\cite{TsaWeb,ZhuZhu}.\\
In dimension $3$, existence of solutions when $\xi$ is irregular (in particular, a space-time white noise) fell out of reach of classical theories. The theory of regularity structures~\cite{Hairer2014} and the paracontrolled calculus~\cite{Max} provide new frameworks in which existence of solutions of such singular SPDEs can be tackled. In the case of the $\Phi^4_3$ model driven by a white noise, existence of local-in-time solutions was proved by Hairer~\cite{Hairer2014}, and Catellier and Chouk~\cite{CatChouk}. Let us also cite the work of Zhu and Zhu~\cite{ZhuZhu3} that constructs the solution by means of Dirichlet forms. Recently, Mourrat and Weber~\cite{MouWeb3} proved that solutions are actually global-in-time. Notice that the solution has space-time H\"older regularity $-1/2^-$ in the parabolic scale: therefore, it is not a function but only a distribution. In the aforementioned constructions of the solution, one actually renormalizes the equation by means of infinite constants; the equation formally becomes:
$$\partial_t u = \Delta u - u^3 + Cu + \xi,\;\; \;\;\; u(0,\cdot)=u_0$$
with $C=+\infty$.\\

In the present paper, we consider a noise $\xi$ which is obtained by convolving space-time white noise with a kernel $R$ satisfying Assumption \ref{asn:R} and either Assumption \ref{asn:Dense} or Assumption \ref{asn:Rough}. These assumptions are precisely presented in Section \ref{sec:noise}, let us simply mention that Assumption \ref{asn:R} requires the kernel to be regular enough (not worse than a Dirac), Assumption \ref{asn:Dense} asks for the associated Cameron-Martin space to be dense in $L^2$ while Assumption \ref{asn:Rough} ensures that $\xi$ is ``rough enough" (i.e.~of H\"older regularity strictly less than $-2$). The existence of solutions to \eqref{Eq:Phi4} in that setting is essentially granted by \cite{CH2016} and \cite{Hairer2014}.

To illustrate our assumptions, one can write a Paley-Littlewood type decomposition\footnote{The Fourier transform $\hat{K}_n$ of $K_n$ is concentrated on frequencies of order $2^n$, but unlike the standard Paley-Littlewood decomposition, $K_n$ (and not $\hat{K}_n$) is compactly supported.}
 $\zeta= \sum_{n\geq 0} K_n * \zeta$ of space-time white noise such that letting $\xi = R * \zeta$ with
 $$R= \sum_{n\ge 0} \alpha_n K_n\;,$$
where $\alpha_n \in \R$ for each $n$, then one has :
\begin{itemize}
\item $(\alpha_n)$ bounded $\Rightarrow$ Assumption \ref{asn:R} is satisfied,
%\item $\alpha_n>0$ for all $n\geq 0$ $\Rightarrow$ Assumption \ref{asn:Dense} is satisfied,
\item $\limsup_{n \to \infty} 2^{n\beta} |\alpha_n| >0$ for some $\beta < \frac{1}{2}$ $\Rightarrow$ Assumption \ref{asn:Rough} is satisfied,
\end{itemize}
see Proposition \ref{prop:PL} below.

\bigskip

We now state our main result. Let $\varphi_i, i=1\ldots n$ be $n\ge 1$ linearly independent functions in the (parabolically scaled) Besov space $\cB^{1/2+\kappa}_{1,\infty}(\R_+\times \T^3)$, for some $\kappa >0$, and assume that they are all supported in $(0,T)\times\T^3$ for some $T>0$.
\begin{theorem}\label{Th:Main}
Assume that the driving noise $\xi$ satisfies Assumption \ref{asn:R} and either Assumption \ref{asn:Dense} or Assumption \ref{asn:Rough} and that the solution $u$ of \eqref{Eq:Phi4} starting from some $u_0 \in \cC^{-2/3+}$ exists up to time $T$ almost surely. Then, the random variable $X=(\langle u, \varphi_1\rangle,\ldots,\langle u,\varphi_n\rangle)$ admits a density with respect to the Lebesgue measure on $\R^n$.
\end{theorem}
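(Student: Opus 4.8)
The standard route to existence of densities via Malliavin calculus is the Bouleau--Hirsch criterion: if a random vector $X$ is Malliavin differentiable, $X \in (\mathbb{D}^{1,2})^n$ (or at least locally so), and its Malliavin covariance matrix $\gamma_X = (\langle DX_i, DX_j\rangle_{\mathcal H})_{i,j}$ is almost surely invertible, then $X$ has a density with respect to Lebesgue measure on $\mathbb R^n$. So the proof splits into two tasks: (i) establish Malliavin differentiability of $u$ (evaluated against the test functions $\varphi_i$), and (ii) prove non-degeneracy of $\gamma_X$.

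For (i), the idea is to work inside the regularity structure used to solve \eqref{Eq:Phi4}. One shifts the noise $\xi \rightsquigarrow \xi + \epsilon\, h$ for $h$ in the Cameron--Martin space $\mathcal H$ of $\xi$, solves the shifted (renormalised) equation, and differentiates at $\epsilon = 0$. Because the solution map of the fixed-point problem in the space of modelled distributions depends smoothly on the model, and the model depends smoothly (in fact linearly at the level of the ``rough'' components, up to the renormalisation which is deterministic) on the perturbation $h$, one obtains that $\langle u, \varphi_i\rangle$ is differentiable in $h$; here is where the Besov-type spaces of modelled distributions from the paper are needed, since the test functions $\varphi_i$ only have limited regularity and one must pair a negatively-regular modelled distribution against them. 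The Malliavin derivative $D u$ is then identified as the solution of the linearised PDE
\begin{equation*}
\partial_t v = \Delta v - 3\, u^2\, v + \dot h, \qquad v(0,\cdot) = 0,
\end{equation*}
interpreted in the regularity-structures sense (the product $u^2 v$ being the singular one). Running this in the Besov framework and checking the relevant continuity/stochastic estimates (uniform in $\epsilon$, with convergence of the renormalised models) gives $X \in (\mathbb{D}^{1,2}_{\mathrm{loc}})^n$, which suffices for Bouleau--Hirsch.

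For (ii), suppose $\gamma_X$ is degenerate on a set of positive probability; then there is a nonzero random vector $\lambda = (\lambda_1,\dots,\lambda_n)$, measurable, with $\sum_i \lambda_i\, D u(z)\cdot\varphi_i = 0$ in $\mathcal H$, i.e. the function $z \mapsto \sum_i \lambda_i \varphi_i(z)$, transported by the (random) solution operator of the linearised equation above, is $\mathcal H$-orthogonal to the driving directions. The cleanest way to extract a contradiction is a short-time / small-scale argument: because $R$ is not too smooth (Assumption~\ref{asn:R}, i.e.\ $\xi$ is genuinely rough), the linearised equation instantaneously spreads any forcing, and the adjoint/duality pairing against $\sum_i\lambda_i\varphi_i$ controls $\sum_i \lambda_i \varphi_i$ itself up to a remainder that is of higher order in the small scale — using either that $\mathcal H$ is dense in $L^2$ (Assumption~\ref{asn:Dense}) or that $\xi$ is rough enough (Assumption~\ref{asn:Rough}) to guarantee enough test directions. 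Since the $\varphi_i$ are linearly independent and live in $\mathcal B^{1/2+\kappa}_{1,\infty}$ with support in $(0,T)\times\mathbb T^3$, one concludes $\lambda = 0$ on that set, a contradiction. This non-degeneracy step is the main obstacle: one must make precise in what topology the linearised flow can be ``inverted'' at small scales, handle the fact that $u^2$ is only a distribution (so $3u^2 v$ must be given meaning and estimated through the regularity structure), and combine this with the analytic input from the two alternative assumptions on $R$; it is here that the bulk of the technical work — and the reason for developing the Besov spaces of modelled distributions — will lie.
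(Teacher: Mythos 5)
Your plan tracks the paper's proof quite closely at the strategic level: Bouleau--Hirsch, Malliavin differentiability via the shifted equation and tangent (linearised) equation solved in $L^2$-type spaces of modelled distributions, and a non-degeneracy argument built around the backward/adjoint linearised equation together with a separation-of-scales mechanism under Assumption~\ref{asn:Rough}. Even the duality $\langle v^h, \varphi\rangle = \langle h, w^\varphi\rangle$ and the fact that ``transporting $\varphi$ by the adjoint flow'' produces the object that must be $\cH$-orthogonal are correctly identified. So the outline is the right one, and I would not call it a different route.

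That said, the non-degeneracy step --- which is the substantive mathematical content of the theorem --- is sketched so loosely that it cannot be regarded as a proof, and I want to flag precisely what is missing. (a) Your phrase ``controls $\sum_i\lambda_i\varphi_i$ up to a remainder of higher order'' skips the actual two-step logic: one first shows that $\cH$-orthogonality forces the \emph{reconstruction} $w = \cR W$ to vanish a.e., and only then runs a separate recursive argument on the abstract fixed point $W$ (Proposition~\ref{prop:wW}) to propagate $w=0$ up the tree levels and conclude $\Phi = 0$; your ``instantaneously spreads any forcing'' gestures at this but does not identify the mechanism, which is the strict inequality $|\tau| > |\tau'|$ for $\tau = \tcI(\rho_1\rho_2\tau')$. (b) Under Assumption~\ref{asn:Rough} the orthogonality-implies-$w=0$ step hinges on a concrete quantitative device you do not name: testing against Cameron--Martin elements of the form $\varphi^k_z = R * \eta^{\lambda_k}_z$ localised at scale $\lambda_k$, expanding $W(z)$ to leading order in the symbol $\<2c0>$, and then invoking an anti-concentration estimate (Carbery--Wright) together with Borel--Cantelli and Fubini to obtain that, a.s., $\limsup_k \lambda_k^{-\gamma'-\beta}\bigl|\langle \Pi_z\<2c0>, \varphi^k_z\rangle\bigr| = \infty$ for a.e.~$z$. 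Without this ``true roughness'' input the separation of scales does not close. (c) On the differentiability side, you correctly predict that Besov-type ($L^2$-in-space-and-time) spaces are needed, but you do not anticipate the genuine obstruction that one cannot restart the $L^2$-in-time fixed point from a time slice (evaluating at a single time costs too much regularity); the paper circumvents this by the extension/patching scheme in Steps 3--4 of Proposition~\ref{Prop:Y}, and some such device is necessary for the argument to run. None of these is a matter of bookkeeping --- they are the ingredients that make the theorem work --- so you should not regard your sketch as reducible to ``technical estimates.''
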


There exists already a substantial literature devoted to proving absolute continuity of densities for SPDEs with degenerate noise (and the often related ergodicity properties), going back to Ocone \cite{Ocone88} for the case of linear equations. In the case of polynomial nonlinearities perturbed by an additive noise which is white in time and smooth in space, a rather complete counterpart to the H\"ormander finite-dimensional theory was developed by Mattingly and coauthors (\cite{MP06,HM06,BM07,HM11}). Of course, we cannot apply these results to \eqref{Eq:Phi4} due to the roughness of the noise, which is actually one of the important technical difficulties we have to overcome. We also note that in the case of space-time white noise, the strong Feller property proved in \cite{HMStrongFeller} implies that for each $t>0$, the law of $u(t,\cdot)$ is absolutely continuous w.r.t.~the invariant measure for \eqref{Eq:Phi4}, which is a stronger statement than simply considering its finite-dimensional projections\footnote{{The absolute continuity of finite-dimensional projections of $u(t,\cdot)$ (still in the case of space-time white noise) has also been proven directly in the recent work of Romito \cite{Romito}.}}. Our result however can be applied to noises which are not white in time, where the Markovian theory is of course not accessible. In addition, we obtain densities for averages of our solution in space \emph{and time}, and not just at a fixed time which is the case considered in virtually all of the literature. (Note that the existence of densities for space-time averages is in principle a strictly stronger statement than densities for a fixed time, as soon as the regularity required for the test functions allows for Dirac masses in $t$. For technical reasons this is however not the case in our theorem).

Let us comment on our assumption on the existence of solutions up to time $T$. Mourrat and Weber~\cite{MouWeb3} showed that the explosion time of the solution is actually infinite when the driving noise is a space-time white noise. Their proof should carry through if we replaced the white noise by a more general driving noise satisfying the hypothesis considered in the present paper: consequently, the assumption on the existence of the solution up to time $T$ is probably not restrictive at all. Actually, we can disregard this assumption and show that the r.v. of the statement of the theorem, conditionally given the event $\{T < T_{\mbox{\tiny explo}}\}$, admits a density. To prove this more general statement, one needs to take care of the differentiability of the r.v. $\tun_{\{T < T_{\mbox{\tiny explo}}\}}X$: this can be done using the same techniques as in~\cite[Section 5.2]{MalliavinReg}. In order not to clutter this article, we preferred not to work in this level of generality.

\subsection*{Outline of the proof.} We rely on the theory of regularity structures~\cite{Hairer2014} to construct solutions to \eqref{Eq:Phi4}. The proof of the theorem is split into two main parts, corresponding to the two main assumptions required by the classical Bouleau-Hirsch criterion for existence of densities. First, we show that the random variable of the statement is Malliavin differentiable. Second, we prove that its Malliavin derivative is almost surely non-degenerate. 

\medskip

To carry out the first task, we start by constructing solutions of \eqref{Eq:Phi4} associated to a shifted noise $\xi+h$:
\begin{equation}\label{eq:shifted} \partial_t u = \Delta u - u^3 + \xi + h\;,\; \;\;\; u(0,\cdot)=u_0\end{equation}
where $h$ lies in the Cameron-Martin space associated to $\xi$ (under our assumptions, this is always a subspace of $L^2(\R_+\times\T^3)$), and of the associated tangent equation (formally obtained by differentiating $u$ w.r.t. $h$):
\begin{equation}\label{eq:tangent} \partial_t v = \Delta v - 3 u^2 v + h\;,\; \;\;\; v(0,\cdot)=0.\end{equation}
Then, we prove that $X$ is Malliavin differentiable and identify its derivative in direction $h$ as being $\big(\langle v,\varphi_1\rangle,\ldots,\langle v,\varphi_n\rangle\big)$.\\
To construct solutions of the above equations in the framework of regularity structures, one can think of two approaches. In the first approach, one adds a new abstract symbol $H$ in the regularity structure and builds the associated enlarged model. In the case of the generalized parabolic Anderson model, this strategy of proof was followed in~\cite{MalliavinReg} since the action of the model on only three new (but similar to each other) symbols needed to be defined. In the case of the $\Phi^4_3$ model, the action of the model on many more new symbols would need to be defined so that a construction by hand would be very tedious. In the second approach, that we actually follow in this paper, one lifts the convolution of $h$ with the heat kernel into an appropriate space of modelled distributions and solves the equation within the original regularity structure.\\
Working with ``classical" $L^\infty$-type spaces of modelled distributions as introduced in~\cite{Hairer2014} would require to view $h$ as an element of $\cC^\alpha=\cB^\alpha_{\infty,\infty}$. Classical embedding theorems show that $\alpha < -5/2$ so that the convolution of $h$ with the heat kernel has negative regularity in these spaces, and it is not possible to lift it as an $L^\infty$-type modelled distribution in the polynomial regularity structure.\\
On the other hand, working with $L^2$-type spaces of modelled distributions as introduced in~\cite{Recons} allows to lift the convolution of $h$ with the heat kernel into the polynomial regularity structure without losing regularity. However, one then needs to solve the equation in such an $L^2$-type space and the interplay of the cubic non-linearity with the $L^2$-type bounds may cause some difficulties. Fortunately, the embedding theorems for spaces of modelled distributions proved in~\cite{Recons} offer the necessary tools to make sense of these non-linear terms.\\
At a technical level, the spaces considered in~\cite{Recons} are not weighted near $t=0$ so that we have to adapt the analytical results presented therein to weighted spaces. Let us also mention that we work with space \textit{and time} $L^2$-type norms: this is problematic for iterating fixed point arguments since one needs to restart the equation from the already obtained solution evaluated at a given time (this requires to embed the solution into an $L^\infty$-type space in the time variable, thus losing too much regularity). This difficulty is circumvented by patching together solutions in a different manner, we refer to the discussion below Proposition \ref{Prop:Y}.

\medskip

To carry out the second task, following \cite{Ocone88} (and also \cite{MP06,BM07}), we work with a backward representation of the Malliavin derivative, namely for a given test function $\varphi \in \cB^{1/2+\kappa}_{1,\infty}$ supported in $(0,T) \times \T^3$, we consider $w$ which is (formally) solution to
\begin{equation}
\label{eq:w}
(-\partial_t -\Delta) w = -3u^2 w + \varphi, \;\;\; w(T,\cdot)=0
\end{equation}
(note that the product $u^2w$ is actually ill-defined, so to make rigorous sense of this equation we work again in a suitable set of modelled distributions), and we are then reduced to proving
$$\left( \left \langle w, h\right\rangle_{L^2([0,T]\times \T^3)} = 0 \;\;\;\forall h \in \cH \right)\; \Rightarrow \; \varphi = 0,$$
where $\cH$ is the Cameron-Martin space associated to the noise. Using the equation satisfied by $w$, a simple induction argument gives the implication
$$w = 0 \Rightarrow \varphi = 0,$$
so that when $\cH$ is dense in $L^2$ (Assumption \ref{asn:Dense}) the result follows immediately. When the noise is degenerate, one has near each point $z$ the local expansion for the r.h.s. of \eqref{eq:w}
$$ -3u^2 w + \varphi = -3 w(z) \<2> + R_z$$
where $\<2>$ is the (renormalized) square $\<2> = (\;\<1>\;)^2$, with $(\partial_t-\Delta)\; \<1> = \xi$, and our roughness assumption (Assumption \ref{asn:Rough}) implies that $R_z$ is of homogeneity near $z$ strictly greater than that of $-3 \<2> w$. By testing against suitable localized elements of $\cH$, we can then separate the contributions of the two terms to obtain that under the orthogonality condition, $w=0$ a.e.. Note that this type of argument based on the separation of scales appears frequently in this context (of proving the non-degeneracy of Malliavin derivatives), and is already present in the classical Malliavin proof of H\"ormander's theorem (via the uniqueness in the decomposition of a continuous semimartingale as the sum of a martingale and a bounded variation process). The precise argument then takes a different form based on the structure of the problem under consideration, for instance in the context of rough differential expansions this led to the notion of ``true roughness", cf. \cite{HP13,FS13,FH14}. The theory of regularity structures is particularly well-suited for this kind of argument, since as soon as the theory is used to solve an equation, it automatically gives a Taylor-like expansion (with terms of successively higher homogeneity) for the solutions.

\subsection*{Other SPDEs.} Our method is not specific to the dynamic $\Phi^4$ model and can in principle be applied to other singular SPDEs. The main requirement is that one can set up a fixed point argument for the shifted and tangent equations \eqref{eq:shifted}, \eqref{eq:tangent} in an $L^2$-type space of modelled distributions. In the case of $\Phi^4$, since the noise is additive, we are able to do that by decomposing the solution to the shifted equation as sum of an $L^\infty$ modelled distribution that captures the most irregular terms in the equation, and an $L^2$ modelled distribution with higher homogeneities, cf. section \ref{sec:shift}. This method would also apply for instance to a generalized KPZ equation of the form:
$$ \partial_t u = \partial^2_x u + f(u) (\partial_x u)^2 + \xi\;,$$
where $\xi$ is space-time white noise on $\R_+\times\T^1$. We could also treat the case of SHE (with the same noise)
$$ \partial_t u = \partial^2_x u +  g(u) \xi\;,$$
by directly solving the shifted equation in an $L^2$-space.
\\
However, it seems that there are SPDEs that fall into the scope of the theory of regularity structures for which our method would not apply, for instance the generalized KPZ equation with multiplicative noise~\cite{Yvain,String} which is the combination of the two equations above. Indeed, in that case one cannot solve the shifted equation directly in $L^2$, while a decomposition as described above does not hold.

\subsection*{Organisation of the paper.} In Section \ref{sec:noise}, we present the assumptions on the driving noise $\xi$. In Section \ref{Sec:Prelim}, we introduce the regularity structure associated with the $\Phi^4_3$ model, together with the appropriate spaces of modelled distributions we will work with. We also state the main analytical tools that we will need, and postpone their technical (but rather classical) proofs to Section \ref{Sec:Techos}. In Section \ref{Sec:Diff}, we prove Malliavin differentiability of the r.v. $X$ of the statement. In Section \ref{Sec:Densities}, we prove that the associated Malliavin derivative is almost surely non-degenerate and thus complete the proof of our main theorem.

\subsection*{Notations.} In this paper, the underlying space will always be the torus $\T^3$ of size $1$. It is convenient to work with functions defined on the whole space $\R^3$ but which have the periodicity of the torus. From now on, we will call periodic any such map. Notice that we will deal with space-time maps: periodicity will always refer to the space variable.\\
Some of our intermediate results hold in arbitrary space dimension so at several occasions in the paper, we will write $d$ for the space dimension.\\
We will be working in the so-called parabolic scaling $\s = (2,1,1,1)$ where $\s_0=2$ refers to time-direction, and $\s_1,\ldots,\s_3$ to space-directions. We set $|\s| = \s_0+\ldots+\s_3$. We consider the so-called $\s$-scaled metric $|z|_{\s}=|z|=\sup_{i=0,\ldots,3} |z_i|^{1/\s_i}$ for all $z\in \R^{4}$. For $\lambda \in \R$ and $z \in \R^4$ we let $\lambda \cdot z = (\lambda^{\s_i} z_i)_{i=0,\ldots,3}$. For a function $\phi$ on $\R^4$ and a given $(\lambda,z)$ we define $\phi^\lambda_z : y \mapsto |\lambda|^{-|\s|} \phi(\lambda^{-1} \cdot (y-z))$ (if $z=0$ we just write $\phi^\lambda$, and if $\lambda=1$ we just write $\phi_z$), note that this transformation preserves the $L^1$-norm. For a multi-index $k \in \N^4$, we let $|k|=|k|_{\s} = \sum_{i=0}^3 \s_i k_i$. For any $k\in\N^{d+1}$, we set $\partial^k = \prod_{i=0}^{d+1} \partial_{z_i}^{k_i}$.\\
We let $\cC^r=\cC^r(\R^{4})$ denote the space of all functions on $\R^4$ that admit continuous derivatives of order $k$, for all $k\in\N^4$ such that $|k| < r$. We further let $\ccB^r$ be the subset of $\cC^r$ whose elements are supported in the $\s$-scaled unit ball and have a $\cC^r$-norm smaller than or equal to $1$.\\
Similarly we let $\cB^\alpha_{p}(\R^4)=\cB^\alpha_{p,\infty}(\R^4)$ be the $\s$-scaled Besov space as defined in~\cite[Def. 2.1]{Recons}: notice that the parameter $q$ in the Besov scale will always be taken equal to $+\infty$ so we omit writing it.\\
For every $n\in\Z$, we define the dyadic grid of $\s$-scaled mesh $2^{-n}$
\begin{equation}
\label{eq:defLambda}
 \Lambda_n := \{(k_0 2^{-2n}, k_1 2^{-n}, k_2 2^{-n}, k_3 2^{-n}):  k=(k_0,\ldots,k_3)\in \Z^4\}\;.
\end{equation}
The Fourier transform of a tempered distribution $f \in \cS'(\R^4)$ is denoted by $\hat{f}$, it is defined by 
$$\left\langle \hat{f}, \phi \right\rangle = \left\langle f, \hat{\phi}\right\rangle$$
with
$$\hat{\phi}(\xi) = \int_{\R^4} \phi(z) e^{-i \xi \cdot z}dz$$
for $\phi$ in $\cS(\R^4)$.

%Recall that we typically identify distributions/functions on $\R\times \T^3$ as functions which are $1$-periodic in the last three arguments (we just say "periodic"). 
The Fourier transform of a smooth function $\phi$ on $\R\times \T^3$ is defined as the function
$$\hat{\phi}(\xi) = \int_{\R \times \T^3} \phi(z) e^{-i \xi \cdot z} dz$$
where the argument $\xi$ takes values in $\R \times (2\pi \Z)^3$. (Note : this is not exactly the same as the Fourier transform of $\phi$ viewed as a distribution on $\R^4$. There will hopefully be no confusion by which transform we mean). %(We could also define the Fourier transform of periodic distributions but we will not need it...).

One then has the isometry
$$\left\|f\right\|_{L^2(\R\times\T^3)} =  \left\| \hat{f} \right\|_{L^2(\R \times (2\pi \Z)^3;\hat{m})}$$
where the measure $\hat{m}$ is defined by
\begin{equation} \label{eq:defm}
 \int \phi(\xi_0,\xi_1,\xi_2,\xi_3) \hat{m}(d\xi) = \frac{1}{2\pi} \int_{\R} d\xi_0 \sum_{(k_1,k_2,k_3)\in \Z^3} \phi(\xi_0, 2\pi k_1, 2\pi k_2, 2\pi k_3)
 \end{equation}

 Given $f$ and $g$ two distributions such that the convolution $f \ast g$ makes sense (say $f$ is  compactly supported), if $g$ is periodic then so is $f\ast g$. Therefore it makes sense to view $f \ast g$ as a distribution on $\R \times \T^3$, and in that case one has
$$\widehat{f\ast g} = \hat{f} \hat{g}.$$

For a function $g : \R^{1+d} \to \R$ decaying sufficiently fast at infinity, its periodization $g^{per}$ is defined by
$$g^{per}(t,x) = \sum_{x_0 \in \Z^d} g(t,x-x_0).$$
One then has for all periodic $f$ (identified with a function on $\R \times \T^d$)
\begin{equation} \label{eq:per}
\left\langle f, g \right\rangle_{L^2(\R\times \R^d)} = \left\langle f, g^{per} \right\rangle_{L^2(\R\times \T^d)}.
\end{equation}
Finally, the notation $A \lesssim B$ means that $A \leq cB$ for some constant $c>0$ which does not depend on the parameters appearing in $A$ and $B$.

\subsection*{Acknowledgements.} The authors are supported by the ANR grant SINGULAR ANR-16-CE40-0020-01. PG would like to thank Yvain Bruned for a helpful discussion.

\section{Assumptions on the noise} \label{sec:noise}

We consider a Gaussian noise $\xi$ with covariance given by
$$\E\left[\left\langle \xi, \phi\right\rangle \left\langle \xi, \psi\right\rangle \right] = \left\langle R\ast \phi, R\ast \psi \right\rangle$$
(i.e. $\xi$ is the convolution of space-time white noise with $R$).

On $R$ we assume the following 
\begin{assumption} \label{asn:R}
One has the decomposition $R = \sum_{n \geq 0} R_n$ (the series is assumed to converge in the sense of distributions) where each $R_n$ is an even smooth function, supported in $\left\{|x|_{\s} \leq C 2^{-n}\right\}$ for some constant $C$. In addition, there exists $\beta \geq 0$, 
s.t. for each multiindex $k$, there exists $C_k>0$ with
\begin{equation}
\left\| \partial_k R_n \right\|_{L^\infty} \leq C_k 2^{n \left(|\s| + |k| - \beta\right)},
\end{equation}
and if $\beta=0$ one further has $\int R_n(x) dx=0$ for $n \geq 1$.
\end{assumption}

The assumption essentially says that $\xi$ has regularity no worse than white noise. The Cameron-Martin space $\cH$ associated to $\xi$ is  then given by $$ \cH =\overline{\left\{R \ast R \ast \phi, \;\;\; \phi \in \cC^\infty_c(\R \times \T^3)\right\}},$$
the closure being taken with respect to the norm
$$\left\|R \ast R \ast \phi \right\|_{\cH} = \left\|R \ast \phi \right\|_{L^2(\R \times \T^3)}.$$
In fact one also has
$$\|\psi \|_{\cH} := \inf\{ \|\phi\|_{L^2(\R\times \T^3)}, \;\; \psi=R\ast \phi\}$$
and
$$\cH = \overline{ \left\{R \ast \phi, \;\;\; \phi \in \cC^\infty_c\right\}}$$
(this can for instance be checked via the Fourier transform).

The next proposition then shows that under our assumption, $\cH$ is a subset of $L^2$.

\begin{proposition} \label{prop:L2H}
Under Assumption  \ref{asn:R}, there exists $C>0$ such that
\begin{equation} \label{eq:asnL2H}
\forall \phi \in L^2(\R \times \T^3), \left\| R\ast \phi \right\|_{L^2} \leq C \left\| \phi \right\|_{L^2}.
\end{equation}

\end{proposition}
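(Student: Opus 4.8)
The plan is to work on the Fourier side, where convolution becomes multiplication. By the isometry recalled in the Notations section, for $\phi \in L^2(\R\times\T^3)$ one has $\|R\ast\phi\|_{L^2(\R\times\T^3)} = \|\hat R\,\hat\phi\|_{L^2(\hat m)}$, so it suffices to show that $\hat R$ is a bounded function on $\R\times(2\pi\Z)^3$, i.e.~$\|\hat R\|_{L^\infty}\le C$; then \eqref{eq:asnL2H} follows with the same constant. (One should be slightly careful here: $R$ is defined as a distribution on $\R^4$, but since the $R_n$ are compactly supported and even, $\hat R_n$ is a bona fide smooth function, and the claim is really a uniform bound on $\sum_n \hat R_n$ evaluated at the relevant frequencies.)

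Next I would estimate each $\hat R_n$ separately using Assumption \ref{asn:R}. Since $R_n$ is supported in a ball of radius $C2^{-n}$ with $\|R_n\|_{L^\infty}\lesssim 2^{n(|\s|-\beta)}$, the trivial bound $|\hat R_n(\xi)| \le \|R_n\|_{L^1} \lesssim 2^{-n\beta}$ holds for every $\xi$. This already gives the result when $\beta>0$: summing the geometric series $\sum_{n\ge0} 2^{-n\beta} < \infty$ yields $\|\hat R\|_{L^\infty}\lesssim 1$. The delicate case is $\beta = 0$, where the trivial bound only gives $|\hat R_n|\lesssim 1$ and the sum diverges. Here I would exploit the extra cancellation hypothesis $\int R_n = 0$ for $n\ge1$, which says $\hat R_n(0)=0$; combining it with a derivative bound on $\hat R_n$ — from $\|\partial_k R_n\|_{L^\infty}\lesssim 2^{n(|\s|+|k|)}$ and the support bound one gets $\|\nabla\hat R_n\|_{L^\infty}\lesssim 2^{-n}$ (the gradient of $\hat R_n$ is, up to sign, the Fourier transform of $z\mapsto z R_n(z)$, which has $L^1$-norm $\lesssim 2^{-n}$) — one obtains $|\hat R_n(\xi)| \lesssim \min(1, 2^{-n}|\xi|)$ using $\s$-scaled homogeneity of the bound. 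Then for fixed $\xi$, splitting the sum at $n_0 \sim \log_2|\xi|$ gives $\sum_{n} |\hat R_n(\xi)| \lesssim \sum_{n\le n_0} 2^{-n}|\xi| + \sum_{n>n_0} 1 \cdot(\text{decay})$; to close this cleanly I would actually want a quantitative decay for large $n$, which is not literally in Assumption \ref{asn:R} as a single inequality — so more realistically one uses a second-order cancellation or interpolates the $k=0$ and higher-$k$ bounds to get $|\hat R_n(\xi)|\lesssim \min(1, (2^{-n}|\xi|)^{-M})$ for any $M$ via repeated integration by parts, making both tails summable uniformly in $\xi$.

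The main obstacle is precisely the borderline case $\beta=0$: the naive $L^1\to L^\infty$ estimate is scale-invariant and fails to be summable, so the entire point is to extract, from the smoothness and moment-cancellation parts of Assumption \ref{asn:R}, enough decay in $\hat R_n(\xi)$ both for $|\xi|\lesssim 2^n$ (using $\hat R_n(0)=0$ plus derivative bounds) and for $|\xi|\gtrsim 2^n$ (using high-order derivative bounds / integration by parts), and then to sum a geometric-type series whose crossover point depends on $\xi$ but whose total is bounded independently of $\xi$. Once that uniform bound $\sup_\xi\sum_n|\hat R_n(\xi)|\le C$ is in hand, the proposition is immediate. I would also double-check that the series $\sum_n R_n$ converging in $\cS'$ is consistent with pointwise convergence of $\sum_n \hat R_n$ on $\R\times(2\pi\Z)^3$, which follows from the uniform summability just established together with dominated convergence.
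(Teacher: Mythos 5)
Your plan matches the paper's: pass to the Fourier side, reduce to $\hat R\in L^\infty$, dispose of $\beta>0$ via $\|\hat R_n\|_{L^\infty}\le\|R_n\|_{L^1}\lesssim 2^{-n\beta}$, and for $\beta=0$ combine the cancellation $\hat R_n(0)=0$ with derivative/support estimates. One correction to your $\beta=0$ discussion: with the low-frequency bound $|\hat R_n(\xi)|\lesssim(2^{-n}|\xi|)\wedge 1$, the range $n>n_0\sim\log_2|\xi|$ (where $2^{-n}|\xi|<1$) is already a convergent geometric series; the trouble is the range $n\le n_0$, where the minimum is $1$ and contributes $\sim\log_2|\xi|$. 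So the supplementary estimate you need is decay of $\hat R_n(\xi)$ for $n$ \emph{small} relative to $\log_2|\xi|$ (i.e.\ $|\xi|\gg 2^n$), not for large $n$ as you state — your written split at $n_0$ has the two regimes reversed. Your proposed fix does target the right regime, and it is simpler than the ``second-order cancellation or interpolation'' you gesture at: Assumption~\ref{asn:R} already gives $\|\partial_{z_i}R_n\|_{L^\infty}\lesssim 2^{n(|\s|+\s_i)}$, which combined with the $2^{-n}$-scale support yields $\|\partial_{z_i}R_n\|_{L^1}\lesssim 2^{n\s_i}$, hence $|\hat R_n(\xi)\,\xi_i|\lesssim 2^{n\s_i}$ and $|\hat R_n(\xi)|\lesssim(2^n|\xi|^{-1})\wedge 1$; a single such bound ($M=1$ in your notation) suffices. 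Combining the two pointwise bounds,
$\sum_n|\hat R_n(\xi)|\lesssim\sum_{n\le n_0}2^n|\xi|^{-1}+\sum_{n>n_0}2^{-n}|\xi|\lesssim 1$
uniformly in $\xi$, which is precisely the paper's argument.
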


\begin{proof}
When $\beta>0$, $R$ is in $L^1$ so that the result is obvious. Hence we now assume $\beta=0$. It is enough to prove that $\hat{R}$ is bounded.

Using the support property of $R_n$, by a Bernstein-type lemma (e.g. one can adapt the proof of \cite[Lemma 2.1]{BCD} to our parabolic setting) one has
$$\left\| \partial^k \hat{R}_n \right\|_{L^\infty} \lesssim 2^{- n |k|} \left\|\hat{R}_n \right\|_{L^\infty} \leq 2^{- n |k|} \left\|R_n\right\|_{L^1} \lesssim  2^{- n |k|} $$
so that, for $n \geq 1$, since $\hat{R}_n(0) = \int R_n = 0$, one obtains
$$ \left| \hat{R}_n(\xi) \right| \lesssim \left(2^{-n} |\xi|\right) \wedge 1.$$
%$$ \left| \hat{R}_n(\xi) \right| \lesssim \left(2^{-n} |\xi|\right) \wedge 1.$$

In addition, for all $i$ $=$ $0,\ldots,3$,
$$\sup_{\xi} \left| \hat{R}_n(\xi) |\xi_i| \right| \lesssim  \left\|  \widehat{ \partial_{z_i} R_n} \right\|_{L^\infty} \leq  \left\|\partial_{z_i} R_n\right\|_{L^1} \lesssim 2^{n \s_i}$$
so that we also have
$$\left| \hat{R}_n(\xi) \right| \lesssim \left(2^n |\xi|^{-1}\right)\wedge 1$$
and combining these two bounds yields that $\hat{R} = \sum_{n \geq 0} \hat{R}_n$ is bounded.
\end{proof}

To obtain non-degeneracy of the Malliavin derivative we will need that one of two additional assumptions holds. The first assumption is a density assumption on the Cameron-Martin space :
\begin{assumption}\label{asn:Dense}
The set $\left\{ h_{| [0,T] \times \T^3} \,\,, h \in \mathcal{H} \right\}$ is dense in $L^2([0,T] \times \T^3)$.
\end{assumption}

To formulate the second assumption we need some notations. For $C>1$ and $n\geq 0$, let
$$A_n^C = \left\{ \xi \in \R^4 : C^{-1} 2^{n} \leq |\xi| \leq  C 2^{n} \right\}$$
and 
$$B_n^C = \left\{(\xi,\xi') \in (\R^4)^2: \; \xi, \xi', \xi+\xi' \in A_n^C\right\}.$$
%Note that $$Vol(B_n)= 2^{2n |\s|} Vol(B_1)$$ with $Vol(B_1)>0$.

The assumption is then written as
\begin{assumption} \label{asn:Rough}
One has $\beta < \frac 1 2$ and for some $C \geq 1$,
\begin{equation} \label{eq:Rough1}
\limsup_{n \to \infty} 2^{3n \beta} \sup_{(\xi,\xi') \in B_n^C}  \left|\hat{R}(\xi)\hat{R}(\xi')\hat{R}(\xi+\xi')\right| > 0.
\end{equation}
\end{assumption}

The following simple lemma will be needed in the proof of Theorem \ref{Th:Main}.

\begin{lemma} \label{lem:RoughL2}
Under Assumption \ref{asn:R}, \eqref{eq:Rough1} is equivalent to
\begin{equation}  \label{eq:Rough2}
\limsup_{n \to \infty} 2^{2n (3\beta - |\s|)} \int_{B_n^C} \hat{m}(d\xi) \hat{m}(d\xi')  \left|\hat{R}(\xi)\hat{R}(\xi')\hat{R}(\xi+\xi')\right|^2 > 0.
\end{equation}
(recall that the measure $\hat{m}$ is defined by \eqref{eq:defm}).
\end{lemma}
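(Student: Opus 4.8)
The plan is to show that the quantity controlling \eqref{eq:Rough1}, namely $M_n := \sup_{(\xi,\xi')\in B_n^C}|\hat R(\xi)\hat R(\xi')\hat R(\xi+\xi')|$, and the quantity controlling \eqref{eq:Rough2}, namely $I_n := \int_{B_n^C}\hat m(d\xi)\hat m(d\xi')\,|\hat R(\xi)\hat R(\xi')\hat R(\xi+\xi')|^2$, are comparable up to the right power of $2^n$. Precisely, I would prove that there are constants $c_1,c_2>0$ (depending only on $C$ and the structure constants in Assumption \ref{asn:R}) such that
\begin{equation*}
 c_1\, 2^{2n(|\s|-3\beta)}\, M_n^2 \;\le\; 2^{2n\,\cdot\,0}\cdot I_n\cdot 2^{2n(|\s|-3\beta)}\cdot 2^{-2n(|\s|-3\beta)}\qquad\text{—}
\end{equation*}
let me instead state it cleanly: I will show $I_n \le c_2\, 2^{2n(|\s|-3\beta)}\, M_n^2$ and $I_n \ge c_1\, 2^{2n(|\s|-3\beta)}\, M_n^2$, after which multiplying by $2^{2n(3\beta-|\s|)}$ gives that $2^{2n(3\beta-|\s|)}I_n$ and $M_n^2$ have the same $\limsup$-positivity, hence (since $M_n\ge 0$) \eqref{eq:Rough1} $\Leftrightarrow$ \eqref{eq:Rough2}.

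The upper bound on $I_n$ is immediate: on $B_n^C$ the integrand is pointwise $\le M_n^2$, and the $\hat m\otimes\hat m$-measure of $B_n^C$ is $\lesssim (2^{|\s|n})^2 = 2^{2n|\s|}$ since $B_n^C$ is contained in an annular region of the $\xi$-variable of $\hat m$-measure $\lesssim 2^{n|\s|}$ (by \eqref{eq:defm}, $\hat m$ of $A_n^C$ is comparable to the Lebesgue measure of the corresponding parabolic annulus, which scales like $2^{n|\s|}$) and similarly in $\xi'$. This gives $I_n \lesssim 2^{2n|\s|} M_n^2$; since the target exponent is $2n(|\s|-3\beta)$ I must absorb the extra $2^{-6n\beta}$, but this is exactly where one uses that the three factors $\hat R$ are each of size $\lesssim 2^{-n\beta}$ on $A_n^C$ — wait, that would only help the \emph{other} direction. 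Let me re-examine: in fact $M_n \lesssim 2^{-3n\beta}$ by the Bernstein-type estimates in the proof of Proposition \ref{prop:L2H} (each $|\hat R(\xi)|\lesssim 2^{-n\beta}$ for $|\xi|\sim 2^n$, as follows from $\|\partial^k\hat R_m\|_{L^\infty}\lesssim 2^{-m|k|}2^{-m\beta}$ summed over $m$), so $M_n^2 2^{2n|\s|}$ already carries a factor $2^{-6n\beta}$ hidden in $M_n^2$; rewriting, $I_n\lesssim 2^{2n|\s|}M_n\cdot M_n$ and one factor of $M_n\lesssim 2^{-3n\beta}$ peels off to give $I_n\lesssim 2^{2n(|\s|-3\beta)}\cdot 2^{-3n\beta}M_n \le 2^{2n(|\s|-3\beta)}M_n^2 \cdot(2^{-3n\beta}/M_n)$ — this is circular unless $M_n\gtrsim 2^{-3n\beta}$, which need not hold. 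So the correct clean statement of the upper bound is simply $I_n \le |\,B_n^C\,|_{\hat m\otimes\hat m}\, M_n^2 \lesssim 2^{2n|\s|}M_n^2$, and then \eqref{eq:Rough2}$\Rightarrow$\eqref{eq:Rough1} follows because $2^{2n(3\beta-|\s|)}I_n \le c\, M_n^2$, so $\limsup 2^{2n(3\beta-|\s|)}I_n>0 \Rightarrow \limsup M_n^2>0$. Good — that direction needs only the measure estimate.

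The reverse direction \eqref{eq:Rough1}$\Rightarrow$\eqref{eq:Rough2} is the main obstacle and requires a \emph{near-maximizer continuity} argument. Fix $n$ with $M_n$ large; pick $(\xi_*,\xi'_*)\in B_n^C$ with $|\hat R(\xi_*)\hat R(\xi'_*)\hat R(\xi_*+\xi'_*)|\ge \tfrac12 M_n$. The key point is that $\hat R$ is slowly varying on scale $2^n$: from the Bernstein bounds, $\|\nabla\hat R_m\|_{L^\infty}\lesssim 2^{-m}2^{-m\beta}2^{m\s_i}$ — more carefully, one shows $|\hat R(\xi)-\hat R(\tilde\xi)|\lesssim 2^{-n\beta}\,(2^{-n}|\xi-\tilde\xi|_\s)$ for $\xi,\tilde\xi\in A_n^{2C}$, i.e. a Lipschitz-type modulus on the parabolic ball of radius $\sim 2^n$ with the natural gain. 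Hence on a parabolic ball $U$ around $(\xi_*,\xi'_*)$ of radius $\delta 2^n$ (with $\delta$ small, absolute, so that $U\subset B_n^{2C}$ say — here one may need to enlarge $C$ to $2C$, which is harmless since \eqref{eq:Rough1} and \eqref{eq:Rough2} are stated "for some $C$"), each of the three factors stays within a factor $\tfrac12$ of its value at the center, so the integrand is $\gtrsim M_n^2$ on $U$; and $\hat m\otimes\hat m(U)\gtrsim (\delta 2^n)^{|\s|}\cdot(\delta 2^n)^{|\s|}\sim 2^{2n|\s|}$ (using that $\hat m$ is locally comparable to Lebesgue measure on such balls — one must check the ball is not too thin in the discrete directions, i.e. has radius $\gg 1$ in the space frequencies, which holds for $n$ large). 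Therefore $I_n\gtrsim 2^{2n|\s|}M_n^2$, i.e. $2^{2n(3\beta-|\s|)}I_n \gtrsim 2^{-6n\beta}M_n^2$. Hmm — this is off by $2^{-6n\beta}$ from what is wanted. Re-reading \eqref{eq:Rough1}: it has $2^{3n\beta}$ times $M_n$, so $\limsup 2^{3n\beta}M_n>0$ means $M_n\gtrsim 2^{-3n\beta}$ along a subsequence, giving $M_n^2\gtrsim 2^{-6n\beta}$ along it; combined with $I_n\gtrsim 2^{2n|\s|}M_n^2$ this yields $2^{2n(3\beta-|\s|)}I_n\gtrsim 2^{2n\cdot 3\beta}M_n^2 = (2^{3n\beta}M_n)^2\gtrsim$ the square of the $\limsup$ in \eqref{eq:Rough1}, hence $>0$. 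Conversely for \eqref{eq:Rough2}$\Rightarrow$\eqref{eq:Rough1}: $I_n\le c\,2^{2n|\s|}M_n^2$ gives $2^{2n(3\beta-|\s|)}I_n\le c\,2^{6n\beta}M_n^2=c(2^{3n\beta}M_n)^2$, so positivity of $\limsup$ of the left transfers to positivity of $\limsup (2^{3n\beta}M_n)^2$, i.e. \eqref{eq:Rough1}. Thus the two exponents match exactly once one is careful, and the only real work is the Lipschitz modulus for $\hat R$ on dyadic annuli (immediate from Bernstein summed over scales) and the lower volume bound for parabolic balls under $\hat m$ (immediate for $n$ large). I would organize the write-up as: (i) recall/derive $|\hat R(\xi)|\lesssim 2^{-n\beta}$ and the Lipschitz estimate on $A_n^{2C}$; (ii) the measure estimates for $\hat m$ on parabolic annuli and balls; (iii) the two-sided comparison of $I_n$ and $2^{2n|\s|}M_n^2$ as above; (iv) conclude. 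The main obstacle is genuinely step (iii)'s lower bound, where one must ensure the near-maximizing ball stays inside a slightly larger $B_n^{C'}$ and that its $\hat m$-measure is $\gtrsim 2^{2n|\s|}$ uniformly.
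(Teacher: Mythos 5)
Your plan is correct and it is essentially the paper's argument: for the easy implication \eqref{eq:Rough2}$\Rightarrow$\eqref{eq:Rough1} you use the pointwise bound and the $\hat m\otimes\hat m$-measure of $B_n^C$ (the paper simply notes this direction is immediate and does not write it out), and for the main implication \eqref{eq:Rough1}$\Rightarrow$\eqref{eq:Rough2} you pick a near-maximizer $(\zeta_n,\zeta_n')$ along the good subsequence, use a Lipschitz-type modulus for $\hat R$ on parabolic annuli of radius $\sim 2^n$ to show the integrand stays comparable on a parabolic ball of radius $\delta 2^n$, and then use the lower volume bound for $\hat m\otimes\hat m$ on such balls. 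One small but genuinely useful refinement in your version: for $\beta>0$ the Taylor/continuity step really does require the $\beta$-improved modulus $\lvert\hat R(\xi)-\hat R(\tilde\xi)\rvert\lesssim 2^{-n\beta}\,(2^{-n}\lvert\xi-\tilde\xi\rvert_{\s})$ that you state, rather than the bare bound $\lvert\partial_{\xi_i}\hat R(\xi)\rvert\lesssim\lvert\xi\rvert^{-\s_i}$ recorded as \eqref{eq:partialkHatR} in the paper; without the extra $2^{-n\beta}$ the admissible $\delta$ would have to shrink with $n$, since all three factors $\lvert\hat R\rvert$ are themselves only of size $\sim 2^{-n\beta}$ on $A_n^C$ along the subsequence where \eqref{eq:Rough1} is witnessed. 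The paper's derivation of \eqref{eq:partialkHatR} actually produces this extra factor (it is simply dropped in the stated inequality), so this is a clarification rather than a different route. You should also make explicit, as you note parenthetically, that the ball $B'_n$ around the near-maximizer must be intersected with a possibly enlarged annulus $B_n^{C'}$, $C'>C$, to guarantee it has $\hat m\otimes\hat m$-measure $\gtrsim 2^{2n\lvert\s\rvert}$; this is harmless since $C$ in the assumptions is a free parameter.
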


\begin{proof}
We only prove that \eqref{eq:Rough1} implies \eqref{eq:Rough2} since the converse implication is immediate (and we will in fact not need it).

One first checks that for any $i$ in $\left\{0,\ldots,3\right\}$, 
\begin{equation} \label{eq:partialkHatR}
\left|\partial_{\xi_i} \hat{R}(\xi) \right| \lesssim|\xi|^{-\s_i}.
\end{equation}
To prove this, we write 
$$\left|\partial_{\xi_i} \hat{R}(\xi) \right|  \leq \sum_{0\leq n \leq n_0} \left|\partial_{\xi_i} \hat{R}_n(\xi)\right| + \sum_{n>n_0} \left|\partial_{\xi_i} \hat{R}_n(\xi)\right| $$
with $n_0$ such that $2^{n_0}\leq  |\xi| \leq 2^{n_0+1}$ . As in the proof of Proposition \ref{prop:L2H}, by Bernstein's lemma it holds that
$$ \sum_{n>n_0} \left\|\partial_{\xi_i} \hat{R}_n\right\|_{L^\infty} \lesssim \sum_{n>n_0} 2^{-n \s_i} \lesssim |\xi|^{-\s_i}$$
For the other sum, we note that for $N > \s_i$, 
\begin{eqnarray*}
 \sup_{\xi} \left| \partial_{\xi_i} \hat{R}_n(\xi) |\xi_j|^{N}\right| &\leq&  \sup_{\xi} \left| \partial_{\xi_i} \left( \hat{R}_n(\xi) (\xi_j)^{N}\right) \right| + \sup_\xi  \left| \hat{R}_n(\xi) \partial_{\xi_i} (\xi_j)^{N} \right|\\
 &\lesssim& 2^{-n \s_i} \left\|\partial_{z_j}^N R_n \right\|_{L^1} + 1_{i=j} \left\| \partial_{z_i}^{N-1} R_n \right\|_{L^1} \\
 &\lesssim& 2^{n (N \s_j - \s_i)},
 \end{eqnarray*}
 from which we deduce
 \[ \left|\partial_{\xi_i} \hat{R}_n(\xi) \right|\lesssim 2^{-n \s_i} \left( |\xi|^{-N} 2^{nN} \wedge 1\right).\]
 Hence we obtain
 $$ \sum_{n\leq n_0} \left|\partial_{\xi_i} \hat{R}_n(\xi)\right| \lesssim  \sum_{n\leq n_0} 2^{n(N-\s_i)} |\xi|^{-N} \lesssim  |\xi|^{-\s_i},$$
which concludes the proof of \eqref{eq:partialkHatR}.

From \eqref{eq:partialkHatR} and Taylor's formula (\cite[Prop. A.1]{Hairer2014}), if $(\zeta_n, \zeta_n') \in B_n^C$ is such that
$$2^{3n \beta}  \left|\hat{R}(\zeta_n)\hat{R}(\zeta_n')\hat{R}(\zeta_n+\zeta_n')\right| \geq K >0 $$
then there exists $\delta>0$ such that 
$$2^{3n \beta} |\hat{R}(\xi)\hat{R}(\xi')\hat{R}(\xi+\xi')| \geq \frac{K}{2} $$
whenever
$$(\xi,\xi') \in B'_n:=\left\{(\xi,\xi') \mbox{ s.t. } |\xi -\zeta_n| \leq 2^n \delta, |\xi'-\zeta'_n|\leq 2^n \delta\right\}.$$
and we note that
$$ \limsup_{n \to \infty}2^{-2n |\s|} \left(\hat{m} \otimes \hat{m} \right)\left(B'_n \cap B_{n}^C\right)= K'>0.$$
It follows that
$$\limsup_{n \to \infty} 2^{2n (3\beta - |\s|)} \int_{B_{n}^C} \hat{m}(d\xi) \hat{m} (d\xi') \left|\hat{R}(\xi)\hat{R}(\xi')\hat{R}(\xi+\xi')\right|^2 \geq \frac{K^2 K'}{4}>0.$$
\end{proof}

It is difficult to give examples of kernels $R$ satisfying Assumption \ref{asn:R} which have simple expressions as Fourier multipliers (in particular, because of the assumption that $R$ is compactly supported).

However, in the following proposition, we give an example of a Littlewood-Paley type decomposition $\sum_{n \geq 0} \zeta_n$ of space-time white noise $\zeta$, such that if one considers the noise $\xi = \sum_{n \geq 0} \alpha_n \zeta_n$, there exist simple sufficient conditions on the sequence $(\alpha_n)_{n \geq 0}$ for the previous assumptions to be fulfilled.

\begin{proposition} \label{prop:PL}
Assume that $\rho$ is a smooth, even, compactly supported function with $\int_{\R^4} \rho =1$, such that, letting $\eta(x) = \rho(x) - 2^{-|\s|} \rho(2^{-1}\cdot x)$, %one has
%\begin{equation} \label{eq:HatEtaPositive}
% \hat{\eta} \geq 0 \mbox{ on } \R^4,
% \end{equation}
%and that
 there exists $\xi_0$ in $\R^4$ such that
\begin{equation} \label{eq:CondPL2}
 \left|\hat{\eta}(\xi_0)\right| - \sum_{n \in \Z \setminus \{0\}} \;\;2^{-n \beta} \left|\hat{\eta}(2^{-n}\cdot\xi_0)\right| > 0.
\end{equation}
Then for any bounded sequence $(\alpha_n)_{n \geq 0}$, the kernel
$$R: = \alpha_0 \rho +  \sum_{n \geq 1} \alpha_n \eta^{2^{-n}}.$$
satisfies Assumption \ref{asn:R} for any $\beta$ such that $\limsup_{n\to \infty} 2^{n \beta} \left|\alpha_n \right|< \infty.$
In addition,
%\begin{enumerate}
%\item if $\alpha_n>0$ for all $n \geq 0$, then Assumption \ref{asn:Dense} is satisfied,
%\item 
if $\beta< \frac{1}{2}$ and $\limsup_{n \to \infty} 2^{n \beta} \left|\alpha_n\right|  \in (0,\infty)$ then Assumption \ref{asn:Rough} is satisfied.
%\end{enumerate}
\end{proposition}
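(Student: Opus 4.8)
Below is the line of argument I would follow; the two assertions are treated separately, and I flag the genuinely delicate step at the end.

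\medskip
\noindent\textbf{Assumption \ref{asn:R}.} Write $R_0=\alpha_0\rho$ and $R_n=\alpha_n\eta^{2^{-n}}$ for $n\ge 1$, so $R=\sum_{n\ge 0}R_n$. Since $\rho$, hence $\eta=\rho-2^{-|\s|}\rho(2^{-1}\cdot\,)$, is even, smooth and compactly supported (say $\mathrm{supp}\,\rho\subseteq\{|x|_\s\le r_0\}$, $\mathrm{supp}\,\eta\subseteq\{|x|_\s\le 2r_0\}$), each $R_n$ is even, smooth and supported in $\{|x|_\s\le 2r_0\,2^{-n}\}$, which is the first requirement with $C=2r_0$. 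From $\partial^k\eta^{2^{-n}}=2^{n(|\s|+|k|)}(\partial^k\eta)(2^{n}\cdot\,)$ one gets $\|\partial^k R_n\|_{L^\infty}=|\alpha_n|\,2^{n(|\s|+|k|)}\|\partial^k\eta\|_{L^\infty}$, so the quantitative bound holds with $C_k=\big(\sup_n 2^{n\beta}|\alpha_n|\big)\|\partial^k\eta\|_{L^\infty}<\infty$, which is exactly where $\limsup_n 2^{n\beta}|\alpha_n|<\infty$ enters. Finally $\int\eta=\int\rho-\int\rho=0$, so $\int R_n=0$ for $n\ge 1$, giving in particular the zero-average condition needed when $\beta=0$; and using $\int\eta=0$ together with a first order Taylor expansion of a test function, $|\langle R_n,\varphi\rangle|=|\alpha_n|\,\big|\int\eta(u)\big(\varphi(2^{-n}\cdot u)-\varphi(0)\big)du\big|\lesssim\|\varphi\|_{C^1}\,2^{-n}$, whence $\sum_n R_n$ converges in $\mathcal S'$.

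\medskip
\noindent\textbf{Assumption \ref{asn:Rough}: the limiting profile.} Now assume $\beta<1/2$ and $L:=\limsup_n 2^{n\beta}|\alpha_n|\in(0,\infty)$. On the Fourier side $\hat R(\xi)=\alpha_0\hat\rho(\xi)+\sum_{m\ge 1}\alpha_m\hat\eta(2^{-m}\cdot\xi)$ with $\hat\eta(\xi)=\hat\rho(\xi)-\hat\rho(2\cdot\xi)$; both $\hat\rho$ and $\hat\eta$ decay faster than any polynomial, $\hat\eta(0)=0$ and $|\hat\eta(\xi)|\lesssim|\xi|^2$ near $0$, so $S(\zeta):=\sum_{j\ne 0}2^{j\beta}|\hat\eta(2^{j}\cdot\zeta)|$ is finite and continuous on $\R^4\setminus\{0\}$, and \eqref{eq:CondPL2} reads (after relabelling $j=-n$) exactly $|\hat\eta(\xi_0)|>S(\xi_0)$. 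Set $b_m:=2^{m\beta}\alpha_m$, a bounded sequence with $\limsup|b_m|=L$. By compactness of $[-\sup|b|,\sup|b|]^{\Z}$ I would first extract a subsequence along which $b_{n_k}\to\ell$ with $|\ell|=L$, and then by a diagonal argument a further subsequence along which $b_{n_k-j}\to\bar b_j$ for every $j\in\Z$; then $\bar b_0=\ell$ and, since $|b_m|\le L+\varepsilon$ for $m$ large, $|\bar b_j|\le L$ for all $j$. A direct computation (substituting $j=n-m$) gives, for $\zeta$ in a compact subset of $\R^4\setminus\{0\}$,
\[
2^{n\beta}\hat R(2^{n}\cdot\zeta)=2^{n\beta}\alpha_0\hat\rho(2^{n}\cdot\zeta)+\sum_{j\le n-1}b_{n-j}\,2^{j\beta}\hat\eta(2^{j}\cdot\zeta),
\]
and letting $n=n_k\to\infty$, dominated convergence (absorbing the truncation $j\le n-1$ and killing the $\hat\rho$-term by rapid decay) yields the locally uniform limit
\[
2^{n_k\beta}\hat R(2^{n_k}\cdot\zeta)\ \longrightarrow\ \Psi(\zeta):=\sum_{j\in\Z}\bar b_j\,2^{j\beta}\hat\eta(2^{j}\cdot\zeta).
\]
Thus $\Psi$ is continuous and even on $\R^4\setminus\{0\}$, and $|\Psi(\xi_0)|\ge|\bar b_0|\,|\hat\eta(\xi_0)|-\sum_{j\ne 0}|\bar b_j|\,2^{j\beta}|\hat\eta(2^j\cdot\xi_0)|\ge L\big(|\hat\eta(\xi_0)|-S(\xi_0)\big)>0$ by \eqref{eq:CondPL2}, so $\Psi\not\equiv 0$.

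\medskip
\noindent\textbf{Conclusion.} It now suffices to produce $u,v\in\R^4$ with $u,v,u+v$ all lying in the open set $\{\Psi\ne 0\}$ and with $|u|_\s,|v|_\s,|u+v|_\s$ of comparable size. Granting this, put $\xi=2^{n_k}\cdot u$, $\xi'=2^{n_k}\cdot v$, so $\xi+\xi'=2^{n_k}\cdot(u+v)$ and $(\xi,\xi')\in B^C_{n_k}$ for a fixed $C\ge 1$; by the convergence above, $2^{3n_k\beta}\,|\hat R(\xi)\hat R(\xi')\hat R(\xi+\xi')|\to|\Psi(u)\Psi(v)\Psi(u+v)|>0$, which is \eqref{eq:Rough1} (and hence also \eqref{eq:Rough2}, by Lemma \ref{lem:RoughL2}).

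\medskip
\noindent\textbf{Main obstacle.} The last step is exactly where the work lies: a small neighbourhood of $\xi_0$ alone contains no such triple (if $u,v\approx\xi_0$ then $u+v\approx 2\xi_0$), so one must know that $\{\Psi\ne 0\}$ is genuinely large. I expect the right way to close this is to show $\{\Psi=0\}$ is Lebesgue-null — exploiting that $\Psi$ is assembled from the rapidly decreasing analytic function $\hat\eta$ and is nondegenerate at $\xi_0$ — for then $\{(u,v):u,v,u+v\in\{\Psi\ne 0\}\}$ is co-null and one simply picks $(u,v)$ in it with, say, $|u|_\s,|v|_\s,|u+v|_\s\in[\tfrac14,4]$. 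Establishing this nondegeneracy of the scaling limit $\Psi$ of $\hat R$ (i.e.\ ruling out that it vanishes on a set of positive measure) is, to my mind, the crux of the proof.
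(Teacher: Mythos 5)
Your treatment of Assumption \ref{asn:R} is correct and is exactly what the paper compresses into ``a straightforward consequence of scaling''; nothing to add there.

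For Assumption \ref{asn:Rough} your route is genuinely different from the paper's. The paper does not extract a scaling limit: it writes $2^{n\beta}|\alpha_n|=C+\varepsilon_n$ with $\limsup_n \varepsilon_n=0$, expands $\hat R(2^n\cdot\xi_0)=\alpha_0\hat\rho(2^n\cdot\xi_0)+\sum_{m\geq1}\alpha_m\hat\eta(2^{n-m}\cdot\xi_0)$, and shows directly by the triangle inequality that $\limsup_n 2^{n\beta}|\hat R(2^n\cdot\xi_0)|\geq C\delta>0$ where $\delta$ is the quantity in \eqref{eq:CondPL2}. Your construction of the profile $\Psi$ by a diagonal extraction and dominated convergence gives the same pointwise lower bound at $\xi_0$ in a cleaner, more conceptual way; both arguments work and buy roughly the same thing at $\xi_0$.

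The ``main obstacle'' you flag at the end is a real issue, and you are right to be suspicious. Condition \eqref{eq:Rough1} requires controlling the \emph{product} $\hat R(\xi)\hat R(\xi')\hat R(\xi+\xi')$, while \eqref{eq:CondPL2} is a hypothesis about a single reference point $\xi_0$. You should be aware, however, that the paper's own proof is also silent on exactly this step: after proving $\limsup_n 2^{n\beta}|\hat R(2^n\cdot\xi_0)|>0$ it simply asserts that \eqref{eq:Rough1} holds for any $C$ with $(\xi_0,\xi_0)\in B_0^C$, implicitly taking $\xi=\xi'=2^n\cdot\xi_0$ and $\xi+\xi'=2^n\cdot 2\xi_0$, but it never estimates $\hat R$ at $2^n\cdot 2\xi_0$. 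Whether $2\xi_0$ sits at the same dyadic scale as $\xi_0$ depends on the choice of $\xi_0$: if $\xi_0=(1,0,0,0)$ then $|2\xi_0|=\sqrt2$ stays strictly inside the annulus $1<|\zeta|<2$, so the same index $m\approx n$ dominates $\hat R$ at both points and the two subsequences align; if instead $\xi_0=(0,1,0,0)$ then $2\xi_0=2\cdot\xi_0$ jumps to the next scale, the dominant coefficients are $\alpha_n$ and $\alpha_{n+1}$ respectively, and these may be large along \emph{disjoint} subsequences (e.g.\ $\alpha_n\sim 2^{-n\beta}$ for $n$ even and $\alpha_n=0$ for $n$ odd). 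So you have put your finger on the one genuinely delicate point; but I would not try to prove that $\{\Psi=0\}$ is Lebesgue-null, which looks hard and is more than you need. It is more in the paper's spirit, and sufficient, to argue (using the openness of the condition \eqref{eq:CondPL2} and continuity of $\hat\eta$) that one may choose $\zeta,\zeta',\zeta+\zeta'$ all strictly inside the same annulus $\{1<|\zeta|<2\}$, all satisfying a version of \eqref{eq:CondPL2} with a uniform $\delta>0$, so that the same good subsequence of $n$'s works simultaneously for all three points; for the $\rho$ constructed in the following lemma this is easily arranged.
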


Note that if $\alpha_n \equiv 1$ then $R=\delta_0$ which corresponds to space-time white noise. 
\begin{proof}

The fact that $R$ satisfies Assumption \ref{asn:R} with $R_0=\alpha_0 \rho$ and $R_n =  \alpha_{n} \eta^{2^{-n}}$ for $n \geq 1$ is a straightforward consequence of scaling.

%We now prove (1). To this end, we first note that Assumption \ref{asn:Dense} is equivalent to the condition
%\begin{equation} \label{eq:DenseFourier}
%\forall (k_0,k_1,k_2,k_3) \in \Z^4, \;\;\hat{R}\left(\frac{2\pi k_0}{T}, 2\pi k_1, 2\pi k_2, 2\pi k_3\right) \neq 0.
%\end{equation}
%Hence it is enough to prove that $\hat{R}>0$ on $\R^4$. Note that by the definition of $\eta$ it holds that
%\begin{equation} \label{eq:HatEta1}
% \forall \xi \in \R^4\setminus\{0\}, \;\; \sum_{n \in \Z} \hat{\eta}(2^{-n}\cdot \xi)  = \hat{\rho}(\xi) + \sum_{n>0} \hat{\eta}(2^{-n}\cdot \xi) = 1
%\end{equation}
%If $\alpha_n >0$ for all $n$, then one has $\hat{R}(0)= \alpha_0>0$
%and for all $\xi \neq 0$,
%$$\hat{R}(\xi) = \alpha_0 \hat{\rho}(\xi) + \sum_{n \geq 1} \alpha_n \hat{\eta}(2^{-n} \cdot \xi) >0$$
%by \eqref{eq:HatEtaPositive} and\eqref{eq:HatEta1}. 

We now prove the second point. We note $C:= \limsup_{n \to \infty} 2^{n \beta} |\alpha_n|$ so that  $2^{n \beta} |\alpha_n| = C + \varepsilon_n$ with $\limsup_{n\to \infty} \varepsilon_n = 0$. We then have 
\begin{align*}
2^{n \beta} \left| \hat{R}(2^n \cdot \xi_0) \right| & \geq 2^{n \beta} \left| \alpha_n \right| \left| \hat{\eta} (\xi_0)\right|   - \sum_{m\ge 1-n; m\ne 0}  2^{n \beta} \left| \alpha_{n+m} \right| \left| \hat{\eta}(2^{-m} \cdot \xi_0) \right| - 2^{n \beta} |\alpha_0| \hat{\rho}(2^{n} \cdot \xi_0)|\\
& \geq C \delta + \varepsilon_n \left| \hat{\eta} (\xi_0)\right| - \sum_{m \geq 1-n; m\ne 0} 2^{-m\beta}\varepsilon_{n+m} \left| \hat{\eta} (2^{-m} \cdot \xi_0)\right| - 2^{n \beta} |\alpha_0| \hat{\rho}(2^{n} \cdot \xi_0)|
\end{align*}
where we have let $\delta = \left|\hat{\eta}(\xi_0)\right| - \sum_{m \in \Z \setminus \{0\}} \;\;2^{-m \beta} \left|\hat\eta(2^{-m}\cdot\xi_0)\right|  >0.$ 

Note that the last term in the inequality above goes to $0$ as $n \to \infty$ since $\hat{\rho}$ decays rapidly at infinity. For the sum $\sum_{m } 2^{-m\beta}\varepsilon_{n+m} \left| \hat{\eta} (2^{-m} \cdot \xi_0)\right|$, we note that by similar arguments as in the proof of Proposition \ref{prop:L2H}, we have that 
$$\hat{\eta}(\zeta) \lesssim |\zeta| \wedge |\zeta|^{-N}$$
for $N$ arbitrarily large $(N>\beta$ will suffice), so that we can bound uniformly (in $n$) each summand by a term of an absolutely convergent series (in $m$). Hence we can interchange limits and obtain
\begin{align*}
&\limsup_{n \to \infty}  \sum_{m \geq 1-n} 2^{-m\beta}\varepsilon_{n+m} \left| \hat{\eta} (2^{-m} \cdot \xi_0)\right| - 2^{n \beta} |\alpha_0| \hat{\rho}(2^{n} \cdot \xi_0)| \\
 = &\sum_{m \neq 0} \limsup_{n \to \infty} \varepsilon_{n+m} 2^{-m\beta} \left| \hat{\eta} (2^{-m} \cdot \xi_0)\right| 
= 0.
\end{align*}
Finally we have
\[\limsup_{n \to \infty} 2^{n\beta} \left|\hat{R}(2^n \cdot \xi_0) \right| >0,\]
and \eqref{eq:Rough1} will be satisfied for any $C$ such that $(\xi_0,\xi_0)$ $\in$ $B_0^C$.
\end{proof}

For completeness we prove that such dyadic partitions of unity (with compact support in the space variable) exist.

\begin{lemma}
There exists $\rho$ satisfying the assumptions of Proposition \ref{prop:PL}.
\end{lemma}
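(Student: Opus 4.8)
The plan is to construct $\rho$ explicitly on the Fourier side while keeping compact support in the space variables, and then verify the quantitative condition \eqref{eq:CondPL2}. The key structural observation is that \eqref{eq:CondPL2} is easiest to satisfy when $\beta$ is close to $0$ (or when we only need it for \emph{some} $\beta<\tfrac12$), so the construction should produce a $\rho$ for which $\hat\eta$ is genuinely concentrated in a single dyadic annulus, making the ``main term'' $|\hat\eta(\xi_0)|$ dominate the geometric tail $\sum_{n\neq 0}2^{-n\beta}|\hat\eta(2^{-n}\cdot\xi_0)|$.

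\begin{proof}
We work in the parabolic scaling $\s=(2,1,1,1)$. Fix a smooth, even, compactly supported function $\chi:\R^4\to[0,1]$ with $\chi\equiv 1$ on the $\s$-scaled ball of radius $1$ and $\chi\equiv 0$ outside the $\s$-scaled ball of radius $2$. For a small parameter $M\geq 2$ to be chosen, set
$$\psi(\xi) := \chi(\xi) - \chi(M\cdot\xi)\;, \qquad \text{so that } \psi \text{ is supported in } \{M^{-1}\leq |\xi|_\s \leq 2\}.$$
Choose $\rho_0$ to be an even, smooth, compactly supported function on $\R^4$ (in the space variables) with $\int\rho_0=1$ and whose Fourier transform $\hat\rho_0$ is bounded, rapidly decaying, and does not vanish on the $\s$-scaled ball of radius $2$ (e.g.\ a suitably scaled Gaussian truncated and mollified, or any such standard choice; such functions exist). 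Define $\rho$ by
$$\hat\rho(\xi) := \hat\rho_0(\xi)\,\Theta(\xi)\;,$$
where $\Theta$ is chosen so that $\hat\eta(\xi) = \hat\rho(\xi) - \hat\rho(2\cdot\xi)$ behaves, up to smooth bounded factors, like $\psi(\xi)$ on the annulus $\{M^{-1}\le |\xi|_\s\le 2\}$ and decays rapidly elsewhere; concretely one can simply take $\rho$ itself compactly supported in space with $\hat\rho$ obtained by a telescoping sum $\hat\rho = \sum_{k\geq 0}\psi(2^k\cdot\,\cdot)\,\hat\rho_0$, which converges and defines a rapidly decaying function since $\psi(2^k\cdot\,\xi)=0$ once $2^k|\xi|_\s < M^{-1}$, and whose corresponding $\rho$ is a finite-band smooth function, hence compactly supported in space after a harmless periodization-free truncation. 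With this choice $\int_{\R^4}\rho = \hat\rho(0) = \sum_{k\geq 0}\psi(0)\hat\rho_0(0) $; since $\psi(0)=\chi(0)-\chi(0)=0$ this would give $0$, so instead one normalizes by adding a fixed compactly supported bump carrying the unit mass at frequency $0$: replace $\hat\rho$ by $\hat\rho + \hat\rho_0$, which is still rapidly decaying, still corresponds to a space-compactly-supported smooth $\rho$, is even, and now satisfies $\int\rho = \hat\rho_0(0)=1$.

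It remains to verify \eqref{eq:CondPL2} for the resulting $\hat\eta(\xi) = \hat\rho(\xi)-\hat\rho(2\cdot\xi)$. By construction $\hat\eta$ is, up to a bounded even smooth nonvanishing factor on the annulus, equal to a function $g$ with $g(\xi_0)=1$ for some fixed $\xi_0$ with $|\xi_0|_\s\asymp 1$, and with $|g(2^{-n}\cdot\xi_0)|\leq C_N\bigl(2^{-n}\wedge 2^{n}\bigr)^{N}$ for every $N$ (the $2^{-n}$ bound from vanishing at the origin, the $2^{nN}$ bound from rapid decay at infinity, exactly as in the proof of Proposition~\ref{prop:L2H}). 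Hence
$$\sum_{n\in\Z\setminus\{0\}} 2^{-n\beta}\,|\hat\eta(2^{-n}\cdot\xi_0)| \;\lesssim\; \sum_{n\geq 1} 2^{-n\beta}\cdot C_N 2^{-nN} \;+\; \sum_{n\geq 1} 2^{n\beta}\cdot C_N 2^{-nN}\;,$$
which, since $\beta<\tfrac12<N$, is bounded by $C_N/(2^{\min(N-\beta,\,N+\beta)}-1)$, a quantity that we can make as small as we like by increasing the separation $M$ between the frequency bands (each extra factor of $2$ in $M$ improves $C_N$ geometrically while $|\hat\eta(\xi_0)|$ stays of order $1$). Choosing $M$ large enough makes this tail strictly smaller than $|\hat\eta(\xi_0)|$, establishing \eqref{eq:CondPL2}. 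Evenness, smoothness, compact support in space, and $\int\rho=1$ having been arranged above, all hypotheses of Proposition~\ref{prop:PL} are met.
\end{proof}

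The main obstacle I anticipate is the tension between compact support in the \emph{space} variables and control of $\hat\rho$ near frequency zero: one cannot have both $\hat\rho$ compactly supported and $\rho$ compactly supported, so the rapid-decay estimates on $\hat\eta$ (which replace the clean band-limited picture of the classical Littlewood--Paley construction) must be obtained from smoothness of $\rho$, and one must be careful that the unit-mass normalization $\int\rho=1$ does not reintroduce a non-vanishing contribution at scale $n=0$ that swamps the annulus term --- this is why the construction splits $\rho$ into a ``high-pass telescoping part'' plus a fixed low-frequency bump carrying the mass.
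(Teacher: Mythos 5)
Your construction contains a genuine gap that you in fact anticipate at the end but do not resolve. You build $\hat\rho = \Theta\,\hat\rho_0$ with $\Theta = \sum_{k\ge0}\psi(2^k\cdot)$ supported in the $\s$-scaled ball of radius $2$, so that $\hat\rho$ is compactly supported; as you note, the corresponding $\rho$ is then band-limited, hence analytic and \emph{not} compactly supported. At this point you invoke a ``harmless periodization-free truncation'' of $\rho$ in space, but truncation is not harmless here: multiplying $\rho$ by a compactly supported cutoff convolves $\hat\rho$ with the Fourier transform of the cutoff, which destroys the compact support of $\hat\rho$ and with it the entire dyadic-annulus structure that your subsequent verification of \eqref{eq:CondPL2} relies on (the claim that ``$\hat\eta$ is, up to a bounded nonvanishing factor on the annulus, equal to a function $g$ with $g(\xi_0)=1$ and $|g(2^{-n}\cdot\xi_0)|\le C_N(2^{-n}\wedge 2^n)^N$'' is a property of the \emph{untruncated} $\rho$). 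You therefore end up estimating the wrong object. In addition, the parameter $M$ does not plausibly do what you claim: since $\eta$ is defined by the \emph{fixed} dyadic scaling $\eta(x)=\rho(x)-2^{-|\s|}\rho(2^{-1}\cdot x)$, widening the gap $M$ in your ancillary Littlewood--Paley function $\psi$ does not make the constants $C_N$ in the decay of $\hat\rho_0$ (which come from the compact support of $\rho_0$, not from $M$) decay geometrically, so the tail $\sum_{n\ne0}2^{-n\beta}|\hat\eta(2^{-n}\cdot\xi_0)|$ is not shown to be small.

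The paper resolves exactly this tension, but by a different mechanism. It starts from a genuinely band-limited Schwartz function $\rho_0$ with $\hat\rho_0\equiv1$ on the unit ball and $\equiv0$ outside radius $2$, for which \eqref{eq:CondPL2} holds \emph{trivially} (the tail is identically zero). It then truncates $\rho_\delta=\rho_0\,\phi(\delta\cdot)$ and, rather than claiming the truncation is harmless, it derives uniform (in $\delta\le1$) bounds $|\hat\eta_\delta(2^{-n}\cdot\xi_0)|\lesssim 2^{-n}\wedge 2^{nN}$ from $\|\hat\phi^\delta\|_{L^1}$ and $\|\partial^k\rho_\delta\|_{L^1}$, and passes to the limit $\delta\to0$ by dominated convergence, obtaining $\lim_{\delta\to0}\bigl(|\hat\eta_\delta(\xi_0)|-\sum_{n\ne0}2^{-n\beta}|\hat\eta_\delta(2^{-n}\cdot\xi_0)|\bigr)=1>0$, then fixes $\delta$ small and normalizes. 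The key difference is that the paper never needs a compactly supported $\rho$ whose $\hat\rho$ retains a clean dyadic profile; it only needs the limiting argument and a soft dominated-convergence estimate. If you want to salvage your approach, you would have to either redo all the $\hat\eta$ estimates after the truncation (which essentially reproduces the paper's argument), or abandon the idea of a ``clean'' band-limited construction altogether.
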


\begin{proof}
We first proceed as for a standard dyadic partition of unity and define $\rho_0$ to be an even Schwartz function s.t.
$$\hat{\rho}_0 \equiv 1 \mbox{ on } |\xi|\leq 1, \;\;\;\;\hat{\rho}_0 \equiv 0 \mbox{ on } |\xi|\geq  2, \;\;\; \hat{\rho} \in[0,1] \mbox{ everywhere}.$$
Letting $\hat{\eta}_0(\xi) = \hat{\rho}_0(\xi) - \hat{\rho}_0(2\xi)$, it is clear that \eqref{eq:CondPL2} holds since in fact if we fix $\xi_0$ such that
 $|\xi_0|=1$, then
 $$ \hat{\eta}_0(\xi_0)=1, \mbox{ and } \;\;\; \forall n \in \Z\setminus \{0\}, \;\;\hat{\eta}_0(2^{-n} \cdot \xi_0) = 0.$$
Of course the issue is that $\rho_0$ is not compactly supported, so we fix $\phi$ smooth, even, compactly supported with $\phi(0)=0$, and let $\rho_\delta := \rho_0 \phi(\delta \cdot),$
and $\hat{\eta}_\delta(\xi) = \hat{\rho}_\delta(\xi) - \hat{\rho}_\delta(2\xi)$. Then we note that 
$$\hat{\rho}_\delta = \hat{\rho}_0 \ast \hat{\phi}^\delta,$$
and since $\left\|\hat{\phi}^\delta\right\|_{L^1}$ does not depend on $\delta$, we have for all multi-indices $k$
$$ \sup_{\delta \in (0,1]}\left\|\partial^k \hat{\rho}_\delta \right\|_{L^\infty} \lesssim \sup_{\delta \in (0,1]} \left\|\partial^k \hat{\rho}_0 \right\|_{L^\infty} \left\|\hat{\phi}^\delta\right\|_{L^1} \lesssim 1.$$
In addition, using that for all multi-indices $k$ one has for $\delta \leq 1$ $\left\| \partial^k \rho_\delta \right\|_{L^1} \lesssim \sum_{l \leq k} \left\| \partial^l \rho_0 \right\|_{L^1}$,
we obtain (using the same arguments as in the proof of Proposition \ref{prop:L2H}) that for arbitrary $N>0$
$$\sup_{\delta \in (0,1]} \left| \hat{\rho}_\delta(\xi)\right| \lesssim |\xi|^{-N} \wedge 1$$
and combining these two bounds yields
$$\sup_{\delta \in (0,1]} \left|\hat{\eta}_\delta(2^{-n}\cdot \xi_0)\right| \lesssim 2^{-n} \wedge 2^{nN}.$$
Hence we can pass to the limit and obtain
$$\lim_{\delta \to 0}  \;\;\; \left|\hat{\eta}_\delta(\xi_0)\right| - \sum_{n \in \Z \setminus \{0\}} \;\;2^{-n \beta} \left|\hat{\eta}_\delta(2^{-n}\cdot\xi_0)\right| = 1.$$
It then suffices to take $\delta$ small enough and let $$\rho = \frac{\rho_\delta}{\int_{\R^4} \rho_\delta}.$$

%TODO What about \eqref{eq:HatEtaPositive} ??? NOT CLEAR....
\end{proof}

\section{The regularity structure setting}\label{Sec:Prelim}

\subsection{Regularity structures}\label{Subsec:RegStruct}

A regularity structure is a triplet $(\cA,\cT,\cG)$ where:\begin{itemize}
\item $\cA$, the so-called set of homogeneities, is a subset of $\R$ assumed to be locally finite and bounded from below,
\item $\cT = \oplus_{\zeta \in \cA} \cT_\zeta$ is a graded normed vector space,
\item $\cG$ is a group of continuous linear maps on $\cT$ which is such that, for every $\Gamma \in \cG$, we have $\Gamma\tau - \tau \in \cT_{<\nu} :=  \oplus_{\zeta \in \cA,\zeta < \nu} \cT_\zeta$ whenever $\tau \in \cT_\nu$ for some $\nu \in \cA$.
\end{itemize}
We will denote by $\cQ_{\zeta}$ the projection from $\cT$ to $\cT_{\zeta}$ and we will use the notation $\left| \tau \right|_{\zeta} = \left|\cQ_{\zeta} \tau \right|$.
\medskip

The regularity structure $\mathcal{T}$ we consider is an extension of the usual regularity structure for $(\Phi_3^4)$ defined in \cite{Hairer2014}, since we need additional symbols to solve the dual backward equation \eqref{eq:W}. In particular, we need an abstract integration operator $\tilde{\cI}$ associated to the backward heat kernel. $\mathcal{T}$ is then given by all the formal linear combinations of elements of $\mathcal{F}$ where
$$\mathcal{F} =  \mathcal{U} \cup \mathcal{R}_U  \cup \mathcal{W}\cup \mathcal{R}_W$$
and $\mathcal{U}, \mathcal{R}_U, \mathcal{W}, \mathcal{R}_W$ are the smallest sets of symbols such that 
$$X^k \in \mathcal{U} \cap \mathcal{R}_U  \cap \mathcal{W}\cap \mathcal{R}_W \;\; \forall k \in \N^4$$
$$\Xi \in \mathcal{R}_U, $$
$$\tau_1, \tau_2, \tau_3 \in \mathcal{U} \Rightarrow \tau_1 \tau_2 \tau_3 \in \mathcal{R}_\mathcal{U}$$
$$\tau \in \mathcal{R}_U \Rightarrow \cI(\tau) \in \mathcal{U}$$
$$\tau_1, \tau_2 \in \mathcal{U}, \rho \in \mathcal{W} \Rightarrow \tau_1 \tau_2 \rho \in \mathcal{R}_W$$
$$\tau \in \mathcal{R}_W \Rightarrow \tilde{\cI}(\tau) \in \mathcal{W}$$
where as usual we take $\cI(X^k)=\tilde{\cI}(X^k)=0$ for all multiindices $k$.

The homogeneity of elements of $\cT$ is defined by letting
$$\left|\Xi\right| = -\frac{|\s|}{2} + \beta - \kappa, \;\; \left|1 \right|=0,$$
for some fixed positive $\kappa$ small enough, and then recursively
$$\left|\cI(\tau)\right| = \left|\tau\right| + 2, \;\;\left|\tcI(\tau)\right| = \left|\tau\right|+2, \;\; \left|\tau_1 \tau_2 \right|= \left|\tau_1\right| + \left|\tau_2 \right|.$$

To save space we omit the details of the construction of the structure group $\cG$ since we will not need them here, and refer instead to \cite{Hairer2014}.

We will frequently use the tree notation to describe elements of $\cT$: $\Xi$ is represented by a dot, the integration maps $\cI$ and $\tcI$ are represented by respectively straight lines and dotted lines, and the product of two symbols is represented by joining the corresponding trees at the root. For example :
$$\cI(\Xi)^2 = \<2>\;, \;\;\; \tcI(\cI(\Xi)^2) \cI(\Xi)^2 = \<2c2>.$$

\subsubsection{The heat kernel}
From now on, $r$ is an arbitrary integer larger than $5/2$. Recall that we denote by $P$ the usual heat kernel defined on the whole space $\R\times\R^3$. By~\cite[Lemma 7.7]{Hairer2014}, for any given $T>0$ there exists a collection of smooth compactly supported functions $P_-$ and $(P_m)_{m\ge 0}$ which vanish at all negative times and satisfy the following properties:\begin{enumerate}
\item $P*f(z) = P_-*f(z) + \sum_{m\ge 0} P_m*f(z)$ for all $z\in (-\infty,T]\times\R^3$ and every periodic map $f$,
\item $P_0$ is supported in $B(0,1)$,
\item we have for all $z=(t,x) \in \R\times\R^3$ and all $m\ge 1$
$$ P_m(z) = 2^{m(|\s|-2)} P_0(t 2^{2m},x_1 2^m,\ldots,x_3 2^m)\;,$$
\item $P_0$ annihilates polynomials of scaled degree $r$.
\end{enumerate}

\noindent Roughly speaking, $P_-$ stands for the smooth part of the heat kernel while for every $m\ge 0$, $P_m$ essentially coincides with $P$ in an annulus of radius $2^{-m}$ around $0$ and vanishes elsewhere.

As a consequence of these properties, we deduce that for any $k\in \N^{d+1}$, there exists a constant $C'>0$ such that for all $m\ge 0$ and all $z\in (0,\infty)\times \R^d$ we have
$$ \big| \partial^k P_m(z) \big| \le C' 2^{m(d+|k|)} \;.$$

In the sequel we will denote $P_+ = \sum_{m \geq 0} P_m$. We will also denote by $\tilde{P}(t,x) := P(-t,x)$ the backward heat kernel and define similarly the functions $\tilde{P}_-$ and $\tilde{P}_+$.

\subsubsection{Admissible models} \label{subsec:Model}
Let us now recall the notion of \textit{admissible model}. A pair $(\Pi,\Gamma)$ is called an admissible model if it satisfies the following assumptions:\begin{itemize}
\item For every $z\in \R^{d+1}$, $\Pi_z$ is a linear map from $\cT$ into the space of Schwartz distributions $\cD'(\R^{d+1})$ and we have the bound
\begin{equation} \label{eq:DefPi}
 \| \Pi \|_K := \sup_{z\in K} \sup_{\lambda \in (0,1]} \sup_{\zeta\in \cA_{<\gamma}} \sup_{\tau\in \cT_\zeta} \sup_{\eta\in\ccB^r} \frac{\big|\langle \Pi_z\tau , \eta^\lambda_z\rangle\big|}{|\tau| \lambda^\zeta} < \infty\;,
 \end{equation}
 for every bounded domain $K \subset \R^{d+1}$.
\item For every $z,z' \in \R^{d+1}$, $\Gamma_{z,z'}$ is an element of $\cG$ and we have
$$ \| \Gamma \|_K := \sup_{z,z'\in K, |z-z'| \le 1} \sup_{\nu \le \zeta} \sup_{\tau\in\cT_\zeta} \frac{\big| \Gamma_{z,z'} \tau\big|_\nu}{|\tau| |z-z'|^{\zeta-\nu}} < \infty\;,$$
for every bounded domain $K \subset \R^{d+1}$.
\item For all $z,z'\in \R^{d+1}$, $\Pi_{z}\Gamma_{z,z'} = \Pi_{z'}$.
\item For every multiindex $k\in \N^{d+1}$, it holds
\begin{align*}
\Pi_z X^k (z') &= (z'-z)^k\;,\\
\Pi_z \cI \tau (z') &= \langle \Pi_z \tau, P_+(z'-\cdot) \rangle - \sum_{|k|<|\tau|+2} \frac{(z'-z)^k}{k!} \langle \Pi_z \tau , \partial^k P_+(z-\cdot)\rangle\;,\\
\Pi_z \tilde\cI \tau (z') &= \langle \Pi_z \tau, \tilde{P}_+(z'-\cdot) \rangle - \sum_{|k|<|\tau|+2} \frac{(z'-z)^k}{k!} \langle \Pi_z \tau , \partial^k \tilde{P}_+(z-\cdot)\rangle\;.
\end{align*}
\end{itemize}

In order to set up a solution theory for \eqref{Eq:Phi4} we also need the following conditions (cf \cite[Section 9.4]{Hairer2014})

\begin{equation} \label{eq:Modelsup1}
 \sup_{s \in \R} \left\| \mathbf{1}_{\{t>s\}}\Pi \Xi\right\|_{\cC^{-\frac{|\s|}{2} - \kappa}(K)}  < \infty
 \end{equation}
\begin{equation} \label{eq:Modelsup2} \sup_{t \in [0,T]} \left\| (P_+ \ast \Pi\Xi)(t,\cdot) \right\|_{\cC^{-\frac12 - \kappa}(K')} < \infty
\end{equation}
for all compact sets $K\subset \R\times\R^3$ and $K'\subset \R^3$, all $T>0$ and any $\kappa >0$. Here $\cC^{\alpha}(K)$ stands for the space of $\s$-scaled $\alpha$-H\"older distributions on $K$.

We let $\cM$ be the space of admissible models satisfying all of the above, and we equip it with the topology associated with the corresponding system of semi-norms.

We will consider a model $\Pi$ obtained by renormalizing smooth canonical models as described in full generality in \cite{CH2016} (note however that our case is only a simple extension of the one in \cite{Hairer2014}, since our noise does not have worse regularity than white noise and the only additional tree of negative regularity in our structure compared to \cite{Hairer2014} is $\<2c2>$ which is renormalized in the exact same way as $\<22>$).

For a given smooth function $\zeta$ on $\R\times \T^3$, the canonical model associated to $\zeta$ is the unique admissible model defined by letting $\Pi_z \Xi = \zeta$, and then recursively by letting $\Pi_z(\tau \tau') = (\Pi_z \tau) (\Pi_z \tau')$, for $\tau$, $\tau'$ such that $\tau \tau'$ $\in$ $\cT$.

We now fix a sequence of mollifiers $\rho^\varepsilon = \varepsilon^{-|\s|} \rho(\varepsilon^{-1} \cdot)$, where $\rho$ is a smooth compactly supported function integrating to $1$, and we let $\xi^\varepsilon = \xi \ast \rho^\varepsilon$ be regularizations of our noise. The models $\Pi^\varepsilon$ are then defined as the canonical models associated to $\xi^\varepsilon$.

It is well known that the sequence $\Pi^\varepsilon$ does not converge as $\varepsilon \to 0$, and that one needs to introduce a renormalization. In our case, the renormalization we need to consider can be described by the group $\mathfrak{R}$ of transformations on $\cT$ of the form $\exp\left( - C_1 L_1 - C_2 L_2 - C_3 L_3 \right)$, where the $L_i$'s are determined by the substitutions
$$L_1 : \<2> \mapsto 1, \;\;\; L_2 : \<22> \mapsto 1, \;\;\; L3 : \<2c2> \mapsto 1$$
(i.e. each $L_i$ acts on a tree $\tau$ in $\cT$ by replacing formally each occurence of the associated tree in $\tau$ by $1$).

One can then define a renormalized model $\Pi^M$ for each $M$ in $\mathfrak{R}$, and for each admissible model $\Pi$. We will not need the precise definition of $\Pi^M$, but in our case one has for each smooth model $\Pi$ the relation
\begin{equation} \label{eq:PiM}
\left(\Pi^M_z\tau \right) (z) = \left( \Pi_z (M\tau)\right)(z), \;\;\;\;\;\forall z \in \R \times \T^3
\end{equation}
which is useful to determine the equations satisfied by (reconstructions of the) solutions to the abstract fixed point equations. 

In our case, we let $M^\varepsilon$ correspond to the constants
$$C_1^\varepsilon = \int (K_\varepsilon(z))^2 dz,$$
$$ C_2^\varepsilon = \int dz_1 dz_2 dz_3 P_+(z_1) K_\varepsilon(z_3-z_1) K_\varepsilon(z_2-z_1) K_\varepsilon(z_3) K_\varepsilon(z_2)$$
$$C_3^\varepsilon = \int dz_1 dz_2 dz_3 P_+(- z_1) K_\varepsilon(z_3-z_1) K_\varepsilon(z_2-z_1) K_\varepsilon(z_3) K_\varepsilon(z_2)$$
(where $K_\varepsilon = R \ast P_+ \ast \rho^\varepsilon$).

In fact, it holds that
\begin{equation} \label{eq:C2C3}
C_2^\varepsilon = C_3^\varepsilon
\end{equation}
as can be seen by the change of variables $(z'_1,z'_2,z'_3) = (-z_1,z_2-z_1,z_3-z_1)$

The renormalized models then converge as $\varepsilon \to 0$ :

\begin{theorem} \label{thm:model}
Let $\hat{\Pi}^\varepsilon := (\Pi^\varepsilon)^{M_\varepsilon}$, then there exists an admissible model $\Pi$ such that $\hat{\Pi}^\varepsilon \to \Pi$ in probability in $\mathfrak{M}$. In addition, $\Pi$ does not depend on the particular choice of $\rho$.
\end{theorem}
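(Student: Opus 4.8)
The plan is to deduce the convergence of $\hat\Pi^\varepsilon$ from the general convergence theorem of \cite{CH2016} (which subsumes \cite{Hairer2014} for the standard $\Phi^4_3$ structure), so the work is essentially one of checking that our enlarged structure and our noise fit into that framework. First I would verify the algebraic/combinatorial input: the regularity structure $\cT$ built above is, up to the extra symbols generated by $\tcI$, the same as the one for $\Phi^4_3$; the key point is to list the trees $\tau \in \cF$ with $|\tau| \le 0$ and check that the only one not already present in the $\Phi^4_3$ structure of \cite{Hairer2014} is $\<2c2> = \tcI(\cI(\Xi)^2)\,\cI(\Xi)^2$, whose homogeneity is $|\<2c2>| = 2|\Xi| + 2 + 2|\Xi| = 4\beta - 4\kappa - |\s| < 0$ when $\beta$ is small (and in general this is the critical tree from the $w$-equation). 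All other new symbols involve at least one factor of $\tcI$ applied to something of positive homogeneity plus two extra derivatives, or products that push the homogeneity above $0$; since $\tcI$ and $\cI$ have the same homogeneity shift $+2$, the bookkeeping is identical to the forward case. Consequently the set of trees requiring renormalization is $\{\<2>, \<22>, \<2c2>\}$, matching the group $\mathfrak{R}$ and the substitutions $L_1, L_2, L_3$ introduced above.

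Next I would check that the noise $\xi$ satisfies the hypotheses of the stochastic estimates in \cite{CH2016}. By Assumption \ref{asn:R}, $\xi = R * \zeta$ with $\zeta$ space-time white noise and $R$ of parabolic regularity no worse than a Dirac mass; concretely, the covariance kernel of $\xi$ is $R * R$, and the decomposition $R = \sum_n R_n$ with the bounds $\|\partial_k R_n\|_{L^\infty} \lesssim 2^{n(|\s|+|k|-\beta)}$ shows that $\xi$ has the $\s$-scaled regularity $-|\s|/2 + \beta - \kappa$ assigned to $\Xi$ (the cancellation $\int R_n = 0$ for $n\ge 1$ when $\beta = 0$ is exactly what is needed to get the logarithmic-free bound, as in the proof of Proposition \ref{prop:L2H}). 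This is precisely the assumption on the noise required by \cite{CH2016}, so the abstract convergence theorem there applies and yields convergence in probability (and in $L^p$) of the BPHZ-renormalized canonical models to a limit $\Pi$, together with independence of the limit from the mollifier $\rho$.

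It then remains to identify the renormalization: I would check that the BPHZ choice of constants coincides (up to terms that vanish as $\varepsilon \to 0$, hence do not affect the limit) with the explicit constants $C_1^\varepsilon, C_2^\varepsilon, C_3^\varepsilon$ written above. For $C_1^\varepsilon$ and $C_2^\varepsilon$ this is the same computation as in \cite[Sec. 9--10]{Hairer2014} with the kernel $K = R*P_+$ in place of $P_+$ (the mollified version being $K_\varepsilon = R*P_+*\rho^\varepsilon$); for $C_3^\varepsilon$, the only genuinely new constant, one uses that $\<2c2>$ is renormalized ``in the exact same way as $\<22>$'' — the contraction pattern is identical, with the single forward kernel $P_+$ replaced by the backward kernel $\tilde P_+$ at the root, which is why $C_3^\varepsilon$ is obtained from $C_2^\varepsilon$ by the substitution $P_+(z_1) \rightsquigarrow P_+(-z_1)$. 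The identity $C_2^\varepsilon = C_3^\varepsilon$ from \eqref{eq:C2C3} via the change of variables $(z_1',z_2',z_3') = (-z_1, z_2 - z_1, z_3 - z_1)$ (using that $R$, hence $K_\varepsilon$, is even) then shows the backward renormalization requires no new diverging constant beyond the forward theory. Finally, the relation \eqref{eq:PiM} for smooth models is a direct consequence of the definition of the renormalized model and the multiplicativity of canonical models, and passes to the limit. The main obstacle I anticipate is purely organizational rather than conceptual: carefully matching our hand-picked constants $C_i^\varepsilon$ to the BPHZ prescription and confirming that the extra symbols generated by $\tcI$ (in particular products like $\tcI(\cI(\Xi)^2)\cI(\Xi)$, which have negative homogeneity only marginally, if at all) do not introduce further subdivergences — but since $\tcI$ mirrors $\cI$ exactly and the backward heat kernel has the same scaling as the forward one, this reduces to the verification already carried out in \cite{Hairer2014, CH2016}.
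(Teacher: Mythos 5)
Your overall strategy agrees with the paper's: reduce everything to the general convergence theorem of \cite{CH2016} (already essentially contained in \cite[Section 10.5]{Hairer2014}), after checking that the enlarged structure and the noise fit the hypotheses. Your extra bookkeeping — listing the negative-homogeneity trees, identifying $\<2c2>$ as the only genuinely new one, matching the constants $C_i^\varepsilon$ to the BPHZ prescription, and noting $C_2^\varepsilon = C_3^\varepsilon$ via the change of variables in \eqref{eq:C2C3} — is correct and goes into more detail than the paper, which dispatches this with a parenthetical remark. So on that front you are fine, and arguably more informative.

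However, there is a genuine gap. The space $\cM$ in which convergence is asserted is not the plain space of admissible models equipped with the seminorms from \eqref{eq:DefPi}; it was defined in Section~\ref{subsec:Model} to also include the two additional conditions \eqref{eq:Modelsup1} and \eqref{eq:Modelsup2} (the $\sup_s$-bound on $\mathbf{1}_{\{t>s\}}\Pi\Xi$ and the $\sup_t$-bound on $(P_+*\Pi\Xi)(t,\cdot)$), and the topology on $\cM$ is "associated with the corresponding system of semi-norms." These are the bounds needed to handle the initial condition in the fixed-point argument (\cite[Section 9.4]{Hairer2014}), and they are \emph{not} provided by \cite{CH2016}, which only gives convergence in the standard model topology. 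The paper's proof devotes roughly half its content to exactly this issue: the first bound is handled via Proposition~\ref{prop:L2H} (i.e.\ \eqref{eq:asnL2H}, the $L^2$ boundedness of $R*$), and the second by observing that $P_+*R$ satisfies the same kernel scaling assumptions as $P_+$, so the proof of \cite[Proposition 9.5]{Hairer2014} goes through unchanged. Your proposal does not address these extra seminorms at all, so as written it only establishes convergence in a weaker topology than the theorem actually claims. You would need to add an argument (paralleling \cite[Proposition 9.5]{Hairer2014}, with the two substitutions just mentioned) for the uniform-in-$s$ and uniform-in-$t$ bounds.
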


\begin{proof}
Convergence of the models is a special case of the results in \cite{CH2016} (and is essentially already contained in \cite[section 10.5]{Hairer2014}, although our noise is not exactly the same). 

The fact that convergence also holds w.r.t.~the bounds \eqref{eq:Modelsup1} \eqref{eq:Modelsup2} follows exactly as in \cite[Proposition 9.5]{Hairer2014} :
\begin{itemize}
\item The first bound follows by the exact same proof as in \cite{Hairer2014}, using \eqref{eq:asnL2H}.
\item The second bound again follows from the proof in \cite{Hairer2014}, once we note that $ P_+ \ast R$ satisfies the same scaling assumptions as $P_+$ (cf. e.g. \cite[Lemma 10.14, Lemma 10.16]{Hairer2014}), and these are the only properties of $P_+$ used in the proof).
\end{itemize}

\end{proof}

\subsection{Weighted spaces of Besov modelled distributions}\label{Subsec:Dgamma}

We first recall the definition of the (unweighted) spaces of Besov modelled distributions introduced in~\cite{Recons}.

\begin{definition}\label{Def:Dgamma}
Take $\gamma \in \R$. We let $\cD^\gamma_{p}$ be the Banach space of all periodic maps $f: \R\times\R^3 \rightarrow \cT_{<\gamma}$ such that for all $\zeta\in\cA_{<\gamma}$, we have:
\begin{align*}
\Big\| \big| f(z) \big|_\zeta \Big\|_{L^p(\R\times\T^3,dz)} &< \infty\;,\quad\sup_{h\in B(0,1)} \bigg\| \frac{\big| f(z+h)-\Gamma_{z+h,z} f(z) \big|_\zeta}{|h|^{\gamma-\zeta}} \bigg\|_{L^p(\R\times\T^3,dz)} &< \infty\;.
\end{align*}
We let $\$f\$$ be the corresponding norm.
\end{definition}

\begin{remark}
For the sake of consistency with~\cite{Recons}, we should have denoted our spaces by $\cD^\gamma_{p,q}$ with $q=\infty$. Since the parameter $q$ will always be taken equal to $+\infty$ in the present work, we omit writing it.
\end{remark}

Let us now introduce spaces of modelled distributions with weights near $t=0+$. The reason for considering such weights is twofold: first, it allows to start the equation at stake from some irregular initial condition, second playing around with the weight parameter $\eta$ will eventually allow us to obtain contractivity of the fixed point map.

\begin{definition}\label{Def:DgammaW}
Let $\gamma \in \R$, $T > 0$ and $\eta \le \gamma$. We let $\cD^{\gamma,\eta,T}_{p}$ be the space of all periodic maps $f: (0,T)\times\R^3 \rightarrow \cT_{<\gamma}$ such that for all $\zeta\in\cA_{<\gamma}$:
\begin{equation}\begin{split}\label{Eq:BdDgamma}
\Big\| \frac{\big| f(z) \big|_\zeta}{t^{\frac{\eta-\zeta}{2}}} \Big\|_{L^p((0,T)\times \T^3,dz)} &< \infty\;,\\
\sup_{h\in B(0,1)} \bigg\| \frac{\big| f(z+h)-\Gamma_{z+h,z} f(z) \big|_\zeta}{|h|^{\gamma-\zeta} \, t^{\frac{\eta-\gamma}{2}}} \bigg\|_{L^p((3|h|^2,T-|h|^2)\times \T^3,dz)} &< \infty\;.
\end{split}\end{equation}
We let $\$f\$$ be the corresponding norm.
\end{definition}

Notice that the exponent of the weight of the local terms is $\frac{\eta-\zeta}{2}$, and not $\frac{(\eta-\zeta)\wedge 0}{2}$ as in~\cite[Section 6]{Hairer2014}. The reason for this choice is simple: the forthcoming embedding theorems would not hold true with $\frac{(\eta-\zeta)\wedge 0}{2}$. Let us mention here that this has some technical consequences: some arguments in the original proof of the convolution with a singular kernel~\cite[Thm 6.16]{Hairer2014} need to be adapted, see in particular the refined bound \eqref{Eq:ReconsConvol} that we will need.

When we are given two models $(\Pi,\Gamma)$ and $(\bar\Pi,\bar\Gamma)$, we will need to compare elements $f$ and $\bar f$ that belong respectively to the spaces $\cD^{\gamma,\eta,T}_p$ and $\bar{\cD}^{\gamma,\eta,T}_p$. To that end, we set:
$$ \|f-\bar f\| := \sup_{\zeta \in \cA_{<\gamma}} \Big\| \frac{\big| f(z)-\bar f(z) \big|_\zeta}{t^{\frac{\eta-\zeta}{2}}} \Big\|_{L^p((0,T)\times \T^3,dz)}\;,$$
as well as
$$ \$ f;\bar f \$ := \|f-\bar f\| + \sup_{h\in B(0,1)} \bigg\| \frac{\big| f(z+h) - \bar{f}(z+h) - \Gamma_{z+h,z} f(z) + \bar\Gamma_{z+h,z} \bar{f}(z) \big|_\zeta}{|h|^{\gamma-\zeta} \, t^{\frac{\eta-\gamma}{2}}} \bigg\|_{L^p((3|h|^2,T-|h|^2)\times \T^3,dz)}\;.$$

\medskip

We now present the main analytical tools associated to these spaces that are needed in the construction of the solution as well as for the proof of the Malliavin differentiability. In order not to clutter the presentation, we postpone the proofs of the forthcoming statements to Section \ref{Sec:Techos} but make some comments on the differences with their original versions (in the H\"older setting) in~\cite{Hairer2014}.

Recall that $r$ is an integer taken larger than $5/2$.

\subsubsection{Reconstruction}

Since we are dealing with modelled distributions defined on the time interval $(0,T)$ only, we need to introduce appropriate Besov-type spaces. To that end, let us introduce some notations. We let $\ccB^r$ be the set of all $\cC^r$ functions from $\R\times\R^d$ into $\R$, which are compactly supported in $B(0,1)$ and whose $\cC^r$-norm is less than or equal to $1$. We then define $\ccB^r_n$ as the subset of $\ccB^r$ whose elements annihilate all polynomials of scaled degree at most $n\in \N$.

\begin{definition}
Fix $T>0$. If $\nu < 0$, we let $\cB^{\nu,T}_{p}$ be the set of all periodic distributions $\xi$ acting on test functions supported in $(-\infty,T)\times\R^d$ such that
\begin{equation}\label{Eq:BgammaT}
\sup_{\lambda \in (0,1]} \Big\| \sup_{\varphi\in\ccB^r} \frac{\big|\langle \xi,\varphi^\lambda_z \rangle\big|}{\lambda^{\nu}} \Big\|_{L^p((-\infty, T-\lambda^2)\times \T^d,dz)} < \infty\;.
\end{equation}
If $\nu \ge 0$, we let $\cB^{\nu,T}_{p}$ be the set of all periodic distributions $\xi$ acting on test functions supported in $(-\infty,T)\times\R^d$ such that
\begin{equation}\label{Eq:BgammaTPos}
\Big\| \sup_{\varphi\in\ccB^r} \big|\langle \xi,\varphi_z \rangle\big| \Big\|_{L^p((-\infty, T-1)\times \T^d,dz)} < \infty\;,\quad \sup_{\lambda \in (0,1]} \Big\| \sup_{\varphi\in\ccB^r_{\lfloor \nu \rfloor}} \frac{\big|\langle \xi,\varphi^\lambda_z \rangle\big|}{\lambda^{\nu}} \Big\|_{L^p((-\infty, T-\lambda^2)\times \T^d,dz)} < \infty\;.
\end{equation}
\end{definition}

The following fundamental result asserts the existence and uniqueness of a linear operator that associates to every modelled distribution a genuine distribution.

\begin{theorem}[Reconstruction]\label{Th:ReconstructionW}
Let $\gamma >0$, $\eta \in \R$, $T>0$ and set $\alpha := \min \cA\backslash\N$. Assume that $\alpha\wedge \eta > -2(1-\frac1{p})$. We set $\bar\alpha = \alpha\wedge \eta$ if $\alpha \wedge \eta \ne 0$, otherwise we let $\bar \alpha$ be any arbitrary negative value. There exists a unique continuous linear map $\cR:\cD^{\gamma,\eta,T}_{p}\rightarrow \cB^{\bar\alpha,T}_{p}$ such that $\langle \cR f, \varphi \rangle = 0$ whenever the support of $\varphi$ lies in $(-\infty,0]\times \T^d$, and such that
\begin{equation}\label{Eq:ReconsBound}
\sup_{\lambda\in (0,1]} \bigg\| \sup_{\varphi\in \ccB^r} \frac{\big|\langle \cR f-\Pi_z f(z), \varphi_z^\lambda \rangle\big|}{\lambda^{\gamma} \, t^{\frac{\eta-\gamma}{2}}} \bigg\|_{L^p((3\lambda^2,T-\lambda^2)\times\T^d,dz)} \lesssim \$f\$\;,
\end{equation}
uniformly over all $f\in \cD^{\gamma,\eta,T}_{p}$.\\
In the case where there are two models $(\Pi,\Gamma)$, $(\bar\Pi,\bar\Gamma)$, we get the following counterpart of \eqref{Eq:ReconsBound}
\begin{equation}\label{Eq:ReconsBoundTwo}\begin{split}
&\sup_{\lambda\in (0,1]} \bigg\| \sup_{\varphi\in \ccB^r} \frac{\big|\langle \cR f-\bar{\cR} \bar f - \Pi_z f(z) + \bar{\Pi}_z \bar f(z), \varphi_z^\lambda \rangle\big|}{\lambda^{\gamma} \, t^{\frac{\eta-\gamma}{2}}} \bigg\|_{L^p((3\lambda^2,T-\lambda^2)\times\T^d,dz)}\\
&\lesssim \$f;\bar f\$ \|\Pi\| (1+\|\Gamma\|) + \$\bar f\$\big( (\|\Pi-\bar\Pi)(1+\|\Gamma\|) + \|\bar\Pi\| \|\Gamma-\bar\Gamma\| \big)\;,
\end{split}\end{equation}
uniformly over all $f,\bar f$ in $\cD^{\gamma,\eta,T}_{p}, \bar{\cD}^{\gamma,\eta,T}_{p}$. 
\end{theorem}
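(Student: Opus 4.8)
\medskip
\noindent\textbf{Proof sketch (plan).}
The plan is to reduce the weighted statement to the unweighted Besov reconstruction theorem of \cite{Recons} by a dyadic decomposition in the time variable, treating the time interval $(0,T)$ and the weight near $t=0$ directly. I only discuss the single‑model existence and the bound \eqref{Eq:ReconsBound}: the two‑model bound \eqref{Eq:ReconsBoundTwo} follows by carrying the difference $\Pi_z f(z)-\bar\Pi_z\bar f(z)$ through every estimate and splitting it, in the usual way, into a part of the form $\Pi_z\big(f(z)-\Gamma_{z,z'}f(z')\big)-\bar\Pi_z\big(\bar f(z)-\bar\Gamma_{z,z'}\bar f(z')\big)$ together with terms involving $\Pi-\bar\Pi$ and $\Gamma-\bar\Gamma$, and I will not detail it.

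\emph{Slab decomposition and construction.} For $k\ge 0$ let $J_k:=(2^{-2(k+1)},2^{-2k})$ and $J_{-1}:=(1,T)$, so $(0,T)=\bigcup_{k\ge-1}J_k$. On the slab $J_k\times\T^3$ all weights $t^{(\eta-\zeta)/2}$, $\zeta\in\cA_{<\gamma}$, are comparable to $2^{-k(\eta-\zeta)}$; parabolically rescaling $J_k$ to unit size and rescaling the model accordingly, $f$ restricted to this slab becomes an element of the unweighted space $\cD^\gamma_{p}$ over a unit domain, with norm bounded by $2^{kc}\$f\$$ for an explicit exponent $c$ depending on $\gamma,\eta,\cA$ and on the scaling behaviour of the model. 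Applying the reconstruction theorem of \cite{Recons} on each rescaled slab produces $\cR^{(k)}f$, and its local uniqueness ensures that the $\cR^{(k)}f$ agree on overlaps; gluing them with a partition of unity subordinate to a slight enlargement of $\{J_k\times\T^3\}$ defines a periodic distribution $\cR f$ on $(0,T)\times\R^3$, extended by $0$ to $(-\infty,T)\times\R^3$. (Alternatively one runs the wavelet construction directly: on the grid $\Lambda_n$ of \eqref{eq:defLambda} set $\langle\cR_n f,\phi^{2^{-n}}_x\rangle:=\langle\Pi_x f(x),\phi^{2^{-n}}_x\rangle$ for $x$ with positive time coordinate and $=0$ otherwise, then pass to the limit; the key per‑scale estimate is $|\langle(\cR_{n+1}-\cR_n)f,\psi^{2^{-n}}_z\rangle|\lesssim 2^{-n\gamma}t^{(\eta-\gamma)/2}$ times a local average of the quantities controlled by $\$f\$$, obtained from $\Pi_x f(x)-\Pi_y f(y)=\Pi_x\big(f(x)-\Gamma_{x,y}f(y)\big)$, the model bound \eqref{eq:DefPi}, and the two inequalities in \eqref{Eq:BdDgamma}.)

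\emph{The bound \eqref{Eq:ReconsBound} and uniqueness.} Testing $\cR f-\Pi_z f(z)$ at scale $\lambda\in(0,1]$ and summing the telescoping series over scales $\le\lambda$ — which converges because $\gamma>0$ — gives $|\langle\cR f-\Pi_z f(z),\varphi^\lambda_z\rangle|\lesssim\lambda^\gamma t^{(\eta-\gamma)/2}$ times a local average of the $\$f\$$‑data whenever $t>3\lambda^2$; dividing by $\lambda^\gamma t^{(\eta-\gamma)/2}$ and taking the $L^p$‑norm in $z$ over $(3\lambda^2,T-\lambda^2)\times\T^3$ then yields \eqref{Eq:ReconsBound}. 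Uniqueness is similar: if $g$ is the difference of two maps satisfying \eqref{Eq:ReconsBound} for the same model, then $\big\|\sup_{\varphi\in\ccB^r}|\langle g,\varphi^\lambda_z\rangle|/(\lambda^\gamma t^{(\eta-\gamma)/2})\big\|_{L^p((3\lambda^2,T-\lambda^2)\times\T^3)}$ is bounded uniformly in $\lambda$, and decomposing a scale‑$\lambda$ test function into wavelets at finer scales and using $\gamma>0$ forces $g=0$.

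\emph{Membership in $\cB^{\bar\alpha,T}_{p}$; the main obstacle.} The delicate step — the one I expect to be the main obstacle — is to show that the glued object is a genuine periodic distribution on $(-\infty,T)\times\R^3$, in particular that it pairs with test functions straddling $\{t=0\}$, and that it lies in $\cB^{\bar\alpha,T}_{p}$. This amounts to summing the contributions of the slabs $J_k\times\T^3$ (equivalently, of the wavelets whose support meets $\{0<t\lesssim 2^{-2n}\}$): the slab $J_k$ brings the factor $2^{kc}$ above, while the $L^p$‑mass of $J_k$ contributes a factor $\sim 2^{-2k/p}$, so the relevant series is $\sum_k 2^{k(c-2/p)}$, which converges precisely under the hypothesis $\alpha\wedge\eta>-2(1-\tfrac1p)$. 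Morally, the hyperplane $\{t=0\}$ has scaled codimension $\s_0=2$, so a singularity of $\cR f$ along it of homogeneity $\ge\alpha\wedge\eta$ stays locally integrable in the $L^p$ sense exactly when $\alpha\wedge\eta>-\s_0(1-\tfrac1p)$; it is also here that the choice of the weight exponent $\frac{\eta-\zeta}{2}$ rather than $\frac{(\eta-\zeta)\wedge 0}{2}$ is used. Carrying out this summation carefully — tracking the rescaling exponents of the model and the behaviour of the reconstruction near $t=0$, which is the point behind the refined near‑boundary bound needed later in the paper — is the main work; the remainder is a routine adaptation of \cite{Recons}.
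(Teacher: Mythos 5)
Your plan follows the same route as the paper: localize $f$ away from $\{t=0\}$, apply the unweighted reconstruction theorem of \cite{Recons} there, then handle test functions whose support meets $\{t=0\}$ by a dyadic summation across scales, with the condition $\alpha\wedge\eta>-2(1-\tfrac1p)$ entering through the parabolic codimension $2$ of the initial-time hyperplane --- exactly as you identify. The paper packages the local step via the norm comparison \eqref{Eq:BdLoc} on balls $B(z,2\lambda)$ with $t>3\lambda^2$ (no rescaling of the domain or the model), and packages the boundary step as a stand-alone extension statement, Lemma \ref{Lemma:Extension}, for distributions defined on test functions supported away from $\{t=0\}$. Your slab rescaling is an acceptable variant, but is noticeably more awkward: one must track how the model norm rescales along with the slab and argue that the $\cR^{(k)}f$ obtained from the rescaled models are consistent, a bookkeeping the paper's local-ball comparison avoids entirely.

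There is a genuine gap in your uniqueness argument. The wavelet telescoping you describe, together with the hypothesis that $\cR f$ vanishes on test functions supported in $(-\infty,0]\times\T^d$, shows that $g:=\cR_1 f-\cR_2 f$ vanishes on test functions supported \emph{away} from $\{t=0\}$, because \eqref{Eq:ReconsBound} only constrains $g$ on the region $(3\lambda^2,T-\lambda^2)\times\T^d$. This leaves open the possibility that $g$ is a non-zero distribution supported on $\{t=0\}$; your decomposition of a scale-$\lambda$ test function into finer wavelets never escapes that set, since at every finer scale there remain wavelets straddling $\{t=0\}$ which \eqref{Eq:ReconsBound} says nothing about. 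To close this you must invoke the a priori membership $g\in\cB^{\bar\alpha,T}_p$ and the condition $\bar\alpha>-2(1-\tfrac1p)$ to show that $\cB^{\bar\alpha,T}_p$ contains no non-trivial distribution supported on $\{t=0\}$. This is precisely the content of the uniqueness half of Lemma \ref{Lemma:Extension}: testing against $\varphi_{(0,x_0)}\cdot I_{n_0}$, where $I_{n_0}$ is a cutoff of thickness $2^{-2n_0}$ around $\{t=0\}$, the estimate \eqref{Eq:BoundExtension} gives a contribution of order $2^{-2n_0(1-1/p)-n_0\bar\alpha}\to 0$. In other words, the same summability-near-$t=0$ computation that you correctly deploy for \emph{existence} (to show the glued object lies in $\cB^{\bar\alpha,T}_p$) is needed a second time, in a dual form, for \emph{uniqueness}; as written your proposal uses it only once.
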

The proof of this result relies on the reconstruction theorem for unweighted modelled distributions, see~\cite[Thm 3.1]{Recons}. Indeed, since the spaces of weighted and unweighted modelled distributions are locally similar, and since the reconstruction operator is local, one can apply the operator constructed in the aforementioned reference to weighted modelled distributions when tested against test functions supported away from the hyperplane $t=0$. Then, one needs to patch together in a consistent way these distributions in order to get an element in $\cB^{\nu,T}_p$: this raises the restriction $\eta\wedge \alpha > -2(1-1/p)$ of the statement. Notice that this is in line with the restriction $\alpha\wedge\eta > -2$ of~\cite[Prop 6.9]{Hairer2014} in the setting of H\"older distributions ($p=\infty$).

\subsubsection{Embedding}

Classical Besov spaces enjoy embedding properties: in particular, one can improve the integrability of a function/distribution at the cost of losing some regularity. In~\cite{Recons}, embedding theorems were established for unweighted Besov-type modelled distributions (Definition \ref{Def:Dgamma}). Notice that in the context of regularity structures, the regularity parameter which is traded off against integrability is no longer the actual regularity of the function/distribution but rather the parameter $\gamma$ (which stands for the order of the generalized Taylor expansion at stake). Below, we extend the scope of the embedding theorems of~\cite{Recons} to the case of weighted spaces near $t=0$ (since the parameter $q$ is set to $+\infty$ in this paper, we do not treat the embedding properties associated with $q$). The main difference with the original version is that one also needs to decrease the value of $\eta$ (by the same amount as $\gamma$) in order to improve integrability.

\begin{theorem}[Embeddings with weights]\label{Th:EmbeddingW}
The space $\cD^{\gamma,\eta,T}_{p}$ is continuously embedded into $\cD^{\gamma',\eta',T}_{p'}$ in any of the following situations:
\begin{enumerate}
\item $p'<p$, $\gamma'=\gamma$ and $\eta'=\eta$,
\item $p'>p$, $\gamma'<\gamma - |\s|\big(\frac1{p}-\frac1{p'}\big)$ and $\eta'=\eta + \gamma'-\gamma$.
\end{enumerate}
\end{theorem}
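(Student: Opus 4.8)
The first embedding is elementary. Since $(0,T)\times\T^3$ has finite Lebesgue measure, H\"older's inequality gives $\|g\|_{L^{p'}(D)}\le|(0,T)\times\T^3|^{1/p'-1/p}\|g\|_{L^{p}(D)}$ for every measurable $D\subset(0,T)\times\T^3$ when $p'<p$. I would apply this with $g=|f(z)|_\zeta\,t^{-(\eta-\zeta)/2}$ on $D=(0,T)\times\T^3$, and with $g=|f(z+h)-\Gamma_{z+h,z}f(z)|_\zeta\,|h|^{\zeta-\gamma}\,t^{-(\eta-\gamma)/2}$ on $D=(3|h|^2,T-|h|^2)\times\T^3$, uniformly over $h\in B(0,1)$ and $\zeta\in\cA_{<\gamma}$; since in case (1) the weights of $\cD^{\gamma',\eta',T}_{p'}$ and $\cD^{\gamma,\eta,T}_{p}$ coincide (because $\gamma'=\gamma$, $\eta'=\eta$), this directly gives $\$f\$_{\cD^{\gamma,\eta,T}_{p'}}\lesssim\$f\$_{\cD^{\gamma,\eta,T}_{p}}$.

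For case (2) the plan is to reduce, scale by scale near $t=0$, to the unweighted embedding $\cD^{\gamma}_{p}\hookrightarrow\cD^{\gamma'}_{p'}$ of~\cite{Recons}. After a harmless time rescaling assume $T=1$, and decompose $(0,1]\times\T^3$ into the overlapping dyadic slabs $D_n:=\{(t,x):2^{-n-1}\le t\le 2^{-n+1}\}$, $n\ge1$, which cover $(0,1]\times\T^3$. The key point is that on $D_n$, where $t\asymp 2^{-n}$, the $\zeta$--dependent weight satisfies $t^{(\eta-\zeta)/2}\asymp 2^{-n\eta/2}\ell_n^{-\zeta}$ with $\ell_n:=2^{-n/2}$, and the $\zeta$--dependence $\ell_n^{-\zeta}$ is exactly what is absorbed by a parabolic rescaling of ratio $\ell_n$; moreover $\ell_n$ is the coarsest parabolic scale resolving the time-width $2^{-n}\asymp\ell_n^{\,\s_0}$ of $D_n$. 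I would therefore cover $D_n$ by $O(\ell_n^{-3})$ parabolic cubes $Q$ of side $\ell_n$ and, on each, divide $f$ by $2^{-n\eta/2}$ and rescale $Q$ and the model by $\ell_n^{-1}$ to unit size (the rescaled model seminorms stay controlled by $\|\Pi\|_K,\|\Gamma\|_K$ on a fixed compact $K$ since $\ell_n\le1$); the resulting object is a genuine element of the \emph{unweighted} $\cD^{\gamma}_{p}$ of the unit cube, of norm $\lesssim\ell_n^{-|\s|/p}\$f\$_Q$ (the factor $\ell_n^{-|\s|/p}$ being the $L^p$--Jacobian of the dilation, $\$f\$_Q$ the $\cD^{\gamma,\eta,1}_p$--seminorm of $f$ over $Q$). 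Applying the unweighted embedding of~\cite{Recons}, rescaling back (which costs $\ell_n^{|\s|/p'}$), and summing over the $O(\ell_n^{-3})$ cubes with $\sum_Q a_Q^{p'}\le(\sum_Q a_Q^{p})^{p'/p}$ ($p'>p$) and the bounded overlap, I expect the scale--$n$ estimate
\begin{equation*}
\Big\|\,|f(z)|_\zeta\,t^{-(\eta-\zeta)/2}\,\Big\|_{L^{p'}(D_n)}\;\lesssim\;2^{\frac n2\,|\s|\left(\frac1p-\frac1{p'}\right)}\,\$f\$_{n}\,,
\end{equation*}
and, by the same argument applied to the difference $f(z+h)-\Gamma_{z+h,z}f(z)$ (whose $\zeta$--component has, on $D_n$, regularity $\gamma-\zeta$ truncated at scale $|h|$), the analogous bound with the normalisation $|h|^{\zeta-\gamma}t^{-(\eta-\gamma)/2}$ inserted, valid for $|h|\lesssim 2^{-n/2}$; here $\$f\$_n$ is the $\cD^{\gamma,\eta,1}_p$--seminorm of $f$ over a neighbourhood of $D_n$. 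It matters that the increment seminorm involves only $z$ with $3|h|^2<t<1-|h|^2$, so every relevant $h$ has $|h|<\sqrt{t/3}$ and hence stays within $O(1)$ consecutive slabs, never seeing the singularity at $t=0$.

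It then remains to reassemble. For the local term of $\cD^{\gamma',\eta',1}_{p'}$ I would write $|f(z)|_\zeta\,t^{-(\eta'-\zeta)/2}=\big(|f(z)|_\zeta\,t^{-(\eta-\zeta)/2}\big)\,t^{(\eta-\eta')/2}$ and use $\eta-\eta'=\gamma-\gamma'$ to gain a factor $t^{(\gamma-\gamma')/2}\asymp 2^{-n(\gamma-\gamma')/2}$ on $D_n$; together with the scale--$n$ estimate this bounds the $L^{p'}(D_n)$--norm by $2^{n\left(\frac{|\s|}2(\frac1p-\frac1{p'})-\frac{\gamma-\gamma'}2\right)}\$f\$_n$, and since $\sum_n\$f\$_n^{p}\lesssim\$f\$^{p}$ (bounded overlap), hence $\sum_n\$f\$_n^{p'}\le(\sum_n\$f\$_n^{p})^{p'/p}\lesssim\$f\$^{p'}$, the series $\sum_n\|\cdot\|_{L^{p'}(D_n)}^{p'}$ converges \emph{exactly} because the hypothesis $\gamma'<\gamma-|\s|(\frac1p-\frac1{p'})$ makes the exponent strictly negative. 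For the increment seminorm one uses $\eta'-\gamma'=\eta-\gamma$ (the two weights there coincide): fixing $h$ with $|h|\asymp 2^{-m/2}$, the admissible domain meets only the slabs $D_n$, $n\lesssim m$; summing the increment scale--$n$ estimate over those $n$ (the exponent now positive, so the sum is dominated by $n=m$) and replacing $|h|^{\gamma-\zeta}$ by $|h|^{\gamma'-\zeta}$, which costs $|h|^{\gamma-\gamma'}\asymp 2^{-\frac m2(\gamma-\gamma')}$, the contribution is $\lesssim 2^{m\left(\frac{|\s|}2(\frac1p-\frac1{p'})-\frac{\gamma-\gamma'}2\right)}\$f\$$, bounded as $m\to\infty$ again precisely because $\gamma'<\gamma-|\s|(\frac1p-\frac1{p'})$. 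This yields both bounds of Definition~\ref{Def:DgammaW} for $\cD^{\gamma',\eta',1}_{p'}$.

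The step I expect to be the main obstacle is the scaling bookkeeping of the second paragraph: one must track simultaneously the powers of the dyadic parameter $2^{n/2}$ coming from the number $O(\ell_n^{-3})$ of cubes, from the $L^p$-- and $L^{p'}$--Jacobians and the rescaled model seminorms under the parabolic dilation, and from the $\gamma$--homogeneity lost in the unweighted embedding, and verify that they combine into exactly the exponent $\tfrac{|\s|}2(\tfrac1p-\tfrac1{p'})$ of the scale--$n$ estimate — and that the lower-homogeneity components hidden in $\Gamma_{z+h,z}f(z)$ do not disturb this count. An essentially equivalent alternative, which makes the role of the shift $\eta'=\eta+\gamma'-\gamma$ manifest, would be to rerun the proof of the unweighted embedding of~\cite{Recons} itself with the $t$--weights carried through from the start.
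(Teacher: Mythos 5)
You treat case (1) exactly the way the paper does — Hölder on the finite-measure domain, which is enough because neither $\gamma$ nor $\eta$ changes and so the weights coincide — so there is nothing to add there.

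For case (2) your route is genuinely different from the paper's. The paper does not localize and rescale in order to invoke the unweighted embedding of~\cite{Recons} as a black box: it reruns that proof from scratch in the weighted setting, by (i) introducing a weighted space of averages $\bar\cD^{\gamma,\eta}_p$ built on the restricted grids $\tilde\Lambda_n$ and carrying the weights $t^{(\eta-\zeta)/2}$, (ii) proving the equivalence $\cD^{\gamma,\eta}_p\simeq\bar\cD^{\gamma,\eta}_p$ (Proposition \ref{Prop:AveragEq}), and (iii) establishing the embedding directly at the level of the averages by a finite recursion over the homogeneity levels, the delicate step being a telescoping bound at the top non-integer level. Your scaling heuristics are correct and illuminating — in particular the factorisation $t^{(\eta-\zeta)/2}\asymp 2^{-n\eta/2}\ell_n^{-\zeta}$ on $D_n$, its compatibility with a parabolic dilation of ratio $\ell_n=2^{-n/2}$, and the exponent algebra showing precisely where $\gamma'<\gamma-|\s|(\tfrac1p-\tfrac1{p'})$ and the choice $\eta'=\eta+\gamma'-\gamma$ (equivalently $\eta'-\gamma'=\eta-\gamma$) enter.

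There is, however, a genuine gap in the step you flag yourself. The unweighted embedding theorem of~\cite{Recons} is proved for modelled distributions on all of $\R\times\T^d$; it is not stated as a local estimate. Restricting $f$ to a parabolic cube $Q$ of side $\ell_n$ and dilating to unit size does not produce an element of the unweighted $\cD^\gamma_p$ — the translation seminorm takes a supremum over all $h\in B(0,1)$, and after restriction it is only defined when both $z$ and $z+h$ stay inside the cube, which fails near the boundary. To invoke~\cite[Thm~5.1]{Recons} on the rescaled cube you must either extend the restricted object globally (which degrades the norm and in general cannot be done losslessly) or multiply by a cutoff lifted into the polynomial structure and track the resulting loss of homogeneity against the sharp hypothesis on $\gamma'$. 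Either route is a non-trivial piece of work, and carrying it out carefully amounts to reproving the embedding — which is exactly what the paper does, in a discrete framework where restriction to the grid $\tilde\Lambda_n$ is free and no cutoff is needed. So the ``essentially equivalent alternative'' you mention at the end is in fact the paper's proof, and your proposal as written does not yet close case (2).
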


\smallskip

\subsubsection{Product}

The notion of sector is introduced in~\cite[Def 2.5]{Hairer2014}: roughly speaking, a sector $V$ of regularity $\alpha$ is a ``sub-regularity structure" whose smallest level is $\alpha$. Two sectors $V_1$ and $V_2$ are said to be $\gamma$-regular if $\Gamma(\tau_1\tau_2) = \Gamma \tau_1 \Gamma \tau_2$ for all $\Gamma\in\cG$ and all $\tau_i \in \cT_{\zeta_i}\cap V_i$ such that the $\zeta_i$'s satisfy $\zeta_1+\zeta_2 < \gamma$. We denote by $\cD^{\gamma,\eta,T}_p(V)$ the subspace of $\cD^{\gamma,\eta,T}_p$ whose elements take values in $V$.

\begin{theorem}[Multiplication]\label{Th:Mult}
Let $f_i \in \mathcal{D}^{\gamma_i,\eta_i,T}_{p_i}(V_i)$, $i=1,2$, where $V_1,V_2$ are $\gamma$-regular sectors of regularity $\alpha_1$, $\alpha_2$. Then $f:= f_1 f_2$ belongs to $\mathcal{D}^{\gamma,\eta,T}_p$, where
$$\gamma = (\gamma_1 +\alpha_2)\wedge (\gamma_2 + \alpha_1)\;,\quad \eta = \eta_1+\eta_2\;,\quad\frac{1}{p} = \frac{1}{p_1} +\frac{1}{p_2} \;.$$
If we are given two models $(\Pi,\Gamma)$ and $(\bar\Pi,\bar\Gamma)$, then we have the bound
\begin{align*}
\$ f_1 f_2 ; g_1 g_2\$ &\lesssim \|\Gamma\|^2 \| f_1-g_1\| \$f_2\$ + \|\Gamma-\bar\Gamma\| \Big(\$g_1\$ \$f_2\$ + \$g_2\$ \$f_1\$\Big) + \$f_1;g_1\$ \$f_2\$\\
&+ \|\Gamma\| \$f_1\$ \$f_2;g_2\$ + \$g_1\$ \|f_2-g_2\| + \|\bar\Gamma\| \|f_1-g_1\| \$ g_2\$\;,
\end{align*}
uniformly over all $f_i \in \cD^{\gamma_i,\eta_i}_{p_i}(V_i)$ and all $g_i \in \bar{\cD}^{\gamma_i,\eta_i}_{p_i}(V_i)$.
\end{theorem}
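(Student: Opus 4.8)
\emph{The product and the local bound.} The plan is to set $(f_1f_2)(z):=\cQ_{<\gamma}\big(f_1(z)\,f_2(z)\big)$, where $f_1(z)f_2(z)$ is the product in $\cT$ and $\cQ_{<\gamma}$ the projection onto $\cT_{<\gamma}$. Since a product of homogeneous trees is homogeneous of the sum of the homogeneities, one has up to a combinatorial constant $|(f_1f_2)(z)|_\zeta\le\sum_{\zeta_1+\zeta_2=\zeta}|f_1(z)|_{\zeta_1}\,|f_2(z)|_{\zeta_2}$ for $\zeta<\gamma$, and the constraint $\zeta<\gamma=(\gamma_1+\alpha_2)\wedge(\gamma_2+\alpha_1)$ together with the sector regularities forces $\alpha_i\le\zeta_i<\gamma_i$ in every summand, so the defining bounds of $\cD^{\gamma_i,\eta_i,T}_{p_i}$ apply to each factor. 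I would then bound each factor by $t^{(\eta_i-\zeta_i)/2}$ times an $L^{p_i}$ function of norm $\lesssim\$f_i\$$ and apply H\"older's inequality with $\frac{1}{p}=\frac{1}{p_1}+\frac{1}{p_2}$: as $\eta_1+\eta_2=\eta$ and $\zeta_1+\zeta_2=\zeta$, the weights combine into $t^{(\eta-\zeta)/2}$, which is exactly the local bound of Definition \ref{Def:DgammaW}. It is worth noting that this step already exploits the fact that our spaces carry the weight $t^{(\eta-\zeta)/2}$ rather than $t^{((\eta-\zeta)\wedge0)/2}$: it is the ``vanishing at rate $t^{(\eta_i-\zeta_i)/2}$'' near $t=0$ which multiplies correctly and gives the clean exponent $\eta=\eta_1+\eta_2$, in contrast with the smaller $(\eta_1+\alpha_2)\wedge(\eta_2+\alpha_1)\wedge(\eta_1+\eta_2)$ of the H\"older theory of \cite{Hairer2014}.

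\emph{The increment bound.} Fix $\zeta<\gamma$ and $z$ with $t\in(3|h|^2,T-|h|^2)$. The plan is to decompose
$$(f_1f_2)(z+h)-\Gamma_{z+h,z}(f_1f_2)(z)=\cQ_{<\gamma}\Big[f_1(z+h)f_2(z+h)-\big(\Gamma_{z+h,z}f_1(z)\big)\big(\Gamma_{z+h,z}f_2(z)\big)\Big]+\cE\;,$$
where the ``commutator error'' $\cE:=\cQ_{<\gamma}\big[(\Gamma_{z+h,z}f_1(z))(\Gamma_{z+h,z}f_2(z))\big]-\Gamma_{z+h,z}\cQ_{<\gamma}\big[f_1(z)f_2(z)\big]$ records the failure of $\Gamma_{z+h,z}$ to commute with the truncation at level $\gamma$. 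Using the $\gamma$-regularity of $V_1,V_2$ one may push $\Gamma_{z+h,z}$ through the product on the part of $f_1(z)f_2(z)$ of homogeneity $<\gamma$, so that $\cE=\sum_{\zeta_1+\zeta_2\ge\gamma}\cQ_{<\gamma}\big[(\Gamma_{z+h,z}(f_1(z))_{\zeta_1})(\Gamma_{z+h,z}(f_2(z))_{\zeta_2})\big]$; the model bound on $\Gamma$ and the local bounds on $f_1,f_2$ then control the level-$\zeta$ part of a summand by $|h|^{\zeta_1+\zeta_2-\zeta}\,t^{(\eta-\zeta_1-\zeta_2)/2}$ times an $L^p$ function (via H\"older). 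For the first, ``main'' term I would telescope $f_1(z+h)f_2(z+h)-(\Gamma_{z+h,z}f_1(z))(\Gamma_{z+h,z}f_2(z))$ as $(f_1(z+h)-\Gamma_{z+h,z}f_1(z))f_2(z+h)+(\Gamma_{z+h,z}f_1(z))(f_2(z+h)-\Gamma_{z+h,z}f_2(z))$, using in the first summand the increment bound of $f_1$ ($\lesssim|h|^{\gamma_1-\zeta_1'}t^{(\eta_1-\gamma_1)/2}$) and the local bound of $f_2$, and in the second the estimate $|\Gamma_{z+h,z}f_1(z)|_{\zeta_1'}\lesssim\sum_{\zeta_1''\ge\zeta_1'}|h|^{\zeta_1''-\zeta_1'}|f_1(z)|_{\zeta_1''}$ together with the increment bound of $f_2$; H\"older then produces the $L^p$-norm, the restriction $t>3|h|^2$ ensuring that the weights at $z$ and $z+h$ are comparable.

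\emph{Balancing the weights --- the main obstacle.} In each of the increment terms above one arrives, after H\"older, at a bound of the form $|h|^{\lambda}\,t^{\mu}$ times an $L^p$ function of norm $\lesssim\$f_1\$\,\$f_2\$$, and one wants to dominate it by $|h|^{\gamma-\zeta}\,t^{(\eta-\gamma)/2}$. Setting $a:=\lambda-(\gamma-\zeta)$ and $b:=\mu-\tfrac{\eta-\gamma}{2}$, a direct computation (using $\eta=\eta_1+\eta_2$, the relation $\gamma\le(\gamma_1+\alpha_2)\wedge(\gamma_2+\alpha_1)$, the sector regularities $\zeta_i\ge\alpha_i$, and the range of summation) shows that in every case $a\ge0$ and $a+2b=0$, hence $b\le0$. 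Then on the region $t>3|h|^2$ one has $t^{b}\le(3|h|^2)^{b}\lesssim|h|^{2b}$ while $|h|^{a}\le1$, so $|h|^{a}t^{b}\lesssim|h|^{a+2b}=1$. This is precisely where the domain restriction built into Definition \ref{Def:DgammaW} is indispensable, and is what permits the exponent $\eta=\eta_1+\eta_2$; I expect this exponent bookkeeping to be the only genuinely delicate point, the combinatorics of the telescoping being otherwise that of \cite[Thm~4.7]{Hairer2014}.

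\emph{Two models.} For the comparison estimate I would run the same decomposition for $(f_1,f_2)$ relative to $(\Pi,\Gamma)$ and for $(g_1,g_2)$ relative to $(\bar\Pi,\bar\Gamma)$, subtract, and expand every difference of products into a sum of terms, each carrying exactly one ``difference'' factor among $f_i-g_i$, $\$f_i;g_i\$$, $\|\Pi-\bar\Pi\|$, $\|\Gamma-\bar\Gamma\|$, the remaining factors being controlled by $\$f_i\$$, $\$g_i\$$, $\|\Gamma\|$, $\|\bar\Gamma\|$; the weight-balancing of the previous paragraph applies verbatim to each of them. This is routine but bookkeeping-heavy, and yields the asserted bound.
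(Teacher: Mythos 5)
Your proposal is correct and follows essentially the same route as the paper: H\"older's inequality for the local bound, a telescoping of the product increment combined with $\gamma$-regularity to isolate the levels $\zeta_1+\zeta_2\ge\gamma$, and the weight-balancing exploiting $|h|\le\sqrt{t}$ on $t>3|h|^2$; the paper's four-term decomposition of the increment is merely a reorganisation of your two-term telescope plus commutator error. One small slip in the final paragraph: since multiplication is purely algebraic and never invokes $\Pi$, only $\|\Gamma-\bar\Gamma\|$ can occur as a model-difference factor in the two-model bound, and indeed $\|\Pi-\bar\Pi\|$ does not appear in the stated estimate.
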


This result is in the flavour of~\cite[Prop 6.12]{Hairer2014}. The main difference is that $\eta$ is not given by the infimum of $\eta_1+\alpha_2$, $\eta_2+\alpha_1$ and $\eta_1+\eta_2$ but is equal to the latter. This is a consequence of our choice of exponents for the weights in the local terms: $\frac{\eta-\zeta}{2}$, and not $\frac{(\eta-\zeta)\wedge 0}{2}$ as in~\cite{Hairer2014}.

\subsubsection{Convolution with the heat kernel}

Recall the decomposition of the heat kernel introduced in Section \ref{Sec:Prelim}. For convenience, we set $P_+ := \sum_{m\ge 0} P_m$ and call this function the singular part of the heat kernel, by opposition to $P_-$ that we call the smooth part of the heat kernel. The goal of the present section is to lift the convolution with the heat kernel at the level of the spaces $\cD^{\gamma,\eta,T}_{p}$. This will be carried out separately for the singular part and the smooth part.

We start with the former, which is the most involved. We set for any $f\in \cD^{\gamma,\eta,T}_p$
\begin{align*}
\cP_+^\gamma f(z) &:= \cI(f(z)) + \sum_{\zeta\in \cA_\gamma} \sum_{k\in\N^{d+1}:|k| < \zeta + 2} \frac{X^k}{k!} \langle \Pi_z \cQ_\zeta f(z),\partial^k P_+(z-\cdot)\rangle\\
&+ \sum_{k\in\N^{d+1}:|k| < \gamma + 2} \frac{X^k}{k!} \langle \cR f - \Pi_z f(z) , \partial^k P_+(z-\cdot)\rangle\;.
\end{align*}

\begin{theorem}[Convolution - singular part]\label{Th:ConvSing}
Let $\alpha := \min\cA\backslash\N$. Fix $\gamma >0$ and $\eta \le \gamma$, and let $\gamma' = \gamma + 2$ and $\eta' < \eta + 2$. Assume that $\gamma' \notin \N$ and $\alpha\wedge \eta > -2(1-\frac1{p})$. Then, the operator $\cP_+^\gamma$ is a continuous linear map from $\cD^{\gamma,\eta,T}_{p}$ into $\cD^{\gamma',\eta',T}_{p}$ and we have for all $f\in\cD^{\gamma,\eta,T}_{p}$
$$ \cR \cP_+^\gamma f = P_+ * \cR f\;.$$
If $(\bar \Pi,\bar \Gamma)$ is another admissible model, then uniformly over all $f,\bar f$ in $\cD^{\gamma,\eta}_{p}, \bar{\cD}^{\gamma,\eta}_{p}$ we have
$$ \$ \cP_+^\gamma f, \cP_+^\gamma \bar f \$ \lesssim \| \Pi\| (1+\|\Gamma\|) \$f,\bar f\$ + (\|\Pi-\bar\Pi\| (1+\|\bar\Gamma\|) + \|\bar\Pi\|\|\Gamma-\bar\Gamma\|)\$ \bar f \$\;.$$
\end{theorem}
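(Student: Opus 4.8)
The plan is to reduce everything to the corresponding result for \emph{unweighted} Besov modelled distributions, namely the convolution theorem of~\cite[Thm 4.x]{Recons}, together with the weighted reconstruction theorem (Theorem~\ref{Th:ReconstructionW}) that we already have. The key observation is that the operator $\cP_+^\gamma$ is built out of three local ingredients---the abstract integration $\cI$, the ``correction'' sum involving $\langle \Pi_z \cQ_\zeta f(z), \partial^k P_+(z-\cdot)\rangle$, and the ``remainder'' sum involving $\langle \cR f - \Pi_z f(z), \partial^k P_+(z-\cdot)\rangle$---each of which behaves well under localization in the time variable. So first I would fix a mollified time-cutoff picture: for each dyadic scale one restricts attention to a time slab $t\sim 2^{-2n}$, notes that on such a slab the weight $t^{(\eta-\zeta)/2}$ is comparable to a constant $\simeq 2^{-n(\eta-\zeta)}$, and hence the weighted $\cD^{\gamma,\eta,T}_p$-norm restricted to the slab is, up to that constant, an unweighted $\cD^\gamma_p$-norm on a fixed reference domain (after parabolic rescaling by $2^n$). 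On that reference domain one applies the unweighted convolution estimate of~\cite{Recons}; then one rescales back and sums the geometric series in $n$. The condition $\eta' < \eta+2$ (strict) is exactly what makes the resulting series summable: the gain of $2$ in homogeneity from the kernel beats the loss coming from the weight as long as we do not saturate.

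Concretely, the steps in order would be: (i) verify that $\cP_+^\gamma f$ is a well-defined map into $\cT_{<\gamma'}$, i.e.\ all the monomials $X^k$ appearing have $|k|<\gamma'=\gamma+2$, and that each coefficient is finite---here one uses $\alpha\wedge\eta > -2(1-1/p)$ to guarantee $\cR f$ exists and that the remainder term $\langle \cR f - \Pi_z f(z),\partial^k P_+(z-\cdot)\rangle$ makes sense, via the weighted reconstruction bound~\eqref{Eq:ReconsBound}; (ii) establish the two $\cD^{\gamma',\eta',T}_p$ bounds of Definition~\ref{Def:DgammaW} for $\cP_+^\gamma f$, namely the local bound $\| |{\cP_+^\gamma f(z)}|_\zeta / t^{(\eta'-\zeta)/2}\|_{L^p}$ and the translation bound, by the slab decomposition described above; (iii) prove the identity $\cR\cP_+^\gamma f = P_+ * \cR f$ by testing against $\varphi_z^\lambda$, using the defining property~\eqref{eq:DefPi} of admissible models for $\cI$ and $X^k$ together with uniqueness in the reconstruction theorem---the reconstruction of $\cP_+^\gamma f$ is the unique distribution close to $\Pi_z(\cP_+^\gamma f(z))$ at small scales, and a direct computation shows $P_+*\cR f$ has this property; (iv) finally, obtain the two-models estimate by running the same slab argument on the difference, using the two-models reconstruction bound~\eqref{Eq:ReconsBoundTwo} and the two-models convolution bound from~\cite{Recons}, and tracking which norms get hit by $\|\Pi-\bar\Pi\|$ versus $\|\Gamma-\bar\Gamma\|$.

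The main obstacle, I expect, is step~(ii)---in particular controlling the \emph{remainder} term $\sum_{|k|<\gamma+2} \frac{X^k}{k!}\langle \cR f - \Pi_z f(z),\partial^k P_+(z-\cdot)\rangle$ near $t=0$ with the correct power of the weight. Because we have chosen the weight exponent $\frac{\eta-\zeta}{2}$ rather than $\frac{(\eta-\zeta)\wedge 0}{2}$, the naive estimate from~\cite[Thm 6.16]{Hairer2014} does not suffice; one needs a refined reconstruction bound (this is the bound alluded to in the text as~\eqref{Eq:ReconsConvol}) giving extra smallness of $\langle \cR f - \Pi_z f(z),\psi_z^\lambda\rangle$ as $t\downarrow 0$, \emph{uniformly} in $\lambda$, in order to absorb the more singular weight. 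The interplay between the scale $\lambda$ at which one tests, the distance $t$ to the initial hyperplane, and the constraint $3\lambda^2 < t$ in the domain of~\eqref{Eq:ReconsBound} has to be unwound carefully: roughly, for $t\gtrsim \lambda^2$ one uses the reconstruction bound directly, while for the kernel pieces $P_m$ with $2^{-2m}\gtrsim t$ one uses the crude $L^\infty$ bound $|\partial^k P_m|\lesssim 2^{m(d+|k|)}$ and sums. The translation bound in~\eqref{Eq:BdDgamma} is then handled by the same dichotomy applied to increments, which is where the restriction $\gamma'\notin\N$ enters (to avoid logarithmic losses at integer homogeneities), exactly as in the classical proof.
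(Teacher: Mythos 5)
Your outline correctly identifies where the real difficulty lives --- the remainder term $\sum_{|k|<\gamma+2}\frac{X^k}{k!}\langle \cR f-\Pi_z f(z),\partial^k P_+(z-\cdot)\rangle$ close to $t=0$, and the fact that the naive weight-in-$\lambda$ bound from the $L^\infty$ theory does not suffice. But the central reduction you propose does not go through, for two reasons.

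First, there is no ``unweighted convolution estimate of \cite{Recons}'' to invoke: the reference \cite{Recons} contains only the reconstruction and embedding theorems for $L^p$-type modelled distributions, not a Schauder-type convolution statement. The paper has to prove the convolution bound itself, essentially by adapting the $L^\infty$ argument of~\cite[Thm 5.12]{Hairer2014} slab by slab. So your plan to ``apply the unweighted convolution theorem on a fixed reference domain'' is missing its key ingredient.

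Second, and more structurally, the slab-plus-parabolic-rescaling reduction does not handle the pieces of the kernel that couple different time slabs. At a point $z$ with $t\sim 2^{-2n_0}$, the contributions from $P_m$ with $m\le n_0+2$ involve $\langle \cR f,\partial^k P_m(z-\cdot)\rangle$ where the support of $P_m(z-\cdot)$ reaches all the way to $\{t'=0\}$. After rescaling the slab by $2^{n_0}$ these become kernel pieces at scales $\ge 1$, which lie outside the scope of any local reconstruction estimate, and they still see the weight singularity. Your proposed fallback --- ``use the crude $L^\infty$ bound $|\partial^k P_m|\lesssim 2^{m(d+|k|)}$ and sum'' --- is too lossy: it produces the wrong power of $2^{-n_0}$ and the sum over $m\le n_0+2$ does not close. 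What is actually needed is the refined estimate of Proposition~\ref{Prop:Recons}, bound~\eqref{Eq:ReconsConvol}, which controls the weighted $L^p(n_0)$-norm of $\sum_{m\le n_0+4}\langle\cR f,\partial^k P_m(z-\cdot)\rangle$ in terms of $\$f\$$ with the exponent $\eta+2-\epsilon-|k|$. (You also misdescribe this bound: it is a bound on $\langle\cR f,\partial^k P_m(z-\cdot)\rangle$, not on $\langle\cR f-\Pi_z f(z),\psi^\lambda_z\rangle$; it is derived from Lemma~\ref{Lemma:SpecificConvol}, not from the unweighted theory.) Once~\eqref{Eq:ReconsConvol} is in hand, the paper's proof proceeds directly in the weighted spaces without any rescaling: one splits by whether $m\le n_0+2$ or $m>n_0+2$ for the local bound (using~\eqref{Eq:ReconsConvol} in the first case and~\eqref{Eq:ReconsBound} in the second), and by the three regimes $\tfrac13\sqrt t\le 2^{-m}$, $|h|\le 2^{-m}\le\tfrac13\sqrt t$, $2^{-m}\le|h|\le\tfrac13\sqrt t$ for the translation bound.
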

\begin{remark}
If $\nu := \min \cA$ and if $f\in \cD^{\gamma,\eta,T}_{p}$, then $\cP_+^\gamma f$ takes values in a sector of regularity $(\nu+2)\wedge 0$.
\end{remark}
\begin{remark}
Notice that in the original version of the convolution theorem~\cite[Th. 5.12]{Hairer2014}, the parameter $\eta$ is sent onto $(\eta\wedge\alpha) +2$ after convolution. As we will be working in situations where $\eta > \alpha$, our result provides a better weight index. Let us also mention that if we had chosen an $L^\infty$-norm in time in our spaces of modelled distributions (as this is the case in the original version of the theorem), then we would have improved the weight index by $2$ and not $2^-$.
\end{remark}

We turn to the convolution with the smooth part of the heat kernel. For every $f\in \cD^{\gamma,\eta,T}_{p}$, we set
$$ \cP_-^\gamma f(z) := \sum_{k\in \N^{d+1}:|k| < \gamma+2} \frac{X^k}{k!} \langle \cR f, \partial^k P_-(z-\cdot) \rangle\;.$$

\begin{theorem}[Convolution - smooth part]\label{Th:ConvSmooth}
In the context of Theorem \ref{Th:ConvSing}, the operator $\cP_-^\gamma$ is a continuous linear map from $\cD^{\gamma,\eta,T}_p$ into $\cD^{\gamma',\eta',T}_p$ and we have
$$ \cR \cP_-^\gamma f = P_- * \cR f\;.$$
\end{theorem}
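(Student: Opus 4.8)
First I would recognise $\cP_-^\gamma f$ as the modelled distribution built from the truncated Taylor expansions of the genuine function $F := P_- \ast \cR f$. Since $P_-$ is smooth, compactly supported and vanishes at negative times, $F$ is smooth and periodic on $\{t<T\}\times\R^d$, vanishes on test functions supported in $(-\infty,0]\times\T^d$ (there $P_-(z-\cdot)$ is supported in $\{s\le 0\}$, where $\cR f$ vanishes), and $\partial^k F(z) = \langle \cR f, \partial^k P_-(z-\cdot)\rangle$, so that $\cP_-^\gamma f(z) = \sum_{|k|<\gamma'} \frac{X^k}{k!}\,\partial^k F(z)$. The statement thus splits into: (a) a weighted $L^p$-bound on the derivatives of $F$; (b) deducing that this lift lies in $\cD^{\gamma',\eta',T}_{p}$ with $\$\cP_-^\gamma f\$\lesssim\$f\$$; and (c) identifying $\cR\cP_-^\gamma f$ with $F$.

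The heart of the matter, and the step I expect to be the main obstacle, is the estimate
$$\| \partial^k F \|_{L^p(\{t\asymp 2^{-i}\}\times\T^d)} \lesssim 2^{-i(1+\eta/2)}\,\$f\$ \qquad (i\ge 0),$$
valid for every multiindex $k$. To prove it I would decompose $\partial^k P_-(z-\cdot)$ using a dyadic partition of unity $\sum_j \chi_j$ of the time variable near $\{s=0\}$ together with an atomic decomposition of each slab $\{s\asymp 2^{-j}\}$ into $O(2^{jd/2})$ rescaled test functions $\psi^{2^{-j/2}}_{z'}$ ($\psi\in\ccB^r$, centred at the grid points of the slab) of amplitude $O(2^{-j|\s|/2})$; the part supported in $\{s\le 0\}$ contributes nothing. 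On each atom, \eqref{Eq:ReconsBound} together with the $\cD^{\gamma,\eta,T}_{p}$-bounds on $f$ yields $|\langle \cR f, \psi^{2^{-j/2}}_{z'}\rangle| \lesssim (s')^{\eta/2}\asymp 2^{-j\eta/2}$ in the relevant $L^p$-sense, since, $\lambda=2^{-j/2}$ and $(s')^{1/2}$ being comparable, every term $\lambda^\zeta |f(z')|_\zeta$ ($\zeta\in\cA_{<\gamma}$) and the reconstruction remainder $\lambda^\gamma (s')^{(\eta-\gamma)/2}$ collapse to $(s')^{\eta/2}$. Carrying out the $L^p$-in-$z$ bookkeeping — Minkowski's inequality, translation invariance in the spatial variable, and the standard passage between a grid sum and the $L^p$-norm over the slab of $z\mapsto\sup_\psi|\langle\cR f,\psi^{2^{-j/2}}_z\rangle|$, which is itself $\lesssim 2^{-j\eta/2}\$f\$$ — one is reduced to a geometric series $\sum_{j\ge i} 2^{-j(1-1/p+\eta/2)}$, summing to $\asymp 2^{-i(1-1/p+\eta/2)}$ precisely because the hypothesis $\alpha\wedge\eta > -2(1-1/p)$ forces $1-1/p+\eta/2 > 0$; combined with the factor $(2^{-i})^{1/p}$ from the time-width of the slab this gives the claim (for $t$ bounded away from $0$ the bound is immediate from the continuity of $\cR$). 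The delicate point is this discrete/continuous $L^p$-interplay, which I would handle as in the proofs of the reconstruction and embedding theorems of~\cite{Recons}.

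Granting the key estimate, summing it against the weight $t^{-w}$ over the dyadic slabs gives $\| \partial^n F / t^{w}\|_{L^p((0,T)\times\T^d)} \lesssim \$f\$$ for every $n$ and every $w < 1+\eta/2$, and both conditions of Definition~\ref{Def:DgammaW} for $\cP_-^\gamma f$ reduce to this. The local bound uses $w=(\eta'-|k|)/2 \le \eta'/2 < (\eta+2)/2$ (the hypothesis $\eta'<\eta+2$ being exactly what is needed). For the increment, note that on the polynomial sector $\Gamma_{z+h,z}$ acts in the canonical, model-independent way, so the $X^j$-component of $\cP_-^\gamma f(z+h)-\Gamma_{z+h,z}\cP_-^\gamma f(z)$ equals $\frac1{j!}$ times a Taylor remainder of $\partial^j F$ of scaled order $\gamma'-|j|_\s$; writing it in integral-remainder form (\cite[Prop.~A.1]{Hairer2014}) and restricting to $t>3|h|^2$, so that $t_y\asymp t$ along the relevant segments, it is bounded by $|h|^{\gamma'-|j|_\s}\int_0^1\sum_{|n|_\s\ge\gamma'}|\partial^n F(z+\theta h)|\,d\theta$, and after dividing by $|h|^{\gamma'-|j|_\s}\,t^{(\eta'-\gamma')/2}$, taking $L^p$ in $z$, Minkowski in $\theta$ and changing variables one is left with $\| \partial^n F / t^{(\eta'-\gamma')/2}\|_{L^p}$ on a slightly enlarged interval, finite since $(\eta'-\gamma')/2 < 1+\eta/2$. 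These estimates also yield $\$\cP_-^\gamma f\$\lesssim\$f\$$, and the two-models bound follows the same way with only \eqref{Eq:ReconsBoundTwo} entering, the polynomial $\Gamma$ being model-independent. Finally, $\Pi_z(\cP_-^\gamma f(z))$ is exactly the Taylor polynomial of $F$ at $z$ of scaled degree $<\gamma'$, so the same integral-remainder estimate shows that $F$ satisfies \eqref{Eq:ReconsBound} relative to $\cP_-^\gamma f$; since $F$ vanishes on test functions supported in $(-\infty,0]\times\T^d$, the uniqueness part of Theorem~\ref{Th:ReconstructionW} gives $\cR\cP_-^\gamma f = F = P_-\ast\cR f$.
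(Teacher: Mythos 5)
Your proof is correct and follows the same strategy as the paper's: a weighted-$L^p$ estimate on $\partial^k(P_-*\cR f)$ near $t=0$ obtained from a dyadic-in-time atomic decomposition, Taylor-remainder control of the translation terms, and the uniqueness part of Theorem~\ref{Th:ReconstructionW} to identify $\cR\cP_-^\gamma f = P_-*\cR f$. The only difference is presentational: the paper cites the previously-established Proposition~\ref{Prop:Recons} (bound~\eqref{Eq:ReconsConvol2}, itself proved via Lemma~\ref{Lemma:SpecificConvol}) for the key estimate and shortcuts the reconstruction identity by restricting to $\cT_{<\gamma''}$ with $\gamma''\in(0,1)$, whereas you re-derive the key estimate inline by essentially the same argument and verify the full reconstruction bound directly.
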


We then define the operator $\cP^\gamma := \cP_+^\gamma + \cP_-^\gamma$ which is a continuous linear map from $\cD^{\gamma,\eta,T}_p$ into $\cD^{\gamma',\eta',T}_p$ such that
$$ \cR \cP^\gamma f = P * \cR f\;.$$

\subsubsection{Convolution of the shift}

For any function $h\in L^2((0,T)\times\T^d)$, we define
$$ \cP h (z) := \sum_{k\in\N^{d+1}:|k| < 2} \frac{X^k}{k!} \langle h, \partial^k P(z-\cdot) \rangle\;.$$
Although an operator $\cP$ acting on $\cD^{\gamma,\eta,T}_p$ was already introduced in the previous subsection, we prefer to keep the same notation for the present operator as they both refer to the convolution with the heat kernel.
\begin{lemma}\label{Lemma:ConvolShift}
For any $h\in L^2((0,T)\times\T^3)$ and any $\kappa > 0$, the restriction of $\cP h$ to $\cT_{< 2-\kappa}$ defines an element of $\cD^{2-\kappa,2-\kappa,T}_{2}$.
\end{lemma}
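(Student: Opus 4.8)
The plan is to estimate directly the two quantities in Definition \ref{Def:DgammaW} for $f = \cP h$ restricted to $\cT_{<2-\kappa}$, using only the size bounds on the heat kernel $P$ near its singularity together with H\"older's inequality in the time variable. Note first that $\cP h(z)$ only involves the symbols $1$ and $X^i$, $i=1,\ldots,3$ (since the parabolic degree of $X_0$ is $2 \geq 2-\kappa$), with coefficients $\langle h, P(z-\cdot)\rangle$ and $\langle h, \partial^i P(z-\cdot)\rangle$ respectively; so $\gamma = 2-\kappa$, $\eta = 2-\kappa$, and the relevant homogeneities are $\zeta = 0$ and $\zeta = 1$. For the action of $\Gamma_{z+\bar h, z}$ on these low-order symbols one has, as in \cite{Hairer2014}, $\Gamma_{z+\bar h,z}(X^k) = (X - \bar h)^k$, so that $(\cP h(z+\bar h) - \Gamma_{z+\bar h, z}\cP h(z))$ expands, level by level, into increments of the coefficient functions minus their Taylor polynomials; these are controlled by bounds on $\langle h, \partial^k P(z-\cdot)\rangle$ with $|k|$ one notch higher, exactly as in the classical computation that $P * h \in \cC^{2-\kappa}$ when $h \in L^\infty$, here adapted to $L^2$ via Besov embedding or directly via kernel estimates.

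The key input is the family of quantitative bounds
\[
\big|\langle h, \partial^k P(z-\cdot)\rangle\big| \lesssim \|h\|_{L^2((0,T)\times\T^3)}\, \big( t^{(2-|k| - (|\s|/2) )/2} \wedge 1 \big)\wedge (\text{something integrable}),
\]
more precisely: since $\partial^k P(z - \cdot)$ is, up to harmless truncation, supported in the parabolic ball of radius $\sqrt t$ around $z$ and satisfies $\|\partial^k P(z-\cdot)\|_{L^2((0,t]\times\R^3)} \lesssim t^{(|\s|/2 - |k|)/2} \cdot t^{-|\s|/4}\cdot t^{|\s|/4}$ — i.e. the $L^2$ norm of $\partial^k P$ over the region $\{0 < s < t\}$ scales like $t^{(2 - |k|)/2 - |\s|/4}$ — Cauchy–Schwarz gives $|\langle h, \partial^k P(z-\cdot)\rangle| \lesssim \|h\|_{L^2}\, t^{(2-|k|)/2 - |\s|/4}$. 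Since $|\s|/4 = 1 < 2 - \kappa$ is irrelevant to the \emph{structure} of the weight exponent but does enter the time-integrability: one checks that $(2-|k|)/2 - |\s|/4 - \tfrac{\eta - \zeta}{2} = (2 - |k|)/2 - 1 - (2-\kappa-\zeta)/2$ and, with $|k| = \zeta$ on the diagonal term and $|k| = \gamma = 2-\kappa$ for the increment term, this exponent equals $\kappa/2 - 1 > -1$ (for $\kappa$ small), so that $t^{2(\cdot)}$ is integrable on $(0,T)$. Raising to the $p=2$ power and integrating in $t$ then gives finiteness of the $L^2((0,T)\times\T^3, dz)$ norms, with the torus integration in $x$ being harmless by periodicity and compact support.

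Concretely the steps are: (i) write out $\cP h(z) = \langle h, P(z-\cdot)\rangle\, 1 + \sum_{i=1}^3 \langle h, \partial_i P(z-\cdot)\rangle\, X^i$, noting $X_0$ does not appear; (ii) prove the kernel estimate $|\langle h, \partial^k P(z - \cdot)\rangle| \lesssim \|h\|_{L^2}\, t^{(2 - |k|)/2 - |\s|/4}$ for $|k| \leq 2$ using the scaling properties of $P_m$ recalled after the heat-kernel decomposition and Cauchy–Schwarz (the contribution of $P_-$ being smooth and trivially bounded); (iii) insert $\zeta = |k|$ into the local bound of \eqref{Eq:BdDgamma}, compute the net power of $t$, verify it is $> -1$ so that $\| |f(z)|_\zeta / t^{(\eta-\zeta)/2}\|_{L^2((0,T)\times\T^3)} < \infty$; (iv) for the increment term, use $\Gamma_{z+\bar h,z} X^k = (X - \bar h)^k$ to reduce $|\cP h(z+\bar h) - \Gamma_{z+\bar h,z}\cP h(z)|_\zeta$ to a Taylor remainder of the coefficient functions, bound it by $|\bar h|^{2-\kappa - \zeta}$ times $\sup|\langle h, \partial^k P\rangle|$ with $|k| = 2-\kappa$ (interpreted via an integer-order Taylor expansion of order $2$ with the fractional gain absorbed into $|\bar h|^{-\kappa}$ as usual), and conclude integrability in $t$ on $(3|\bar h|^2, T - |\bar h|^2)$ by the same power count. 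The main obstacle is step (ii)/(iv): getting the \emph{sharp} power of $t$ in the kernel estimate — in particular checking that $|\s|/4 < 1 \leq (2-\kappa)/2 + (\text{gain from }\zeta)$ so that no weight worse than $t^{(\eta - \zeta)/2}$ is needed and the resulting exponent stays above the $L^2$-integrability threshold $-1$ — and handling the fractional Taylor remainder at order $2-\kappa$ cleanly using \cite[Prop. A.1]{Hairer2014}. Everything else is bookkeeping.
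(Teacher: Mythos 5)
Your step (ii) is the heart of the argument, and it is false: $\partial^k P(z-\cdot)$ restricted to $\{0<s<t\}\times\T^3$ is \emph{not} square-integrable. In the parabolic $3+1$ scaling ($|\s|=5$) the heat kernel scales as $P(\bar z)\sim|\bar z|_\s^{-(|\s|-2)}=|\bar z|_\s^{-3}$ near the singularity, and $\int_{|\bar z|_\s<1}|\bar z|_\s^{-6}\,d\bar z\sim\int_0^1 r^{-6}r^{|\s|-1}\,dr=\int_0^1 r^{-2}\,dr=+\infty$; concretely, $\|\partial^k P(\sigma,\cdot)\|_{L^2(\R^3)}^2\sim\sigma^{-3/2-|k|}$ and $\int_0^t\sigma^{-3/2-|k|}\,d\sigma=\infty$ for every $|k|\ge 0$. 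So for generic $h\in L^2$ the pointwise pairing $\langle h,\partial^k P(z-\cdot)\rangle$ is \emph{not} controlled by Cauchy--Schwarz, and indeed it is not defined for every $z$: the map $z\mapsto\langle h,\partial^k P(z-\cdot)\rangle$ only makes sense $dz$-a.e.\ with $L^2$ (not $L^\infty$) control. This is not a technicality but the entire point of the lemma: if a pointwise bound of the kind you write held, $\cP h$ would lift to an $L^\infty$-type modelled distribution, which is false (for $h\in L^2$ the spatial gradient of $P*h$ has no pointwise trace in general), and this failure is the very reason the paper introduces the $L^2$-type spaces $\cD^{\gamma,\eta,T}_2$ for this step.

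Two secondary errors compound this. First, the arithmetic: $|\s|/4=5/4$, not $1$, so even taking the (incorrect) kernel estimate at face value, the net exponent on the weight in the diagonal term is $\kappa/2-5/4$, whose square-integral $\int_0^T t^{\kappa-5/2}\,dt$ diverges for small $\kappa$. Even with your substitution $|\s|/4=1$, the exponent $\kappa/2-1$ gives $\int_0^T t^{\kappa-2}\,dt$, which converges only if $\kappa>1$ --- far too large for the regularity-structure framework. Second, $\partial^k P(z-\cdot)$ is not ``supported in a parabolic ball of radius $\sqrt t$ around $z$'': it is supported in $\{s<t\}$ with Gaussian tails in space, and the singularity (hence the integrability problem) sits at $s\uparrow t$, $y\to x$, not at small $|z-\cdot|_\s$ being harmlessly bounded.

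The correct route --- which the paper follows --- is to regard $h\in L^2((0,T)\times\T^d)$ as an element of the Besov-type space $\cB^{-\kappa/3,T}_2$ and then invoke Lemma \ref{Lemma:SpecificConvol} and \eqref{Eq:SpecificConvol2}, which deliver the dyadic estimates \eqref{Eq:BoundConvolShift} and \eqref{Eq:BoundConvolShift2}. These are intrinsically $L^2$-in-$z$ averages over dyadic time slabs $L^2(n_0)$, obtained after splitting $P=P_-+\sum_m P_m$ with each $P_m$ compactly supported in an annulus of scale $2^{-m}$ and carefully pairing the regimes $m\lessgtr n_0$. No pointwise-in-$z$ kernel bound is proved or used anywhere, because none is available.
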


\section{Malliavin differentiability}\label{Sec:Diff}

\subsection{The Bouleau-Hirsch criterion}
Let $\Omega$ be a separable Banach space, let $\P$ be the law of a zero-mean Gaussian field on $\Omega$ and let $\cH$ be the associated Cameron-Martin space. We also let $\cF$ be the Borel $\sigma$-field associated with $\Omega$, completed with $\P$-null sets.

\begin{definition}\label{Def:Diff}
A random variable $X$ on $(\Omega,\cF,\P)$ is said to be locally $\cH$-differentiable if there exists an almost surely positive r.v.~$q$ such that $h\mapsto X(\omega+h)$ is Fr\'echet differentiable on $\{h\in\cH: \|h\|_\cH < q(\omega)\}$. For all $\omega$ such that $q(\omega) > 0$, we call $DX(\omega)$ the differential at $h=0$ of the above map.
\end{definition}

In our context, $\Omega$ is taken to be the H\"older space $\cC^\alpha((0,T)\times\T^d)$ with $\alpha < - 5/2$ and $\P$ is the law of the noise $\xi$.

We then have the following result, essentially due to Bouleau and Hirsch: we refer to~\cite[Section 2]{MalliavinReg} for details and references.
\begin{theorem}
Let $X$ be an $\R^n$-valued random variable on $(\Omega,\cF,\P)$. Assume that $X$ is locally $\cH$-differentiable and that $\P$ almost surely, $DX : \cH \to \R^n$ is onto.
Then $X$ admits a density w.r.t. Lebesgue measure on $\R^n$.
\end{theorem}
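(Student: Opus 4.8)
\emph{Sketch of proof.} The plan is to show that $\P(X\in A)=0$ for every Borel set $A\subseteq\R^n$ of zero Lebesgue measure, which is exactly the assertion that the law of $X$ is absolutely continuous. The first step is a localisation that converts the local notion of Definition \ref{Def:Diff} into genuine differentiability. Let $q$ be the a.s.\ positive random variable provided by local $\cH$-differentiability and set $\Omega_m:=\{q>1/m\}$, so that $\Omega_m\uparrow\Omega$ up to a $\P$-null set. On $\Omega_m$ the map $h\mapsto X(\omega+h)$ is Fr\'echet differentiable on $\{\|h\|_\cH<1/m\}$, with differential $DX(\omega)$ at $h=0$. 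Since $\P(X\in A)=\lim_{m}\P(\{X\in A\}\cap\Omega_m)$, it suffices to prove $\P(\{X\in A\}\cap\Omega_m)=0$ for each fixed $m$.

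Fix $m$. The assumption that $DX:\cH\to\R^n$ is onto $\P$-a.s.\ is equivalent to the $\P$-a.s.\ invertibility of the Malliavin covariance matrix $\sigma(\omega):=\big(\langle DX_i(\omega),DX_j(\omega)\rangle_\cH\big)_{1\le i,j\le n}$, because for a bounded linear operator surjectivity onto $\R^n$ is the same as positive-definiteness of $\sigma=DX\,(DX)^{*}$. By a further countable localisation, intersecting $\Omega_m$ with the events $\{\|DX\|\le M\}\cap\{\|\sigma^{-1}\|\le M\}$ and letting $M\to\infty$, we may assume that on the event under consideration these operator norms are bounded. At this point we are exactly in the setting of the Bouleau--Hirsch criterion: a random variable which is Fr\'echet differentiable on a ball and whose Malliavin matrix is nondegenerate there has an absolutely continuous image law on that event.

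To conclude one runs the classical Bouleau--Hirsch argument on this event. Given $\varepsilon>0$, pick an open set $A_\varepsilon\supseteq A$ with Lebesgue measure $<\varepsilon$ and a smoothed primitive $\psi_\varepsilon\in C^1(\R^n)$ of $\mathbf 1_{A_\varepsilon}$, so that $0\le\psi_\varepsilon\le C\varepsilon$ while $\nabla\psi_\varepsilon\ge\mathbf 1_{A}$ near $A$. Using the chain rule $D(\psi_\varepsilon(X))=\sum_j\partial_j\psi_\varepsilon(X)\,DX_j$ and the identity $\nabla\psi_\varepsilon(X)=\sigma^{-1}\big(\langle D(\psi_\varepsilon(X)),DX_j\rangle_\cH\big)_j$, one rewrites $\P(\{X\in A\}\cap\Omega_m)$ (after multiplying and dividing by a regularised $\sigma$ and passing to the limit) as an expression involving $\langle D(\psi_\varepsilon(X)),\cdot\rangle_\cH$, and an integration by parts on Wiener space bounds it by $C\varepsilon\,\to 0$. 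The main obstacle is precisely this integration by parts: carried out naively it would demand second-order Malliavin differentiability of $X$ and integrability of $\sigma^{-1}$, neither of which is available here. Bouleau and Hirsch circumvent this by an approximation/Dirichlet-form argument, which is where the only real work lies; we refer to~\cite[Section~2]{MalliavinReg} for the complete argument in a framework identical to ours.
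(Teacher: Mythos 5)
Your proposal ultimately does what the paper does: state the theorem and hand the proof off to \cite[Section~2]{MalliavinReg}, after a preliminary localisation — so in that sense the approach is essentially the same, and there is no gap. However, the interpolated sketch conflates two distinct arguments. What you describe as the ``classical Bouleau--Hirsch argument'' (smoothed primitive $\psi_\varepsilon$, chain rule, inverting $\sigma$, integration by parts on Wiener space) is in fact the Malliavin integration-by-parts route, which — as you correctly note at the end — does not run here: $X$ is not twice Malliavin differentiable and $\sigma^{-1}$ is not integrable. Bouleau--Hirsch's actual argument is of a structurally different kind: one first shows that local $\cH$-differentiability places $X$ in the local Dirichlet/Sobolev space $\mathbb{D}^{1,2}_{\mathrm{loc}}$, and then applies a capacity/coarea-type absolute-continuity criterion that never invokes the divergence operator at all; this identification of pathwise Fr\'echet differentiability with Sobolev-type Malliavin differentiability is precisely the nontrivial content supplied in \cite{MalliavinReg}. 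There is also a small typechecking issue: for $n>1$ the inequality $\nabla\psi_\varepsilon\ge\mathbf{1}_A$ compares a vector to a scalar, and the one-dimensional primitive construction ($\|\psi_\varepsilon\|_\infty\lesssim\varepsilon$, $\psi_\varepsilon'\ge\mathbf 1_A$) does not carry over verbatim to $\R^n$. None of this is fatal since you explicitly defer to the same reference the paper cites, but the sketch you give in between should not be mistaken for the argument that actually closes the proof.
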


\subsection{Fixed points}

\subsubsection{Solution theory for $(\Phi^4_3)$}

Let us first recall the theory for the fixed point equation for $U$

\begin{equation} \label{eq:fpU}
U = \cP \left( - U^3 + \mathbf{1}_{\{t>0\}} \Xi \right) + G u_0
\end{equation}

Here $u_0 \in \cC^\eta(\T^3)$ is the initial condition, and $G u_0$ is the lift into the polynomial regularity structure of the convolution with the heat kernel of the initial condition:
$$ G u_0(t,x) := \sum_{k\in\N^{d+1}:|k| < \gamma'} \frac{X^k}{k!} \langle u_0, \partial^k P(t,x-\cdot) \rangle\;,$$
see~\cite[Lemma 7.5]{Hairer2014}.

By \cite[Proposition 9.8]{Hairer2014}, for any model $(\Pi,\Gamma)  \in \cM$, any $u_0 \in \cC^\eta(\T^3)$ with $\eta \in (-2/3,-1/2-\kappa)$ and any $\gamma > 1$, there exists a time $T_{\mbox{\tiny explo}} > 0$ such that \eqref{eq:fpU} admits a solution $U \in \cD^{\gamma,\eta,T}_\infty$ for each $T < T_{\mbox{\tiny explo}}$, and if $T_{\mbox{\tiny explo}} < +\infty$ one has
$$ \lim_{t \to T_{\mbox{\tiny explo}}} \left\|(\cR U)(t,\cdot)\right\|_{\cC^\eta(\T^3)} = +\infty . $$
In addition, the map $(\Pi,\Gamma) \mapsto U$ is continuous in the sense that if $\Pi^\varepsilon \to \Pi$ in $\cM$, then
$$\liminf_{\varepsilon} T_{\mbox{\tiny explo}}(\Pi^\varepsilon) \geq T_{\mbox{\tiny explo}}(\Pi)$$
and for each $T< T_{\mbox{\tiny explo}}(\Pi)$,
$$\lim_{\varepsilon} \$ U^\varepsilon ; U \$_{\cD^{\gamma,\eta,T}_{\infty}} =0,$$
where $U^\varepsilon = U(\Pi^\varepsilon)$, $U=U(\Pi)$.

Furthermore, if $\hat{\Pi}^\varepsilon$ is the model obtained from $\xi$ by renormalization and regularization as described in Section \ref{subsec:Model}, then one can show that $u_\varepsilon:= \cR U(\hat{\Pi}^\varepsilon)$ is the solution to
$$(\partial_t - \Delta) u_\varepsilon = C^\varepsilon u_\varepsilon - u_\varepsilon^3 + \xi_\varepsilon, \;\;\; u_\varepsilon(0,\cdot)=u_0,$$
with $C^\varepsilon = 3 C_1^\varepsilon - 9 C_2^\varepsilon$.

For a fixed sequence $(\varepsilon_k)_{k \geq 0}$ converging to $0$, we let
$$\Omega_0 := \left\{ \xi, \exists \Pi, \mbox{ s.t. }\;\;  \hat{\Pi}^{\varepsilon_k}(\xi) \to \Pi \mbox{ in } \cM\right\}$$
and note that by Theorem \ref{thm:model}, at least for a certain choice of $(\varepsilon_k)$, one has $\P(\Omega_0) = 1$.

We further let
$$\Omega_{0,T} = \Omega_0 \cap \left\{ T < T_{\mbox{\tiny explo}}(\Pi(\xi))\right\}$$
and we will assume throughout that
$$\P(\Omega_{0,T}) = 1.$$

We further let
$$\Omega_{1,T} := \left\{ \xi, \exists u, \mbox{ s.t. }\;\;  u_{\varepsilon_k} \to u \mbox{ in }C([0,T], C^\eta(\T^3))\right\}$$
and note that $\Omega_{0,T} \subset \Omega_{1,T}$. Throughout the rest of the paper, by $u$ we will mean the random variable defined on $\Omega_{1,T}$ as the limit of the $u_{\varepsilon}$'s and arbitrarily (say $0$) on $\Omega_{1,T}^c$.

\subsubsection{Shifted equations} \label{sec:shift}

From now on, we let $U_0 \in\cD^{\gamma_0,\eta_0,T}_\infty$ denote the solution of \eqref{eq:fpU} associated to a model $(\Pi,\Gamma)$ and an initial condition $u_0 \in \cC^{\eta_0}(\T^3)$ with
$$ \gamma_0 > 1\;, \quad -1/2-\kappa > \eta_0 > -2/3\;.$$
Notice that the homogeneity of the lowest level on which $U_0$ takes values is $-1/2 - \kappa$.

Our first goal is to extend the solution theory to the shifted equation and then to the tangent equation. We start with the former; we aim at solving the following equation:
\begin{equation}\label{Eq:Y}
Y_h = -\cP(3 U_0^2 Y_h + 3 U_0 Y_h^2 + Y_h^3) + \cP(h)\;.
\end{equation}
Notice that $U_h = U_0 + Y_h$ is then the solution of \eqref{eq:fpU} where $\Xi$ is replaced by $\Xi+h$.
\begin{proposition}\label{Prop:Y}
Take $\gamma \in (\frac74 + 2\kappa , 2-2\kappa)$. Then, for any admissible model $\Pi$ such that $T_{\mbox{\tiny explo}}(\Pi) > T$ there exists $q=q(\Pi) > 0$ such that for all $h$ in the ball of radius $q$ in $L^2(0,T)$, there exists a unique solution $Y_h \in \cD^{\gamma,\gamma,T}_{2}$ to \eqref{Eq:Y}. Furthermore, the map $(\Pi,h) \mapsto Y_h$ is locally Lipschitz.
\end{proposition}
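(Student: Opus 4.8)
The plan is to set up a Banach fixed point argument for the map
$$ \Phi_h(Y) := -\cP\big(3 U_0^2 Y + 3 U_0 Y^2 + Y^3\big) + \cP(h) $$
on a small ball of $\cD^{\gamma,\gamma,T}_{2}$, exploiting both the gain of regularity from the heat-kernel convolution (Theorem \ref{Th:ConvSing}, Theorem \ref{Th:ConvSmooth}) and the embedding theorems (Theorem \ref{Th:EmbeddingW}) to absorb the $L^2$-type integrability losses caused by the products. First I would record the mapping properties of each term. The modelled distribution $U_0$ lives in $\cD^{\gamma_0,\eta_0,T}_\infty$ with lowest homogeneity $\alpha_U = -1/2-\kappa$, while $Y \in \cD^{\gamma,\gamma,T}_2$ has lowest homogeneity $\geq \gamma > 7/4 + 2\kappa - 2 = -1/4 + 2\kappa$; actually the relevant point is that $\cP(h)$ takes values (by Lemma \ref{Lemma:ConvolShift}) in a sector whose lowest level is at homogeneity $0$ hence $Y_h$ will have regularity $\geq 0$. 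Using Theorem \ref{Th:Mult}, $U_0^2 Y$ lies in $\cD^{\gamma_1,\eta_1,T}_{p_1}$ with $\tfrac1{p_1} = \tfrac12$ (two $\infty$'s and one $2$), $\eta_1 = 2\eta_0 + \eta$, and $\gamma_1 = (\gamma + \alpha_U)\wedge(\gamma_0 + 2\alpha_U + \dots)$; one checks with the stated constraint $\gamma \in (\tfrac74 + 2\kappa, 2-2\kappa)$ that $\gamma_1 > 0$ and, crucially, that $\alpha_1 := 2\alpha_U + 0 = -1 - 2\kappa > -2(1-\tfrac1{p_1}) = -1$ holds provided $\kappa$ is small enough, so that Reconstruction (Theorem \ref{Th:ReconstructionW}) and the singular convolution theorem apply to this product. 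Similarly $U_0 Y^2$ and $Y^3$ lie in spaces with $p$-index still $2$ (since $Y$ has regularity $\geq 0$) and suitably positive $\gamma$'s and admissible $\eta$'s.

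**Key steps.** After the bookkeeping, I would (i) apply $\cP^\gamma = \cP_+^\gamma + \cP_-^\gamma$ to each of the three products. Each convolution raises $\gamma$ by $2$ and $\eta$ by (almost) $2$, and does not change $p$. The point is to verify that for the ``worst" term $\cP(3U_0^2 Y)$ the output space contains $\cD^{\gamma,\gamma,T}_2$: the target $\gamma$-index is $\gamma_1 + 2 \geq \gamma$ since $\gamma_1 \geq \gamma - (1/2+\kappa) $ ... here I would instead note we only need $\Phi_h$ to map $\cD^{\gamma,\gamma,T}_2$ into itself, which requires the output regularity index to be $\geq \gamma$ and the output weight index to be $\geq \gamma$; the small gap between $2$ and $2^-$ in the weight gain from Theorem \ref{Th:ConvSing} is exactly why the constraint $\eta = \gamma < 2 - 2\kappa$ is imposed, leaving room to re-absorb. (ii) Add $\cP(h)$, which by Lemma \ref{Lemma:ConvolShift} lies in $\cD^{2-\kappa, 2-\kappa, T}_2 \subset \cD^{\gamma,\gamma,T}_2$ since $\gamma < 2-\kappa$ and $\gamma < 2-\kappa$. (iii) Establish the contraction estimate: for $Y, \bar Y$ in the ball $B_r = \{\$Y\$ \le r\}$, write
$$ \Phi_h(Y) - \Phi_h(\bar Y) = -\cP\big(3U_0^2(Y-\bar Y) + 3U_0(Y^2 - \bar Y^2) + (Y^3 - \bar Y^3)\big), $$
factor $Y^2 - \bar Y^2 = (Y+\bar Y)(Y-\bar Y)$ etc., and apply the difference bounds in Theorem \ref{Th:Mult} and Theorem \ref{Th:ConvSing} to get $\$\Phi_h(Y) - \Phi_h(\bar Y)\$ \lesssim (T^\theta + \$U_0\$ + r + r^2)\,\$Y - \bar Y\$$ for some $\theta > 0$ coming from the weight in time (a factor $T^{(\gamma' - \gamma)/2}$-type gain available because $\eta' < \eta + 2$ strictly, or from reducing $\gamma$ via embedding). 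Choosing $r$ small, then $q = q(\Pi)$ small so that $\$\cP h\$ \lesssim \|h\|_{L^2} < q$ forces $\Phi_h(B_r) \subset B_r$, and $T$ is already fixed but the relevant smallness is in $r$ and in $\|U_0^2\|$ being controlled via $\$U_0\$_{\cD^{\gamma_0,\eta_0,T}_\infty}$, we obtain a unique fixed point. (iv) Self-map and contraction are uniform in $(\Pi, h)$ over a neighbourhood since all the bounds in Theorems \ref{Th:ReconstructionW}, \ref{Th:Mult}, \ref{Th:ConvSing} are stated with explicit model-dependence, giving local Lipschitz dependence of $Y_h$ on $(\Pi, h)$ by the standard argument (difference of fixed points controlled by difference of the maps).

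**Main obstacle.** The delicate point is not the contraction mechanics but the \emph{bookkeeping of the $L^2$-type exponents through the nonlinearity}: one must check simultaneously that (a) every intermediate product has a reconstruction, i.e. its lowest homogeneity $\alpha$ and weight $\eta$ satisfy $\alpha \wedge \eta > -2(1 - 1/p)$ with the correct $p$ (this is where $p$ degrading to $2$ after one multiplication by $U_0$ is the binding constraint, since $-2(1-1/2) = -1$ and the lowest homogeneity of $U_0^2 Y$ is $2\alpha_U = -1 - 2\kappa$, only admissible because $Y$ itself contributes homogeneity $\geq 0$ rather than something negative — so one genuinely uses that $\cP h$, and hence $Y_h$, has nonnegative regularity); and (b) after convolution, the output $(\gamma', \eta')$ dominates the target $(\gamma, \gamma)$, which forces the precise window $\gamma \in (\tfrac74 + 2\kappa, 2 - 2\kappa)$ — the lower bound to keep all product-$\gamma$'s positive and reconstructions valid, the upper bound to leave slack against the $2^-$ (rather than $2$) weight gain in Theorem \ref{Th:ConvSing}. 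Verifying all these inequalities jointly, with $\kappa$ chosen small at the end, is the real content; the rest is the routine Banach fixed point scheme in $\cD^{\gamma,\gamma,T}_2$.
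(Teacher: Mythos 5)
Your bookkeeping of the sectors, exponents and applications of Theorems~\ref{Th:EmbeddingW}, \ref{Th:Mult}, \ref{Th:ConvSing}--\ref{Th:ConvSmooth} is in the spirit of the paper's Step~1, but the core of the proposal --- a single Banach fixed point on a ball of $\cD^{\gamma,\gamma,T}_2$ for the \emph{given} $T$, with the contraction constant controlled by shrinking the ball radius $r$ and the shift $\|h\|_{L^2} < q$ --- cannot work. The obstruction is the \emph{linear} term $\cP\big(3U_0^2(Y-\bar Y)\big)$ in $\Phi_h(Y)-\Phi_h(\bar Y)$: its operator norm on $\cD^{\gamma,\gamma,T}_2$ is a fixed number depending on $\$U_0\$$ (which is whatever the model and $T$ dictate) and is not made small by choosing $r$ or $q$ small. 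Your sentence ``$T$ is already fixed but the relevant smallness is in $r$ and in $\|U_0^2\|$ being controlled via $\$U_0\$_{\cD^{\gamma_0,\eta_0,T}_\infty}$'' asserts a smallness that is simply not there. The $T^\theta$ factor you gesture at only appears when you embed $\cD^{\gamma+\eps,\gamma+\eps,S}_2 \hookrightarrow \cD^{\gamma,\gamma,S}_2$ over a \emph{small} time interval of length $S$; with $S = T$ fixed this gives no gain.

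The paper therefore obtains the contraction only on a small interval $(0,T_*)$ (Step~2), and the real difficulty --- flagged explicitly in the paragraph preceding the proposition --- is how to extend to $(0,T)$. The usual trick of restarting the equation at time $T_*$ with initial data $u(T_*,\cdot)$ is unavailable here because the spaces are $L^2$-type in space \emph{and time}: evaluating a modelled distribution at a fixed time would require an embedding into $L^\infty$-in-time that costs too much regularity. The paper's workaround (Steps~3--4) is the genuinely new ingredient: smoothly cut off the already-obtained fixed point $Y_*$ to produce $Y^{\mathrm{ext}}\in\cD^{\gamma,\gamma,3T_*/2}_2$, then solve a \emph{modified} fixed point on the shifted window $(T_*/2,3T_*/2)$ in a shifted weighted space $\hat\cD^{\gamma,\gamma,T_*}_2$, whose map $\hat\cM_{T_*}$ contains both the original nonlinearity in the unknown and extra cross-terms involving $Y^{\mathrm{ext}}$. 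Crucially, those extra terms are all controlled by $\|h\|_{L^2}$, so contractivity is restored by shrinking $q$ further; and the construction is designed so that the new piece vanishes on the overlap $(T_*/2,2T_*/3]$, making the patched object a fixed point of $\cM_{3T_*/2}$. Iterating covers $[0,T]$. Your proposal contains no analogue of this mechanism, and without it the proof does not close. (A secondary remark: the inequality you invoke, $-1-2\kappa > -1$, is false as written; but this is a side issue next to the missing time-iteration.)
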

Although the statement of this proposition is classical in the framework of the theory of regularity structures, the proof presents a specific difficulty. Indeed, we are working with spaces of modelled distributions of $L^p$-type in space \textit{and} time so that to evaluate the solution at some given time $t$, one needs to embed it into a space of modelled distributions of $L^\infty$-type in time: this decreases tremendously the regularity of the corresponding function of space, and unfortunately, prevents us from iterating a fixed point argument. To circumvent this difficulty, we build the iterations in a different manner: after having obtained a fixed point on $(0,T_*)$ for some $T_* >0$, we iterate the fixed point argument on the interval $(T_*/2,3T_*/2)$ but with a slightly different map which takes into account the fixed point already obtained on $(0,T_*)$. The key point is that everything depends continuously on the $L^2$-norm of the shift $h$, so that reducing the latter one can always obtain contractivity.
\begin{proof}
For $S \le T$, consider the map
$$ \cM_S: Y \mapsto -\cP(3 U_0^2 Y + 3 U_0 Y^2 + Y^3) + \cP(h)\;.$$
Let $\cY,\cR_Y$ be the smallest sets of symbols such that $X^k \in \cY$ for all $k\in \N^4$ and, for all $\tau_1,\tau_2,\tau_3 \in \cU$ and all $\rho_1,\rho_2,\rho_3 \in \cY$, we have
$$ \tau_1 \tau_2 \rho_1 \in \cR_Y\;,\quad  \tau_1 \rho_1 \rho_2 \in \cR_Y\;,\quad \rho_1\rho_2\rho_3 \in\cR_Y\;,$$
and for all $\rho \in \cR_Y$ we have $\cI(\rho) \in \cY$. It is simple to check that $\cY$ then contains only symbols with non-negative homogeneities. The proof is now split into five steps.\\

\textit{Step 1.} Let us show that $\cM_S$ goes from $\cD^{\gamma,\gamma,S}_{2}$ into $\cD^{\gamma+\eps,\gamma+\eps,S}_{2}$ for some $\eps>0$. To that end, we look at every single term appearing in $\cM_S(Y)$ and show that it belongs to some $\cD^{\gamma',\eta',S}_2$ with $\gamma' >\gamma$ and $\eta' > \gamma$. Since $\cD^{\gamma',\eta',S}_2$ can be continuously embedded into $\cD^{\gamma'',\eta'',S}_2$ if $\gamma''\le\gamma'$, $\eta'' \le \eta'$ and $\gamma'' \notin \cA$, this is enough to obtain the desired property.\\

Applying successively the Embedding Theorem (Th. \ref{Th:EmbeddingW}), the Multiplication Theorem (Th. \ref{Th:Mult}) and the Convolution Theorems (Th. \ref{Th:ConvSing} and \ref{Th:ConvSmooth}), we obtain the following: (for the sake of readability, we drop the superscript $S$)
$$ Y \in \cD^{\gamma,\gamma}_{2} \Rightarrow Y \in \cD^{\gamma-\frac{5}{3}-\kappa,\gamma-\frac{5}{3}-\kappa}_{6} \Rightarrow Y^3 \in \cD^{\gamma-\frac{5}{3}-\kappa,3\gamma-5-3\kappa}_{2} \Rightarrow \cP(Y^3) \in \cD^{\gamma + \frac{1}{3}-\kappa,3\gamma-3-4\kappa}_{2}\;,$$
as well as
\begin{align*}
Y \in \cD^{\gamma,\gamma}_{2} &\Rightarrow Y\in \cD^{\gamma-\frac5{4}-\kappa,\gamma-\frac5{4}-\kappa}_{4} \Rightarrow Y^2 \in \cD^{\gamma-\frac5{4}-\kappa,2\gamma-\frac52-2\kappa}_{2}\\
&\Rightarrow Y^2 U_0 \in \cD^{\gamma-\frac{7}{4}-2\kappa,2\gamma-\frac52-2\kappa+\eta_0}_{2} \Rightarrow \cP(Y^2 U_0) \in \cD^{\gamma+\frac1{4}-2\kappa,2\gamma-\frac12-3\kappa+\eta_0}_{2}\;,
\end{align*}
and
$$ Y \in \cD^{\gamma,\gamma}_{2} \Rightarrow YU_0^2 \in \cD^{\gamma-1-2\kappa,\gamma+2\eta_0}_{2} \Rightarrow \cP(YU_0^2) \in \cD^{\gamma+1-2\kappa,\gamma+2\eta_0 + 2 - \kappa}_{2}\;.$$
Finally, $\cP(h) \in \cD^{2-\kappa,2-\kappa}_{2}$ by Lemma \ref{Lemma:ConvolShift}.\\
This shows that $\cM_S$ goes from $\cD^{\gamma,\gamma,S}_{2}$ into $\cD^{\gamma+\eps,\gamma+\eps,S}_{2}$ for some $\eps>0$. Notice that the embedding from $\cD^{\gamma+\eps,\gamma+\eps,S}_{2}$ into $\cD^{\gamma,\gamma,S}_{2}$ has a norm of order $S^\epsilon$: we will use this fact below to get a fixed point.

\medskip

\textit{Step 2.}  In the forthcoming equations, $\$\cdot\$$ will refer to the $\cD^{\gamma,\gamma,S}_2$-norm. By the analytical results of Subsection \ref{Subsec:Dgamma}, there exists $C>0$ such that
$$ \$ \cM_{S}(Y)\$ \le C S^\epsilon \big( \$Y\$ + \$Y\$^2 + \$Y\$^3 + \| h \|_{L^2((0,S)\times\T^3)}\big)\;,$$
and
$$ \$ \cM_{S}(Y)-\cM_{S}(Y')\$ \le C S^\epsilon \$Y-Y'\$ \big( 1 + \$Y-Y'\$ + \$Y-Y'\$^2\big)\;,$$
uniformly over all $S \le T$ and all $Y,Y' \in \cD^{\gamma,\gamma,S}_2$. Consequently, for any $R>0$, one can choose $T_*$ and $q$ small enough so that $\cM_{T_*}$ maps the centered ball of radius $R$ into the centered ball of radius $R/2$ in $\cD^{\gamma,\gamma,T_*}_{2}$ and is $1/2$-Lipschitz there, uniformly over all $\|h\|_{L^2(0,T)} \le q$. Fix $R >0$ and let $Y_* = Y_*(h)$ be the corresponding fixed point for any $h$ such that $\|h\|_{L^2} \le q$. A simple computation shows that
$$ \$ Y_*(h) \$ \le C' {T_*}^\epsilon \|h\|_{L^2}\;.$$

%We can also ask for such a property to be true not only on the interval $[0,T_*]$ but on any interval of length $T_*$ in the compact set of $[0,T]$ (this requires to take bounds on $U_0$ and on the model on the whole compact set).
%we deduce that for any two admissible models $(\Pi,\Gamma)$ and $(\bar\Pi,\bar\Gamma)$, we have
%\begin{align*}
%\$ \cM_T(W) \$ \le C \big(T^\epsilon + \|h\|_{L^2((0,T)\times\T^3)}\big)\;,\\
%\$ \cM_T(W) ; \cM_T(\bar W) \$ \le C T^\epsilon(\$ W ; \bar W\$ + \| \Pi-\bar\Pi\| + \|\Gamma-\bar\Gamma\|)\;,
%\end{align*}
%uniformly over all $W,\bar W$ and all $(\Pi,\Gamma), (\bar\Pi,\bar\Gamma)$ whose norms are smaller than some constant $R>0$.
\textit{Step 3.} We ``extend" $Y^*$ into an element $Y^{\mbox{\tiny ext}}$ of $\cD^{\gamma,\gamma,3T_*/2}_{2}$ that satisfies:
$$ Y^{\mbox{\tiny ext}}(t,\cdot) = Y_*(t,\cdot)\;,\quad \forall t\in (0,2T_*/3)\;,$$
and
$$ Y^{\mbox{\tiny ext}}(t,\cdot) = 0\;,\quad \forall t\in (3T_*/4,3T_*/2)\;.$$
To do so, we consider a smooth function of $t$ which equals $1$ on $(0,2T_*/3)$ and $0$ after $3T_*/4$, and we lift it into the polynomial regularity structure up to level $\lfloor \gamma + 2 \rfloor$. The $\cD^{\gamma+2,0,3T_*/2}_{\infty}$-norm of such a function is of order $T_*^{-\gamma-2}$ and we can apply Theorem \ref{Th:Mult} to take the product of this function with $Y_*$. Notice that $\$Y^{\mbox{\tiny ext}}\$ \lesssim T_*^{-\gamma-2} \$ Y_*(h) \$$.\\

\textit{Step 4.} Let us now iterate this fixed point procedure. We introduce the space $\hat{\cD}^{\gamma,\gamma,T_*}_{2}$ of maps on $(T_*/2,3T_*/2)\times\T^3$ which vanish on $(T_*/2,2T_*/3)\times\T^3$ and satisfy the bounds of the ${\cD}^{\gamma,\gamma,T_*}_{2}$-norm but shifted by $T_*/2$ in time: in particular the weights are given by powers of $t-T_*/2$ instead of $t$. We then consider the following map defined on $\hat{\cD}^{\gamma,\gamma,T_*}_{2}$:
\begin{align*}
\hat{\cM}_{T_*}:Y \mapsto &- \cP(3 U_0^2 Y + 3 U_0 Y^2 + Y^3) - \cP(6 U_0Y^{\mbox{\tiny ext}}Y + 3 {Y^{\mbox{\tiny ext}}}^2 Y + 3 Y^{\mbox{\tiny ext}} Y^2 ) + \cP(h\tun_{t\ge 2T_*/3})\\
&-\cP(3 U_0^2 Y^{\mbox{\tiny ext}}  + 3 U_0 {Y^{\mbox{\tiny ext}}}^2 + {Y^{\mbox{\tiny ext}}}^3) - Y^{\mbox{\tiny ext}} + \cP(h\tun_{t< 2T_*/3})\;.
\end{align*}
Notice that the second line vanishes on $(T_*/2,2T_*/3]$ since $Y^{\mbox{\tiny ext}}$ coincides with the fixed point $Y_*$ of $\cM_{T_*}$ on this interval. The map $\hat{\cM}_{T_*}$ then takes values in $\hat{\cD}^{\gamma,\gamma,T_*}_{2}$.\\
Up to diminishing the value of $q$, one can check that for any arbitrary $\delta > 0$, $\hat{\cM}_{T_*}$ maps the centred ball of radius $R$ in $\hat{\cD}^{\gamma,\gamma,T_*}_{2}$ into the centered ball of radius $R/2 + \delta$ in $\hat{\cD}^{\gamma,\gamma,T_*}_{2}$ and is $(1/2 + \delta)-$Lipschitz there. Indeed, compared to $\cM_{T_*}$, the norms of the additional terms appearing in $\hat{\cM}_{T_*}$ all depend on the $L^2$-norm of $h$ so that their contributions can be made as small as desired by simply diminishing the latter.\\
This yields a fixed point $Y_{**}$. Since it vanishes on $(T_*/2,2T_*/3]$, it can be extended into an element of $\cD^{\gamma,\gamma,3T_*/2}_{2}$ by simply setting its value to $0$ before time $T_*/2$, and one can check that $Y_{\mbox{\tiny ext}} + Y_{**}$ is a fixed point of the map $\cM_{3T_*/2}$. Iterating this procedure $k$ times, one obtains a fixed point on the interval $[0,(k+1)T_*/2]$ so that we can get as close as desired to time $T$.\\

\textit{Step 5.} The lower-semicontinuity of the maximal time $(\Pi,\Gamma)\mapsto T$ ensures that we can find a neighbourhood of $(\Pi,\Gamma)$ where the maximal time is uniformly larger than $T$. The local Lipschitz continuity of the solution map $(\Pi,\Gamma,h)\mapsto Y_h$ is then a consequence of the bounds obtained in Theorems \ref{Th:ReconstructionW}, \ref{Th:EmbeddingW}, \ref{Th:Mult} and \ref{Th:ConvSing} applied to the fixed points associated with the model $(\Pi,\Gamma)$ and some close-by model $(\bar\Pi,\bar\Gamma)$.
\end{proof}
In the sequel, we let $U_h=U_0+Y_h$. As these two terms do not live in the same space, we will need to treat them separately in the next analytical bounds.

%
%For any given $h_0,h\in L^2$, we consider the equation
%\begin{equation}\label{Eq:Y2}
%Y = -\cP(3 U_{h_0}^2 Y + 3 U_{h_0} Y^2 + Y^3) + \cP(h)\;.
%\end{equation}
%
%\begin{proposition}\label{Prop:Y2}
%Take $\gamma \in (\frac74 + 2\kappa , 2-2\kappa)$. Then, for any admissible model $\Pi$ such that $T_{\mbox{\tiny explo}}(\Pi) \ge T$, for all $h_0$ in the centered ball of radius $q$ (where $q$ is taken from the previous proposition) and for all $h$ in the centered ball of radius $\delta=q-\|h_0\|$, there exists a unique solution $Y_{h_0,h} \in \cD^{\gamma,\gamma,T}_{2}$ to \eqref{Eq:Y2}. Furthermore, the map $(\Pi,h_0,h) \mapsto Y_{h_0,h}$ is locally Lipschitz.
%\end{proposition}
%\begin{proof}
%It suffices to observe that $Y_{h_0,h}$ is solution to \eqref{Eq:Y2} if and only if $Y_{h_0}+Y_{h_0,h}$ is solution to the equation \eqref{Eq:Y} associated to the shift $h_0+h$. The statement of the proposition follows from Proposition \ref{Prop:Y}.
%\end{proof}
%
%We further let $U_{h_0,h} := U_{h_0} + Y_{h_0,h}$ and
We then have the following consistency result:

\begin{proposition}
For all $\xi$ $\in$ $\Omega_{0,T}$, for all $h$ $\in$ $\cH$ s.t. $\|h\|< q(\xi)$ where $q$ is given by Proposition \ref{Prop:Y} for the model $\Pi(\xi)$, one has $\xi+h \in \Omega_{1,T}$ and 
$$u(\xi+h) = \cR U_{h} (\Pi(\xi))).$$
\end{proposition}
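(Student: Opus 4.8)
The plan is to push the decomposition $U_h = U_0 + Y_h$ of Proposition~\ref{Prop:Y} through the mollification scheme. Fix $\xi \in \Omega_{0,T}$ and $h\in\cH$ with $\|h\| < q(\xi) := q(\Pi(\xi))$, and set $h^\eps := h\ast\rho^\eps$, so that the regularisation of the shifted noise $\xi+h$ is $(\xi+h)\ast\rho^\eps = \xi^\eps + h^\eps$. The first step is to check that, for each fixed $\eps>0$, the classical solution of
$$(\partial_t-\Delta)u = C^\eps u - u^3 + \xi^\eps + h^\eps\;,\qquad u(0,\cdot)=u_0\;,$$
with $C^\eps = 3C_1^\eps - 9 C_2^\eps$ as in Subsection~\ref{sec:shift} (the renormalisation constants are unchanged, since they depend only on $R$ and not on the noise sample), is exactly $\cR\bigl(U_0(\hat{\Pi}^\eps) + Y_{h^\eps}(\hat{\Pi}^\eps)\bigr)$, where $U_0$ solves \eqref{eq:fpU} and $Y_{h^\eps}$ solves \eqref{Eq:Y} with $h$ replaced by $h^\eps$, both for the smooth renormalised model $\hat{\Pi}^\eps = \hat{\Pi}^\eps(\xi)$. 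This is the precise analogue of the identification recalled for \eqref{eq:fpU} itself: writing $U_h = U_0 + Y_{h^\eps}$, reconstructing, and using the renormalisation identity \eqref{eq:PiM}, the abstract equation turns into the displayed PDE, the only point to verify being that the extra symbols in $\cR_Y$ coming from the cross terms $3U_0^2 Y$ and $3U_0 Y^2$ generate, under the group $\mathfrak{R}$, precisely the counterterms produced by expanding $u^3$ around $u_0$ --- which is the same computation as in the unshifted case. Note that $\cP h^\eps$ only involves the polynomial symbols $X^k$, so it is untouched by the renormalisation, and its reconstruction for any admissible model is $P\ast h^\eps$.

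The second step is to let $\eps = \eps_k\to 0$. Since $\xi\in\Omega_{0,T}$, Theorem~\ref{thm:model} gives $\hat{\Pi}^{\eps_k}(\xi)\to\Pi(\xi)$ in $\cM$ with $T_{\mbox{\tiny explo}}(\Pi(\xi))>T$; lower semicontinuity of the explosion time gives $T_{\mbox{\tiny explo}}(\hat{\Pi}^{\eps_k}(\xi))>T$ for $k$ large and $U_0(\hat{\Pi}^{\eps_k}(\xi))\to U_0(\Pi(\xi))$ in $\cD^{\gamma_0,\eta_0,T}_\infty$. Since $\|h^{\eps_k}-h\|_{L^2((0,T)\times\T^3)}\to 0$, one has $\|h^{\eps_k}\|_{L^2}<q(\xi)$ for $k$ large, and $\cP h^{\eps_k}\to\cP h$ in $\cD^{2-\kappa,2-\kappa,T}_2$ by Lemma~\ref{Lemma:ConvolShift}. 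Shrinking $q(\xi)$ if necessary so that the fixed-point construction of Proposition~\ref{Prop:Y} is uniform over a neighbourhood of $\Pi(\xi)$ (as in Step~5 of its proof), the local Lipschitz continuity of $(\Pi,h)\mapsto Y_h$ then yields $Y_{h^{\eps_k}}(\hat{\Pi}^{\eps_k}(\xi))\to Y_h(\Pi(\xi))$ in $\cD^{\gamma,\gamma,T}_2$.

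Finally, applying the reconstruction operator --- through the continuity estimate \eqref{Eq:ReconsBound} together with the two-model bound \eqref{Eq:ReconsBoundTwo} of Theorem~\ref{Th:ReconstructionW}, combined with the convergence of $\Pi_z f(z)$ --- gives $\cR\bigl(U_0+Y_{h^{\eps_k}}\bigr)(\hat{\Pi}^{\eps_k}(\xi))\to\cR\bigl(U_0+Y_h\bigr)(\Pi(\xi)) = \cR U_h(\Pi(\xi))$. To obtain convergence in the topology of $C([0,T],\cC^\eta(\T^3))$ entering the definition of $\Omega_{1,T}$, one treats the two contributions separately: the singular (merely distributional) part lies in $U_0$, which is already controlled in the $L^\infty$-in-time space $\cD^{\gamma_0,\eta_0,T}_\infty$ exactly as in Subsection~\ref{sec:shift}, whereas $Y_h$ --- and the $Y_{h^{\eps_k}}$ --- take values in a sector of strictly positive homogeneity (recall $\gamma > 7/4$), so their reconstructions are genuine space-time functions, Hölder of positive exponent, and the $L^2$-in-time bounds there upgrade to $L^\infty$-in-time without loss. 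Hence $u_{\eps_k}(\xi+h)\to\cR U_h(\Pi(\xi))$ in $C([0,T],\cC^\eta(\T^3))$, which is precisely the assertion that $\xi+h\in\Omega_{1,T}$ and $u(\xi+h)=\cR U_h(\Pi(\xi))$. I expect the main obstacle to be Step~1: verifying carefully that reconstructing $U_0 + Y_{h^\eps}$ under the renormalised smooth model produces exactly the shifted PDE with constant $C^\eps$ and mollified shift $h^\eps$ --- in other words that shifting the noise commutes with the abstract solution map modulo the renormalisation already identified for \eqref{eq:fpU} --- with a secondary difficulty in checking that the convergence holds in a topology strong enough to reach $C([0,T],\cC^\eta(\T^3))$, which rests on the positivity of the homogeneities in the sector where $Y_h$ lives.
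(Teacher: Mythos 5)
Your proposal is correct and follows essentially the same route as the paper's proof: identify, via the renormalisation identity \eqref{eq:PiM}, the classical PDE satisfied by $\cR\bigl(U_0+Y_{h^\eps}\bigr)(\hat\Pi^\eps)$, match it against the equation solved by $u_\eps(\xi+h)$ (whose mollified noise is $\xi^\eps+h^\eps$), and pass to the limit using model convergence together with the local Lipschitz continuity of $(\Pi,h)\mapsto Y_h$ from Proposition~\ref{Prop:Y}. Your extra discussion of the $C([0,T],\cC^\eta(\T^3))$ topology fills in a point the paper leaves implicit (it merely invokes continuity of $(\Pi,h)\mapsto U_h$), although the phrase that the $L^2$-in-time bounds ``upgrade to $L^\infty$-in-time without loss'' is a shortcut: the time-continuity of $\cR Y_h$ is better justified by observing that $Y_h=\cP(\cdots)+\cP h$ so that its reconstruction is a heat convolution, which is continuous in time.
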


\begin{proof}
By applying the same arguments identifying the equation satisfied by $u_\varepsilon$ (cf. \cite[Proposition 9.10]{Hairer2014}), one can show that $u_{h}^\varepsilon := \cR U_{h} (\hat{\Pi}^\varepsilon)$ satisfies the equation
$$(\partial - \Delta) u_{h}^\varepsilon = - (u_{h}^\varepsilon)^3 + (3 C_1^\varepsilon - 9 C_2^\varepsilon) u_{h}^\varepsilon + \xi_{\varepsilon} + h, \;\;\;\;\; u_{h}^\varepsilon(0,\cdot)= u_0.$$
Comparing with the equation satisfied by $u_\varepsilon$, we obtain that
$$u_\varepsilon(\xi+h) = u^\varepsilon_{h^\varepsilon}$$
where $h^\varepsilon = h \ast \rho^\varepsilon$. Taking the limit and using continuity of $(\Pi,h) \mapsto U_{h}$, we obtain the result.
\end{proof}

We then consider the \textit{tangent equation}
\begin{equation}\label{Eq:V}
V := \cP(-3U_{h_0}^2V) + \cP h\;.
\end{equation}

\begin{proposition}\label{Prop:V}
Take $\gamma \in (\frac74 + 2\kappa , 2-2\kappa)$. Then, for any admissible model $\Pi$ such that $T_{\mbox{\tiny explo}}(\Pi) \ge T$ for all $h_0$ in the ball of radius $q$ in $L^2(0,T)$ and for all $h\in L^2(0,T)$, there exists a unique solution $V_{h_0,h} \in \cD^{\gamma,\gamma,T}_{2}$ to \eqref{Eq:V}. Furthermore, the map $(\Pi,h_0,h) \mapsto V_{h_0,h}$ is locally Lipschitz.
\end{proposition}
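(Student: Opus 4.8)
The plan is to mirror the proof of Proposition \ref{Prop:Y}, taking advantage of the fact that the tangent equation \eqref{Eq:V} is \emph{linear} in $V$, which renders both the fixed point and the time-iteration arguments considerably simpler. Write $U_{h_0} = U_0 + Y_{h_0}$ with $U_0 \in \cD^{\gamma_0,\eta_0,T}_\infty$ and $Y_{h_0}\in\cD^{\gamma,\gamma,T}_2$, the latter being available from Proposition \ref{Prop:Y} since $\|h_0\|_{L^2} < q$ and $T_{\mbox{\tiny explo}}(\Pi)\ge T$; expanding,
$$ 3\,U_{h_0}^2\, V = 3\,U_0^2\, V + 6\, U_0\, Y_{h_0}\, V + 3\, Y_{h_0}^2\, V\;, $$
and for $S\le T$ we consider the affine map $\cM_S : V\mapsto -\cP\big(3 U_{h_0}^2 V\big) + \cP h$ on $\cD^{\gamma,\gamma,S}_2$.

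\textit{Step 1 (mapping property).} Applying successively the Embedding, Multiplication and Convolution Theorems (Theorems \ref{Th:EmbeddingW}, \ref{Th:Mult}, \ref{Th:ConvSing}, \ref{Th:ConvSmooth}) exactly as in Step 1 of the proof of Proposition \ref{Prop:Y}, one checks that each of $\cP(U_0^2 V)$, $\cP(U_0 Y_{h_0} V)$ and $\cP(Y_{h_0}^2 V)$ lands in some $\cD^{\gamma',\eta',S}_2$ with $\gamma' > \gamma$ and $\eta' > \gamma$: here $V$ plays precisely the role that $Y$ played there, the term $U_0 Y_{h_0} V$ the role of $U_0 Y^2$, and $Y_{h_0}^2 V$ the role of $Y^3$, and the constraints $\gamma\in(\tfrac74+2\kappa,\,2-2\kappa)$ and $\eta_0\in(-2/3,-1/2-\kappa)$ are exactly what guarantee that all resulting homogeneities and weight exponents exceed $\gamma$. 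Together with $\cP h\in\cD^{2-\kappa,2-\kappa}_2$ (Lemma \ref{Lemma:ConvolShift}), and since the embedding of $\cD^{\gamma+\eps,\gamma+\eps,S}_2$ into $\cD^{\gamma,\gamma,S}_2$ has norm of order $S^\eps$, this yields
$$ \$\cM_S(V)\$ \le C S^\eps\big(\$V\$ + \|h\|_{L^2((0,S)\times\T^3)}\big)\;,\qquad \$\cM_S(V)-\cM_S(V')\$ \le C S^\eps\,\$V-V'\$\;, $$
with $\$\cdot\$$ denoting the $\cD^{\gamma,\gamma,S}_2$-norm and $C$ depending only on $\$U_0\$$, $\$Y_{h_0}\$$ and the model. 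The crucial point is that, by linearity, the Lipschitz estimate carries no higher-order terms.

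\textit{Steps 2--4 (local solution and time iteration).} Fix $T_*$ with $CT_*^\eps\le\tfrac12$; then $\cM_{T_*}$ is a $\tfrac12$-contraction on all of $\cD^{\gamma,\gamma,T_*}_2$, hence admits a unique fixed point $V_*$ with $\$V_*\$\le 2CT_*^\eps\|h\|_{L^2}$. To reach a time arbitrarily close to $T$ we repeat the patching construction of Steps 3--4 of Proposition \ref{Prop:Y}: extend $V_*$ to $V^{\mbox{\tiny ext}}\in\cD^{\gamma,\gamma,3T_*/2}_2$ equal to $V_*$ on $(0,2T_*/3)$ and to $0$ on $(3T_*/4,3T_*/2)$ by multiplying with a lifted smooth cutoff (Theorem \ref{Th:Mult}), and then solve, on the interval $(T_*/2,3T_*/2)$ and in the shifted space $\hat{\cD}^{\gamma,\gamma,T_*}_2$, the analogue $\hat{\cM}_{T_*}$ of the map used there, whose only $V$-dependent part is again $-\cP(3 U_{h_0}^2 V)$; since \eqref{Eq:V} is linear there is no cross term $V^{\mbox{\tiny ext}}\!\cdot V$, so $\hat{\cM}_{T_*}$ is a $\tfrac12$-contraction for the same $T_*$, with no need to further shrink $q$. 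This produces a fixed point $V_{**}$ vanishing on $(T_*/2,2T_*/3]$, and, extending $V_{**}$ by $0$ before $T_*/2$, the sum $V^{\mbox{\tiny ext}}+V_{**}$ is a fixed point of $\cM_{3T_*/2}$. Iterating finitely many times yields the solution $V_{h_0,h}\in\cD^{\gamma,\gamma,T}_2$ of \eqref{Eq:V}; uniqueness on $[0,T]$ follows from uniqueness on each sub-interval of length $T_*/2$, again by the contraction property.

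\textit{Step 5 (local Lipschitz dependence).} The bounds of Theorems \ref{Th:ReconstructionW}, \ref{Th:EmbeddingW}, \ref{Th:Mult} and \ref{Th:ConvSing}, applied to the fixed points built from two nearby admissible models, combined with the local Lipschitz dependence of $U_0$ on the model (solution theory for \eqref{eq:fpU}) and of $Y_{h_0}$, hence of $U_{h_0}$, on $(\Pi,h_0)$ (Proposition \ref{Prop:Y}), give the local Lipschitz continuity of $(\Pi,h_0,h)\mapsto V_{h_0,h}$ --- and, for fixed $(\Pi,h_0)$, this dependence is in fact linear in $h$. The only real subtlety, exactly as in Proposition \ref{Prop:Y}, is the iteration of the fixed point across $[0,T]$ forced by the use of $L^2$-in-time norms; but since \eqref{Eq:V} is linear this step is strictly easier here, so I do not anticipate any genuine new difficulty.
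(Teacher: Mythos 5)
Your proof is correct and follows essentially the same route as the paper's, whose proof of Proposition~\ref{Prop:V} is deliberately terse: it simply refers back to the proof of Proposition~\ref{Prop:Y} and observes that linearity of \eqref{Eq:V} in $V$ removes the smallness constraint on $h$. Your fleshed-out version --- expanding $U_{h_0}^2 V = U_0^2 V + 2 U_0 Y_{h_0} V + Y_{h_0}^2 V$ and matching each term to the corresponding estimate in Step~1 of Proposition~\ref{Prop:Y}, then noting that the contraction and patching steps become simpler because $\cM_S$ and $\hat{\cM}_{T_*}$ are affine --- is exactly what that one-paragraph proof is meant to convey.
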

\begin{proof}
The arguments are essentially the same as those presented in the proof of Proposition \ref{Prop:Y}. The main difference is that \eqref{Eq:V} is linear in $V$ so that the solution is itself linear in $h$: therefore, there is no constraint on the $L^2$-norm of $h$ for existence of solutions.
\end{proof}

We also identify the equation satisfied by $V_{h_0,h}$ in the case of a regularized model :

\begin{proposition} \label{prop:vheps}
$v^{h_0,h,\varepsilon}:= \cR V_{h_0,h} (\hat{\Pi}^\varepsilon)$ satisfies
\begin{equation} \label{eq:vheps}
(\partial_t - \Delta) v^{h_0,h,\varepsilon} = -3 (u^{h_0,\varepsilon})^2 v^{h_0,h,\varepsilon} + (3C_1^\varepsilon - 9 C_2^\varepsilon) v^{h_0,h,\varepsilon} + h, \;\;\;\; v^{h_0,h,\varepsilon}(0,\cdot) = 0.
\end{equation}
\end{proposition}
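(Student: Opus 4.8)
The plan is to reconstruct the abstract tangent equation~\eqref{Eq:V} against the renormalized model $\hat\Pi^\varepsilon$ and then to read off the renormalization counterterms, following verbatim the computation used to identify the equation satisfied by $u_\varepsilon$ in~\cite[Prop.~9.10]{Hairer2014} (and reused above for $u^{h_0,\varepsilon}$).

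\emph{Reconstruction.} First I would apply $\cR=\cR^{\hat\Pi^\varepsilon}$ to both sides of~\eqref{Eq:V}. Since $\cR\cP^\gamma g = P\ast\cR g$ for any modelled distribution $g$ (Theorems~\ref{Th:ConvSing} and~\ref{Th:ConvSmooth}) and $\cR(\cP h)=P\ast h$, $\cP h$ being the truncated Taylor lift of the continuous function $P\ast h$ (cf.~Lemma~\ref{Lemma:ConvolShift}), this yields
\[
v^{h_0,h,\varepsilon} \;=\; -3\,P\ast\cR\big(U_{h_0}^2\,V_{h_0,h}\big)\;+\;P\ast h\;,
\]
hence $(\partial_t-\Delta)v^{h_0,h,\varepsilon} = -3\,\cR\big(U_{h_0}^2 V_{h_0,h}\big) + h$. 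Since $\cR V_{h_0,h}$ is supported in $\{t\ge 0\}$ and $h$ in $(0,T)\times\T^3$, both convolutions above have vanishing trace at $t=0$, which gives the initial condition $v^{h_0,h,\varepsilon}(0,\cdot)=0$.

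\emph{Identification of the counterterm.} Because $\hat\Pi^\varepsilon$ is a smooth model, $\cR F(z)=\big(\hat\Pi^\varepsilon_z F(z)\big)(z)$, which by~\eqref{eq:PiM} equals $\big(\Pi^\varepsilon_z M_\varepsilon[F(z)]\big)(z)$, where $\Pi^\varepsilon$ is the canonical (hence multiplicative) model of $\xi^\varepsilon$ and $M_\varepsilon=\exp(-C_1^\varepsilon L_1-C_2^\varepsilon L_2-C_3^\varepsilon L_3)$. Applying this with $F=U_{h_0}^2 V_{h_0,h}$ and expanding on the basis of $\cT$, the only symbols of $U_{h_0}^2 V_{h_0,h}$ whose image under $M_\varepsilon$ carries a nonzero $\tun$-component are $\<2>$ and $\<22>$; since $V_{h_0,h}$ contains no $\cI(\Xi)$, the two factors $\cI(\Xi)$ making up these trees must both come from the two copies of $U_{h_0}$, so these trees enter $U_{h_0}^2 V_{h_0,h}$ with the same coefficients as in the cubic term $U^3$ up to a factor $1/3$ and with $u_\varepsilon$ replaced by $v^{h_0,h,\varepsilon}$. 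Running the computation of~\cite[Prop.~9.10]{Hairer2014}, adapted to our weighted $L^2$-type spaces exactly as for $u_\varepsilon$ and $u^{h_0,\varepsilon}$, the multiplicativity of $\Pi^\varepsilon$ handles the non-renormalized (``classical'') part and one obtains
\[
\cR\big(U_{h_0}^2 V_{h_0,h}\big) \;=\; (u^{h_0,\varepsilon})^2\,v^{h_0,h,\varepsilon}\;-\;\big(C_1^\varepsilon-3C_2^\varepsilon\big)\,v^{h_0,h,\varepsilon}\;.
\]
Note that $\<2c2>$ never occurs here, since neither $U_{h_0}$ nor $V_{h_0,h}$ involves the backward integration map $\tcI$, so $C_3^\varepsilon$ plays no role.

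\emph{Conclusion and main difficulty.} Substituting the last display into the reconstructed equation gives exactly~\eqref{eq:vheps}. The only step that is more than bookkeeping is the counterterm identification: one has to run the (nontrivial) argument of~\cite[Prop.~9.10]{Hairer2014} to see that the non-renormalized contributions to $\cR(U_{h_0}^2 V_{h_0,h})$ reassemble precisely into the genuine product $(u^{h_0,\varepsilon})^2 v^{h_0,h,\varepsilon}$, and that the counterterm is expressed through the reconstruction $v^{h_0,h,\varepsilon}=\cR V_{h_0,h}$ itself rather than through a lower-order coefficient of $V_{h_0,h}$ — but this is the same mechanism already invoked for the equations of $u_\varepsilon$ and $u^{h_0,\varepsilon}$. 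Alternatively, one could derive~\eqref{eq:vheps} by differentiating in $h_0$ the equation satisfied by $u^{h_0,\varepsilon}$, after checking from the contraction estimates in the proof of Proposition~\ref{Prop:Y} that $h_0\mapsto U_{h_0}$ is Fr\'echet differentiable with derivative $h\mapsto V_{h_0,h}$ and that $\cR$ is linear and continuous.
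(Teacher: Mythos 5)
Your proposal is correct and follows the same strategy as the paper: reconstruct \eqref{Eq:V}, use $\cR\cP=P\ast\cR$ to reduce to the PDE with source $-3\cR(U_{h_0}^2V_{h_0,h})+h$, then identify $\cR(U_{h_0}^2V_{h_0,h})$ via the renormalized model. The one substantive difference is presentational: the paper carries out the explicit tree expansions of $U$, $V$, and $-3U^2V$ up to the relevant homogeneity and reads off the counterterm directly, whereas you argue by analogy with the $U^3$ computation of~\cite[Prop.~9.10]{Hairer2014}, exploiting that $V$ contains no $\cI(\Xi)$ so that $\<2>$ and $\<22>$ must take both $\cI(\Xi)$-legs from the two $U$-factors and that $\<2c2>$ (hence $C_3^\varepsilon$) never appears. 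Your ``factor $1/3$ with $u_\varepsilon$ replaced by $v$'' claim is correct at the level of the counterterm $(3C_1^\varepsilon-9C_2^\varepsilon)u_\varepsilon\leadsto(C_1^\varepsilon-3C_2^\varepsilon)\cdot 3 v$, though at the level of individual tree coefficients it is really the $\tun$-coefficient $u_{\tun}$ (not the reconstruction $u_\varepsilon=u_{\tun}+P\ast\xi_\varepsilon$) that gets replaced by $v$; this does not affect the conclusion since the mismatch is absorbed into the non-renormalized part reassembling to $u_\varepsilon^2v$. Your alternative sketch via differentiating $h_0\mapsto u^{h_0,\varepsilon}$ is a genuinely different and arguably cleaner route, but you leave it at the level of a remark.
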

\begin{proof}
We fix $h_0$ and $h$, and for ease of notation we let $U=U^\varepsilon_{h_0}, V= V^\varepsilon_{h_0,h}$ and $u$, $v$ their respective reconstructions.
Recall that from the definition of $U$ one gets that 
$$U = \<1> + u_{\mathbf{1}} \mathbf{1} - \<30> + (>1-),$$
where by $(>1-)$ we mean a sum of symbols of degree $1-$ or higher. In particular
$$u = (P \ast \xi_\varepsilon) + u_{\mathbf{1}}$$
and also 
$$U^2 = \<2> +2  u_{\mathbf{1}} \<1> - 2\<31> + u_{\mathbf{1}}^2 \mathbf{1} + (>0).$$
From \eqref{Eq:V} we get
$$V = v \mathbf{1} - 3 v \<20> + \sum_{i=1}^3 v_{i} X_i + (>3/2-),$$
and 
$$-3U^2 V= -3 v  \<2> - 6 u_{\mathbf{1}} v  \<1> + 6 v  \<31> - 3u^2_{\mathbf{1}} v \mathbf{1} + 9 v \; \<22> - 3\sum_{i=1}^3 v_{i} X_i  \<2> + (>0).$$
Since $\cR(-3U^2 V)(z) = \hat{\Pi}^\varepsilon_z(-3 U^2 V)(z)$, using \eqref{eq:PiM} we obtain
$$\cR(-3U^2 V) = -3 v \left( (P\ast \xi_{\varepsilon})^2 - C^\eps_1 \right) - 6 u_{\mathbf{1}} v (P\ast \xi_\varepsilon) - 3 u^2_{\mathbf{1}} v - 9 v C_2^\varepsilon = - 3 u^2 v + 3 C^\varepsilon_1 v - 9 C_2^\varepsilon v$$
and since $(\partial_t-\Delta)(\cR \cP f) = \cR f$ for any modelled distribution $f$, it follows that $\cR V$ satisfies \eqref{eq:vheps}.
\end{proof}

\subsection{Malliavin differentiability of the solutions}

Let $\varphi_i$, $i=1,\ldots,n$ be functions in the Besov space $\cB^{1/2+}_{1}$ and assume that they are compactly supported in $(0,T)$.

\begin{remark}
The regularity of the test functions has to be larger than $1/2$ by a quantity related to $\kappa$: we prefer not to write precisely the relationship with $\kappa$ to avoid dealing with many different constants times $\kappa$ in the arguments.
\end{remark}

We claim that for all $\xi \in \Omega_{0,T}$, for all $h$ lying in the ball of radius $q(\xi)$, the random variable $\langle u(\xi+h), \varphi_i\rangle$ makes sense. Indeed, recall that we have $u(\xi+h) = \cR U_0 + \cR (Y_{h})$. The first term lies in the space $\cB^{-1/2-,T}_{\infty}$ (at least far from $t=0+$) so that it can be tested against $\varphi_i$ (which vanishes near $t=0+$). On the other hand, $Y_{h} \in \cD^{2-,2-,T}_2 \subset \cD^{0+,0+,T}_{10-}$ so that $\cR (Y_{h}) \in \cB^{0+,T}_{10-}$ while $\varphi_i \in \cB^{1/2+}_{1} \subset \cB^{0+}_{\frac{10}{9}+}$ so that $\langle \cR (Y_{h}) , \varphi_i\rangle$ makes sense.
\begin{proposition}
The random variable $\xi\mapsto (\langle u(\xi),\varphi_1\rangle,\ldots,\langle u(\xi),\varphi_n\rangle)\in \R^n$ is locally $\cH$-differentiable in the sense of Definition \ref{Def:Diff}.
\end{proposition}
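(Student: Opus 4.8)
The plan is to fix $\xi\in\Omega_{0,T}$, write $\Pi=\Pi(\xi)$, and let $q(\xi):=q(\Pi)/C$, where $q(\Pi)>0$ is the radius given by Proposition~\ref{Prop:Y} and $C$ the constant of Proposition~\ref{prop:L2H}, setting $q:=0$ on the $\P$-null set $\Omega_{0,T}^c$; then $q$ is an almost surely positive random variable. For $h\in\cH$ with $\|h\|_\cH<q(\xi)$ one has $\|h\|_{L^2((0,T)\times\T^3)}<q(\Pi)$, so Proposition~\ref{Prop:Y} and the consistency result established above both apply: $\xi+h\in\Omega_{1,T}$ and $u(\xi+h)=\cR U_h(\Pi)=\cR U_0+\cR Y_h$, with $Y_h\in\cD^{\gamma,\gamma,T}_2$ and $Y_0=0$ by uniqueness. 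Since $\cR U_0$ does not depend on $h$, writing $X:=(\langle u,\varphi_1\rangle,\ldots,\langle u,\varphi_n\rangle)$ we get
$$X(\xi+h)=X(\xi)+\big(\langle\cR Y_h,\varphi_1\rangle,\ldots,\langle\cR Y_h,\varphi_n\rangle\big)\;.$$
As recalled before the statement, composing the embedding $\cD^{\gamma,\gamma,T}_2\hookrightarrow\cD^{0+,0+,T}_{10-}$ (Theorem~\ref{Th:EmbeddingW}), the reconstruction operator $\cR$ (Theorem~\ref{Th:ReconstructionW}), and the pairing against $\varphi_i\in\cB^{1/2+}_1\subset\cB^{0+}_{\frac{10}{9}+}$ produces a bounded linear functional $Y\mapsto\langle\cR Y,\varphi_i\rangle$ on $\cD^{\gamma,\gamma,T}_2$. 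It therefore suffices to show that $h\mapsto Y_h$ is Fr\'echet differentiable on $\{\|h\|_{L^2}<q(\Pi)\}$ as a map into $\cD^{\gamma,\gamma,T}_2$, with differential $DY_{h_0}[k]=V_{h_0,k}$ at each $h_0$ in the ball (where $V_{h_0,k}$ solves the tangent equation \eqref{Eq:V}); composition then gives that $h\mapsto X(\xi+h)$ is Fr\'echet differentiable and $DX(\xi)$, its differential at $h=0$, is $k\mapsto\big(\langle\cR V_{0,k},\varphi_1\rangle,\ldots,\langle\cR V_{0,k},\varphi_n\rangle\big)$, with $V_{0,k}$ solving \eqref{Eq:V} for $U_{h_0}=U_0$.

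To differentiate $h\mapsto Y_h$ I would exploit that the construction in the proof of Proposition~\ref{Prop:Y} is an iteration of \emph{parametrized} fixed points. Set $\Phi(h,Y):=Y+\cP\big(3U_0^2Y+3U_0Y^2+Y^3\big)-\cP h$, so that $Y_h$ solves $\Phi(h,Y_h)=0$; by the analytic toolbox of Subsection~\ref{Subsec:Dgamma} (Theorems~\ref{Th:Mult}, \ref{Th:ConvSing}, \ref{Th:ConvSmooth}, \ref{Th:EmbeddingW} and Lemma~\ref{Lemma:ConvolShift}, composed with the relevant embeddings as in Step~1 of that proof) $\Phi$ is a well-defined map $L^2((0,T)\times\T^3)\times\cD^{\gamma,\gamma,T}_2\to\cD^{\gamma,\gamma,T}_2$ which is polynomial, hence smooth. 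On the short interval $(0,T_*)$ the associated map is, by Step~2 of that proof, a uniform contraction in $Y$ depending smoothly on $h$, so the uniform contraction principle makes $h\mapsto Y_*(h)$ smooth; the extension step of Step~3 is linear, hence smooth; the map $\hat\cM_{T_*}$ of Step~4 is again a uniform contraction depending smoothly on $(h,Y^{\mbox{\tiny ext}}(h))$, so $h\mapsto Y_{**}(h)$ is smooth; iterating, $h\mapsto Y_h$ is smooth, in particular Fr\'echet differentiable. Differentiating $\Phi(h,Y_h)=0$ yields $D_Y\Phi(h_0,Y_{h_0})\big[DY_{h_0}[k]\big]=\cP k$, and since $D_Y\Phi(h_0,Y_{h_0})[W]=W+\cP\big(3U_{h_0}^2W\big)$ with $U_{h_0}=U_0+Y_{h_0}$, the element $V:=DY_{h_0}[k]$ solves $V=\cP(-3U_{h_0}^2V)+\cP k$, i.e.\ \eqref{Eq:V}; thus $DY_{h_0}[k]=V_{h_0,k}$, which is linear and continuous in $k$ by Proposition~\ref{Prop:V}.

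The only genuinely delicate point is the same one already met in Proposition~\ref{Prop:Y}: since the spaces of modelled distributions carry $L^2$-norms in space \emph{and} time, $\cP(3U_{h_0}^2\,\cdot\,)$ is not a contraction on the full interval $[0,T]$, so the smooth dependence above (equivalently, the solvability on $[0,T]$ of the linearized equation) must be obtained through the interval-by-interval patching of that proof, with all constants tracked via the $L^2$-norm of the shift. Should one prefer to bypass the fixed-point formalism, one can estimate the remainder directly: with $\Delta:=Y_{h_0+k}-Y_{h_0}$, subtracting the two instances of \eqref{Eq:Y} and factoring the polynomial differences gives $\Delta=-\cP\big(3c_k\Delta\big)+\cP k$ with $c_k\to U_{h_0}^2$ as $k\to0$, so that $\Delta-V_{h_0,k}$ solves a linear equation of the same type with driving term $\cP\big(3(U_{h_0}^2-c_k)\Delta\big)$ of size $\lesssim\$\Delta\$^2\lesssim\|k\|_{L^2}^2$ by the local Lipschitz bound of Proposition~\ref{Prop:Y}; propagating this bound through the patching gives $\$\Delta-V_{h_0,k}\$=o(\|k\|_{L^2})$. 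The remaining points — measurability of $\xi\mapsto q(\xi)$ and of $\xi\mapsto V_{0,k}$ (from Theorem~\ref{thm:model} and the continuity statements of Propositions~\ref{Prop:Y} and~\ref{Prop:V}) and $\P(\Omega_{0,T})=1$ — are routine.
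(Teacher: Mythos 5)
Your proposal is correct and follows essentially the same strategy as the paper: reduce to Fr\'echet differentiability of $h\mapsto Y_h$ (the continuous pairing against $\varphi_i$ being handled exactly as you do via the embedding/reconstruction chain), use the fixed-point structure of Proposition~\ref{Prop:Y} together with the analytic toolbox, and identify the derivative with the solution $V_{h_0,k}$ of the tangent equation~\eqref{Eq:V}. The only difference is one of packaging. The paper centres the map at $h_0$, setting $F(h,Y)=Y+\cP\big((Y+U_{h_0})^3-U_{h_0}^3\big)-\cP(h)$ so that $F(0,0)=0$, verifies that $F$ is smooth (it is polynomial) and that $\partial_2F(h,Y)(Z)=Z+3\cP\big((Y+U_{h_0})^2Z\big)$ is a bounded linear isomorphism, and then invokes the Implicit Function Theorem in one stroke; the invertibility of $\partial_2F$ is proved by the same $T_*$-patching as Proposition~\ref{Prop:Y}, this time using a small scalar $\lambda$ as tuning parameter in place of $\|h\|_{L^2}$. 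You instead propagate smooth dependence of the fixed point through each patching step via the uniform contraction principle (or, alternatively, estimate the remainder directly). Both routes are sound and lean on the same core fact — unique solvability with bounds of the linearized equation $Z\mapsto-3\cP(U_{h_0}^2Z)+X$ on $[0,T]$ via patching — but the IFT route isolates this as a single clean lemma (``$\partial_2F$ is an isomorphism''), whereas your route requires the reader to trust that smoothness survives each of the iterated contractions and the cut-off/extension steps, and your direct remainder estimate still implicitly needs that linear solvability bound to convert ``driving term $O(\|k\|^2)$'' into ``solution $o(\|k\|)$''. Neither observation is a gap, merely a point you gesture at rather than verify, and the paper's formulation would dispose of it for you.
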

%The proof is an adaptation of~\cite[Prop 4.7]{MalliavinReg}.
\begin{proof}
Fix $\xi\in \Omega_{0,T}$. For any $h_0$ whose $L^2((0,T)\times \T^d)$ norm is smaller than the parameter $q(\xi)$ of Proposition \ref{Prop:Y}, we introduce the map
\begin{align*}
F: L^2 \times \cD^{\gamma,\gamma,T}_2&\longrightarrow\quad \cD^{\gamma,\gamma,T}_2\\
(h,Y) &\longmapsto\quad Y + \cP\big((Y+U_{h_0})^3 - U_{h_0}^3\big) - \cP(h)\;.
\end{align*}
It can be checked that $F$ is Fr\'echet differentiable and that its partial derivatives satisfy:
\begin{align*}
\partial_1 F (h,Y)(g) &= - \cP g\;,\\
\partial_2 F (h,Y)(Z) &= Z + 3 \cP( (Y+U_{h_0})^2 Z)\;.
\end{align*}
Notice that the differential depends continuously on $h,Y$. Let us prove that $Z\mapsto \partial_2 F (h,Y)(Z)$ is a bounded, linear isomorphism.\\
The linearity is immediate and the boundedness is a consequence of the analytic results on products and convolution, see Theorems \ref{Th:Mult}, \ref{Th:ConvSing} and \ref{Th:ConvSmooth}. To prove that $Z\mapsto \partial_2 F (h,Y)(Z)$ is a bijection, we proceed as follows. Let $X\in\cD^{\gamma,\gamma,T}_2$ be given and consider the fixed point equation:
\begin{equation}\label{Eq:Z}
Z = -3 \cP((Y+U_{h_0})^2 Z) + \lambda X\;.\end{equation}
If we prove that this equation admits a unique solution for all $\lambda > 0$ small enough, then the linearity in $Z$ of the equation ensures that uniqueness holds for all $\lambda > 0$ and, in turn, we deduce that $Z\mapsto \partial_2 F (h,Y)(Z)$ is a bijection (surjectivity follows from the existence of solutions, injectivity follows from the uniquess of the solution for $X=0$). The proof of the claim then follows from exactly the same arguments as in the proof of Proposition \ref{Prop:Y}: the tuning parameter in that proof was the $L^2$-norm of $h$ while, in the present case, one decreases the parameter $\lambda$ in order to get contractivity of the solution map.\\
We know that $F(0,0) = 0$ so that we can apply the Implicit Function Theorem that ensures the existence of a ball $B(0,\epsilon)$ in $L^2((0,T)\times \T^d)$ and of a differentiable function $\theta : B(0,\epsilon) \mapsto \cD^{\gamma,\gamma,T}_2$ such that $F(h,\theta(h)) = 0$ for all $h \in B(0,\epsilon)$. Furthermore,
\begin{equation}\label{Eq:Difftheta} D\theta(h) = - \Big(\partial_2 F (h,\theta(h))\Big)^{-1} \big( \partial_1 F(h,\theta(h))\big)\;.\end{equation}
Comparing the identities $F(h,\theta(h)) = 0$ and \eqref{Eq:Y}, we deduce from the uniqueness part of Proposition \ref{Prop:Y} that $\theta(h) = Y_{h_0+h}-Y_{h_0}$ for all $h\in B(0,\epsilon)$. Similarly, comparing the identities \eqref{Eq:Difftheta} and \eqref{Eq:V}, we deduce from the uniqueness part of Proposition \ref{Prop:V} that $D\theta(0)(h)=V_{h_0,h}$ for all $h\in L^2(0,T)$. Putting everything together, we get
\begin{align*}
\$ Y_{h_0, h} - V_{h_0,h} \$ = \$ \theta(h) - \theta(0) - D\theta(0)(h) \$ = o(\|h\|)\;
\end{align*}
uniformly over all $h$ whose $L^2$-norm is smaller than $\epsilon$.\\
Recall that $u(\xi+h_0+h)= \cR U_0 + \cR Y_{h}$ and that $v(\xi)^{h_0,h} = \cR V_{h_0,h}$. Consequently,
$$ u(\xi+h_0+h) - u(\xi+h_0) - v(\xi)^{h_0,h} = \cR(Y_{h_0+h}-Y_{h_0} - V_{h_0,h})\;.$$
Composing with the continuous map
$$ f \mapsto (\langle f,\varphi_1\rangle, \ldots,\langle f,\varphi_n\rangle)\;,$$
this yields the asserted differentiability at any point $\xi+h_0$ such that $\xi\in \Omega_{0,T}$ and the $L^2$-norm of $h_0$ is smaller than $q(\xi)$. The differential at $\xi+h_0$ is equal to
$$ h\mapsto \Big(\langle v(\xi)^{h_0,h}, \varphi_1 \rangle ,\ldots, \langle v(\xi)^{h_0,h}, \varphi_n\rangle\Big)\;.$$
By Proposition \ref{Prop:V} this map is continuous in $h_0$ (within the ball of radius $q(\xi)$), we deduce the asserted Fr\'echet differentiability, thus concluding the proof.
\end{proof}

\section{Existence of densities}\label{Sec:Densities}

We want to study the density of $X = \left( \langle u, \phi_1 \rangle, \ldots, \langle u, \phi_n  \rangle \right)$ with $\phi_1, \ldots, \phi_n$ linearly independent elements of $\cB^{1/2+}_{1}$ with support in $(0,T) \times \T^3$.

Recall that by the results of the previous section, the Malliavin derivative of $X$ is given by
$$ DX:h \mapsto \left( \langle v^h, \phi_1 \rangle, \ldots, \langle v^h, \phi_n  \rangle \right)$$
where $v^h= \cR V^h$ with $V^h=V_{0,h}$ defined as above.

The a.s. surjectivity of $DX$ is then equivalent to proving that a.s. one has
$$\langle v^h, \sum_{i} \lambda_i \phi_i \rangle = 0\quad \forall h \in \mathcal{H} \;\; \Longrightarrow \lambda_1=\cdots=\lambda_n=0.$$

\subsection{The dual equation}

Fix $T>0$. We need to introduce spaces of modelled distributions which are amenable to solving SPDEs going backward in time from time $T$.\\
For $\gamma>0$, $\eta \le \gamma$, $p\in [1,\infty]$ and $T' \le T$, we introduce the space $\tilde{\cD}^{\gamma,\eta,T'}_p$ as the set of all modelled distributions $f:(T',T)\times\T^d\rightarrow \cT_{<\gamma}$ that satisfy the bounds \eqref{Eq:BdDgamma} with $(0,T)$ and $(3|h|^2,T-|h|^2)$ replaced by $(T',T)$ and $(T'-|h|^2,T-3|h|^2)$, and with the weights $t^{(\eta-\zeta)/2}$ and $t^{(\eta-\gamma)/2}$ replaced by $(T-t)^{(\eta-\zeta)/2}$ and $(T-t)^{(\eta-\gamma)/2}$. The calculus presented in Subsection \ref{Subsec:Dgamma} finds naturally its counterpart backward in time: we denote by $\tilde{\cR}$ and $\tilde{\cP}$ the associated operators acting on the spaces $\tilde{\cD}$.

For $\delta >1/2$, let $\varphi\in \cB^{\delta}_1(\R\times\T^3)$ be a function whose support lies in $(0,T) \times \T^3$. One can naturally lift this function into the polynomial regularity structure by setting
$$ \Phi(z) := \sum_{k\in \N^{d+1}:|k| < \delta} \frac{X^k}{k!} \partial^k \varphi(z)\;.$$
This defines an element of $\tilde{\cD}^{\delta,\delta,T}_{1}$: notice that we can take the parameter $\eta$ in this space to be equal to $\delta$ since our function $\varphi$ vanishes before time $T$. Applying Theorems \ref{Th:ConvSing} and \ref{Th:ConvSmooth}, we deduce that $\tilde{\cP} \Phi \in \cD^{\delta+2,\delta+2-\kappa,T}_{1}$.

We want to study the backward equation which is dual to that of $V^h$,  given by
\begin{equation} \label{eq:W}
W = \tilde{\cP} (-3 U_0^2 W) + \tilde{\cP} \Phi\;.
\end{equation}
(Note that $U_0$ still denotes the solution to the \emph{forward} equation \eqref{eq:fpU}).
\begin{proposition}
Let $\Pi$ be an admissible model such that $T_{\mbox{\tiny explo}}(\Pi) \ge T$ and set $\tilde\gamma=\tilde\eta=2+\delta-\kappa$. For every $T'\in (0,T)$, there exists a unique solution $W\in \tilde{\cD}^{\tilde\gamma,\tilde\eta,T'}_1$ to \eqref{eq:W}. Furthermore, the map $\Pi\mapsto W$ is locally Lipschitz. Finally, we have the following estimate:
$$ \$ W\$_{\tilde{\cD}^{\tilde\gamma,\tilde\eta,T'}_1} \lesssim (T')^{\eta_0-\gamma_0}\;,$$
uniformly over all $T'\in (0,T)$.
\end{proposition}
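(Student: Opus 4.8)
The strategy is the standard Banach fixed-point argument in the backward space $\tilde{\cD}^{\tilde\gamma,\tilde\eta,T'}_1$, run exactly as in the proof of Proposition \ref{Prop:Y}, with the cubic nonlinearity replaced by the \emph{linear} map $W\mapsto \tilde{\cP}(-3U_0^2 W)$. First I would check, using the backward counterparts of the Embedding Theorem (Th. \ref{Th:EmbeddingW}), the Multiplication Theorem (Th. \ref{Th:Mult}) and the Convolution Theorems (Th. \ref{Th:ConvSing}, Th. \ref{Th:ConvSmooth}), that the map
$$ \tilde{\cM}_{T'}: W \mapsto \tilde{\cP}(-3U_0^2 W) + \tilde{\cP}\Phi$$
sends $\tilde{\cD}^{\tilde\gamma,\tilde\eta,T'}_1$ into itself, indeed into a space with a strictly better $\gamma$-index so that the embedding back into $\tilde{\cD}^{\tilde\gamma,\tilde\eta,T'}_1$ carries a small norm. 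The critical point is to track the homogeneity and weight indices: starting from $W\in\tilde{\cD}^{\tilde\gamma,\tilde\eta}_1$ one embeds (raising $p$ at the cost of $\gamma$ and $\eta$) to make the product $U_0^2 W$ well-defined — recall $U_0\in\cD^{\gamma_0,\eta_0,T}_\infty$ takes values on a sector of lowest homogeneity $-1/2-\kappa$, so $U_0^2$ has lowest homogeneity $-1-2\kappa$ — and then convolving with the backward heat kernel via $\tilde{\cP}$ raises $\gamma$ by $2$ and $\eta$ by $2^-$. Choosing $\tilde\gamma=\tilde\eta=2+\delta-\kappa$ with $\delta>1/2$ ensures the intermediate products are licit (one needs $\tilde\gamma+\alpha_{U_0^2}>0$ type conditions, which hold since $2+\delta-\kappa-1-2\kappa>0$) and that the output lands in a better space.

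Since the equation is linear in $W$, the Lipschitz constant of $\tilde{\cM}_{T'}$ on any ball is governed by a factor $(T')^{\epsilon}$ coming from the embedding $\tilde{\cD}^{\tilde\gamma+\epsilon,\tilde\eta+\epsilon,T'}_1\hookrightarrow \tilde{\cD}^{\tilde\gamma,\tilde\eta,T'}_1$ times $\$\Phi\$$-independent constants times $\$U_0\$^2$. Because we are going backward from a fixed terminal time $T$ and the terminal data vanishes, working on $(T',T)$ with $T'$ close to $T$ makes this contraction constant small; however, since the nonlinearity is linear, once contractivity holds for small $T-T'$ one gets uniqueness for all $T'$, and existence on the whole interval $(0,T)$ follows by the same patching-in-time device used in Steps 3–4 of the proof of Proposition \ref{Prop:Y} (extend the partial solution by a cutoff in $t$ lifted to the polynomial structure, restart the fixed point on a shifted interval with the correction term). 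The local Lipschitz continuity of $\Pi\mapsto W$ is then read off from the ``two models'' bounds in Theorems \ref{Th:ReconstructionW}, \ref{Th:EmbeddingW}, \ref{Th:Mult}, \ref{Th:ConvSing} applied along the iteration, exactly as in Step 5 of Proposition \ref{Prop:Y}.

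It remains to prove the quantitative bound $\$W\$_{\tilde{\cD}^{\tilde\gamma,\tilde\eta,T'}_1}\lesssim (T')^{\eta_0-\gamma_0}$. The linearity gives $\$W\$ \lesssim \$\tilde{\cP}\Phi\$ $ where the implicit constant depends on $\$U_0\$_{\cD^{\gamma_0,\eta_0,T}_\infty}$; but $U_0$ is only controlled with a weight blowing up like $t^{(\eta_0-\gamma_0)/2}$ near $t=0$, and on the backward interval $(T',T)$ the relevant quantity is $\sup_{t\in(T',T)}t^{(\gamma_0-\eta_0)/2}$-type weights evaluated at the left endpoint $t=T'$, i.e. a factor $(T')^{(\eta_0-\gamma_0)/2}$ from each of the two copies of $U_0$ in $U_0^2$, hence $(T')^{\eta_0-\gamma_0}$ overall (note $\eta_0-\gamma_0<0$, so this is a genuine blow-up as $T'\to 0$). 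Concretely one inserts the pointwise weight bound $|U_0(z)|_\zeta \lesssim \$U_0\$ \, t^{(\eta_0-\zeta)/2}$ into the Multiplication Theorem estimate for $U_0^2 W$ restricted to $(T',T)\times\T^3$, pulls out $\inf_{t>T'} t^{\eta_0-\gamma_0}$, and closes the fixed point with this modified constant. \textbf{The main obstacle} I anticipate is precisely this last step: one must be careful that the backward weighted norm of $U_0^2$ on $(T',T)$ — a \emph{forward}-weighted object living on a space interval bounded away from $t=0$ only when $T'>0$ — is correctly estimated, since $U_0\in\cD^{\gamma_0,\eta_0,T}_\infty$ carries weights measured from $t=0$ whereas $W$ carries backward weights measured from $t=T$; reconciling the two weight systems on the common interval $(T',T)$, and verifying that the product still lands in the backward space $\tilde{\cD}$ with the claimed $(T')^{\eta_0-\gamma_0}$ dependence, is the delicate bookkeeping at the heart of the proof.
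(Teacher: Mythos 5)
Your plan is correct and follows essentially the same route as the paper: a Banach fixed-point argument in $\tilde{\cD}^{\tilde\gamma,\tilde\eta,T'}_1$ built from the backward versions of the Embedding, Multiplication and Convolution theorems, the patching device from Proposition \ref{Prop:Y}, and — for the quantitative estimate — the observation that the restriction of $U_0$ to $(T',T)$, viewed in the backward space $\tilde{\cD}^{\gamma_0,\eta_0,T'}_\infty$, picks up a factor $(T')^{(\eta_0-\gamma_0)/2}$ coming from the forward weights on $U_0$ (so the obstacle you flag at the end is precisely the step the paper resolves, packaged as the inequality $\$U_0\$_{\tilde{\cD}^{\gamma_0,\eta_0,T'}_\infty}\lesssim (T')^{(\eta_0-\gamma_0)/2}\$U_0\$_{\cD^{\gamma_0,\eta_0,T}_\infty}$). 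Two small remarks: since $\eta_0-\gamma_0<0$, the factor you extract is $\sup_{t>T'} t^{\eta_0-\gamma_0}=(T')^{\eta_0-\gamma_0}$ rather than an infimum; and in the patching step the paper replaces $\Phi$ by $\lambda\Phi$ with $\lambda$ small (the analogue of shrinking $\|h\|_{L^2}$ in Proposition \ref{Prop:Y}) to make the iterated fixed point close, then uses linearity to rescale back — your outline gestures at this via the cutoff/patching device but leaves that small-parameter trick implicit.
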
 

\begin{proof}
Observe that the restriction of $U_0$ to $(T',T)$ belongs to $\tilde{\cD}^{\gamma_0,\eta_0,T'}_\infty$ and that its norm satisfies:
$$ \$U_0\$_{\tilde{\cD}^{\gamma_0,\eta_0,T'}_\infty} \lesssim (T')^{\frac{\eta_0-\gamma_0}{2}} \$U_0\$_{\cD^{\gamma_0,\eta_0,T}_\infty}\;,$$
uniformly over all $T' \in (0,T)$. Indeed, the norm on the left hand side is not weighted near $0$, while the norm on the right hand side is weighted by exponents no worse than $\frac{\eta_0-\gamma_0}{2}$.  \\
Applying the analytical results of Subsection \ref{Subsec:Dgamma}, we deduce the following:
$$ W \in \tilde{\cD}^{2+\delta-\kappa,2+\delta-\kappa,T'}_1 \Rightarrow W U_0^2 \in \tilde{\cD}^{2+\delta-1-3\kappa,2+\delta-\kappa+2\eta_0,T'}_1 \Rightarrow \tilde{\cP}(W U_0^2) \in \tilde{\cD}^{2+\delta+1-3\kappa,4+\delta-2\kappa+2\eta_0,T'}_1\;,$$
so that the map $\cM_{T'}:W \mapsto \tilde{\cP} (-3 U_0^2 W) + \tilde{\cP} \Phi$ goes from $\tilde{\cD}^{\tilde\gamma,\tilde\eta,T'}_1$ into $\tilde{\cD}^{\tilde\gamma+\epsilon,\tilde\eta+\epsilon,T'}_1$ for some $\epsilon > 0$.\\
At this point, we introduce a parameter $\lambda >0$ and consider the modified map $\cM_{T'}:W \mapsto \tilde{\cP} (-3 U_0^2 W) + \lambda \tilde{\cP} \Phi$. Following the same steps as in the proof of Proposition \ref{Prop:Y}, we obtain a fixed point to this map: the only difference is that instead of decreasing the norm of $h$ in order to get contractivity, here we decrease the value of $\lambda$. Hence we obtain a unique solution with $\Phi$ replaced by $\lambda \Phi$ in the definition of $\cM_{T'}$ for some $\lambda > 0$ potentially very small. Since the equation is actually linear in $W$, we easily recover the solution for $\Phi$.
\end{proof}

Since $W$ takes values in non-negative levels only, $\tilde\cR W$ is a function on $(T',T)\times\T^3$ with $T' > 0$ arbitrarily small: it therefore defines a function on $(0,T)\times\T^3$ and we have the following result.

\begin{proposition} \label{prop:dual}
Let $v^h = \cR V_h$, $w^\phi = \cR W$, then it holds that
\begin{equation} \label{eq:duality}
\forall (h,\phi) \in L^2 \times \cB_{1}^{\frac12 +}\;, \quad \langle v^h, \phi \rangle = \langle h , 1_{(0,T)} w^\phi \rangle.\end{equation}
In particular, $w^\phi$ is in $L^2((0,T)\times\T^3)$.
\end{proposition}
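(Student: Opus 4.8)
The plan is to establish the identity first at the level of the regularized and renormalized models, where $u$, $v^h$ and $w^\phi$ are smooth functions solving classical linear parabolic equations, and then to pass to the limit. Fix $\xi\in\Omega_{0,T}$ and for the models $\hat{\Pi}^\varepsilon$ write $u_\varepsilon=\cR U_0(\hat{\Pi}^\varepsilon)$, $v^{h,\varepsilon}=\cR V_h(\hat{\Pi}^\varepsilon)$ and $w^{\phi,\varepsilon}=\tilde{\cR} W(\hat{\Pi}^\varepsilon)$, all smooth since $\xi^\varepsilon$ is smooth. The first step is to identify the backward equation solved by $w^{\phi,\varepsilon}$, by exactly the same computation as in Proposition \ref{prop:vheps}: expanding $-3U_0^2 W$ around a base point $z$, using that $\tilde{\cR}(-3U_0^2 W)(z)=\big(\hat{\Pi}^\varepsilon_z M_\varepsilon(-3U_0^2 W)\big)(z)$ together with \eqref{eq:PiM}, and noting that the only renormalized symbols whose contribution survives the evaluation at $z$ are $\<2>$ (producing the constant $C_1^\varepsilon$) and $\<2c2>$, the dotted-line analogue of $\<22>$, which is renormalized by $C_3^\varepsilon$. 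This gives
$$ (-\partial_t-\Delta)\,w^{\phi,\varepsilon} = -3\,u_\varepsilon^2\,w^{\phi,\varepsilon} + (3C_1^\varepsilon-9C_3^\varepsilon)\,w^{\phi,\varepsilon} + \varphi\;,\qquad w^{\phi,\varepsilon}(T,\cdot)=0\;. $$
The crucial point is that \eqref{eq:C2C3} gives $C_3^\varepsilon=C_2^\varepsilon$, so this renormalization constant is exactly $C^\varepsilon:=3C_1^\varepsilon-9C_2^\varepsilon$, the same one appearing in \eqref{eq:vheps} taken with $h_0=0$.

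Second, I would derive the duality at the regularized level by an elementary integration by parts. Fix $h\in L^2((0,T)\times\T^3)$; testing \eqref{eq:vheps} (with $h_0=0$) against $w^{\phi,\varepsilon}$ over $(0,T)\times\T^3$, integrating by parts in time — the boundary contributions at $t=0$ and $t=T$ vanish since $v^{h,\varepsilon}(0,\cdot)=0$ and $w^{\phi,\varepsilon}(T,\cdot)=0$ — and in space, one obtains $\int_{(0,T)\times\T^3} (\partial_t-\Delta)v^{h,\varepsilon}\cdot w^{\phi,\varepsilon}=\int_{(0,T)\times\T^3} v^{h,\varepsilon}\cdot(-\partial_t-\Delta)w^{\phi,\varepsilon}$. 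Substituting the two equations, the zeroth-order terms $-3u_\varepsilon^2 v^{h,\varepsilon}w^{\phi,\varepsilon}$ and $C^\varepsilon v^{h,\varepsilon}w^{\phi,\varepsilon}$ are identical on both sides and cancel, leaving
$$ \langle v^{h,\varepsilon},\varphi\rangle = \langle h,\,w^{\phi,\varepsilon}\rangle_{L^2((0,T)\times\T^3)}\;. $$

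Third, I would pass to the limit $\varepsilon\to 0$. By Theorem \ref{thm:model}, the local Lipschitz dependence of $V_h$ and of $W$ on the model (Proposition \ref{Prop:V} and the corresponding well-posedness statement for \eqref{eq:W}) and the continuity of the reconstruction operators (Theorem \ref{Th:ReconstructionW}), one has $v^{h,\varepsilon}\to v^h$ and $w^{\phi,\varepsilon}\to w^\phi$ in suitable Besov spaces; the integrability bookkeeping already performed in Section \ref{Sec:Diff} shows that $v^h$ lives in a space pairing continuously with $\varphi\in\cB^{1/2+}_{1}$, so the left-hand side converges to $\langle v^h,\varphi\rangle$. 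For the right-hand side, since $\sup_\varepsilon\|\hat{\Pi}^\varepsilon\|<\infty$ on compacts and, by Proposition \ref{Prop:V}, $h\mapsto V_h$ is linear with $\$V_h\$\lesssim\|h\|_{L^2}$ (with a constant uniform in $\varepsilon$ for $\varepsilon$ small), we get $|\langle v^{h,\varepsilon},\varphi\rangle|\lesssim\|h\|_{L^2}$ uniformly in $\varepsilon$, hence $\sup_\varepsilon\|w^{\phi,\varepsilon}\|_{L^2((0,T)\times\T^3)}<\infty$; combined with the (distributional) convergence $w^{\phi,\varepsilon}\to w^\phi$, this forces $1_{(0,T)}w^\phi\in L^2((0,T)\times\T^3)$ and $w^{\phi,\varepsilon}\rightharpoonup 1_{(0,T)}w^\phi$ weakly in $L^2$. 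Passing to the limit in the regularized identity then gives $\langle v^h,\varphi\rangle=\langle h,1_{(0,T)}w^\phi\rangle$ for every $h\in L^2((0,T)\times\T^3)$, which is \eqref{eq:duality}.

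The step I expect to be the main obstacle is the first one: checking carefully that the renormalized backward equation for $w^{\phi,\varepsilon}$ carries the same renormalization constant $C^\varepsilon$ as the forward equation for $v^{h,\varepsilon}$. This matching is what makes the zeroth-order terms cancel in the integration by parts — if the two constants differed by a quantity with a finite nonzero limit, an extra correction term would survive in \eqref{eq:duality} — and it relies precisely on the identity $C_2^\varepsilon=C_3^\varepsilon$ of \eqref{eq:C2C3}. A secondary subtlety is that the $L^2$-integrability of $w^\phi$ is not visible from the modelled-distribution bounds alone (indeed $\$W\$_{\tilde{\cD}^{\tilde\gamma,\tilde\eta,T'}_1}\lesssim (T')^{\eta_0-\gamma_0}$ blows up as $T'\to 0$) and is genuinely produced by the duality with the better-behaved forward object $v^h$.
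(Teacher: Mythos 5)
Your proposal is correct and follows essentially the same route as the paper: reduce to the regularized renormalized models, identify the forward equation for $v^{h,\varepsilon}$ and the backward equation for $w^{\phi,\varepsilon}$, observe that the renormalization constants match thanks to $C_2^\varepsilon = C_3^\varepsilon$ (\eqref{eq:C2C3}), integrate by parts, and pass to the limit. You spell out the passage to the limit (uniform $L^2$ bound on $w^{\phi,\varepsilon}$ via duality with $v^{h,\varepsilon}$, then weak convergence) more explicitly than the paper, which compresses this into ``by continuity of everything w.r.t.\ the model''; this is a welcome elaboration rather than a deviation.
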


\begin{proof}
By continuity of everything w.r.t the model, it is enough to prove this identity for the approximating smooth renormalized models. Recall that by Proposition \ref{prop:vheps} we know that in that case the equation satisfied by $v^h$ is given by
$$(\partial_t - \Delta) v^h_\varepsilon =  (-3 u_\varepsilon^2 + C_\varepsilon) v^h_\varepsilon + h, \;\;\; v^h(0,\cdot)=0$$
with $C_\varepsilon = 3 C^\varepsilon_1 - 9 C^\varepsilon_2$. To identify the equation satisfied by $w^\phi$, we proceed as in the proof of Proposition \ref{prop:vheps} and obtain that
$$W = w^\phi \mathbf{1} - 3 w^\phi\; \<2c0> + \sum_i w_{i} X_i + (>3/2-)$$
and
$$- 3 U^2 W = -3 w  \<2> - 6 u_{\mathbf{1}} w\;  \<1> + 6 w \; \<31> - 3u^2_{\mathbf{1}} w \mathbf{1} + 9 w\;  \<2c2> - 3\sum_{i=1}^3 v_{i} X_i  \<2> + (>0).$$
As in the proof of Proposition \ref{prop:vheps} this yields that $w$ satisfies
$$(-\partial_t - \Delta) w^\phi =  (-3 u_\varepsilon^2 + \tilde{C}_\varepsilon) w^\phi + \phi, \;\;\; w^\phi(T,\cdot)=0.$$
where $\tilde{C}_\varepsilon = 3 C_1^\varepsilon - 9 C_3^\varepsilon = C_\varepsilon$ (recalling \eqref{eq:C2C3}), and from there a standard integration by parts leads to \eqref{eq:duality}.
\end{proof}

%
%Then we transform this classicaly using that by integration by parts
%$$\langle v^h, \phi \rangle = \langle h ,w^\phi \rangle$$
%where
%$$(-\partial_t -\partial_{xx}) w^\phi = f'(u) (u_x)^2 w^\phi - \partial_x( (2 f(u)  u_x) w^\phi) + \phi, \;\;\; w^\phi(T,\cdot)=0,$$
%which we rewrite as an equation for a modelled distribution $W$ :
%\begin{equation} \label{eq:W}
%W = \overleftarrow{P} (A W + B \nabla W) + \overleftarrow{P} \phi,
%\end{equation}
%$\overleftarrow{P}$ being the abstract integration operator associated to the backward heat kernel.
%Here we have noted
%$$A = F'(U) \nabla U - 2 F(U) \nabla^2 U, \;\;\; B=- 2 F(U) \nabla U$$
%which are suitable elements of some $\mathcal{D}^\gamma$ spaces.
%
%(Note : here since we consider $\nabla^2 U$ which does not appear in the equation for $U$, we need to consider an extended regularity structure containing also $\mathcal{U} \cdot \nabla^2 \mathcal{U}$, where $\mathcal{U}$ are all the symbols used to describe $U$. Hopefully this is not a problem ?? Actually : also the backward heat kernel would also introduce a different integration operator etc......)
%

\subsection{Existence of densities}

We will now prove the main result of this paper, Theorem \ref{Th:Main}. The proof relies on the following crucial result.
%
%\begin{theorem} \label{thm:main}
%Let $\xi$ satisfy Assumption \ref{asn:R} and either assumption \ref{asn:Dense} or assumption \ref{asn:Rough}. Fix $T>0$ and assume that $T< T_{\mbox{\tiny explo}}$ almost surely. Let $\phi_1,\ldots,\phi_n$ be linearly independent elements of $\cB^{1/2+\kappa}_1$ for some $\kappa>0$, supported in $(0,T) \times \T^3$. Then the law of the random variable 
%$$\left(\left\langle u,\phi_1 \right\rangle, \ldots, \left\langle u, \phi_n \right\rangle \right)$$
%is absolutely continuous w.r.t. Lebesgue measure.
%\end{theorem}

\begin{proposition} \label{prop:wW}
Let $\phi$ $\in$ $\cB^{1/2+}_1$ supported in $(0,T)\times\T^3$, $W$ the solution to the fixed point equation \eqref{eq:W} and $w = \cR W$. If $w= 0$ a.e. on $[0,T] \times \T^3$, then one has $\phi =0$.
\end{proposition}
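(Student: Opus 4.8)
The plan is to first turn the hypothesis $w\equiv 0$ into an identity for the reconstruction of $-3U_0^2W$, and then to run a short induction over the homogeneities of $\cT$ showing that the modelled distribution $W$ itself vanishes; once $W=0$, the conclusion $\phi=0$ is immediate. Recall that $W$ solves \eqref{eq:W} in $\tilde{\cD}^{\tilde\gamma,\tilde\eta,T'}_1$ for every $T'\in(0,T)$ with $\tilde\gamma=\tilde\eta=2+\delta-\kappa$, that it takes values in the sector $\cW$ (whose homogeneities are all non-negative: the polynomial ones are $\geq 0$, and the lowest non-polynomial one, that of $\tcI(\<2>)=\<2c0>$, is $\geq 1-2\kappa>0$ for $\kappa$ small), and that $U_0$ takes values in a sector of regularity $\geq-\tfrac12-\kappa$. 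Applying $\tilde{\cR}$ to \eqref{eq:W} and using the backward versions of Theorems \ref{Th:ConvSing}--\ref{Th:ConvSmooth} (so that $\tilde{\cR}\tilde{\cP}g=\tilde{P}\ast\tilde{\cR}g$) together with $\tilde{\cR}\Phi=\phi$, one gets $w=\tilde{P}\ast\big(\tilde{\cR}(-3U_0^2W)+\phi\big)$. Since $w=0$ a.e.\ and $(-\partial_t-\Delta)(\tilde{P}\ast g)=g$, this yields $\tilde{\cR}(-3U_0^2W)=-\phi$.

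Next, I would prove by induction on $\zeta\in\cA$, taken in increasing order (only finitely many levels matter, $\cA$ being locally finite and bounded below and $W$ being $\cT_{<\tilde\gamma}$-valued), that $\cQ_\zeta W=0$. For $\zeta=0$ this is just $\cQ_0W=(\cR W)\mathbf{1}=w\,\mathbf{1}=0$. For the inductive step, fix $\zeta>0$, assume $\cQ_{\zeta'}W=0$ for all $\zeta'<\zeta$, and note that the level-$\zeta$ part of $\cW$ is spanned by the $X^k$ with $|k|=\zeta$ and by the $\tcI(\sigma)$ with $\sigma\in\cR_W$ and $|\sigma|=\zeta-2$. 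For $\tcI(\sigma)$: since $\tilde{\cP}\Phi$ is polynomial-valued and the $\tcI$-part of $\tilde{\cP}^\gamma_+f$ is $\tcI(f(\cdot))$, the coefficient of $\tcI(\sigma)$ in $W$ equals the coefficient of $\sigma$ in $-3U_0^2W$; the latter is multilinear in coefficients of $U_0$ (occurring at homogeneities $\geq-\tfrac12-\kappa$) and of $W$, with these homogeneities summing to $|\sigma|=\zeta-2$, so the $W$-coefficient involved sits at homogeneity $\leq(\zeta-2)+2(\tfrac12+\kappa)=\zeta-1+2\kappa<\zeta$ (this is where $\kappa<\tfrac12$ enters) and therefore vanishes by the inductive hypothesis. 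For $X^k$ with $|k|=\zeta$: unwinding the definitions of $\tilde{\cP}^\gamma_\pm$ and collapsing the resulting polynomial corrections gives
\[
 \Big\langle W(z),\tfrac{X^k}{k!}\Big\rangle=\frac{1}{k!}\Big[\big\langle\tilde{\cR}(-3U_0^2W)+\phi,\,\partial^k\tilde{P}(z-\cdot)\big\rangle-\sum_{\zeta'\leq\zeta-2}\big\langle\Pi_z\cQ_{\zeta'}(-3U_0^2W)(z),\,\partial^k\tilde{P}_+(z-\cdot)\big\rangle-\sum_{|l|\leq\zeta-2}\frac{\partial^l\phi(z)}{l!}\big\langle(\cdot-z)^l,\,\partial^k\tilde{P}_+(z-\cdot)\big\rangle\Big].
\]
The first pairing vanishes by the identity of the previous paragraph; the middle sum vanishes since, by the inductive hypothesis and $\kappa<\tfrac12$, $-3U_0^2W$ has no component at a homogeneity $\leq\zeta-2$; and each term of the last sum vanishes because $|l|<|k|$ forces $\partial^k\big[(\cdot-z)^l\big]\equiv0$, whence an integration by parts gives $\langle(\cdot-z)^l,\partial^k\tilde{P}_+(z-\cdot)\rangle=0$. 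Hence $\cQ_\zeta W=0$, closing the induction.

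Thus $W=0$. Feeding this back into \eqref{eq:W} gives $\tilde{\cP}\Phi=0$, hence $\tilde{P}\ast\phi=\tilde{\cR}\tilde{\cP}\Phi=0$, and applying $(-\partial_t-\Delta)$ yields $\phi=0$. The only substantive part of this scheme is the bookkeeping in the inductive step: one has to check that, after the two cancellations produced by $\tilde{\cR}(-3U_0^2W)=-\phi$ and by the vanishing moments of $\tilde{P}_+$, every quantity entering the fixed point at level $\zeta$ is governed by components of $W$ at strictly lower levels, and this is precisely what pins down the constraint $\kappa<\tfrac12$; the remaining ingredients (the backward reconstruction/convolution calculus on the spaces $\tilde{\cD}^{\gamma,\eta,T'}_p$ and the identity $\tilde{\cR}\tilde{\cP}=(\tilde{P}\ast\cdot)\circ\tilde{\cR}$) are the routine backward transcriptions of the results of Subsection \ref{Subsec:Dgamma}.
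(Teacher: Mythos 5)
Your proof is correct, and the central mechanism is the same as the paper's: one climbs the levels of $\cW$ and observes that the coefficient of each non-polynomial symbol $\tau=\tcI(\rho_1\rho_2\tau')$ is (a sum of) products $\langle U_0,\rho_1\rangle\langle U_0,\rho_2\rangle\langle W,\tau'\rangle$ with $|\tau'| < |\tau|$ (thanks to $\kappa < 1/2+\beta$), so non-polynomial coefficients can only be nonzero if a strictly lower one already is. You organize this as an increasing induction over $\cA$; the paper instead takes $\delta$ to be the smallest level carrying a nonzero coefficient, concludes from the same algebraic observation that $\delta$ must be an integer, truncates $W$ just above $\delta$ to obtain a purely polynomial modelled distribution $W'\in\cD^{\gamma'}_1$, and then invokes \cite[Prop.~3.4]{Recons} (recovery of a polynomial-valued modelled distribution from its reconstruction) to get $W'=0$, a contradiction. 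Your treatment of the integer levels is therefore the genuinely different part: you first derive $\tilde\cR(-3U_0^2W)=-\phi$ from $w=0$ and $\tilde\cR\tilde\cP=\tilde P\ast\tilde\cR$, then unwind $\tilde\cP^\gamma_\pm$ to write the $X^k$-coefficient of $W(z)$ as the pairing of $\tilde\cR(-3U_0^2W)+\phi$ with $\partial^k\tilde P(z-\cdot)$ minus local corrections that vanish either by the inductive hypothesis (since $-3U_0^2W$ has no component $\le |k|-2$) or because $\tilde P_+$ kills low-degree polynomials. This is more explicit and self-contained than the paper's appeal to \cite[Prop.~3.4]{Recons}, at the cost of the rather long displayed identity; the paper's route is shorter but less transparent. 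Both are sound, and both rest on exactly the same structural fact about the fixed-point map.
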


\begin{proof}
Assume that $W=0$, then by \eqref{eq:W} we obtain $\tilde P*\phi=0$, hence $\phi=0$. We therefore only need to prove that $W=0$ as soon as $w = \mathcal{R} W = \langle W,\mathbf{1} \rangle=0$, which we now assume.

Let $$\delta = \inf\{|\tau|, \tau \in \mathcal{W} \;\; | \langle W,\tau \rangle| \mbox{ is not a.e. } 0\}$$
and assume $\delta <\gamma$. Each symbol in $\mathcal{W}$ which is not a polynomial is of the form $\tau = \tcI(\rho_1 \rho_2 \tau')$, with $\rho_1, \rho_2 \in \cU$ and $\tau' \in \cW$, so that (recalling that the lowest homogeneity in $\cU$ is $-\frac{1}{2}+\beta-\kappa$)
$$|\tau| = 2 + |\rho_1|+|\rho_2|+|\tau'| \geq 1 +2\beta-2\kappa +|\tau'|>|\tau'|,$$
and in addition by the fixed point equation one has
$$\langle W,\tau \rangle =-3 \langle U_0,\rho_1 \rangle\langle U_0,\rho_2 \rangle \langle W,\tau ' \rangle$$
so that if $\langle W,\tau ' \rangle = 0$ a.e. the same holds for $\langle W,\tau \rangle$. Hence it follows that $\delta$ is an integer. But then by letting $W'=\cQ_{<\gamma'} W$  where  $\sup (0,\gamma')\cap \cA = \delta$, $W'$ is an element of $\cD^{\gamma'}_1$ taking value in the polynomial regularity structure, so that by \cite[Prop.3.4]{Recons} $\cR W' = 0 \Rightarrow W'=0$, a contradiction.
\end{proof}

Recall that in order to prove Theorem \ref{Th:Main}, by the Bouleau-Hirsch criterion, it suffices to check that a.s., for all $(\lambda_1,\ldots,\lambda_n),$
$$\left( \forall h \in \cH, \left\langle v^h, \sum_i \lambda_i \phi_i \right\rangle =0\right) \Rightarrow \lambda_1 = \ldots = \lambda_n = 0.$$

By Proposition \ref{prop:dual} (and linearity of $\phi \mapsto w^\phi$), this is equivalent to proving that a.s.
\begin{equation}
\label{eq:PhiH}
\forall \phi \in \operatorname{span}(\phi_1,\ldots,\phi_n), \;\;\left( \forall h \in \cH, \left\langle h, w^\phi 1_{[0,T]}  \right\rangle=0 \right) \Rightarrow \phi=0.
\end{equation}

We will in fact prove the stronger fact that a.s., the above implication holds for all $\phi \in \cB^{1/2+}_1$ with support in $(0,T) \times \T^3$.

\begin{proof}[Proof of Theorem \ref{Th:Main} under Assumption \ref{asn:Dense}]
This is immediate combining the fact that under Assumption \ref{asn:Dense},
$$\left( \forall h \in \cH, \left\langle h, w^\phi 1_{[0,T]} \right\rangle=0 \right) \;\; \Rightarrow \;\; w^\phi 1_{[0,T]}=0$$
and Proposition \ref{prop:wW}.
\end{proof}

The proof under Assumption \ref{asn:Rough} is more involved and we prepare it with a preliminary result.

\begin{lemma} \label{lem:L}
Let $\varphi$ be a smooth, compactly supported function with vanishing moments up to order one, then one has for all $z$
\begin{equation}
\E \left[ \left\langle \Pi_z \<2c0>, \varphi_z \right\rangle ^2 \right] =2  \| L(\varphi) \|^2_{L^2}
\end{equation}
where
$$L(\varphi)(z_1, z_2) = \int dz (P_+ \ast \varphi)(z) (P_+ \ast R)(z-z_1) (P_+ \ast R)(z-z_2).$$
The Fourier transform of $L(\varphi)$ is equal to
$$\widehat{L(\varphi)}(\xi_1,\xi_2) = (\hat{P}_+ \hat{\varphi}) (\xi_1+\xi_2)  (\hat{P}_+ \hat{R}) (-\xi_1) (\hat{P}_+ \hat{R})(-\xi_2).$$
\end{lemma}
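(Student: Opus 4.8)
The statement is a direct computation combining the definition of the renormalized model on the tree $\<2c2>$'s ancestor $\<2c0>$, together with the Gaussian structure of $\Pi\Xi$. The starting point is the identity $\<2c0> = \tcI(\cI(\Xi)^2)$, so that for any test function $\varphi_z$ one has, using the definition of admissible models (the formula for $\Pi_z\tcI\tau$) and the fact that $\varphi$ has vanishing moments up to order one (so the polynomial correction terms in the definition of $\Pi_z\tcI$ drop out when paired against $\varphi_z$),
\begin{equation*}
\langle \Pi_z \<2c0>, \varphi_z \rangle = \langle \Pi_z (\cI(\Xi)^2), \tilde{P}_+(\cdot - \,) \ast \varphi_z \rangle = \langle \Pi_z(\cI\Xi)^2, \tilde{P}_+ \ast \varphi_z\rangle\;.
\end{equation*}
Here I would be slightly careful about the direction of the convolution ($\tilde P_+$ versus its reflection $P_+$); since $\tilde P(t,x) = P(-t,x)$, pairing $\Pi_z\tcI\tau$ against $\varphi_z$ amounts to pairing $\Pi_z\tau$ against $P_+ \ast \varphi_z$ (up to the sign convention in the definition of $\ast$). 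Next, for the renormalized model one has $\Pi_z(\cI\Xi)^2 = (P_+ \ast \Pi_z\Xi)^2 - C_1$ where $C_1$ is the renormalization constant; the constant is annihilated when tested against $\varphi$ (again using one vanishing moment, or simply that $\varphi$ has zero mean), so effectively
\begin{equation*}
\langle \Pi_z \<2c0>, \varphi_z\rangle = \int (P_+ \ast \varphi)(z'-z)\,\big( (P_+ \ast \Pi\Xi)(z') \big)^2\, dz' - (\text{const.}) \;.
\end{equation*}

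The second step is to compute the second moment. Writing $\Pi\Xi = \xi$ and recalling that $\xi = R \ast \zeta$ with $\zeta$ space-time white noise, set $g(z') = (P_+\ast R)(z' - \,)$, so that $(P_+\ast \xi)(z') = \langle \zeta, g(z')\rangle$ in the white-noise sense; hence
\begin{equation*}
\langle \Pi_z \<2c0>, \varphi_z\rangle = \int (P_+\ast\varphi)(z'-z)\,\Big( \langle \zeta, g(z')\rangle^2 - \|g(z')\|_{L^2}^2 \Big)\, dz' \;,
\end{equation*}
which is precisely a second Wiener chaos element: it equals $I_2(K_z)$ where $K_z(z_1,z_2) = \int (P_+\ast\varphi)(z'-z)\,(P_+\ast R)(z'-z_1)(P_+\ast R)(z'-z_2)\,dz'$, symmetric in $(z_1,z_2)$. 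By the isometry for multiple Wiener–Itô integrals, $\E[ I_2(K_z)^2 ] = 2\|K_z\|_{L^2}^2$. Taking $z=0$ (translation invariance makes the answer independent of $z$) gives $K_0 = L(\varphi)$, which is exactly the claimed formula, so $\E[\langle \Pi_z\<2c0>,\varphi_z\rangle^2] = 2\|L(\varphi)\|_{L^2}^2$.

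The third and final step is the Fourier identity, which is a routine application of the convolution theorem: $L(\varphi)(z_1,z_2) = \big( (P_+\ast\varphi) \ast_{z'} [(P_+\ast R)\otimes (P_+\ast R)] \big)$ evaluated appropriately; taking Fourier transforms in $(z_1,z_2)$, each factor $(P_+\ast R)(z'-z_i)$ contributes $\widehat{P_+\ast R}(\xi_i) e^{-i\xi_i\cdot z'} = (\hat P_+\hat R)(\xi_i)e^{-i\xi_i\cdot z'}$ (using $\widehat{P_+\ast R} = \hat P_+\hat R$ and the fact that $R$ is even, so one gets $(\hat P_+\hat R)(-\xi_i)$ after accounting for the sign of the argument $z'-z_i$), and integrating $(P_+\ast\varphi)(z')\,e^{-i(\xi_1+\xi_2)\cdot z'}\,dz'$ over $z'$ produces $(\hat P_+\hat\varphi)(\xi_1+\xi_2)$. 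Assembling these gives $\widehat{L(\varphi)}(\xi_1,\xi_2) = (\hat P_+\hat\varphi)(\xi_1+\xi_2)(\hat P_+\hat R)(-\xi_1)(\hat P_+\hat R)(-\xi_2)$.

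**Main obstacle.** The computation itself is elementary; the one point requiring genuine care is the first step, namely justifying that the renormalization constant $C_1$ and the polynomial recentering terms in the definition of $\Pi_z\tcI$ genuinely disappear when paired with $\varphi_z$. This is where the hypothesis that $\varphi$ is compactly supported with vanishing moments up to order one is used: it kills the (at most affine) polynomial corrections, and a zero mean alone would kill the constant; one also needs to check that $P_+\ast\varphi$ decays fast enough for the $z'$-integral defining $K_z$ to converge and for the Fourier manipulations to be legitimate, which follows from the regularity and support/decay properties of $P_+$ recorded in Section \ref{Sec:Prelim}. A secondary bookkeeping nuisance is keeping track of the various sign/reflection conventions ($\tilde P$ vs.\ $P$, the direction of $\ast$, the argument $-\xi_i$ vs.\ $\xi_i$), but since $R$ and the relevant kernels are even this does not affect the final formula.
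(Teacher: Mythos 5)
Your proof is correct and follows essentially the same route as the paper's. The paper first uses stationarity to reduce to $z=0$, then uses the vanishing-moment hypothesis to identify $\langle\Pi\<2c0>,\varphi\rangle$ with $\langle\Pi\<2>, P_+\ast\varphi\rangle$, then invokes ``standard Gaussian computations'' to get $\E[\langle\Pi\<2>,\psi\rangle^2]=2\int dz_1\,dz_2\,\psi(z_1)\psi(z_2)\langle P_+\ast R(z_1-\cdot),P_+\ast R(z_2-\cdot)\rangle^2$, which when expanded is exactly your $2\|K_0\|_{L^2}^2$; what you do by exhibiting $\langle\Pi\<2c0>,\varphi_z\rangle$ explicitly as a second Wiener--Itô integral $I_2(K_z)$ and applying the chaos isometry is precisely the content of that phrase, just spelled out. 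The Fourier-transform step is identical to the paper's direct computation. One very minor remark: the polynomial recentering terms in $\Pi_z\cI\Xi$ vanish automatically (since $|\Xi|+2<0$ there are no admissible multiindices $k$), so for $\<1>$ itself only the $\tcI$-recentering on the outer vertex of $\<2c0>$ needs to be killed by the moment hypothesis; you treat this correctly but your phrasing suggests you are also worrying about the inner $\cI$, which requires no hypothesis.
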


\begin{proof}
By stationarity of the model, it suffices to take $z=0$. For the first assertion, note that since $\varphi$ has vanishing moments, one has
$$ \left\langle \Pi \<2c0>, \varphi \right\rangle =  \left\langle P_+(-\cdot) \ast \Pi(\<2>), \varphi \right\rangle =  \left\langle \Pi(\<2>), P_+ \ast \varphi \right\rangle.$$
One then checks via standard Gaussian computations and the definition of $\Pi$ that for any test function $\psi$ one has
$$\E \left[\left\langle \Pi(\<2>), \psi \right\rangle^2 \right]  = 2  \int dz_1 dz_2 \psi(z_1)\psi(z_2) \left\langle P_+\ast R(z_1- \cdot), P_+\ast R(z_2- \cdot) \right\rangle^2  $$
and the result follows.

For the second assertion, we write
\begin{eqnarray*}
\widehat{L(\varphi)}(\xi_1,\xi_2) &=& \int dz_2 dz_1 dz \;\; e^{-i \left\langle \xi_1, z_1\right\rangle} e^{-i \left\langle \xi_2, z_2\right\rangle}  (P_+ \ast \varphi)(z) (P_+ \ast R)(z-z_1) (P_+ \ast R)(z-z_2) \\
&=& \int dz_1 dz \;\; e^{-i \left\langle \xi_1, z_1\right\rangle} (P_+ \ast \varphi)(z) (P_+ \ast R)(z-z_1)  e^{-i \left\langle \xi_2, z\right\rangle} (\hat{P}_+ \hat{R})(- \xi_2) \\
&=& \int dz \;\; e^{-i \left\langle \xi_1+ \xi_2, z\right\rangle}  (P_+ \ast \varphi)(z) (\hat{P}_+\hat{R})(-\xi_1) (\hat{P}_+\hat{R})(- \xi_2)  \\
&=& (\hat{P}_+ \hat{\varphi})(\xi_1+\xi_2) (\hat{P}_+\hat{R})(-\xi_1) (\hat{P}_+\hat{R})(- \xi_2) .
\end{eqnarray*}
\end{proof}

\begin{proof}[Proof of Theorem \ref{Th:Main} under Assumption \ref{asn:Rough}]
Recall that it suffices to prove that $\P$-a.s., for all $\phi$ $\in$ $\cB^{1/2+}_{1}$ supported in $(0,T)\times\T^3$, if $W$ is the solution to
$$W = -\tilde{\cP} (3 U^2 W) + \tilde\cP(\Phi)$$ 
and $w:= \cR W$ is in  $\cH^\perp$, then $w=0$ (which implies $W=0$ and $\Phi=0$ by Proposition \ref{prop:wW}).

By Assumption \ref{asn:Rough} and Lemma \ref{lem:RoughL2}, there exists a sequence $n_k \to \infty$ s.t.,
\begin{equation} \label{eq:RoughSubseq}
\lim_{k \to \infty} 2^{2n_k (3\beta - |\s|)}\int_{B_{n_k}}  \hat{m}(d\xi_1)  \hat{m}(d\xi_2) \left|\hat{R} (\xi_1+\xi_2)  \hat{R}(\xi_1) \hat{R}(\xi_2)\right|^2 >0.
\end{equation}
Now letting $\lambda_k=2^{-n_k}$, we fix $\varphi^k_z = R \ast \eta_z^{\lambda_k}$ where $\eta$ is compactly supported, with vanishing moments up to order $r'\in\N$ with $r' > r+ 4 + \beta$ and such that 
$$ \inf_{A_0} \left| \hat{\eta}\right| > 0.$$
%such functions exist: take $\eta$ compactly supported with vanishing moments up to $r'$ and observe that $\hat\eta$ is analytic so that the $0$ at the origin is necessarily isolated. By rescaling this function, we can make guarantee that an annulus around $0$ does not carry any $0$ of $\hat\eta$.
Then by Lemma \ref{lem:L}, one has
\begin{eqnarray*}
\E \left[ \left\langle \Pi_z \<2c0>, \varphi^k_z \right\rangle ^2 \right] &=&  2 \| L(\varphi^k) \|^2_{L^2}=  2 \| \widehat{L(\varphi^k)} \|^2_{L^2} \\
&\geq& 2 \int_{B_{n_k}}  \hat{m}(d\xi_1)  \hat{m}(d\xi_2)  \left|(\hat{P}_+ \hat{R}) (\xi_1+\xi_2)  (\hat{P}_+ \hat{R}) (-\xi_1) (\hat{P}_+ \hat{R})(-\xi_2) \hat{\eta}(\lambda_k(\xi_1 + \xi_2)) \right|^2 \\
 &\gtrsim&  2^{-12n_k} \int_{B_{n_k}} \hat{m}(d\xi_1)  \hat{m}(d\xi_2) \left|\hat{R} (\xi_1+\xi_2)  \hat{R}(\xi_1) \hat{R}(\xi_2)\right|^2 \\
&\gtrsim&2^{2n_k( - 1 - 3 \beta)}\left(2^{2n_k ( 3\beta - |\s|)}\int_{B_{n_k}}  \hat{m}(d\xi_1)  \hat{m}(d\xi_2) \left|\hat{R} (\xi_1+\xi_2)  \hat{R}(\xi_1) \hat{R}(\xi_2)\right|^2\right)\\ &\gtrsim& 2^{2n_k( - 1 - 3 \beta)}.
\end{eqnarray*}
 
where we have used in the second inequality that $\big|\hat{P}_+(\xi)\big|\gtrsim |\xi|^{-2}$ and \eqref{eq:RoughSubseq} in the last inequality.

We now fix $\gamma$ and $\gamma'$ such that
\begin{equation} \label{eq:gammadef}
1 + 2 \beta < \gamma'<\gamma < \frac{3}{2} + \beta - \kappa
\end{equation}
(This is possible for $\kappa$ small enough since $\beta < \frac{1}{2}$.) 

Now since $\left\langle \Pi_z \<2c0>, \varphi^k_z \right\rangle$ is an element of a Gaussian chaos of order $2$, by the Carbery-Wright inequality \cite[Theorem 8]{CW2001} there exists $C>0$ such that for each $A>0$,
$$\P\left( 2^{n_k (\gamma' + \beta)} \left|\left\langle \Pi_z \<2c0>, \varphi^k_z \right\rangle \right| \leq A \right) \leq \frac{C A^{1/2} 2^{ - \frac{n_k}{2} (\gamma' + \beta)}}{\E \left[  \left\langle \Pi_z \<2c0>, \varphi^k_z \right\rangle ^2 \right]^{1/4} } \lesssim A^{1/2} 2^{- \frac{n_k}{2}  (\gamma' -1-2\beta)}.$$ 
It follows by the Borel-Cantelli Lemma that for any $z$,
$$\P\left(\lim_{k \to \infty}\left| \left\langle \Pi_{z} \<2c0>, \varphi_{z}^{k} \right\rangle \right| 2^{n_k(\gamma'+\beta)}=+\infty\right) =1$$ 
and by Fubini's Theorem that almost surely
\begin{equation} \label{eq:trulyrough}
\left\{ z\in (0,T)\times\T^3 : \lim_{k \to \infty}\left| \left\langle \Pi_{z} \<2c0> , \varphi_{z}^{k} \right\rangle \right| 2^{n_k(\gamma'+\beta)}=+\infty \right\} \mbox{ has full measure.}
\end{equation}

Note that $\varphi^k_z= (R^{\lambda_k^{-1}} \ast \eta)^{\lambda_k}_z$ and we will use the following inequality (the proof of which is deferred to Lemma \ref{lem:Rasteta} below) :
\begin{equation} \label{eq:Rasteta}
\Big|(R^{\lambda_k^{-1}} \ast \eta )(z)\Big| \lesssim \lambda_k^{\beta} (|z|+1)^{-|\s| - r '+ \beta}.
\end{equation}

We now need to localize $W$ to obtain an element of an unweighted space. Let $\theta^\varepsilon=\theta^\varepsilon(t)$ be a smooth function equal to $0$ on $(-\infty,0)$ and to $1$ on $[\varepsilon,+\infty)$, and such that $\|\theta^\varepsilon\|_{\cC^\gamma}\lesssim \varepsilon^{-\gamma}$. We can then lift it to a modelled distribution $\Theta^\varepsilon$ and consider the product $W \cdot \Theta^\varepsilon$. It is simple to check that $W \cdot \Theta^\varepsilon$ belongs to the unweighted space $\cD^\gamma_1=\cD^\gamma_{1,\infty}$ from Definition \ref{Def:Dgamma} and that we have the bound: $\|W \cdot \Theta^\varepsilon\|_{\cD^\gamma_1}\lesssim \varepsilon^{-\delta}$ uniformly over all $\varepsilon > 0$, where $\delta=\gamma+\gamma_0-\eta_0$.

Now for each $z \in (0,T) \times \T^d$ one has using  \eqref{eq:per} that
\[ 
 \left\langle w1_{(0,T)}, \varphi^k_z  \right\rangle_{L^2(\R^4)} = \left\langle w1_{(0,T)}, \left(\varphi^k_z\right)^{per}  \right\rangle_{L^2(\R \times \T^3)} = 0
\] 
since $ \left(\varphi^k_z\right)^{per} =  \left(  R \ast\eta_z^{\lambda_k} \right)^{per} = R \ast \left( \eta_z^{\lambda_k} \right)^{per}$ is in $\mathcal{H}$, so that
\begin{eqnarray*}
0 &=& \left\langle w, 1_{(0,T)}\varphi^k_z  \right\rangle \\
&=&  \left\langle w, \left(1_{(0,T)}-\theta^{\varepsilon_k}\right)\varphi^k_z  \right\rangle \;\;+ \;\;  \left\langle \cR(W\Theta^{\varepsilon_k}) - \Pi_z (W\Theta^{\varepsilon_k})(z),\varphi^k_z  \right\rangle \;\;+\;\; \left\langle \Pi_z (W\Theta^{\varepsilon_k})(z),\varphi^k_z  \right\rangle \\
&=:& A_1^k(z) + A_2^k(z) + A_3^k(z)
\end{eqnarray*}

where we will take $\varepsilon_k =|\log(\lambda_k)|^{-1}$. Hence we have 
\begin{equation} \label{eq:A}
0 = \liminf_{k \to \infty} \int_{(0,T)\times\T^3} dz 1_{\{t \geq 2 \varepsilon_k \}} \lambda_k^{-\gamma'-\beta}  \left| A_1^k(z) + A_2^k(z) + A_3^k(z) \right|  
\end{equation}

We will bound the first two terms from above and the third from below to obtain that $w=0$ a.e.. First note that letting $z=(t,x)$, by \eqref{eq:Rasteta} one has that
$$|\varphi^{k}_z|\lesssim \lambda_k^{-|\s|+\beta}\left( 1+ \frac{(t-\varepsilon_k)_+^{1/2}}{\lambda_k}\right)^{-|\s|-r'+\beta} \mbox{ on the support of }1_{(0,T)} - \theta^\varepsilon,$$ so that for $t \geq 2 \varepsilon_k$ we have by the Cauchy-Schwarz inequality
\[\left|A_1^k(z)\right|\; \leq \; \left\|w \right\|_{L^2} \left\|\left(1_{(0,T)}-\theta^{\varepsilon_k}\right)\varphi^{k}_z \right\|_{L^2} \; \lesssim \; \lambda_k^{r'} \varepsilon_k^{\frac{-|\s|-r'+\beta+1}{2}} \]
and we obtain
\begin{equation} \label{eq:A1}
\limsup_{k \to \infty} \int_{(0,T)\times \T^3} dz 1_{\{t \geq 2 \varepsilon_k \}} \lambda_k^{-\gamma'-\beta }  \left| A_1^k(z) \right|  \lesssim \limsup_{k \to \infty} \lambda_k^{-\gamma'-\beta+r'} \varepsilon_k^{\frac{-|\s|-r'+\beta+1}{2}} = 0.
\end{equation}
%For the second term, we use Lemma \ref{lem:ReconsNoncompact} below (with $p=1$). Indeed, by Lemma \ref{lem:Rasteta} the function $R^{\lambda_k^{-1}} \ast \eta$ belongs to $\ccN_{\lambda_k}^{r,r'-r-\beta}$ and since $r' - r- \beta> 4 > \gamma - |\Xi|$, Lemma \ref{lem:ReconsNoncompact} yields

For the second term, we observe that the function $R^{\lambda_k^{-1}} \ast \eta$ is supported in a centered ball of radius of order $\lambda_k^{-1}$ and satisfies by Lemma \ref{lem:Rasteta}
$$ \| R^{\lambda_k^{-1}} \ast \eta\|_{\cC^r (B(z,1))} \lesssim \lambda_k^{\beta}  \left(1+|z|\right)^{-(|\s| + c)}\;$$
with $c=r'-r-\beta$. Since $r' - r- \beta> 4 > \gamma - |\Xi|$, Lemma \ref{lem:ReconsNoncompact} yields

\begin{equation} \label{eq:A2}
\limsup_{k \to \infty} \int_{(0,T)\times \T^3}  dz 1_{\{t \geq 2 \varepsilon_k \}} \lambda_k^{-\gamma'-\beta }  \left| A_2^k(z) \right| \lesssim \limsup_{k \to \infty} \|W \cdot \Theta^\varepsilon\|_{\cD^\gamma_1} \lambda_k^{\gamma-\gamma'} \lesssim \limsup_{k \to \infty} \varepsilon_k^{-\delta} \lambda_k^{\gamma-\gamma'} = 0.
\end{equation}
For the third term, the expansion of $W$ up to order $\gamma$ gives
$$W(z) = w(z) \mathbf{1} -3 w(z)\; \<2c0> + \sum_{i=1}^3 w_i(z) X_i $$
(the next term in the expansion would be a factor of $\<1c0>$ which is of homogeneity $\frac{3}{2} + \beta - \kappa > \gamma$). Since $\varphi^k_z$ has vanishing moments, it holds that 
$$ \left\langle \Pi_z (W\Theta^{\varepsilon_k})(z),\varphi^k_z  \right\rangle = -3w(z) \theta^\varepsilon(z) \left\langle \Pi_{z} \<2c0>, \varphi_{z}^{k} \right\rangle$$
 and by Fatou's Lemma and \eqref{eq:trulyrough}
 \begin{eqnarray*}
\liminf_{k \to\infty}  \int dz  1_{\{t \geq 2 \varepsilon_k \}} \lambda_k^{-\gamma'-\beta}  \left| A_3^k(z) \right| &\geq& 3 \int dz  |w(z)| \left(\liminf_{k\to\infty} \lambda_k^{-\gamma'-\beta}\left| \left\langle \Pi_{z} \<2c0> , \varphi_{z}^{k} \right\rangle\right|\right) \\
&=& \int dz  |w(z)| \cdot \infty\;.
 \end{eqnarray*}
Combining this inequality with \eqref{eq:A}, \eqref{eq:A1} and \eqref{eq:A2}, we see that necessarily $w=0$ a.e., which finishes the proof.
\end{proof}

\begin{lemma} \label{lem:Rasteta}
Let $R$ satisfy Assumption \ref{asn:R}, and $\eta$ be a compactly supported, smooth function, with vanishing moments up to order $\rho > \beta - |\s|$. Then it holds that for $\lambda \in (0,1]$, $z \in \R^4$,
$$ \big|(R^{\lambda^{-1}} \ast \eta)(z)\big| \lesssim \lambda^{\beta} (1+|z|)^{-|\s|-\rho+\beta}.$$
\end{lemma}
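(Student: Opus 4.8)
The plan is to exploit the dyadic structure of $R$. Write $R^{\lambda^{-1}} = \sum_{n\ge 0}G_n$ with $G_n := R_n^{\lambda^{-1}}$; by scaling and Assumption \ref{asn:R}, each $G_n$ is even, supported in $\{|y|_\s \le C 2^{-n}\lambda^{-1}\}$, satisfies $\|\partial^k G_n\|_{L^\infty} \lesssim (2^n\lambda)^{|\s|+|k|}\,2^{-n\beta}$ for every multi-index $k$ (so in particular $\|G_n\|_{L^1} = \|R_n\|_{L^1} \lesssim 2^{-n\beta}$), and, when $\beta = 0$, $\int G_n = 0$ for $n\ge 1$. I would then fix the scale $N \asymp \log_2(1/\lambda)$ at which $2^{-n}\lambda^{-1} \approx 1$, split $R^{\lambda^{-1}}\ast\eta = \sum_{n\le N} G_n\ast\eta + \sum_{n>N}G_n\ast\eta$, and estimate the two sums separately; note that each $G_n\ast\eta$ is supported in $\{|y|_\s \lesssim 2^{-n}\lambda^{-1} + 1\}$, a fixed bounded set for $n > N$ and a set growing like $2^{-n}\lambda^{-1}$ for $n\le N$.

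For the sum over $n > N$ (the ``concentrated'' pieces, where $G_n$ lives in a small fixed ball) I would Taylor-expand $\eta$ around $z$ inside the convolution integral (legitimate since $x\mapsto G_n(z-x)$ is supported in a ball of radius $<1$ about $z$): using the vanishing of the first moments of $G_n$ (evenness), $(G_n\ast\eta)(z) = \eta(z)\int G_n + O\big(\|\eta\|_{\cC^2}\int |G_n(y)|\,|y|_\s^2\,dy\big) = O\big(2^{-n\beta}\big) + O\big((2^{-n}\lambda^{-1})^2 2^{-n\beta}\big)$. Summing over $n>N$: the first term gives $\lesssim 2^{-N\beta}\lesssim\lambda^\beta$ when $\beta>0$ and vanishes identically when $\beta = 0$, while the second gives $\lesssim \lambda^{-2}\sum_{n>N}2^{-n(2+\beta)}\lesssim\lambda^{-2}2^{-2N}\lesssim 1$; so in all cases $\big|\sum_{n>N}G_n\ast\eta\big|\lesssim\lambda^\beta$ on its (fixed, bounded) support, hence $\lesssim\lambda^\beta(1+|z|)^{-|\s|-\rho+\beta}$.

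For the sum over $n\le N$ (the ``spread-out'' pieces) I would instead Taylor-expand $y\mapsto G_n(z-y)$ about $0$ to scaled order $s := \lfloor\rho\rfloor+1$ (see \cite[Prop.~A.1]{Hairer2014}); the vanishing moments of $\eta$ up to order $\rho$ annihilate the Taylor polynomial, and the remainder, together with $\int|\eta(w)|\,|w|_\s^{s}\,dw<\infty$ and the derivative bound on $G_n$ (recall $2^n\lambda\le 1$ here, so the lowest-order derivative dominates), gives $|(G_n\ast\eta)(z)|\lesssim (2^n\lambda)^{|\s|+s}2^{-n\beta}\,\mathbf{1}_{\{|y|_\s \lesssim 2^{-n}\lambda^{-1}\}}$. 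The crucial observation is that, as a function of $n$, this bound is a geometric sequence with ratio $2^{|\s|+s-\beta}>1$ (positivity of the exponent uses $s>\rho>\beta-|\s|$), so for each fixed $z$ the sum over the finitely many contributing $n$ is comparable to its largest term, occurring at $2^{n_*}\approx \lambda^{-1}/\max(1,|z|_\s)$. Substituting $n_*$ yields exactly $\lambda^\beta\,\max(1,|z|_\s)^{-(|\s|+s-\beta)}\lesssim\lambda^\beta(1+|z|)^{-|\s|-\rho+\beta}$ (using $s>\rho$ and $|z|_\s\ge 1$ on the relevant range). Adding the two sums proves the lemma. When $\rho<0$ the hypothesis $\rho>\beta-|\s|$ forces the target exponent to be negative, the Taylor step is vacuous, and one replaces $s$ by $0$, using the crude bound $\|G_n\|_{L^\infty}\|\eta\|_{L^1}$; the rest is identical.

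The main obstacle is the spread-out regime: one must produce the prefactor $\lambda^\beta$ and the polynomial decay in $z$ \emph{simultaneously}, and the naive estimate --- bounding each of the $\asymp\log(1/\lambda)$ summands by $\lambda^\beta(1+|z|)^{-|\s|-\rho+\beta}$ --- loses a logarithm; this is avoided precisely because the per-scale bounds decay geometrically away from the top scale, so only one scale matters. A secondary point requiring the moment and symmetry structure of $R$ is the case $\beta=0$ in the concentrated regime, where $\sum_{n>N}$ fails to converge absolutely unless one uses $\int R_n = 0$ and the evenness of $R_n$.
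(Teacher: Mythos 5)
Your dyadic decomposition at the threshold scale $2^{-N}\approx\lambda$ and your treatment of the spread-out pieces ($n\le N$) follow the paper's argument closely: Taylor-expand the kernel $R_n^{\lambda^{-1}}(z-\cdot)$ against the vanishing moments of $\eta$, use the support constraint to restrict to scales $2^{-n}\gtrsim\lambda|z|$, and sum the resulting geometric series, which is dominated by its top scale and thereby produces the $\lambda^\beta$ prefactor and the polynomial decay in $z$ at once. Where you genuinely diverge is the concentrated regime $n>N$. The paper bounds these pieces by $\|R_n^{\lambda^{-1}}\ast\eta\|_{L^\infty}\lesssim\|R_n\|_{L^1}\lesssim 2^{-n\beta}$, which is summable only for $\beta>0$; for $\beta=0$ it falls back on an entirely separate argument combining Proposition~\ref{prop:L2H} (the $L^2$-boundedness of convolution with $R$) with a Gagliardo--Nirenberg inequality. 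Your second-order (scaled) Taylor expansion of $\eta$ about $z$ --- killing the zeroth-order term with $\int R_n=0$ for $\beta=0$, $n\ge1$, or bounding it by $\|R_n\|_{L^1}\lesssim 2^{-n\beta}$ when $\beta>0$, and using evenness to kill the first-order spatial term --- treats both cases uniformly and avoids the $L^2$/Gagliardo--Nirenberg detour altogether; this is a genuine simplification. One bookkeeping slip: you bound the quadratic remainder by $\lambda^{-2}\sum_{n>N}2^{-n(2+\beta)}\lesssim\lambda^{-2}2^{-2N}\lesssim 1$, which falls short of the stated conclusion $\lesssim\lambda^\beta$ when $\beta>0$; keeping $\beta$ in the exponent of the geometric sum gives the correct $\lambda^{-2}2^{-N(2+\beta)}\approx\lambda^\beta$, so the conclusion you state is right even though the chain of inequalities as written is not tight.
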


\begin{proof}
We first prove that 
$$ \|R^{\lambda^{-1}} \ast \eta \|_{L^\infty} \lesssim \lambda^{\beta},$$
distinguishing between the cases $\beta=0$ and $\beta>0$. When $\beta=0$, we note that for any $L^2$ function $g$, one has (using Proposition \ref{prop:L2H})
$$\left\|R^{\lambda^{-1}} \ast g \right\|_{L^2} = \lambda^{|\s|/2} \left\|R\ast g^\lambda \right\|_{L^2} \lesssim \lambda^{|\s|/2} \left\| g^\lambda \right\|_{L^2} = \left\|g\right\|_{L^2},$$
so that by Gagliardo-Nirenberg inequality
$$\|R^{\lambda^{-1}} \ast \eta \|_{L^\infty} \; \lesssim \; \|R^{\lambda^{-1}} \ast \eta \|_{L^2}^{1/2} \|R^{\lambda^{-1}} \ast D^4 \eta \|_{L^2}^{1/2} \; \lesssim \; \|\eta \|_{L^2}^{1/2} \|D^4 \eta \|_{L^2}^{1/2} \;\lesssim \;1.$$

In the case $\beta>0$, we fix $n_0$ s.t. $2^{-n_0} \sim \lambda$ and we write
\begin{eqnarray*}
\left(R^{\lambda^{-1}} \ast \eta \right)(z) = \sum_{0 \leq n \leq n_0}  \left(R_n^{\lambda^{-1}} \ast \eta \right)(z) +   \sum_{n > n_0}   \left(R_n^{\lambda^{-1}} \ast \eta \right)(z).
\end{eqnarray*}
The second sum is easily found to be of order $\lambda^{\beta}$, using that
$$ \left(R_n^{\lambda^{-1}} \ast \eta \right)(z) \lesssim \left\| R_n^{\lambda^{-1}} \right\|_{L^1} \left\| \eta\right\|_{L^\infty} \lesssim 2^{-n\beta}.$$
We set for all $z,y$ and all smooth $\phi$:
$$ T_{\rho,z} (\phi)(y) = \sum_{k\in\N^{d+1}:|k|<\rho} \partial^k \phi(z) \frac{y^k}{k!}\;.$$ 
For $0\leq n \leq n_0$, we note that since $\eta$ has vanishing moments up to order $\rho$, one has
$$\left(R_n^{\lambda^{-1}} \ast \eta \right)(z) = \int \left(R_n^{\lambda^{-1}}(z-y)  - T_{\rho,z}(R_n^{\lambda^{-1}})(-y)\right)\eta(y) dy $$
and for $|y| \lesssim 1$, by the Taylor formula from \cite[Proposition A.1]{Hairer2014}, it holds that
 $$ \left|R_n^{\lambda^{-1}}(z-y)  - T_{\rho,z}(R_n^{\lambda^{-1}})(-y)\right| \lesssim \sup_{\rho \leq |l| \leq \rho+2} \|\partial^l R_n^{\lambda^{-1}}\|_{L^\infty} \lesssim \lambda^{|\s|+\rho} 2^{n(|\s|+\rho-\beta)}.$$
Hence 
$$\sum_{n \leq n_0}   \Big|\left(R_n^{\lambda^{-1}} \ast \eta \right)(z)\Big| \lesssim \lambda^{|\s|+\rho}  \sum_{n \leq n_0} 2^{n(|\s|+\rho-\beta)} \lesssim \lambda^{\beta}.$$

Finally, let $C_\eta$ be such that $\eta$ is supported in $B(0,C_\eta)$, and $C>0$ be as given in Assumption \ref{asn:R}, we remark that when $|z| \geq 2C_\eta$, 
$$C 2^{-n}<  \frac{\lambda|z|}{2} \Rightarrow C 2^{-n} <   \lambda( |z| - C_{\eta})$$
so that for such $n$, for all $y$ in $supp(\eta)$, $R_n^{\lambda^{-1}}(z-y) = 0$, and $\left(R_n^{\lambda^{-1}} \ast \eta\right)(z)=0$.

Hence it holds that
\begin{align*}
\left(R^{\lambda^{-1}} \ast \eta \right)(z) &= \sum_{n : 2^{-n} \gtrsim \lambda |z|}  \int R_n^{\lambda^{-1}}(z-y) \eta(y) dy 
\end{align*}
so that the same estimate as above gives  $$\left(R^{\lambda^{-1}} \ast \eta \right)(z) \lesssim \lambda^{|\s|+\rho}   \sum_{n : 2^{-n} \gtrsim \lambda |z|} 2^{n(|\s|+\rho-\beta)} \lesssim \lambda^{\beta} |z|^{-|\s|-\rho+\beta}.$$
\end{proof}

\begin{lemma} \label{lem:ReconsNoncompact}
Take $\ccN^{r,c}_\lambda$ as the space of all functions $\varphi$ supported in $B(0,\lambda^{-1})$ such that $\| \varphi \|_{\cC^r (B(z,1))} \leq  \left(1+|z|\right)^{-(|\s| + c)}$, with $c > p\left( \gamma - \inf \cA\right)$. Then, we have
$$ \sup_{\lambda\in (0,1]} \bigg\| \sup_{\varphi\in \ccN^{r,c}_\lambda} \frac{\big|\langle \cR f-\Pi_z f(z), \varphi_z^\lambda \rangle\big|}{\lambda^{\gamma}} \bigg\|_{L^p(\R_+\times\T^d,dz)} \lesssim \$f\$_{\cD^\gamma_p}\;,$$
uniformly over all $f \in \cD^\gamma_p$.
\end{lemma}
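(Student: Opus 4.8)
The plan is to reduce the bound to the (unweighted) reconstruction estimate of \cite{Recons}, the new input being a multiscale partition of unity of $\varphi^\lambda_z$ tailored to the polynomial decay $\|\varphi\|_{\cC^r(B(w,1))}\le(1+|w|)^{-(|\s|+c)}$.

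First I would fix $\lambda\in(0,1]$, write $\lambda=2^{-n}$, and split $\varphi=\sum_{m\ge 0}\varphi_m$ into dyadic annular pieces, $\varphi_m$ being a smooth cutoff of $\varphi$ to the annulus $\{|w|\sim 2^m\}$ (with $\varphi_0$ supported in $B(0,2)$). Only $m\lesssim n$ contribute by the support assumption, and the decay assumption gives $\|\varphi_m\|_{\cC^r}\lesssim 2^{-m(|\s|+c)}$. After rescaling, $(\varphi_m)^\lambda_z$ is a test function of width $\sim 2^m\lambda$ that is still only $\cC^r$-regular at the finer scale $\lambda$; I then cut it by a partition of unity subordinate to balls of radius $\lambda$ centred at points $z_{m,j}=z+v_{m,j}$, $j\le O(2^{m|\s|})$, with $|v_{m,j}|\sim 2^m\lambda$ and --- crucially --- with the offsets $v_{m,j}$ chosen independently of $z$. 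This produces, uniformly over $\varphi\in\ccN^{r,c}_\lambda$,
$$\varphi^\lambda_z=\sum_m 2^{-m(|\s|+c)}\sum_j b_{m,j}\,(\eta_{m,j})^\lambda_{z_{m,j}}\;,\qquad |b_{m,j}|\lesssim 1,\quad \eta_{m,j}\in\ccB^r\;.$$

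Next, for each bump I would insert the model expansion based at its own centre, writing $\langle\cR f-\Pi_z f(z),(\eta_{m,j})^\lambda_{z_{m,j}}\rangle$ as $\langle\cR f-\Pi_{z_{m,j}}f(z_{m,j}),(\eta_{m,j})^\lambda_{z_{m,j}}\rangle+\langle\Pi_z(\Gamma_{z,z_{m,j}}f(z_{m,j})-f(z)),(\eta_{m,j})^\lambda_{z_{m,j}}\rangle$. The first piece is $\le\lambda^\gamma g(z_{m,j})$, where $g(y):=\sup_{\eta\in\ccB^r}\lambda^{-\gamma}|\langle\cR f-\Pi_y f(y),\eta^\lambda_y\rangle|$ satisfies $\|g\|_{L^p}\lesssim\$f\$$ by the reconstruction theorem of \cite{Recons}, and since $z\mapsto z_{m,j}$ is a fixed translation, $\|g(z_{m,j}(\cdot))\|_{L^p}=\|g\|_{L^p}$. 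For the second piece I would estimate $|\langle\Pi_z\tau,\eta^\lambda_{z_{m,j}}\rangle|$ on homogeneous $\tau$ through $\Pi_z=\Pi_{z_{m,j}}\Gamma_{z_{m,j},z}$ and the model and structure-group bounds, obtaining (with $h_m:=2^m\lambda$) a quantity $\lesssim\sum_{\zeta<\gamma}h_m^{\zeta-\inf\cA}\lambda^{\inf\cA}\,|\Gamma_{z,z_{m,j}}f(z_{m,j})-f(z)|_\zeta$; then, using $\Gamma_{z,z_{m,j}}f(z_{m,j})-f(z)=\Gamma_{z,z_{m,j}}(f(z_{m,j})-\Gamma_{z_{m,j},z}f(z))$ together with the $\cD^\gamma_p$-modulus bound, $\big\||\Gamma_{z,z_{m,j}}f(z_{m,j})-f(z)|_\zeta\big\|_{L^p(dz)}\lesssim |v_{m,j}|^{\gamma-\zeta}\$f\$\sim h_m^{\gamma-\zeta}\$f\$$.

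Finally I would sum by the triangle inequality in $L^p$, first over the $O(2^{m|\s|})$ indices $j$ and then over $m$; the bounds above collapse to
$$\big\|\langle\cR f-\Pi_z f(z),\varphi^\lambda_z\rangle\big\|_{L^p(dz)}\lesssim\$f\$\,\lambda^\gamma\sum_{m\ge 0}\big(2^{-mc}+2^{m(\gamma-\inf\cA-c)}\big)\;,$$
and the hypothesis on $c$ (which in particular forces $c>\gamma-\inf\cA>0$) makes this series converge, so the right-hand side is $\lesssim\$f\$\lambda^\gamma$; dividing by $\lambda^\gamma$ and taking the supremum over $\lambda\in(0,1]$ and $\varphi\in\ccN^{r,c}_\lambda$ gives the claim. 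I expect the main difficulty to be the far-field cross-terms $\Pi_z f(z)-\Pi_{z_{m,j}}f(z_{m,j})$: the base points are separated by distances up to order one, so the transition maps $\Gamma$ generate the geometric loss $2^{m(\gamma-\inf\cA)}$ across the $\sim\log\lambda^{-1}$ dyadic scales, and it is exactly this loss that the decay exponent $c$ of $\varphi$ must beat. The rest --- the passage between the continuous reconstruction estimate and the lattice sums, the treatment of the cutoffs, and absorbing constants when $h_m$ is of order one --- is routine bookkeeping with the results of Subsection \ref{Subsec:Dgamma}.
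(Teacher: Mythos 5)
Your proof is correct and proceeds along the same basic lines as the paper's: decompose the rescaled test function into bumps of scale~$\lambda$ with centres $z+\lambda y$ independent of $z$, rewrite $\cR f-\Pi_z f(z)$ by re-basing the expansion at each bump's centre, control the first contribution by the unweighted reconstruction bound of~\cite{Recons} combined with translation invariance of the $L^p$-norm, and control the second by the model bound together with the $\cD^\gamma_p$ modulus estimate $\big\||f(\cdot+h)-\Gamma_{\cdot+h,\cdot}f(\cdot)|_\zeta\big\|_{L^p}\lesssim|h|^{\gamma-\zeta}\$f\$$. Your organisation of the bumps into dyadic shells, versus the paper's flat decomposition over $\Lambda_0$, is essentially cosmetic, and your detour through $\Pi_z=\Pi_{z_{m,j}}\Gamma_{z_{m,j},z}$ on the second piece is also redundant: the two structure-group factors cancel, so one may equivalently write the second piece as $\langle\Pi_{z_{m,j}}(f(z_{m,j})-\Gamma_{z_{m,j},z}f(z)),\,\eta^\lambda_{z_{m,j}}\rangle$ and apply the model bound directly to get the factor $\lambda^\zeta$, which is what the paper does.

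The one substantive difference is how you sum over the bumps: you take $L^p$-norms termwise and use the triangle inequality, so the decay weight $(1+|y|)^{-|\s|-c}$ need only beat the $\ell^1$-growth $|y|^{\gamma-\zeta}$ arising from the modulus estimate, yielding the condition $c>\gamma-\inf\cA$. The paper instead applies Jensen's inequality with respect to the weights and only then integrates, so the weight must beat $|y|^{p(\gamma-\zeta)}$, yielding the stated hypothesis $c>p(\gamma-\inf\cA)$. Your route therefore proves the lemma under a genuinely weaker hypothesis (the two coincide exactly when $p=1$, which is the only case the paper actually invokes). In short: same skeleton, but your handling of the $L^p$-summation is sharper.
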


\begin{proof}
Let $\psi:\R^{d+1}\to\R$ be a smooth function, supported in $B(0,1)$, that defines a partition of unity $\sum_{y\in\Lambda_0} \psi_y(\cdot) = 1$. We can decompose 
$$\varphi = \sum_{y\in \Lambda_0:|y| \le \lambda^{-1}+1} \psi_y \varphi =: \sum_{y\in \Lambda_0:|y| \le \lambda^{-1}+1} \eta^y(\cdot-y)\;,$$
where each $\eta^y$ is supported in $B(0,1)$ and such that $$\|\eta^y\|_{\cC^r} \leq \left(1+|y|\right)^{-(|\s| + c)}.$$
We now rely on the reconstruction theorem~\cite[Th 3.1]{Recons} in unweighted spaces. We have 
\begin{eqnarray*}
\left\langle \cR f - \Pi_z f(z), \varphi^\lambda_z \right\rangle &=& \sum_{y\in \Lambda_0} \left\langle \cR f - \Pi_z f(z), (\eta^y)^\lambda_{z+\lambda y} \right\rangle \\
&=& \sum_{y\in \Lambda_0:|y| \le \lambda^{-1}+1} \left\langle \cR f - \Pi_{z+\lambda y} f(z+\lambda y), (\eta^y)^\lambda_{z+\lambda y} \right\rangle\\
&&  + \sum_{y\in \Lambda_0:|y| \le \lambda^{-1}+1} \left\langle \Pi_{z+\lambda y}\left( f(z+\lambda y) - \Gamma_{z+\lambda y ,z} f(z)\right), (\eta^y)^\lambda_{z+\lambda y} \right\rangle .
\end{eqnarray*}
We then bound 
$$\int dz \sup_{\varphi\in \ccN^{r,c}_\lambda}\left|\left\langle \cR f - \Pi_z f(z), \varphi^\lambda_z \right\rangle\right|^p$$
by bounding separately the two terms above. For the first one, we have
\begin{eqnarray*}
&& \int dz  \sup_{\varphi\in \ccN^{r,c}_\lambda}\left|\sum_{y\in \Lambda_0} \left\langle \cR f - \Pi_{z+\lambda y} f(z+\lambda y), (\eta^y)^\lambda_{z+\lambda y} \right\rangle\right|^p \\
&\leq& \int dz  \left|\sum_{y\in \Lambda_0} \frac{1}{ \left(1+|y|\right)^{|\s|+c}} \sup_{\phi \in \cB^r} \big|\left\langle \cR f - \Pi_{z+\lambda y} f(z+\lambda y),\phi^\lambda_{z+\lambda y} \right\rangle\big|\right|^p \\
&\lesssim &  \int \sum_{y\in \Lambda_0} dz'   \frac{1}{ \left(1+|y|\right)^{|\s|+c}}  \left| \sup_{\phi \in B^r} \big|\left\langle \cR f - \Pi_{z'} f(z'),\phi^\lambda_{z'} \right\rangle\big|\right|^p \\
&\lesssim& \lambda^{\gamma p}
\end{eqnarray*}
where we have used Jensen's inequality and the change of variables $z'=z+\lambda y$. For the second one, note that 
$$\left\langle \Pi_{z+\lambda y}\left( f(z+\lambda y) - \Gamma_{z+\lambda y ,z} f(z)\right), (\eta^y)^\lambda_{z+\lambda y} \right\rangle \lesssim \sum_{\zeta < \gamma} \left|f(z+\lambda y) - \Gamma_{z+\lambda y ,z} f(z)\right|_\zeta  \left(1+|y|\right)^{-|\s|-c} \lambda^\zeta$$
(the sums being taken over elements $\zeta$ in $\cA$), so that we obtain
\begin{eqnarray*}
&&\sum_{\zeta<\gamma} \int dz \left|\sum_{y\in \Lambda_0:|y| \le \lambda^{-1}+1} \left|f(z+\lambda y) - \Gamma_{z+\lambda y ,z} f(z)\right|_\zeta  \left(1+|y|\right)^{-|\s|-c} \lambda^\zeta\right|^p \\
&\lesssim& \sum_{\zeta<\gamma} \int dz \sum_{y\in \Lambda_0:|y| \le \lambda^{-1}+1} \frac{\lambda^{p\zeta}}{(1+|y|)^{|\s|+c}}  \left|f(z+\lambda y) - \Gamma_{z+\lambda y ,z} f(z)\right|_\zeta^p \\
&\lesssim&  \lambda^{p\gamma} \sum_{y\in\Lambda_0} \frac{|y|^{p(\gamma-\zeta)}}{(1+|y|)^{|\s|+c}}  \$f\$_{\cD^\gamma_p}^p \;\;\lesssim \;\;\lambda^{p\gamma}
\end{eqnarray*}
where we have again used Jensen's inequality and the fact that we assumed $c> p(\gamma - \beta)$ for all $\beta \in \cA$. \end{proof}

\section{Some technical proofs}\label{Sec:Techos}

\subsection{Reconstruction}

In addition to the Reconstruction Theorem, we will need (in the proof of the Convolution Theorem) a technical result on the regularity of the image of the reconstruction operator near the hyperplane $t=0$. For simplicity, we let $L^p(n_0)$ denote the space $L^p((2^{-2n_0}\wedge T,2^{-2(n_0-1)}\wedge T)\times \T^d,dz)$ for every $n_0\in \Z$. Let $n_T\in \Z$ be the unique integer such that $2^{-2n_T} \le T < 2^{-2(n_T-1)}$.\\
\begin{proposition}\label{Prop:Recons}
In the context of Theorem \ref{Th:ReconstructionW}, for any given $\epsilon >0$ and for every multiindex $k\in \N^{d+1}$, we have the following bounds:
\begin{equation}\label{Eq:ReconsConvol}
\bigg(\sum_{n_0 \ge n_T} \Big\| \sum_{0\le m\le n_0+4} \frac{\big|\langle \cR f,\partial^k P_m(z-\cdot) \rangle\big|}{2^{-n_0(\eta+2-\epsilon-|k|)}} \Big\|_{L^p(n_0)}^p\bigg)^{\frac1{p}} \lesssim \$ f\$\;,
\end{equation}
and
\begin{equation}\label{Eq:ReconsConvol2}
\bigg(\sum_{n_0 \ge n_T} \Big\| \frac{\big|\langle \cR f,\partial^k P_-(z-\cdot) \rangle\big|}{2^{-n_0(\eta+2-\epsilon-|k|)}} \Big\|_{L^p(n_0)}^p\bigg)^{\frac1{p}} \lesssim \$ f\$\;,
\end{equation}
uniformly over all $f\in \cD^{\gamma,\eta,T}_p$. In the case of two models, bounds similar to \eqref{Eq:ReconsBoundTwo} hold.
\end{proposition}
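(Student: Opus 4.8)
The plan is to reduce both \eqref{Eq:ReconsConvol} and \eqref{Eq:ReconsConvol2} to the reconstruction bound \eqref{Eq:ReconsBound} by exploiting the scaling and support properties of the kernels $P_m$ and $P_-$. First I would treat \eqref{Eq:ReconsConvol}. Fix $n_0 \ge n_T$ and $z = (t,x)$ with $t$ in the dyadic annulus defining $L^p(n_0)$, i.e. $t \simeq 2^{-2n_0}$. For a fixed $m \le n_0 + 4$, the function $\partial^k P_m(z - \cdot)$ is, up to the constant $2^{m(|\s| + |k|)}$ coming from the scaling relation $P_m(w) = 2^{m(|\s|-2)} P_0(2^m\cdot w)$, a rescaled bump $\psi^{2^{-m}}_z$ at scale $\lambda = 2^{-m}$, where $\psi$ is a fixed $\cC^r$ function supported in $B(0,1)$ with bounded norm. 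Crucially, since $P_0$ annihilates polynomials of scaled degree $r$, we may also use a version of this bump that lies in $\ccB^r$ after normalisation. The key geometric point is that $P_m$ vanishes at negative times, so $P_m(z - \cdot)$ is supported in $\{t' \le t\}$; in particular, since $t \simeq 2^{-2n_0}$, the test function $\psi^{2^{-m}}_z$ is supported in times that are positive but potentially much smaller than $2^{-2m}$ when $m \le n_0$. Here is where I would need to be slightly careful: the bound \eqref{Eq:ReconsBound} requires the base point of the rescaled test function to lie in $(3\lambda^2, T-\lambda^2)$, and with $\lambda = 2^{-m}$ and $t \simeq 2^{-2n_0} \le 2^{-2m}$ this need not hold when $m$ is close to $n_0$. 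The fix is standard: for the "bulk" range $0 \le m \le n_0 - C$ (for a fixed constant $C$ absorbing the factor $3$ and the shift $+4$), one has $t \in (3 \cdot 2^{-2m}, T - 2^{-2m})$, so \eqref{Eq:ReconsBound} applies directly and — after inserting $|\Pi_z f(z)|$ and controlling it via the $\cD^{\gamma,\eta,T}_p$ bounds on $f$ plus the definition of $\|\Pi\|$ — yields, for the bulk sum,
$$
\Big| \sum_{0 \le m \le n_0 - C} \langle \cR f, \partial^k P_m(z-\cdot)\rangle \Big| \lesssim \sum_{m \le n_0 - C} 2^{m(|\s|+|k|)} \big( 2^{-m\gamma} t^{\frac{\eta-\gamma}{2}} g(z) + 2^{-m\zeta}\, |f(z)|_\zeta \big)\;,
$$
where $g$ is the $L^p$ function supplied by \eqref{Eq:ReconsBound} and the sum over $\zeta < \gamma$ is implicit. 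Taking the geometric sums in $m$ (all exponents of $2^m$ are, for the relevant terms, strictly larger than the "$2 - |k|$" threshold up to the $\epsilon$-loss, since $\gamma, \zeta < \gamma$ and $\eta \le \gamma$; this is exactly the arithmetic that forces the $\epsilon$), then multiplying by $2^{n_0(\eta+2-\epsilon-|k|)}$ and using $t^{\frac{\eta-\gamma}{2}} \simeq 2^{-n_0(\eta-\gamma)}$ on the annulus, the powers of $2^{n_0}$ cancel and one is left with $g(z) + \sum_\zeta \|f(z)\|_\zeta\, 2^{n_0(\text{nonpositive})}$; raising to the $p$-th power and summing over $n_0 \ge n_T$, the dyadic sum over $n_0$ is absorbed using Jensen/the $\ell^p$-triangle inequality together with the fact that $\bigcup_{n_0} (2^{-2n_0}\wedge T, 2^{-2(n_0-1)}\wedge T) = (0,T)$ essentially disjointly, so $\sum_{n_0} \|\cdot\|_{L^p(n_0)}^p = \|\cdot\|_{L^p((0,T)\times\T^d)}^p$, and we recover $\$f\$^p$.

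For the remaining "boundary" range $n_0 - C < m \le n_0 + 4$ (finitely many values of $m$ for each $n_0$, uniformly bounded in number), \eqref{Eq:ReconsBound} cannot be applied at base point $z$. Instead I would estimate $\langle \cR f, \partial^k P_m(z-\cdot)\rangle$ by decomposing the kernel $\partial^k P_m(z - \cdot)$ into translates of rescaled bumps sitting on a grid $\Lambda_{m}$ at scale $2^{-m}$, and covering the support of $P_m(z-\cdot)$ (which is contained in $\{|z' - z| \lesssim 2^{-m}\} \cap \{t' > 0\}$), exactly as in the proof of the reconstruction theorem: write $\langle \cR f, \partial^k P_m(z-\cdot)\rangle$ as a sum of $\langle \cR f - \Pi_{z_j} f(z_j), (\eta^j)^{2^{-m}}_{z_j}\rangle + \langle \Pi_{z_j}(f(z_j) - \Gamma_{z_j, z}f(z)), \ldots\rangle + \langle \Pi_z f(z), \ldots\rangle$ over the finitely many grid points $z_j$ whose base times lie in $(3\cdot 2^{-2m}, T - 2^{-2m})$ — and for the handful of grid points too close to $t=0$ to use \eqref{Eq:ReconsBound}, one uses the property that $\cR f$ vanishes against test functions supported in $(-\infty,0]\times\T^d$ together with a further dyadic subdivision in time down to scale $t$, which is legitimate precisely because $\eta \wedge \alpha > -2(1-1/p)$ guarantees the relevant time-integrals converge. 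This boundary contribution, once the $L^p(n_0)$ norm is taken and summed in $n_0$, is again $\lesssim \$f\$$, with the $\epsilon$-loss again absorbing the logarithmically-many scales one might accumulate. I expect \emph{this boundary analysis near $t=0$} to be the main technical obstacle — it is precisely the place where the non-standard weight exponent $\frac{\eta-\zeta}{2}$ (rather than $\frac{(\eta-\zeta)\wedge 0}{2}$) matters and where one must be careful that the reconstruction near the hyperplane is consistent — everything else is geometric-series bookkeeping.

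The bound \eqref{Eq:ReconsConvol2} for the smooth part $P_-$ is considerably easier: $P_-$ is a fixed smooth compactly supported function vanishing at negative times, so $\partial^k P_-(z - \cdot)$ is, for $z$ with $t \simeq 2^{-2n_0}$, a fixed smooth test function supported in $\{|z - \cdot| \lesssim 1\} \cap \{t' \ge 0\}$; treating it as $\psi^{1}_z$ at unit scale (or, if one prefers, decomposing it into unit-scale bumps on $\Lambda_0$ as in Lemma \ref{lem:ReconsNoncompact}) and applying \eqref{Eq:ReconsBound} with $\lambda$ of order $1$ directly gives $|\langle \cR f, \partial^k P_-(z-\cdot)\rangle| \lesssim t^{\frac{\eta-\gamma}{2}} g(z) + \sum_\zeta \|f(z)\|_\zeta \lesssim 2^{-n_0(\eta-\gamma)} g(z) + \sum_\zeta \|f(z)\|_\zeta$. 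Multiplying by $2^{n_0(\eta + 2 - \epsilon - |k|)}$ — here $\eta + 2 - \epsilon - |k| \ge \eta - \gamma$ is not automatic, but since $P_-$ is smooth the natural estimate already carries an extra factor $2^{-n_0 \cdot \mathrm{(anything)}}$ from the weight $t^{(\eta-\gamma)/2}$, and one simply checks $\eta + 2 - \epsilon - |k| < \gamma - \eta + (\text{room})$; in fact the cleanest route is to note $\eta \le \gamma$ and $|k| < \gamma + 2$ so the exponent $n_0(\eta + 2 - \epsilon - |k|)$ times $2^{-n_0(\gamma - \eta)}$ leaves a strictly negative power of $2^{n_0}$ up to $\epsilon$ — summing over $n_0 \ge n_T$ and over the disjoint time annuli once more returns $\$f\$$. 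Finally, the two-model statement is obtained by running the identical argument with \eqref{Eq:ReconsBoundTwo} in place of \eqref{Eq:ReconsBound} everywhere, and with the $\cD^{\gamma,\eta,T}_p$-bounds on $f$ replaced by the corresponding $\$f;\bar f\$$, $\$\bar f\$$ quantities; no new ideas are needed, only careful bookkeeping of which norm multiplies which model-difference.
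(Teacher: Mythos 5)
The central structural choice — decomposing $\partial^k P_m(z-\cdot)$ into rescaled bumps, applying the reconstruction bound piece by piece, and being careful near $t=0$ — is the right idea, but your division of the $m$-sum into ``bulk'' and ``boundary'' is exactly backwards, and this is fatal to the argument as written. For $z=(t,x)$ with $t\simeq 2^{-2n_0}$ and $\lambda = 2^{-m}$, the hypothesis $t\in(3\lambda^2, T-\lambda^2)$ of \eqref{Eq:ReconsBound} requires $2^{-2n_0}>3\cdot 2^{-2m}$, i.e.\ $m\ge n_0+1$. So the reconstruction bound applies \emph{directly} only for $m$ in the short range $\{n_0+1,\dots,n_0+4\}$, while the long range $0\le m\le n_0$ — which you call ``bulk'' — is precisely where it fails: there the support of $P_m(z-\cdot)$ extends all the way to (and past) the hyperplane $t'=0$, so $z$ is not an admissible base point at scale $\lambda=2^{-m}$. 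Your arithmetic is also symptomatic of the reversal: $\partial^k P_m(z-\cdot)$ contributes a factor $2^{m(|k|-2)}$ (not $2^{m(|\s|+|k|)}$) when written as a rescaled bump, and the resulting geometric sum $\sum_{m\le n_0-C}$ paired with the weight $2^{n_0(\eta+2-\epsilon-|k|)}$ does not cancel the $n_0$-powers as claimed; one finds a net factor $2^{n_0(\gamma+2-\epsilon-|k|)}$ times the reconstruction remainder, which blows up.

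The fix in the paper (Lemma \ref{Lemma:SpecificConvol}, proved via Lemma \ref{Lemma:Extension}) is a genuinely multi-scale decomposition \emph{in time}: one writes $\partial^k P_m(z-\cdot)=\sum_{n\ge n_0}\sum_{\bar z\in\Lambda_n}\partial^k P_m(z-\cdot)\tilde\psi^n_{\bar z}$, where each $\tilde\psi^n_{\bar z}$ localizes to a dyadic time annulus $t'\simeq 2^{-2n}$ and has scale $2^{-n}$; at each such location the bump scale matches the distance to $\{t'=0\}$, so the local reconstruction estimate can be applied. This is needed for \emph{all} $0\le m\le n_0+4$, not just a boundary handful, and the convergence of the resulting double sum (over $n$ and the grid) is where the restriction $\alpha\wedge\eta>-2(1-1/p)$ and the $\epsilon$-loss actually enter. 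Your remark about ``a further dyadic subdivision in time down to scale $t$'' points in the right direction but is applied to the wrong range of $m$ and, crucially, stops at scale $t\simeq 2^{-2n_0}$; the subdivision must go to arbitrarily small time scales $n\to\infty$. The same issue affects your treatment of \eqref{Eq:ReconsConvol2}: $P_-$ has scale of order one, so the condition $t>3\lambda^2$ fails for all $n_0\ge 1$, and it cannot be handled by a single application of \eqref{Eq:ReconsBound} at unit scale — it also requires the multi-scale decomposition of Lemma \ref{Lemma:SpecificConvol}.
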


We now present the proofs of Theorem \ref{Th:ReconstructionW} and Proposition \ref{Prop:Recons} jointly. For notational simplicity, we take $T=1$, although the arguments carry through if $T>0$ is arbitrary. The proof of Theorem \ref{Th:ReconstructionW} and Proposition \ref{Prop:Recons} relies on three main arguments. First, we use the reconstruction theorem in unweighted spaces of modelled distributions~\cite[Thm 3.1]{Recons}. Indeed, any element $f\in\cD^{\gamma,\eta,T}_{p}$, restricted (by a localisation argument) to a ball $B(z,\lambda)$ with $z=(t,x)$ and $3\lambda^2 < t$, belongs to $\cD^\gamma_{p}$ and the norm of the injection is of order $t^{\frac{\eta-\gamma}{2}}$. This allows to reconstruct $f$ away from the hyperplane $t=0$.\\
Second, we show that a distribution on the set of test functions supported away from the hyperplane $t=0$ can uniquely be extended to test functions supported on this hyperplane as soon as the regularity index is not too low.\\
Third, we obtain the specific regularity index near the hyperplane $t=0$ by an accurate analysis of the interplay between the regularity of the model and the growth/decay of the weights.\\

The second and third steps are the contents of the following lemmas.
\begin{lemma}\label{Lemma:Extension}
Take $\nu \ne 0$ and assume that $\nu  > - 2(1-\frac1{p})$. Let $\xi$ be a distribution on the set of all test functions whose supports lie in $\big((-\infty,0)\cup (0,T)\big) \times \R^d$. Assume that $\langle \xi,\varphi\rangle = 0$ whenever the support of $\varphi$ lies in $(-\infty,0)\times\R^d$. In addition, assume that:\begin{itemize}
\item if $\nu < 0$, $\xi$ satisfies the bound \eqref{Eq:BgammaT} with $(-\infty,T-\lambda^2)$ replaced by $(3\lambda^2, T-\lambda^2)$
\item if $\nu > 0$, $\xi$ satisfies the bounds \eqref{Eq:BgammaTPos} with $(-\infty,T-\lambda^2)$ replaced by $(3\lambda^2, T-\lambda^2)$, and for any $c'>0$ we have the additional bound
\begin{equation*}\label{Eq:SpecificConvol}
\sup_{m\ge 0} \Big\| \sup_{\varphi\in \ccB^r} \frac{\big|\langle \xi, \varphi^{2^{-m}}_z \rangle\big|}{2^{-m\nu}} \Big\|_{L^p\big((3\cdot 2^{-2m}\wedge (T-2^{-2m}),c' \cdot 2^{-2m}\wedge (T-2^{-2m}))\times \T^d,dz\big)} < \infty\;.
\end{equation*}
\end{itemize}
Then, there exists a unique extension of $\xi$ that belongs to the space $\cB^{\nu,T}_{p}$.
\end{lemma}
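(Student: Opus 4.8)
The plan is to construct the extension $\bar\xi \in \cB^{\nu,T}_p$ as a limit of the distributions $\langle \xi, \theta^\varepsilon \cdot \varphi \rangle$ where $\theta^\varepsilon$ is a smooth cutoff that vanishes near $t=0$, and to show that the resulting object is independent of the cutoff and satisfies the required Besov-type bounds. Concretely, for a test function $\varphi$ supported in $(-\infty,T)\times\R^d$ and a point $z=(t,x)$, at scale $\lambda$ one decomposes $\varphi^\lambda_z$ using a dyadic partition of unity in the time variable: writing $1 = \sum_{m} \chi_m(t)$ where $\chi_m$ is supported in an annulus $\{t \simeq 2^{-2m}\}$ (in the parabolic scale), one gets $\varphi^\lambda_z = \sum_m \chi_m \varphi^\lambda_z$, and each summand is (up to rescaling) a bona fide test function at scale $2^{-m} \wedge \lambda$ localised in a region where $t > 0$, so that $\langle \xi, \chi_m \varphi^\lambda_z\rangle$ already makes sense by the hypotheses. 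The extension is then defined by summing these contributions over all $m$, including the ones approaching the hyperplane $t=0$, and the content of the lemma is that this sum converges and obeys the right bounds.

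First I would set up the dyadic decomposition carefully: fix a smooth function $\chi$ with $\sum_{m\in\Z}\chi(2^{2m}\cdot) = 1$ on $(0,\infty)$, supported in $[1/2,2]$ say, and for a given $\varphi^\lambda_z$ with $z=(t,x)$ only the indices $m$ with $2^{-2m}\lesssim t$ or $2^{-2m}\lesssim \lambda^2$ contribute. The terms with $2^{-2m}\gtrsim \lambda^2$ are the ``bulk'' and are controlled directly by \eqref{Eq:BgammaT} (or \eqref{Eq:BgammaTPos}); the delicate terms are those with $2^{-2m}$ much smaller than both $t$ and $\lambda^2$, i.e. the ones genuinely near the hyperplane, and for these one uses the hypothesis with $(3\lambda^2,T-\lambda^2)$ replaced by the shifted interval together with (in the $\nu>0$ case) the additional bound \eqref{Eq:SpecificConvol}. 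Next I would verify the defining bound of $\cB^{\nu,T}_p$ — namely \eqref{Eq:BgammaT} for $\nu<0$ or \eqref{Eq:BgammaTPos} for $\nu\ge0$ — by taking $L^p$-norms in $z$ of the summed contributions; here the key quantitative input is that the number of relevant scales $m$ near $t=0$, weighted against the factor $2^{-m\nu}$ and the available control $2^{-m\cdot(\text{something})}$, produces a geometric series that converges precisely when $\nu > -2(1-1/p)$. This is where the exponent $2(1-1/p)$ enters: integrating the contribution of scale $m$ over the slab $\{t \simeq 2^{-2m}\}$, whose $dz$-measure is of order $2^{-2m}$, against the $p$-th power of a bound of size $2^{-m\nu}$, gives $2^{-2m} \cdot 2^{-mp\nu}$, and summability over $m\to\infty$ requires $2 + p\nu > 0$.

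Then I would prove uniqueness: if two elements of $\cB^{\nu,T}_p$ agree on all test functions supported away from $\{t=0\}$, their difference $\zeta$ is supported on the hyperplane and satisfies the Besov bound; testing $\zeta$ against $\varphi^\lambda_z$ with $t\simeq 3\lambda^2$ and sending $\lambda\to 0$ forces $\langle\zeta,\psi\rangle=0$ for all $\psi$ by a standard argument (a distribution supported on a hyperplane of codimension one that has negative Hölder-Besov regularity better than $-2(1-1/p)$ in the appropriate $L^p$ sense must vanish — this is exactly the kind of statement behind the restriction). Concretely one writes an arbitrary $\psi$ as $\psi = (\psi - \theta^\varepsilon\psi) + \theta^\varepsilon\psi$ where $\theta^\varepsilon$ cuts off a neighbourhood of $t=0$; the second term is killed by the support assumption applied to both candidates, and the first is estimated to $0$ as $\varepsilon\to0$ using the Besov bound and $\nu > -2(1-1/p)$ again.

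The main obstacle I expect is the bookkeeping in the third step — showing the summed extension actually lands in $\cB^{\nu,T}_p$ with the precise weight/scale interplay, particularly in the $\nu\ge 0$ case where one must also recover the extra requirement that $\bar\xi$ annihilates low-degree polynomials at small scales (the condition involving $\ccB^r_{\lfloor\nu\rfloor}$), which is why the separate hypothesis \eqref{Eq:SpecificConvol} is imposed. Handling the case $\nu>0$ requires checking both the ``global'' bound in \eqref{Eq:BgammaTPos} (test functions at scale $1$) and the ``local'' bound against polynomial-annihilating test functions, and making sure the dyadic reassembly near $t=0$ preserves the vanishing-moment structure; the pieces $\chi_m\varphi^\lambda_z$ individually do \emph{not} annihilate polynomials even if $\varphi$ does, so one has to be careful to exploit \eqref{Eq:SpecificConvol} rather than the generic bound for precisely those terms. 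Everything else — the bulk estimate, the uniqueness argument, and the two-model version (which follows by running the same decomposition on the difference and invoking the bilinear structure of the hypotheses) — should be routine once the decomposition is in place.
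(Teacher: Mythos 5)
The overall blueprint—partition near $\{t=0\}$, verify the $\cB^{\nu,T}_p$ bound, argue uniqueness via a cutoff—is the same as the paper's, but two of the steps as you describe them do not actually go through.

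First, the decomposition. You partition $\varphi^\lambda_z$ only in the time variable, writing $\varphi^\lambda_z = \sum_m \chi_m\varphi^\lambda_z$, and claim each piece is ``a bona fide test function at scale $2^{-m}\wedge\lambda$.'' This is false in the regime that matters. For $2^{-m}\ll\lambda$, the piece $\chi_m\varphi^\lambda_z$ has parabolic time-extent $\sim 2^{-2m}$ but space-extent $\sim\lambda \gg 2^{-m}$, so it is not a rescaled element of $\ccB^r$ at any single parabolic scale. To make it one, you must also localise in space into $\sim(\lambda 2^m)^d$ blocks, which is precisely what the paper's $\tilde\psi^n_{\bar z}=\chi(2^{2n}t')\psi^n_{\bar z}$ does. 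This is not cosmetic: the spatial multiplicity $(\lambda 2^m)^d$, together with the volume $2^{-m|\s|}$ of each block and the dilation prefactor $2^{-m|\s|}\lambda^{-|\s|}$, are exactly the quantities whose interplay produces the critical exponent.

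Second, the exponent. Your heuristic (``$dz$-measure of the slab is $2^{-2m}$, times $(2^{-m\nu})^p$, summable iff $2+p\nu>0$'') gives the condition $\nu>-2/p$, which is \emph{not} $\nu>-2(1-\frac1p)$; they agree only at $p=2$, and for $p=\infty$ your condition reads $\nu>0$ whereas the correct (and standard, cf. Hairer's $p=\infty$ case) threshold is $\nu>-2$. The discrepancy comes from two errors. The Besov hypothesis gives an $L^p$ bound in the centre $z'$ of the rescaled test function, not a pointwise bound, so ``a bound of size $2^{-m\nu}$'' cannot simply be raised to the $p$-th power; and the integration variable $z$ for the target norm ranges over $(-\lambda^2,3\lambda^2)\times\T^d$ (measure $\sim\lambda^2$), not over the slab $\{t\sim 2^{-2m}\}$. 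To convert the sum over spacetime blocks (an $L^1$-type count) into the available $L^p$ information one needs a re-centering/averaging step: each pairing $\langle\xi,\varphi^\lambda_z\tilde\psi^n_{\bar z}\rangle$ is rewritten as an average over $z''\in B(\bar z,2^{-n})$ of pairings with $z''$-centred test functions, and then Jensen's inequality and a count of the blocks are used. The upshot is a factor $\lambda^{-2(1-1/p)-\nu}2^{-2n(1-1/p)-n\nu}$ at scale $n$, and the geometric series over $n\ge n_0\sim\log_2(1/\lambda)$ converges iff $2(1-\frac1p)+\nu>0$, which is the stated threshold. Your plan has no counterpart to this averaging step, and your exponent bookkeeping lands on the wrong condition.

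Two smaller remarks. Your uniqueness sketch is in the right spirit; the paper implements it with a cutoff $I_{n_0}$ vanishing outside $\{|t|\lesssim 2^{-2n_0}\}$ and shows $|\langle\zeta,\varphi_{(0,x_0)}I_{n_0}\rangle|\lesssim 2^{-n_0(2(1-1/p)+\nu)}\to 0$, for the case $\nu<0$; for $\nu>0$ it simply observes that elements of $\cB^{\nu,T}_p$ are $L^p$ functions, so uniqueness is immediate—you do not need the cutoff argument there. Your worry that the pieces $\chi_m\varphi^\lambda_z$ do not kill polynomials when $\nu>0$ is correct and is exactly why the extra hypothesis \eqref{Eq:SpecificConvol} is imposed: the paper side-steps moment conditions by proving the stronger bound against all of $\ccB^r$ (not just $\ccB^r_{\lfloor\nu\rfloor}$) for centres near $t=0$, which implies both requirements in \eqref{Eq:BgammaTPos}.
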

\begin{proof}
The proof consists in two steps: first we show uniqueness of the extension and second we construct the extension. For further use, we let $\chi:\R\rightarrow\R$ be a smooth function, supported in a compact subset $[a,A]$ with $a>15$ and such that for all $t>0$
$$\sum_{n\in \Z} \chi(2^{2n} t) = 1\;.$$

\textit{Uniqueness.} If $\nu > 0$, then any element $\xi$ of $\cB^{\nu,T}_p$ is a function in $L^p$, see for instance~\cite[Lemma 2.7]{Recons}; consequently, $\xi$ is completely determined by its evaluations away from $t=0$.\\
Let us now consider the case $\nu < 0$. Let $I_{n_0}(t) := 1 - \sum_{n\le n_0} \chi(2^{2n}t) - \sum_{n\le n_0} \chi(-2^{2n}t)$ and observe that $I_{n_0}$ is a smooth function, supported in $[- 2^{-2n_0} R,2^{-2n_0} R]$ for some $R>0$ that does not depend on $n_0$. If we show that
\begin{equation}\label{Eq:BoundExtension}
\big|\langle \xi, \varphi_{(0,x_0)} \cdot I_{n_0} \rangle \big| \lesssim 2^{-2n_0(1-\frac1{p}) - n_0\nu} \$ \xi\$_{\cB^{\nu,T}_{p}}\;,
\end{equation}
uniformly over all $n_0$ large enough, all $x_0\in \T^d$, all $\varphi \in \ccB^r$ and all $\xi \in \cB^{\nu,T}_{p}$, then we deduce that any $\xi \in \cB^{\nu,T}_{p}$ is completely characterised by its evaluations away from the hyperplane $t=0$ as soon as $\nu > -2\big(1-\frac1{p}\big)$.\\
Therefore, we are left with proving (\ref{Eq:BoundExtension}). We consider a smooth function $\psi:\R^{d+1}\rightarrow\R$, supported in $B(0,1)$, that defines a partition of unity:
$$ \sum_{\bar z \in \Lambda_0} \psi_{\bar z}(z) = 1\;,\quad \forall z\in \R^{d+1}\;,$$
as well as its rescaled version $\psi^{n_0}_{\bar z}(\cdot) := \psi(2^{n_0\s}(\cdot-\bar z))$ which defines a partition of unity at scale $2^{-n_0}$:
$$ \sum_{\bar z \in \Lambda_{n_0}} \psi_{\bar z}^{n_0}(z) = 1\;,\quad \forall z\in \R^{d+1}\;.$$
We thus have for any test function $\varphi\in\ccB^r$ and any $x_0\in\T^d$
$$ \varphi_{(0,x_0)}\cdot I_{n_0} = \sum_{\substack{\bar z \in \Lambda_{n_0}\\ |\bar t| \le (R+1) 2^{-2n_0}\\ |\bar x| \le 3}} \varphi_{(0,x_0)}\cdot I_{n_0}\cdot \psi_{\bar z}^{n_0}\;.$$
For any $\bar z \in \Lambda_{n_0}$ and any $z\in B(\bar z,2^{-n_0})$, the function $\varphi I_{n_0} \psi^{n_0}_{\bar z}$ can be written as $2^{-n_0|\s|} \eta^{2^{-(n_0-1)}}_{z}$ for some function $\eta \in \ccB^r$ and up to some multiplicative constant which is uniformly bounded over all the parameters at stake. (Recentering the function at $z$ instead of $\bar z$ is convenient to recover $L^p$ norms later on). Using Jensen's inequality at the second line, we get
\begin{align*}
\big|\langle \xi,\varphi_{(0,x_0)}\cdot I_{n_0} \rangle \big| &\lesssim \sum_{\substack{\bar z \in \Lambda_{n_0}\\ |\bar t| \le (R+1) 2^{-2n_0}\\ |\bar x| \le 3}} \int_{z\in B(\bar z, 2^{-n_0})} 2^{n_0|\s|}\sup_{\eta\in \ccB^r} 2^{-n_0|\s|} \big|\langle \xi , \eta^{2^{-(n_0-1)}}_z \rangle\big| dz\\
&\lesssim \bigg(\sum_{\substack{\bar z \in \Lambda_{n_0}\\ |\bar t| \le (R+1) 2^{-2n_0}\\ |\bar x| \le 3}}2^{-n_0(|\s|-2)} \int_{z\in B(\bar z, 2^{-n_0})} 2^{n_0|\s|}\sup_{\eta\in \ccB^r} \Big(2^{-2n_0} \big|\langle \xi , \eta^{2^{-(n_0-1)}}_z \rangle\big|\Big)^p dz\bigg)^{\frac1{p}}\\
&\lesssim 2^{-2n_0(1-\frac1{p})} \bigg(\int_{z\in (-(R+2)2^{-2n_0},(R+2)2^{-2n_0})\times \T^d}\sup_{\eta\in \ccB^r} \big|\langle \xi , \eta^{2^{-(n_0-1)}}_z \rangle\big|^p dz\bigg)^{\frac1{p}}\\
&\lesssim 2^{-2n_0(1-\frac1{p}) - n_0 \nu} \| \xi \|_{\cB^{\nu,T}_{p}}\;,
\end{align*}
uniformly over all $n_0\ge 0$ such that $(R+2)2^{-2n_0}<T$, all $x_0\in\T^d$ and all $\varphi\in \ccB^r$. The asserted bound follows, so that the uniqueness part of the statement is proved.

\textit{Existence.} For all $n\in\Z$ and all $\bar z \in \Lambda_n$, we set
$$ \tilde{\psi}_{\bar z}^n(z') := \chi(2^{2n} t') \psi^n_{\bar z}(z')\;,\quad z'=(t',x')\in \R^{d+1}\;,$$
and we observe that $\sum_{n\in \Z}\sum_{\bar z \in \Lambda_n} \tilde{\psi}_{\bar z}^n(z') = 1$ for all $z'\in (0,\infty)\times \R^d$. We need to define $ \langle \xi,\varphi^\lambda_z \rangle$ when the support of $\varphi^\lambda_z$ overlaps the hyperplane $t=0$. Since $\xi$ vanishes on $(-\infty,0)\times \R^d$, it is natural to set
$$ \langle \xi,\varphi^\lambda_z \rangle := \sum_{n\in \Z} \sum_{\bar z \in \Lambda_n} \langle \xi , \varphi^\lambda_z \tilde{\psi}_{\bar z}^n \rangle\;,$$
for all $\lambda \in (0,1]$, all $z\in (-\lambda^2,3\lambda^2]\times \mathbb{T}^d$ and all $\varphi \in \ccB^r$. Let us show that
\begin{equation*}
\sup_{\lambda \in (0,1]} \Big\| \sup_{\varphi\in\ccB^r} \frac{\big|\langle \xi,\varphi^\lambda_z \rangle\big|}{\lambda^{\nu}} \Big\|_{L^p((-\lambda^2, 3\lambda^2]\times \T^d,dz)} < \infty\;.
\end{equation*}
holds whatever the sign of $\nu$. Notice that for $\nu < 0$ this is what we need, while for $\nu > 0$ this is stronger than what is required since $\varphi$ is not assumed to annihilate polynomials here.\\
Observe that $\varphi^\lambda_z \tilde{\psi}_{\bar z}^n$ vanishes as soon as $2^{-n} > \lambda$: indeed, the cutoff function $\chi(2^{2n}\cdot)$ is supported in $[a 2^{-2n},A 2^{-2n}]$, the function $\varphi^\lambda_z$ vanishes in $[4\lambda^2,\infty)\times \R^d$ and $4\lambda^2 < a \lambda^2 \le a 2^{-2n}$. Furthermore, for all $z'\in B(\bar z, 2^{-n})$, the function $\varphi^\lambda_z \tilde{\psi}_{\bar z}^n$ coincides with $2^{-n|\s|}\lambda^{-|\s|} \rho^{2^{-(n-1)}}_{z'}$ for some function $\rho \in \ccB^r$ up to a multiplicative factor uniformly bounded over all the parameters. Then, for every $n\ge 0$ such that $2^{-n}\le \lambda$, we have
\begin{align*}
&\Big\| \sum_{\bar z \in \Lambda_n} \sup_{\varphi\in\cB^r} \frac{\big|\langle \xi,\varphi^\lambda_z \tilde{\psi}_{\bar z}^n\rangle\big|}{\lambda^{\nu}} \Big\|_{L^p((-\lambda^2, 3\lambda^2)\times \T^d,dz)}\\
&\lesssim \Big\| \sum_{\substack{\bar z \in \Lambda_n\\|z-\bar z| \le \lambda + 2^{-n}\\ \bar t\in [(a-1) 2^{-2n},(A+1) 2^{-2n}]}} \int_{z'\in B(\bar z, 2^{-n|\s|})} 2^{n|\s|} \sup_{\rho\in\cB^r} \frac{\big|\langle \xi,\rho^{2^{-(n-1)}}_{z'} \rangle\big|}{\lambda^{\nu}} dz' 2^{-n|\s|} \lambda^{-|\s|} \Big\|_{L^p\big((-\lambda^2, 3\lambda^2)\times \T^d,dz\big)}\;,
\end{align*}
The number of non-zero contributions coming from the sum over $\bar z$ is of order $\lambda^d 2^{nd}$ uniformly over all the parameters. Hence, by Jensen's inequality we get
\begin{align*}
&\lesssim \lambda^{-2} 2^{-2n}\Big(\int_{z\in(-\lambda^2, 3\lambda^2)\times \T^d} \sum_{\substack{\bar z \in \Lambda_n\\|z-\bar z| \le \lambda + 2^{-n}\\ \bar t\in [(a-1) 2^{-2n},(A+1) 2^{-2n}]}} 2^{-nd} \lambda^{-d} \\
&\qquad\qquad\qquad\times\int_{z'\in B(\bar z, 2^{-n|\s|})} 2^{n|\s|} \Big(\sup_{\rho\in\ccB^r} \frac{\big|\langle \xi,\rho^{2^{-(n-1)}}_{z'} \rangle\big|}{\lambda^{\nu}}\Big)^p dz'\, dz\Big)^{\frac1{p}}\\
&\lesssim \lambda^{-2} 2^{-2n}\Big(\int_{z\in(-\lambda^2, 3\lambda^2)\times \T^d}  \int_{\substack{z'=(t',x')\\t'\in [3\cdot 2^{-2n},c'2^{-2n}]\\ |z'-z| \le \lambda+c'2^{-n}}} \lambda^{-d} 2^{2n} \Big(\sup_{\rho\in\cB^r} \frac{\big|\langle \xi,\rho^{2^{-(n-1)}}_{z'} \rangle\big|}{\lambda^{\nu}}\Big)^p dz' dz\Big)^{\frac1{p}}\;,
\end{align*}
for some $c'>0$. Since for every given $z'$ in the last integral, the integral over $z\in(-\lambda^2, 3\lambda^2)\times \T^d$ of the indicator of $|z'-z| \le \lambda+c'2^{-n}$ gives a term of order $\lambda^{|\s|}$ we deduce the following bound
\begin{align*}
&\lesssim \lambda^{-2\big(1-\frac1{p}\big)-\nu} 2^{-2n\big(1-\frac1{p}\big)-n\nu}\Big(\int_{z'\in [3\cdot 2^{-2n},c'2^{-2n}]\times\T^d} \Big(\sup_{\rho\in\ccB^r} \frac{\big|\langle \xi,\rho^{2^{-(n-1)}}_{z'} \rangle\big|}{2^{-n\nu}}\Big)^p dz'\Big)^{\frac1{p}}\;,
\end{align*}
uniformly over all $n\ge 0$ such that $2^{-n}\le \lambda$. Henceforth, we find
\begin{align*}
&\Big\| \sup_{\varphi\in\cB^r} \frac{\big|\langle \xi,\varphi^\lambda_z \rangle\big|}{\lambda^{\nu}} \Big\|_{L^p\big((-\lambda^2, 3\lambda^2)\times \T^d,dz\big)}\\
&\lesssim \sum_{n: 2^{-n}\le \lambda} \Big\| \sum_{\bar z \in \Lambda_n} \sup_{\varphi\in\cB^r} \frac{\big|\langle \xi,\varphi^\lambda_z \tilde{\psi}_{\bar z}^n\rangle\big|}{\lambda^{\nu}} \Big\|_{L^p\big((-\lambda^2, 3\lambda^2)\times \T^d,dz\big)}\\
&\lesssim \sum_{n: 2^{-n}\le \lambda}\lambda^{-2\big(1-\frac1{p}\big)-\nu} 2^{-2n\big(1-\frac1{p}\big)-n\nu} \times \sup_{n: 2^{-n} \le \lambda} \Big(\int_{z'\in [3\cdot 2^{-2n},c'2^{-2n}]\times\T^d} \Big(\sup_{\rho\in\ccB^r} \frac{\big|\langle \xi,\rho^{2^{-(n-1)}}_{z'} \rangle\big|}{2^{-n\nu}}\Big)^p dz'\Big)^{\frac1{p}}\\
&\lesssim 1\;,
\end{align*}
as desired.
\end{proof}
%\begin{lemma}\label{Lemma:SpecificConvol}
%In the context of Lemma \ref{Lemma:Extension}, suppose that for any $c'>0$ we have the additional bound
%\begin{equation}\label{Eq:SpecificConvol}
%\bigg(\sum_{m\ge 0} \Big\| \sup_{\varphi\in \ccB^r_1} \frac{\big|\langle \xi, \varphi^{2^{-m}}_z \rangle\big|}{2^{-m\eta'}} \Big\|_{L^p\big((3\cdot 2^{-2m},c' \cdot 2^{-2m})\times \T^3,dz\big)}^p\bigg)^{\frac1{p}} < \infty\;,
%\end{equation}
%then we have the following bound for every multiindex $k\in \N^{d+1}$
%\begin{equation}\label{Eq:SpecificConvol}
%\bigg(\sum_{n_0> n_T} \Big\| \sum_{m\le n_0+2} \frac{\big|\langle \xi,\partial^k P_m(z-\cdot) \rangle\big|}{2^{-n_0(\eta'-|k|+2)}} \Big\|_{L^p\big((2^{-2n_0},2^{-2(n_0-1)})\times \T^3,dz\big)}^p\bigg)^{\frac1{p}} < \infty\;.
%\end{equation}
%\end{lemma}
\begin{lemma}\label{Lemma:SpecificConvol}
In the context of Lemma \ref{Lemma:Extension}, take $\nu' < \nu$ such that $\nu' > -2(1-\frac1{p})$. Then we have the following bounds for every multiindex $k\in \N^{d+1}$
\begin{equation}\label{Eq:SpecificConvol}
\bigg(\sum_{n_0\ge 0} \Big\| \sum_{0\le m\le n_0+4} \frac{\big|\langle \xi,\partial^k P_m(z-\cdot) \rangle\big|}{2^{-n_0(\nu'-|k|+2)}} \Big\|_{L^p(n_0)}^p\bigg)^{\frac1{p}} < \infty\;,
\end{equation}
\begin{equation}\label{Eq:SpecificConvol2}
\bigg(\sum_{n_0\ge 0} \Big\| \frac{\big|\langle \xi,\partial^k P_-(z-\cdot) \rangle\big|}{2^{-n_0(\nu'-|k|+2)}} \Big\|_{L^p(n_0)}^p\bigg)^{\frac1{p}} < \infty\;.
\end{equation}
\end{lemma}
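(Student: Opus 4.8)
The plan is to deduce both estimates from the two facts about $\xi$ available at this point: the interior bound contained in the hypotheses of Lemma~\ref{Lemma:Extension} (together with its refined dyadic-slab version on the sets $(3\cdot 2^{-2n},c'2^{-2n})\times\T^d$ when $\nu\ge 0$), and the near-hyperplane bound
$$ \sup_{\lambda\in(0,1]}\Big\|\sup_{\varphi\in\ccB^r}\frac{\big|\langle\xi,\varphi^\lambda_z\rangle\big|}{\lambda^\nu}\Big\|_{L^p((-\lambda^2,3\lambda^2]\times\T^d,dz)}<\infty $$
established in the existence part of the proof of Lemma~\ref{Lemma:Extension}, which holds for $\varphi\in\ccB^r$ without any moment condition and whatever the sign of $\nu$.

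First I would reduce to rescaled test functions. By property~(3) of the heat-kernel decomposition one has, for every $m\ge 0$,
$$ \partial^k P_m(z-\cdot)=c_k\,2^{-m(2+|k|)}\,\psi^{2^{-m}}_z\;, $$
where $\psi:=(\partial^k P_0)(-\,\cdot)$ is a \emph{fixed} $\cC^r$ function supported in $B(0,1)$ which, since $P_0$ annihilates polynomials of scaled degree $\le r$, itself annihilates polynomials of scaled degree $\le r$; similarly $\partial^k P_-(z-\cdot)$ is a fixed $\cC^r$ function at scale $O(1)$ with the same moment property. Writing $\Theta_m(z):=\sup_{\varphi\in\ccB^r}\big|\langle\xi,\varphi^{2^{-m}}_z\rangle\big|$, it thus suffices to prove that for each $n_0$
$$ \Big\|\sum_{0\le m\le n_0+4}2^{-m(2+|k|)}\Theta_m\Big\|_{L^p(n_0)}\lesssim 2^{-n_0(\nu+2-|k|)} $$
and the analogue for $P_-$ (which is just the $m=0$ contribution): this is strictly stronger than the claimed bound with $\nu$ replaced by $\nu'$, and since $\sum_{n_0}2^{-n_0 p(\nu-\nu')}<\infty$ the loss $\nu-\nu'>0$ then provides the convergence of the series over $n_0$.

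The bounded window $n_0-C_0\le m\le n_0+4$ — a fixed number of terms, for a suitable universal $C_0$ — is handled directly: there $\psi^{2^{-m}}_z$ sits at scale $2^{-m}\sim\sqrt t$ on $L^p(n_0)$, and splitting $L^p(n_0)$ into its parts with $t\le 3\cdot 2^{-2m}$ and $t>3\cdot 2^{-2m}$ and using respectively the near-hyperplane bound and the interior bound at scale $2^{-m}$ gives $\|\Theta_m\|_{L^p(n_0)}\lesssim 2^{-m\nu}$, hence a contribution $\lesssim 2^{-n_0(2+|k|+\nu)}\le 2^{-n_0(\nu+2-|k|)}$. The delicate range is $0\le m<n_0-C_0$, where $2^{-m}\gg\sqrt t$ and $\psi^{2^{-m}}_z$ straddles the hyperplane $\{t=0\}$. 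I would decompose $\psi^{2^{-m}}_z$, restricted to $\{t>0\}$, against a dyadic partition of unity in the parabolic distance to the hyperplane, exactly as in the proof of Lemma~\ref{Lemma:Extension},
$$ \psi^{2^{-m}}_z=\sum_{n\ge m}\sum_{\bar z\in\Lambda_n}\psi^{2^{-m}}_z\,\tilde\psi^n_{\bar z}\;, $$
the functions $\tilde\psi^n_{\bar z}$ being supported at scale $2^{-n}$ around points $\bar z$ whose time-coordinate is of order $2^{-2n}$. Each product $\psi^{2^{-m}}_z\tilde\psi^n_{\bar z}$ equals, up to a constant of size $2^{(m-n)|\s|}$, a rescaled test function $\rho^{2^{-n}}_{z_{\bar z}}$ with $\rho\in\ccB^r$, and the $n$-th layer contains $\sim 2^{(n-m)(|\s|-2)}$ of them; the innermost layers, those reaching the hyperplane, are estimated with the near-hyperplane bound, and the remaining layers with the interior bound (in its dyadic-slab form when $\nu\ge 0$, since these products are no longer oscillatory). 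Summing the resulting geometric series first in $n\ge m$ and then in $0\le m<n_0$ — using at each step that $2(1-\tfrac1p)+\nu>0$ to ensure convergence — together with the attendant $L^p$-in-$z$ bookkeeping (Jensen's inequality on the finite inner sums over $\bar z$, then a change of variables $z\mapsto z_{\bar z}$ and an overlap count, turning the layerwise slab bounds into an estimate on $L^p(n_0)$) yields the desired bound; the case of $P_-$ is the same with $m=0$ throughout.

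The main obstacle is exactly this last multi-scale $L^p$ estimate for $0\le m<n_0$: three scales interact — the scale $2^{-m}$ of the test function, the scale $2^{-n}$ of the decomposition layer, and the thickness $2^{-2n_0}$ of the slab $L^p(n_0)$ — and one must track carefully how the number of pieces, the normalizing constants $2^{(m-n)|\s|}$, the layerwise bounds $2^{-n\nu}$ and the measures of the relevant sets of centres combine, so that after the two geometric summations the power of $2^{-n_0}$ comes out exactly as $\nu+2-|k|$ (equivalently, no weaker than it). This is where the precise parabolic placement (heights $\sim 2^{-2n}$) of the decomposition layers, and the sharp form of the hypotheses of Lemma~\ref{Lemma:Extension} — in particular the dyadic-slab bound in the case $\nu\ge 0$, needed precisely because the pieces $\psi^{2^{-m}}_z\tilde\psi^n_{\bar z}$ no longer carry vanishing moments — play the decisive role.
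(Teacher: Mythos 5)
Your overall plan --- decompose $\partial^k P_m(z-\cdot)$ via the dyadic partition of unity $\tilde\psi^n_{\bar z}$ near $\{t=0\}$, estimate each piece using the slab bounds on $\xi$ from the hypotheses of Lemma~\ref{Lemma:Extension}, and close via Jensen's inequality and a change of variables --- is exactly the strategy the paper follows, and you correctly identify the decisive mechanism (the three interacting scales $2^{-m}$, $2^{-n}$, $2^{-2n_0}$, and the use of the dyadic-slab hypothesis because the pieces $\psi^{2^{-m}}_z\tilde\psi^n_{\bar z}$ carry no moment conditions). However, two issues prevent this from being a proof.

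First, an arithmetic slip: from property~(3) of the heat-kernel decomposition, $\partial^k P_m(z-\cdot) = c_k\,2^{m(|k|-2)}\,\psi^{2^{-m}}_z$, not $2^{-m(2+|k|)}\psi^{2^{-m}}_z$; the sign of $|k|$ is reversed. With your constant, the contribution of the window $m\sim n_0$ appears to improve with $|k|$, which is spurious; with the correct constant one gets exactly the borderline $2^{m(|k|-2)}\|\Theta_m\|_{L^p(n_0)}\sim 2^{-n_0(\nu+2-|k|)}$ there, so the lemma is tight and the subsequent sums must be done with care.

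Second, and more seriously, your reduction to the per-$n_0$ bound $\big\|\sum_{m} 2^{m(|k|-2)}\Theta_m\big\|_{L^p(n_0)}\lesssim 2^{-n_0(\nu+2-|k|)}$ is where all of the work lies, and as stated it is not even true when $|k|=0$: the uniform near-hyperplane estimate $\|\Theta_m\|_{L^p(n_0)}\lesssim 2^{-m\nu}$ (valid for all $n_0\ge m$ by monotonicity of the $L^p$-norm in the domain) is far from sharp for $m\ll n_0$, while even with the sharp slab-measure gain $\|\Theta_m\|_{L^p(n_0)}\lesssim 2^{2(m-n_0)}2^{-n_0\nu}$ --- which is what the change-of-variables/overlap count must deliver --- the sum over $0\le m<n_0$ produces an extra factor of $n_0$ at $|k|=0$. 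That logarithm is only harmless because $\nu'<\nu$, which means you cannot isolate the $n_0$-slices: you must bound the $\ell^p$-sum over $n_0$ directly. This is exactly what the paper does: it decomposes over layers $n\ge n_0$ (note: not $n\ge m$, though that difference is cosmetic since the contributing layers are forced by the support to satisfy $n\gtrsim n_0$), applies Jensen on the $\sim 2^{(n-m)d}$ terms of the inner $\bar z$-sum with weight $2^{(m-n)d}$, performs a Fubini/change of variables $z\mapsto z''$ giving the factor $2^{2(n-n_0)}$, and then introduces an intermediate $\nu''\in(\nu',\nu)$ and runs a second Jensen over the triple of indices $(n_0,m,n)$ with weight $\tfrac1{n_0}2^{-(n-n_0)(\nu''+2-2/p)}$: the $1/n_0$ absorbs the $\sim n_0$ terms of the $m$-sum, the geometric weight in $n$ is summable because $\nu''+2-\tfrac{2}{p}>0$, and the left-over $2^{-n_0(\nu''-\nu')}$ kills the polynomial in $n_0$ and makes the $n_0$-sum converge. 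You explicitly flag this multi-scale bookkeeping as ``the main obstacle,'' but without carrying it out --- without producing the intermediate slab estimate and the Jensen weights --- the proof has a hole precisely where the content of the lemma is.
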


\begin{proof}
We only prove \eqref{Eq:SpecificConvol}, \eqref{Eq:SpecificConvol2} can be obtained by similar arguments.
We adapt the proof of Lemma \ref{Lemma:Extension} to this specific test function. First of all, we have for all $z\in (2^{-2n_0},2^{-2(n_0-1)})\times \T^d$ and all $0\le m\le n_0+4$
$$ \partial^k P_m(z-z') = \sum_{n \ge n_0} \sum_{\bar z \in \Lambda_n} \partial^k P_m(z-z') \tilde{\psi}^n_{\bar z}(z')\;,\quad \forall z'\in (0,\infty)\times\R^d\;.$$
We observe that the terms in the sum over $\bar z$ vanish except when $\bar t \in [(a-1)2^{-2n},(A+1) 2^{-2n}]$ and $|\bar x - x| < C2^{-m}$ for some constant $C>0$ depending on the sizes of the supports of $P_0$ and $\tilde{\psi}$. Furthermore, for all $z'' \in B(\bar z,2^{-n})$, the function $\partial^k P_m(z-\cdot) \tilde{\psi}^n_{\bar z}(\cdot)$ can be viewed as $2^{m(|k|+d)}2^{-n|\s|} \rho^{2^{-(n-1)}}_{z''}$ for some function $\rho \in \ccB^r$, up to a multiplicative constant which is uniformly bounded over all the parameters. This being given, we write
\begin{align*}
&\bigg\| \frac{\big|\langle \xi,\partial^k P_m(z-\cdot) \rangle\big|}{2^{-n_0(\nu'-|k|+2)}} \bigg\|_{L^p(n_0)}\lesssim 2^{(m-n_0)|k|} \sum_{n\ge n_0} 2^{-(n-n_0)(\nu'+2)}\\
&\qquad\times\bigg\|\sum_{\substack{\bar z \in \Lambda_n\\\bar t\in [(a-1)2^{-2n},(A+1) 2^{-2n}]\\|\bar x - x| \le C2^{-m}}} 2^{(m-n)d}  \int_{z'' \in B(\bar z, 2^{-n})} 2^{n|\s|} \sup_{\rho\in\ccB^r}\frac{\big|\langle \xi, \rho^{2^{-(n-1)}}_{z''}\rangle\big|}{2^{-n\nu'}} dz'' \bigg\|_{L^p(n_0)}\;.
\end{align*}
Then, using Jensen's inequality we get
\begin{align*}
&\Big\|\sum_{\substack{\bar z \in \Lambda_n\\\bar t\in [(a-1)2^{-2n},(A+1) 2^{-2n}]\\|\bar x - x| \le C2^{-m}}} 2^{(m-n)d}  \int_{z'' \in B(\bar z, 2^{-n})} 2^{n|\s|} \sup_{\rho\in\ccB^r}\frac{\big|\langle \xi, \rho^{2^{-(n-1)}}_{z''}\rangle\big|}{2^{-n\nu'}} dz'' \Big\|_{L^p(n_0)}\\
&\lesssim \bigg(\int_{z\in (2^{-2n_0}\wedge T,2^{-2(n_0-1)}\wedge T)\times \T^d} \sum_{\substack{\bar z \in \Lambda_n\\\bar t\in [(a-1)2^{-2n},(A+1) 2^{-2n}]\\|\bar x - x| \le C2^{-m}}} 2^{(m-n)d}  \int_{z'' \in B(\bar z, 2^{-n})} 2^{n|\s|} \sup_{\rho\in\ccB^r} \Big(\frac{\big|\langle \xi, \rho^{2^{-(n-1)}}_{z''}\rangle\big|}{2^{-n\nu'}}\Big)^p dz''\,dz\bigg)^{\frac1{p}}\\
&\lesssim \bigg(\int_{z'' \in [(a-2)2^{-2n},(A+2)2^{-2n}]\times \T^d} 2^{2(n-n_0)}  \sup_{\rho\in\ccB^r} \Big(\frac{\big|\langle \xi, \rho^{2^{-(n-1)}}_{z''}\rangle\big|}{2^{-n\nu'}} \Big)^p dz'' \bigg)^{\frac1{p}}\;,
\end{align*}
uniformly over all $0\le m\le n_0+4$ and all $n_0 \le n$. Observe that the last bound does not depend on $m$.\\
We now argue separately according as $k=0$ or $|k| > 0$. In the case $k=0$ and let us introduce $\nu'' \in (\nu',\nu)$. The quantity
$$ \sum_{0\le m\le n_0+4}\sum_{n\ge n_0} \frac1{n_0} 2^{-(n-n_0)(\nu''+2-2/p)}\;,$$
is uniformly bounded over all $n_0\ge 0$. Therefore, using Jensen's inequality on the sums over $m$ and $n$ we get
\begin{align*}
&\bigg(\sum_{n_0\ge 0} \Big\| \sum_{m\le n_0+4} \frac{\big|\langle \xi,\partial^k P_m(z-\cdot) \rangle\big|}{2^{-n_0(\nu'-|k|+2)}} \Big\|_{L^p(n_0)}^p\bigg)^{\frac1{p}}\\
&\lesssim\bigg(\sum_{n_0\ge 0} \sum_{0\le m\le n_0+4} \sum_{n\ge n_0}\frac1{n_0} 2^{-(n-n_0)(\nu''+2-\frac{2}{p})}\\
&\qquad \times\int_{z'' \in [(a-2)2^{-2n},(A+2)2^{-2n}]\times \T^d} \sup_{\rho\in\ccB^r} \Big(2^{-n_0(\nu''-\nu')} n_0 \frac{\big|\langle \xi, \rho^{2^{-(n-1)}}_{z''}\rangle\big|}{2^{-n\nu''}} \Big)^p dz''\bigg)^{\frac1{p}}\\
&\lesssim\bigg(\sum_{n\ge 0} \int_{z'' \in [(a-2)2^{-2n},(A+2)2^{-2n}]\times \T^d} \sup_{\rho\in\ccB^r} \Big(\frac{\big|\langle \xi, \rho^{2^{-(n-1)}}_{z''}\rangle\big|}{2^{-n\nu''}} \Big)^p dz'' \bigg)^{\frac1{p}}\;.
\end{align*}
By concavity of $x\mapsto x^{1/p}$ on $\R_+$, the last term is bounded by
\begin{align*}
&\sum_{n\ge 0} \bigg(\int_{z'' \in [\frac{(a-2)}{4}2^{-2n},\frac{(A+2)}{4}2^{-2n}]\times \T^d} \sup_{\rho\in\ccB^r} \Big(\frac{\big|\langle \xi, \rho^{2^{-n}}_{z''}\rangle\big|}{2^{-n\nu''}} \Big)^p dz'' \bigg)^{\frac1{p}}\\
&\lesssim \sum_{n\ge 0} 2^{-n(\nu-\nu'')} \sup_{n\ge 0} \bigg(\Big(\int_{z'' \in [3\cdot 2^{-2n},c'\cdot2^{-2n}]\times \T^d} \sup_{\rho\in\ccB^r} \Big(\frac{\big|\langle \xi, \rho^{2^{-n}}_{z''}\rangle\big|}{2^{-n\nu}} \Big)^p dz'' \bigg)^{\frac1{p}}\;,
\end{align*}
which is finite by assumption. The case $k>0$ is simpler: we don't need to introduce $\nu''$ since
$$ \sum_{m\le n_0+4}\sum_{n\ge n_0} 2^{(m-n_0)|k|} 2^{-(n-n_0)(\nu'+2-2/p)}\;,$$
is bounded uniformly over all $n_0\ge 0$. A similar calculation as before allows to complete the proof.
\end{proof}

\begin{proof}[Proof of Theorem \ref{Th:ReconstructionW} and Proposition \ref{Prop:Recons}]
First of all, we set $\langle \cR f,\varphi\rangle := 0$ whenever $\varphi$ is supported in $\{t<0\}\times\R^d$. Second, take $z=(t,x)$ and $\lambda\in (0,1]$ such that $t\in (3\lambda^2,T-\lambda^2)$, and observe that $f$ satisfies locally the bound of the unweighted space $\cD^\gamma_p$:
\begin{align}\label{Eq:BdLoc}
\Big\| \big| f(z') \big|_\zeta \Big\|_{L^p(D,dz')}+ \sup_{h\in B(0,\lambda)} \bigg\| \frac{\big| f(z'+h)-\Gamma_{z'+h,z'} f(z') \big|_\zeta}{|h|^{\gamma-\zeta}} \bigg\|_{L^p(D_h,dz')} \lesssim t^{\frac{\eta-\gamma}{2}} \$ f\$_{\eta,T,D}\;,
\end{align}
where
$$D= [t-2\lambda^2,t+\lambda^2]\times B(x,2\lambda)\;,\quad D_h = [t-2\lambda^2+|h|^2,t+\lambda^2-|h|^2]\times B(x,2\lambda-|h|)\;,$$
and $\$ f\$_{\eta,T,D}$ stands for the $\cD^{\gamma,\eta,T}_p$-norm where the integrals are restricted to the set $D$. A careful inspection of the proof of the reconstruction theorem~\cite[Th 3.1]{Recons} in the unweighted space $\cD^\gamma_p$ shows that for constructing the quantities $\langle \cR f, \varphi^\lambda_z\rangle$ for all $\varphi\in\ccB^r$, the finiteness of the l.h.s. of \eqref{Eq:BdLoc} suffices. This defines a distribution $\cR f$ on the set of all test functions supported in $\big((-\infty,0)\cup (0,T)\big)\times\R^d$: indeed, any such test function can be decomposed into a sum of \textit{finitely} many test functions of the form $\varphi^\lambda_z$ satisfying the assumption above and on which the action of $\cR f$ has been defined.
The reconstruction bound from~\cite[Th 3.1]{Recons} together with \eqref{Eq:BdLoc} ensures that \eqref{Eq:ReconsBound} is satisfied.\\
We now aim at applying Lemma \ref{Lemma:Extension}. If $\alpha\wedge\eta < 0$, then for all $\zeta\in\cA$ we have $\zeta-\alpha\wedge\eta \ge 0$ and therefore
\begin{align*}
\bigg\| \sup_{\varphi\in \ccB^r} \frac{\big|\langle \Pi_z f(z), \varphi_z^\lambda \rangle\big|}{\lambda^{\alpha\wedge\eta}} \bigg\|_{L^p((3\lambda^2,T-\lambda^2)\times\T^d,dz)} &\lesssim \sum_\zeta \bigg\| \sup_{\varphi\in \ccB^r} \frac{\big|\langle \Pi_z f_\zeta(z), \varphi_z^\lambda \rangle\big|}{\lambda^{\zeta} \, t^{\frac{\eta-\zeta}{2}}} \frac{\lambda^{\zeta-\alpha\wedge\eta}}{t^{\frac{\zeta-\eta}{2}}} \bigg\|_{L^p((3\lambda^2,T-\lambda^2)\times\T^d,dz)}\\
&\lesssim \$f\$_{\eta,T}\;.
\end{align*}
If $\alpha\wedge\eta =0$, then the same type of computation with $\alpha\wedge\eta$ replaced by $\bar\alpha <0$ still applies.\\
If $\alpha\wedge\eta > 0$, then $\min \cA = 0$ so that the same computation works with $\alpha\wedge\eta$ replaced by $0$. Furthermore, if the test function $\varphi$ belongs to $\ccB^r_{\lfloor \alpha\wedge\eta \rfloor}$ then $\langle \Pi_z f(z) , \varphi_z^\lambda \rangle = \sum_{\zeta\ge \alpha \wedge \eta} \langle \Pi_z f_\zeta (z),\varphi_z^\lambda \rangle$, and the same computation as above carries through with $\alpha\wedge \eta$. Finally, for any $c'>0$ we have:
\begin{align*}
&\Big\| \sup_{\varphi\in \ccB^r} \frac{\big|\langle \Pi_z f(z), \varphi^{2^{-m}}_z \rangle\big|}{2^{-m\eta}} \Big\|_{L^p\big((3\cdot 2^{-2m}\wedge (T-2^{-2m}),c' \cdot 2^{-2m}\wedge (T-2^{-2m}))\times \T^d,dz\big)}\\
&\lesssim \sum_{\zeta \in \cA} \Big\| \frac{|f(z)|_\zeta}{2^{-m(\eta-\zeta)}} \Big\|_{L^p\big((3\cdot 2^{-2m}\wedge (T-2^{-2m}),c' \cdot 2^{-2m}\wedge (T-2^{-2m}))\times \T^d,dz\big)} \\
&\lesssim \sum_{\zeta \in \cA} \Big\| \frac{|f(z)|_\zeta}{t^{\frac{\eta-\zeta}{2}}} \Big\|_{L^p\big((3\cdot 2^{-2m}\wedge (T-2^{-2m}),c' \cdot 2^{-2m}\wedge (T-2^{-2m}))\times \T^d,dz\big)}\;,
\end{align*}
so that bounding the $\ell^\infty$-norm by the $\ell^p$-norm, we get:
\begin{align*}
\sup_{m\ge 0} \Big\| \sup_{\varphi\in \ccB^r} \frac{\big|\langle \Pi_z f(z), \varphi^{2^{-m}}_z \rangle\big|}{2^{-m\eta}} \Big\|_{L^p\big((3\cdot 2^{-2m}\wedge (T-2^{-2m}),c' \cdot 2^{-2m}\wedge (T-2^{-2m}))\times \T^d,dz\big)} \lesssim \sum_{\zeta \in \cA} \Big\| \sup_{\varphi\in \ccB^r} \frac{|f(z)|_\zeta}{t^{\frac{\eta-\zeta}{2}}} \Big\|_{L^p((0,T)\times \T^d,dz)}\;.
\end{align*}
In any case, by combining the bounds we have just obtained with the reconstruction bound \eqref{Eq:ReconsBound}, we deduce that the conditions required in Lemma \ref{Lemma:Extension} are met, thus yielding the extension of $\cR f$ as an element of $\cB^{\bar\alpha,T}_p$ with $\bar\alpha$ as in the statement of the theorem.\\
Applying Lemma \ref{Lemma:SpecificConvol}, we deduce the statement of Proposition \ref{Prop:Recons} in the case of a single model.\\
Finally, we treat the case where we are given two models by using the bound already obtained in the unweighted case~\cite[Th 3.1]{Recons}: the bound \eqref{Eq:ReconsBoundTwo}, as well as the two-models counterpart of \eqref{Eq:ReconsConvol} and \eqref{Eq:ReconsConvol2}, easily follow using the same argument as above.
\end{proof}

\subsection{Embedding Theorem}\label{Subsec:Embeddings}

For classical Besov spaces, the difficulty of the proof of the embedding theorem varies according to the definition of the Besov-norm one opted for: when the norm is ``countable", the proof is simple as it essentially relies on the embedding properties of $\ell^p$-type spaces. In~\cite[Th 5.1]{Recons}, embedding theorems were obtained for the unweighted spaces $\cD^\gamma_{p,q}$. The main idea of the proof therein is the following: if one defines a space of averages $\bar\cD^\gamma_{p,q}$ (whose elements are defined on a countable set that approximates $\R\times\R^d$) endowed with a ``countable" norm, then the proof of the embedding theorem at the level of this space is simple. The important step is then the equivalence between the space of averages and the space $\cD^\gamma_{p,q}$.\\

We adapt this proof to our setting. In comparison with the original proof, the main technical difficulty comes from the weights near $t=0+$ which need some extra care. For simplicity, we assume that $T=1$ in this subsection and we drop the superscript $T$ in the spaces $\cD^{\gamma,\eta,T}_p$. This is a harmless assumption since the general case $T >0$ can be treated by considering the smallest $n_T\in \Z$ such that $T\ge 2^{-2n_T}$ and by considering the slightly modified grids $\Lambda_n = \{(k_0 2^{-2n}T, k_1 2^{-n}\sqrt T, \ldots, k_d 2^{-n} \sqrt T): k\in \Z^{d+1}\}$ for every $n\ge n_T$.\\

For every $n\ge 0$, 
%we introduce the grid
%$$ \Lambda_n := \{(k_0 2^{-2n}, k_1 2^{-n}, k_2 2^{-n}, k_3 2^{-n}):  k=(k_0,\ldots,k_3)\in \Z^4\}\;,$$
%as well as the set
we define
$$ \cE_n := \big\{ h\in \Lambda_n: 0 <  |h|_{\s} \le 2^{-n|\s|}\big\}\;,$$
where we recall that $\Lambda_n$ was defined in \eqref{eq:defLambda}.

For every $n_0\ge 0$, we introduce the following restriction of the grid
$$ \tilde\Lambda_n := \Lambda_n \cap [3\cdot 2^{-2n}, 1-2\cdot 2^{-2n}]\times \T^d\;,$$
as well as the associated ``boundary'':
$$ \partial \tilde\Lambda_n := \tilde\Lambda_n \cap \Big([3\cdot 2^{-2n}, 3\cdot 2^{-2(n-1)}]\times \T^d\Big)\;.$$
We then denote by $\ell^p_{n}(\tilde\Lambda_{n})$ the set of all sequences $u(z)$, $z\in\tilde\Lambda_{n}$ such that
$$ \| u \|_{\ell^p_{n}} = \Big(\sum_{z\in\tilde\Lambda_{n}} 2^{-n|\s|} |u(z)|^p \Big)^{\frac1{p}} < \infty\;.$$
We take a similar definition for $\ell^p_{n}(\partial\tilde\Lambda_{n})$.

With these notations at hand, we recast the definition of the space of averages in our context with weights. We let $\bar{\cD}^{\gamma,\eta}_{p}$ be the set of all sequences $(\averag{f}{n})_{n\ge 0}$ of maps $\averag{f}{n}:\tilde\Lambda_{n} \rightarrow \cT_{<\gamma}$ such that for all $\zeta \in \cA_\gamma$ we have:
\begin{enumerate}
\item Local bound:
$$ \bigg(\sum_{n\ge 0} \Big\| \frac{|\averag{f}{n}(z)|_\zeta}{t^{\frac{\eta-\zeta}{2}}} \Big\|_{\ell^p_{n}(\partial \tilde\Lambda_n)}^p\bigg)^{\frac1{p}} < \infty\;,$$
\item Translation bound:
$$ \sup_{n\ge 0} \sup_{h\in \cE_n} \Big\| \frac{|\averag{f}{n}(z+h)-\Gamma_{z+h,z} \averag{f}{n}(z)|_\zeta}{2^{-n(\gamma-\zeta)}t^{\frac{\eta-\gamma}{2}}} \Big\|_{\ell^p_{n}(\tilde\Lambda_n)} < \infty\;,$$
\item Consistency bound:
$$ \sup_{n\ge 0} \Big\| \frac{|\averag{f}{n}(z)-\averag{f}{n+1}(z)|_\zeta}{2^{-n(\gamma-\zeta)} t^{\frac{\eta-\gamma}{2}}} \Big\|_{\ell^p_{n}(\tilde\Lambda_n)} < \infty\;.$$
\end{enumerate}
We denote by $\$ \averag{f}{}\$$ the corresponding norm. We now follow the strategy of proof of~\cite[Th 5.1]{Recons} by adapting the intermediary results.\\
Observe that if we set $\cE_n^C := \{h\in \Lambda_n: 0< |h|_\s \le C 2^{-n}\}$, then we have for any given $C>0$ the bound
\begin{equation}\label{Eq:TransConsis}
\sup_{n\ge 0} \sup_{h\in \cE_n^C} \Big\| \tun_{\{z+h \in \tilde{\Lambda}_{n}\}}\frac{|\averag{f}{n+1}(z+h)-\Gamma_{z+h,z} \averag{f}{n}(z)|_\zeta}{2^{-n(\gamma-\zeta)}t^{\frac{\eta-\gamma}{2}}} \Big\|_{\ell^p_{n}(\tilde\Lambda_n)} < \infty\;.
\end{equation}
We first show that the spaces $\cD^{\gamma,\eta}_{p}$ and $\bar\cD^{\gamma,\eta}_{p}$ are essentially equivalent. Let us set
$$ B(z,r)^+ := B(z,r) \cap \{z'=(t',x') \in \R^{d+1}: t' \ge t\}\;.$$
Notice that the volume of $B(z,2^{-n})$ is $2^{d+1} 2^{-n|\s|}$, while the volume of $B(z,2^{-n})^+$ is $2^{d} 2^{-n|\s|}$.
\begin{proposition}\label{Prop:AveragEq}
Let $f\in\cD^{\gamma,\eta}_{p}$ and set for every $n\ge 0$
$$ \averag{f}{n}(z) := \int_{B(z,2^{-n})^+} 2^{-d} 2^{n|\s|} \Gamma_{z,z'} f(z') dz'\;,\quad z\in \tilde\Lambda_{n}\;.$$
Then $\averag{f}{} \in \bar{\cD}^{\gamma,\eta}_{p}$.\\
Conversely, let $\averag{f}{}\in\bar{\cD}^{\gamma,\eta}_{p}$ and set
$$ f_{n}(z) = \Gamma_{z,z_n} \averag{f}{n}(z_n)\;,$$
where $z_n$ is the nearest point to $z$ on the grid $\tilde\Lambda_{n}$. Then, for every $n_0\ge 0$, the sequence $(f_{n})_{n\ge n_0}$ converges in $L^p((3\cdot 2^{-2n_0},1)\times\T^d)$ to an element $f\in \cD^{\gamma,\eta}_p$.\\
If $\averag{f}{}$ is obtained from some $f\in\cD^{\gamma,\eta}_{p}$ as in the first part of the statement, then the sequence $f_n$ converges to the same map $f$.
\end{proposition}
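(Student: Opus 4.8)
The plan is to follow the scheme of~\cite[Section~5]{Recons}, where the analogous equivalence between modelled distributions and their discrete averages was established for the unweighted spaces; the only genuinely new ingredient here is the interaction of the averaging and discretisation operations with the weights $t^{\frac{\eta-\zeta}{2}}$. I would split the argument into three parts: (i) that $\averag{f}{}$ built from $f\in\cD^{\gamma,\eta}_p$ belongs to $\bar\cD^{\gamma,\eta}_p$; (ii) that for every $n_0$ the piecewise reconstruction $(f_n)_{n\ge n_0}$ is Cauchy in $L^p((3\cdot 2^{-2n_0},1)\times\T^d)$, with limit $f\in\cD^{\gamma,\eta}_p$; (iii) that these two constructions invert each other. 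The observation driving everything is that if $z=(t,x)\in\tilde\Lambda_n$ then $t\ge 3\cdot 2^{-2n}$, so that every $z'=(t',x')\in B(z,2^{-n})^+$ satisfies $t\le t'\le\tfrac43 t$; hence $t'^{\,s}$ and $t^{\,s}$ are comparable uniformly in $n$ and $z$ for any fixed exponent $s$. This comparability is exactly what the truncation of the grids to $\{t\ge 3\cdot 2^{-2n}\}$, and the choice of weight exponent $\frac{\eta-\zeta}{2}$ rather than $\frac{(\eta-\zeta)\wedge 0}{2}$, are there to provide.

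For part (i) I would verify the three bounds in the definition of $\bar\cD^{\gamma,\eta}_p$. For the local bound, Jensen's inequality gives $|\averag{f}{n}(z)|_\zeta^p\le\int_{B(z,2^{-n})^+}2^{-d}2^{n|\s|}\,|\Gamma_{z,z'}f(z')|_\zeta^p\,dz'$; one then bounds $|\Gamma_{z,z'}f(z')|_\zeta\lesssim\sum_{\zeta'\ge\zeta}2^{-n(\zeta'-\zeta)}|f(z')|_{\zeta'}$, uses $t'\sim t\sim 2^{-2n}$ on $\partial\tilde\Lambda_n$ to absorb the factor $2^{-n(\zeta'-\zeta)}$ into the quotient $t^{\frac{\eta-\zeta}{2}}/t'^{\frac{\eta-\zeta'}{2}}$, and converts the sum over $n$ of the $\ell^p_n(\partial\tilde\Lambda_n)$-norms into an integral over $(0,1)\times\T^d$ (the time-annuli $[3\cdot 2^{-2n},3\cdot 2^{-2(n-1)}]$, once enlarged by the size of the balls, overlap with finite multiplicity), ending with a bound by $\$f\$$. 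For the translation and consistency bounds I would use the identity $\Gamma_{z+h,z}\Gamma_{z,z'}=\Gamma_{z+h,z'}$ to rewrite the increment as an integral over a symmetric difference of balls,
$$\averag{f}{n}(z+h)-\Gamma_{z+h,z}\averag{f}{n}(z)=2^{-d}2^{n|\s|}\int_{B(z+h,2^{-n})^+\triangle B(z,2^{-n})^+}\Gamma_{z+h,z'}f(z')\,dz'\,,$$
and then estimate the right-hand side by Hölder's inequality, the modelled-distribution increment bound for $f(z')-\Gamma_{z',z}f(z)$, the $\Gamma$-bound, and the volume of the symmetric difference, every weight being comparable by the observation above. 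The consistency bound is entirely similar, and it also yields the auxiliary estimate~\eqref{Eq:TransConsis}.

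For part (ii), fix $n_0$, set $R_{n_0}=(3\cdot 2^{-2n_0},1)\times\T^d$, and for $z\in R_{n_0}$ and $n\ge n_0$ let $z_n$ be the nearest point of $\tilde\Lambda_n$ to $z$, so that $|z-z_n|\lesssim 2^{-n}$, $|z_n-z_{n+1}|\lesssim 2^{-n}$, and $z_n,z_{n+1}\in\Lambda_{n+1}$. From $f_{n+1}(z)-f_n(z)=\Gamma_{z,z_{n+1}}\bigl(\averag{f}{n+1}(z_{n+1})-\Gamma_{z_{n+1},z_n}\averag{f}{n}(z_n)\bigr)$, the bound~\eqref{Eq:TransConsis} and the translation/consistency bounds of $\averag{f}{}$ (the latter applied once at the intermediate point $z_n$, which lies in $\tilde\Lambda_{n+1}$, since $z_n-z_{n+1}$ need not lie in $\Lambda_n$), together with the conversion of grid sums to integrals, I expect $\bigl\|\,|f_{n+1}(z)-f_n(z)|_\zeta\,\bigr\|_{L^p(R_{n_0})}\lesssim 2^{-n(\gamma-\zeta)}\,(2^{-2n_0})^{\frac{\eta-\gamma}{2}}\,\$\averag{f}{}\$$, which is summable in $n$ since $\gamma>\zeta$; hence $(f_n)$ is Cauchy in $L^p(R_{n_0})$, and letting $n_0\to\infty$ defines $f$ on $(0,1)\times\T^d$. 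To see $f\in\cD^{\gamma,\eta}_p$ I would pass to the limit: the local bound is inherited because $\bigl\|\,|f_n(z)|_\zeta/t^{\frac{\eta-\zeta}{2}}\,\bigr\|_{L^p}$ is controlled uniformly in $n$ by $\$\averag{f}{}\$$ (each $f_n$ being the $\Gamma$-transport of the constant $\averag{f}{n}(z_n)$, with $t\sim t_{z_n}$), and the increment bound follows by choosing $n$ with $2^{-n}\sim|h|$ and decomposing $f(z+h)-\Gamma_{z+h,z}f(z)=(f-f_n)(z+h)+\bigl(f_n(z+h)-\Gamma_{z+h,z}f_n(z)\bigr)+\Gamma_{z+h,z}(f_n-f)(z)$, bounding the middle term by the translation bound of $\averag{f}{}$ and the outer two by the Cauchy estimate.

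Finally, for part (iii), if $\averag{f}{}$ is built from some $f\in\cD^{\gamma,\eta}_p$ then, for $z\in R_{n_0}$, one has $f_n(z)-f(z)=\Gamma_{z,z_n}\bigl(\averag{f}{n}(z_n)-\Gamma_{z_n,z}f(z)\bigr)=\Gamma_{z,z_n}\int_{B(z_n,2^{-n})^+}2^{-d}2^{n|\s|}\,\Gamma_{z_n,z'}\bigl(f(z')-\Gamma_{z',z}f(z)\bigr)\,dz'$, so Hölder's inequality, the $\Gamma$-bound and the $\cD^{\gamma,\eta}_p$-increment bound with $h=z'-z$ give $\bigl\|\,|f_n(z)-f(z)|_\zeta\,\bigr\|_{L^p(R_{n_0})}\lesssim 2^{-n(\gamma-\zeta)}(2^{-2n_0})^{\frac{\eta-\gamma}{2}}\$f\$\to 0$, which identifies the limit produced in part (ii) with $f$ itself. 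I expect the main obstacle to be the weight bookkeeping: every estimate rests on the comparability $t'\sim t$ on $B(z,2^{-n})^+$ for $z\in\tilde\Lambda_n$, and one must keep careful track of the interplay between this comparability, the scale factors $2^{-n(\zeta'-\zeta)}$ produced by the action of $\Gamma$, and the $\ell^p_n\!\to\!L^p$ conversion on the boundary annuli; once this is organised correctly, the remaining steps are a routine adaptation of~\cite{Recons}.
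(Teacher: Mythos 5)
Your three-part scheme and the central observation that $t\sim t'$ uniformly on $B(z,2^{-n})^+$ for $z\in\tilde\Lambda_n$ are exactly the paper's; parts (ii) and (iii) of your plan match the paper's argument in all essentials, including the decomposition of $f(z+h)-\Gamma_{z+h,z}f(z)$ into three pieces and the treatment of the intermediate grid point $z_n\in\Lambda_{n+1}$. The gap is in the translation bound of part (i).

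Your symmetric-difference formula is correct (as a \emph{signed} integral, since the two balls contribute with opposite signs), and it is in fact equivalent, via the substitutions $z'=z+h+u$ and $z'=z+u$, to the paper's identity
$$
\averag{f}{n}(z+h)-\Gamma_{z+h,z}\averag{f}{n}(z)=\int_{u\in B(0,2^{-n})^+}2^{-d}2^{n|\s|}\,\Gamma_{z+h,z+h+u}\bigl(f(z+h+u)-\Gamma_{z+h+u,z+u}f(z+u)\bigr)\,du\,.
$$
The problem is which increment you then insert. You propose to subtract the constant $\Gamma_{z+h,z}f(z)$, i.e.\ to use the increment $f(z')-\Gamma_{z',z}f(z)$ with base point the \emph{grid point} $z$ and running shift $z'-z$. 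After applying the $\Gamma$-bound and Hölder (or Jensen), this produces an integral of the form
$$
\sum_{z\in\tilde\Lambda_n}\int_{\triangle_z}\Bigl(\frac{|f(z')-\Gamma_{z',z}f(z)|_{\zeta'}}{|z'-z|^{\gamma-\zeta'}\,t^{(\eta-\gamma)/2}}\Bigr)^p\,dz'
$$
in which the integrand depends on \emph{both} $z'$ and the discrete variable $z$, through the pointwise value $f(z)$. One cannot then collapse $\sum_{z}\int_{z'}$ into a single $L^p$-integral in $z'$: fixing $z'$, the sum over the $O(1)$ grid points $z$ with $z'\in\triangle_z$ produces quantities of the form $f(z')-\Gamma_{z',z'-u(z')}f(z'-u(z'))$ with a \emph{piecewise-constant, $z'$-dependent} shift $u(z')$, and the $\cD^{\gamma,\eta}_p$ norm only controls $\sup_u\|f(\cdot+u)-\Gamma_{\cdot+u,\cdot}f(\cdot)\|_{L^p}$ for a \emph{fixed} shift $u$. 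In other words, the increment bound does not bound an integral along a graph $z'\mapsto (z'-u(z'),z')$ when $u$ varies, and still less a genuine grid sum.

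The paper avoids this by never subtracting the pointwise value $f(z)$: the identity above already exhibits the difference as a single average over $u\in B(0,2^{-n})^+$ of $\Gamma_{z+h,z+h+u}$ applied to the increment $f(z+h+u)-\Gamma_{z+h+u,z+u}f(z+u)$, whose base point is $z+u$ and whose shift is the \emph{fixed} vector $h$. After bounding $\Gamma_{z+h,z+h+u}$ by powers of $|u|\lesssim 2^{-n}$ and using $t_z\sim t_{z+u}$, the integrand becomes a function of $z+u$ alone; the change of variables $z'=z+u$ and the finite-multiplicity covering of space by the balls $B(z,2^{-n})^+$ then turn $\sum_{z}\int_u$ into $\int_{z'}$, and the translation bound of $\cD^{\gamma,\eta}_p$ with shift $h$ applies directly. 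You need to replace your step ``insert the increment $f(z')-\Gamma_{z',z}f(z)$'' by ``reparametrise both averages over $u\in B(0,2^{-n})^+$ and use the increment of $f$ with shift $h$ at base point $z+u$''; with this change the rest of your plan goes through.

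Two smaller remarks: the displayed identity should carry a sign on the symmetric difference (or be written as the difference of two integrals), and the volume factor plays essentially no role once the reparametrisation is made, since the normalisation $2^{-d}2^{n|\s|}$ is exactly the inverse volume of $B(0,2^{-n})^+$ and Jensen applies cleanly.
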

\begin{proof}
We start with the first part of the proposition. To get the local bound on $\averag{f}{}$, we write
\begin{align*}
\Big\| \frac{|\averag{f}{n}(z)|_\zeta}{t^{\frac{\eta-\zeta}{2}}} \Big\|_{\ell^p_{n}(\partial \tilde\Lambda_n)} &\le \Big\| \int_{B(z,2^{-n})^+} 2^{-d} 2^{n|\s|}\frac{|\Gamma_{z,z'}f(z')|_\zeta}{t^{\frac{\eta-\zeta}{2}}} dz' \Big\|_{\ell^p_{n}(\partial \tilde\Lambda_n)}\\
&\lesssim \sum_{\nu \ge \zeta} \Big\| \int_{B(z,2^{-n})^+} 2^{-d} 2^{n|\s|}\frac{|f(z')|_\nu}{t^{\frac{\eta-\nu}{2}}} dz' \Big\|_{\ell^p_{n}(\partial \tilde\Lambda_n)}\;,
\end{align*}
where we have used the fact that $t$ is of order $2^{-2n}$ in the above integral. Applying Jensen's inequality on the integral over $z'$, we obtain the further bound
$$ \lesssim \sum_{\nu \ge \zeta} \bigg(\int_{z' \in \big((3\cdot 2^{-2n}, 13\cdot 2^{-2n})\cap (0,1)\big)\times \T^d} \Big(\frac{|f(z')|_\nu}{(t')^{\frac{\eta-\nu}{2}}}\Big)^p dz' \bigg)^{\frac1{p}}\;,$$
uniformly over all $n\ge 0$. Consequently, we find
$$ \Big(\sum_{n\ge 0} \Big\| \frac{|\averag{f}{n}(z)|_\zeta}{t^{\frac{\eta-\zeta}{2}}} \Big\|_{\ell^p_{n}(\partial \tilde\Lambda_n)}^p\Big)^{\frac1{p}}  \lesssim \sum_\nu\Big\| \frac{|f(z)|_\nu}{t^{\frac{\eta-\nu}{2}}} \Big\|_{L^p((0,1)\times\T^d})\;,$$
as required. We turn to the translation bound. For all $h\in \cE_n$ we write
$$ \averag{f}{n}(z+h) - \Gamma_{z+h,z} \averag{f}{n}(z) = \int_{u\in B(0,2^{-n})^+} 2^{-d} 2^{n|\s|} \Gamma_{z+h,z+h+u}\big(f(z+h+u) - \Gamma_{z+h+u,z+u} f(z+u)\big) du\;.$$
Therefore, using Jensen's inequality at the first line we get
\begin{align*}
&\Big\| \frac{|\averag{f}{n}(z+h)-\Gamma_{z+h,z} \averag{f}{n}(z)|_\zeta}{2^{-n(\gamma-\zeta)}t^{\frac{\eta-\gamma}{2}}} \Big\|_{\ell^p_{n}(\tilde\Lambda_n)}\\
&\lesssim \bigg(\sum_{z\in\tilde{\Lambda}_n} \int_{u\in B(0,2^{-n})^+} \Big(\frac{\Big|\Gamma_{z+h,z+h+u}\big(f(z+h+u) - \Gamma_{z+h+u,z+u} f(z+u)\big)\Big|_\zeta}{|h|^{\gamma-\zeta} t^{\frac{\eta-\gamma}{2}}}\Big)^p du\bigg)^{\frac1{p}}\\
&\lesssim \sum_{\nu\ge \zeta}\bigg(\int_{z\in (3\cdot 2^{-2n}, 1-2^{-2n})\times \T^d} \Big(\frac{\big|f(z+h) - \Gamma_{z+h,z} f(z)\big|_\nu}{|h|^{\gamma-\nu} t^{\frac{\eta-\gamma}{2}}}\Big)^p dz \bigg)^{\frac1{p}}\;.
\end{align*}
Therefore
$$ \Big(\sum_{n\ge 0} \Big\| \frac{|\averag{f}{n}(z+h)-\Gamma_{z+h,z} \averag{f}{n}(z)|_\zeta}{2^{-n(\gamma-\zeta)}t^{\frac{\eta-\gamma}{2}}} \Big\|_{\ell^p_{n}(\tilde\Lambda_n)}^p\Big)^{\frac1{p}} \lesssim \$f\$\;,$$
as required. The consistency bound is obtained similarly so we skip the details.\\
Let us now prove the second part of the statement. We first show that $(f_n)_{n\ge n_0}$ is a Cauchy sequence in $L^p((3\cdot 2^{-2n_0},1)\times\T^d)$. Fix $n_0\ge 0$. We have for every $n\ge n_0$:
\begin{align*}
&\Big\| \frac{\big|f_{n+1}(z) - f_n(z)\big|_\zeta}{t^{\frac{\eta-\zeta}{2}}} \Big\|_{L^p((3\cdot 2^{-2n_0},1)\times\T^d)}\\
&= \Big\| \frac{\big|\Gamma_{z,z_{n+1}}\big(\averag{f}{n+1}(z_{n+1}) - \Gamma_{z_{n+1},z_n}\averag{f}{n}(z_n)\big)\big|_\zeta}{t^{\frac{\eta-\zeta}{2}}} \Big\|_{L^p((3\cdot 2^{-2n_0},1)\times\T^d)}\\
&\lesssim \sum_{\nu \ge \zeta} 2^{-n(\nu-\zeta)} \Big(\int_{z\in (3\cdot 2^{-2n_0},1)\times\T^d} \Big(\frac{\big|\averag{f}{n+1}(z_{n+1}) - \Gamma_{z_{n+1},z_n}\averag{f}{n}(z_n)\big|_\nu}{t^{\frac{\eta-\zeta}{2}}}\Big)^p dz\Big)^{\frac1{p}}\;.
\end{align*}
At this point, we use the fact that there exists $C>0$ such that $|z_{n+1}-z_n|$ is bounded by $C 2^{-(n+1)}$ uniformly over all $z$. We thus get the further bound
\begin{align*}
&\lesssim \sum_{\nu \ge \zeta} 2^{-n(\nu-\zeta)} \Big(\sum_{z_n \in \tilde{\Lambda}_n\cap (3\cdot 2^{-2n_0},1)\times\T^d} 2^{-n|\s|} \sum_{h\in \cE_{n+1}^C} \Big(\frac{\big|\averag{f}{n+1}(z_n+h) - \Gamma_{z_{n}+h,z_n}\averag{f}{n}(z_n)\big|_\nu}{t^{\frac{\eta-\zeta}{2}}}\Big)^p \Big)^{\frac1{p}}\\
&\lesssim \sum_{\nu \ge \zeta} 2^{-n(\gamma-\zeta)} \sup_{h\in \cE_{n+1}^C} \Big(\sum_{z_n \in \tilde{\Lambda}_n\cap (3\cdot 2^{-2n_0},1)\times\T^d} 2^{-n|\s|} \Big(\frac{\big|\averag{f}{n+1}(z_n+h) - \Gamma_{z_{n}+h,z_n}\averag{f}{n}(z_n)\big|_\nu}{|h|^{\gamma-\nu} t^{\frac{\eta-\gamma}{2}} t^{\frac{\gamma-\zeta}{2}}}\Big)^p \Big)^{\frac1{p}}\\
&\lesssim 2^{-(n-n_0)(\gamma-\zeta)} \sum_{\nu \ge \zeta} \sup_{h\in \cE_{n+1}^C} \Big(\sum_{z_n \in \tilde{\Lambda}_n\cap (3\cdot 2^{-2n_0},1)\times\T^d} 2^{-n|\s|} \Big(\frac{\big|\averag{f}{n+1}(z_n+h) - \Gamma_{z_{n}+h,z_n}\averag{f}{n}(z_n)\big|_\nu}{|h|^{\gamma-\nu} t^{\frac{\eta-\gamma}{2}}}\Big)^p \Big)^{\frac1{p}}\;,
\end{align*}
where we have used the fact that $t\ge 2^{-2n_0}$ at the last line. We deduce that $(f_n)_{n\ge n_0}$ is a Cauchy sequence in $L^p([3\cdot 2^{-2n_0},1]\times\T^d)$, for every $n_0\ge 0$. We then get an element $f\in L^p((0,1)\times\T^3)$ and it remains to show that it actually belongs to $\cD^{\gamma,\eta}_p$. The local bound is already proved, let us focus on the translation bound. For every $h\in B(0,1)$ and every $z\in [3\cdot |h|^2,1-|h|^2]\times\T^d$, let $n_0\ge 0$ be the smallest integer such that $6\cdot 2^{-2n_0} \le 2|h|^2$. Then, we write
\begin{equation}\label{Eq:Decompfaveragf}\begin{split}
f(z+h) - \Gamma_{z+h,z} f(z) &= \big(f(z+h)-f_{n_0}(z+h)\big) + \big(f_{n_0}(z+h) - \Gamma_{z+h,z} f_{n_0}(z)\big)\\
&+ \big(\Gamma_{z+h,z}(f_{n_0}(z) - f(z))\big)\;.
\end{split}\end{equation}
We bound separately the three terms appearing on the r.h.s. Regarding the first term, we use the previous calculation to get
\begin{align*}
\Big\| \frac{\big|f(z+h)-f_{n_0}(z+h)\big|_\zeta}{|h|^{\gamma-\zeta} t^{\frac{\eta-\gamma}{2}}} \Big\|_{L^p((3\cdot |h|^2,1-|h|^2)\times\T^d)} &\le \sum_{n\ge n_0} \Big\| \frac{\big|f_{n+1}(z+h)-f_{n}(z+h)\big|_\zeta}{|h|^{\gamma-\zeta} t^{\frac{\eta-\gamma}{2}}} \Big\|_{L^p((3\cdot |h|^2,1-|h|^2)\times\T^d)} \\
&\le \sum_{n\ge n_0} \Big\| \frac{\big|f_{n+1}(z)-f_{n}(z)\big|_\zeta}{|h|^{\gamma-\zeta} t^{\frac{\eta-\gamma}{2}}} \Big\|_{L^p((3\cdot 2^{-2n_0},1)\times\T^d)}\\
&\le \sum_{n\ge n_0} 2^{-(n-n_0)(\gamma-\zeta)} \$ \averag{f}{}\$ \;,
\end{align*}
so it is bounded by a term of order $\$ \averag{f}{}\$$ as required. The bound of the third term of \eqref{Eq:Decompfaveragf} is similar. Let us now consider the second term of \eqref{Eq:Decompfaveragf}. We have
\begin{align*}
&\Big\| \frac{\big|f_{n_0}(z+h)-\Gamma_{z+h,z}f_{n_0}(z)\big|_\zeta}{|h|^{\gamma-\zeta} t^{\frac{\eta-\gamma}{2}}} \Big\|_{L^p([3\cdot |h|^2,1-|h|^2]\times\T^d)}\\
&\le \sum_{\nu \ge \zeta} \sup_{\tilde{h} \in \cE_{n_0}^C} \Big\| 2^{-n_0(\nu-\zeta)}\frac{\big|\averag{f}{n_0}(z_{n_0}+\tilde{h})-\Gamma_{z_{n_0}+\tilde{h},z_{n_0}}\averag{f}{n_0}(z_{n_0})\big|_\nu}{|h|^{\gamma-\zeta} t^{\frac{\eta-\gamma}{2}}} \Big\|_{L^p((3\cdot |h|^2,1-|h|^2)\times\T^d)} \\
&\le \sum_{\nu \ge \zeta} \sup_{\tilde{h} \in \cE_{n_0}^C} \Big\| \frac{\big|\averag{f}{n_0}(z+\tilde{h})-\Gamma_{z+\tilde{h},z}\averag{f}{n_0}(z)\big|_\nu}{|h|^{\gamma-\nu} t^{\frac{\eta-\gamma}{2}}} \Big\|_{\ell^p(\tilde{\Lambda}_{n_0})}\;,
\end{align*}
which is bounded by $\$ \averag{f}{}\$$ uniformly over all $n_0\ge 0$, as required. This completes the proof of the translation bound.\\
Finally, if $\averag{f}{}$ is constructed from some element $f\in \cD^{\gamma,\eta}_p$, then a simple computation shows that the sequence $(f- f_n)_{n\ge n_0}$ converges to $0$ in $L^p([3\cdot 2^{-2n_0},1]\times \T^d)$ for every $n_0\ge 0$. This completes the proof of the proposition.
\end{proof}

Let us state a useful bound for the sequel. For all $p \le p' \in [1,\infty]$, we have
\begin{equation}\label{Eq:Boundellp}
\Big\| u(z) \Big\|_{\ell^{p'}_n(\tilde\Lambda_n)} \le 2^{n|\s|(\frac1{p}-\frac1{p'})}  \Big\| u(z) \Big\|_{\ell^{p}_n(\tilde\Lambda_n)}\;,
\end{equation}
uniformly over all sequences $u(z), z\in \tilde\Lambda_n$. Of course, this remains true if $\tilde\Lambda_n$ is replaced by $\partial \tilde\Lambda_n$.\\

We have all the elements at hand to prove the embedding theorem.
\begin{proof}[Proof of Theorem \ref{Th:EmbeddingW}]
The first embedding is a direct consequence of the boundedness of the underlying space which implies the continuous inclusion $L^p \subset L^{p'}$ whenever $p' < p$.\\
The second embedding is more involved and relies on the spaces of averages $\bar{\cD}^{\gamma,\eta}_p$. If we establish the embedding at the level of these spaces, namely
$$ \$ \averag{f}{} \$_{\gamma',\eta',p'} \lesssim \$ \averag{f}{} \$_{\gamma,\eta,p}\;,$$
uniformly over all $\averag{f}{} \in \bar{\cD}^{\gamma,\eta}_p$, then the equivalence stated in Proposition \ref{Prop:AveragEq} yields the desired result.\\

Let us introduce the notation: $(\gamma,\eta,p) \leadsto (\gamma',\eta',p')$ if we have $\eta'-\eta = \gamma'-\gamma$ as well as
$$ \gamma' = \gamma - |\s| \big(\frac1{p} - \frac1{p'}\big)\;,\quad p'<\infty\;,$$
or
$$ \gamma' \le \gamma - \frac{|\s|}{p}\;,\quad p'=\infty\;.$$
Notice that, for all $\zeta < \gamma$, there is a unique $p(\zeta)\in [p,\infty]$ such that $(\gamma,\eta,p) \leadsto (\zeta,\eta+\zeta-\gamma,p(\zeta))$.\\

We let $\zeta_1 > \zeta_2 > \ldots$ be the elements of $\cA_\gamma$ listed in decreasing order. We are going to show the following two properties:\begin{enumerate}
\item[(1)] If $\gamma',\eta',p'$ are such that $\gamma' \in (\zeta_1,\gamma)$, $p' \in (p,\infty]$ and $(\gamma,\eta,p)\leadsto (\gamma',\eta',p')$, then $\averag{f}{} \in \bar{\cD}^{\gamma',\eta'}_{p'}$.
\item[(2)] If $\gamma' \in (\zeta_2,\zeta_1)$ and if $\eta'-\eta=\gamma'-\gamma$, then $\averag{f}{} \in \bar{\cD}^{\gamma',\eta'}_{p(\zeta_1)}$.
\end{enumerate}

Let us show that these two properties, combined with a simple recursion, yield the second embedding of the theorem. Let $p'' < p$, $\gamma'' < \gamma - |\s| \big(\frac1{p} - \frac1{p''})$ and $\eta'' = \eta + \gamma''-\gamma$: we aim at showing that $\bar{\cD}^{\gamma,\eta}_{p}$ is continuously embedded into $\bar{\cD}^{\gamma'',\eta''}_{p''}$. We distinguish four cases.

First, if $p'' < p({\zeta_1})$ then (1) shows that $\bar{\cD}^{\gamma,\eta}_{p}$ is continuously embedded into $\bar{\cD}^{\gamma',\eta'}_{p'}$ with $p'=p''$, $\gamma' = \gamma - |\s| \big(\frac1{p} - \frac1{p''})$ and $\eta'=\eta+\gamma'-\gamma$. The latter space is itself continuously embedded into $\bar{\cD}^{\gamma'',\eta''}_{p''}$ so that the desired embedding is proved.

Second, if $p'' = p({\zeta_1})$ then (2) shows that $\bar{\cD}^{\gamma,\eta}_{p}$ is continuously embedded into $\bar{\cD}^{\gamma',\eta'}_{p''}$ for all $\gamma' \in (\zeta_2,\zeta_1)$ and all $\eta'=\eta+\gamma'-\gamma$. If $\gamma'' \in (\zeta_2,\zeta_1)$ then the desired embedding is proved. If $\gamma'' < \zeta_2$, then it is a consequence of the continuous embedding of $\bar{\cD}^{\gamma',\eta'}_{p''}$ into $\bar{\cD}^{\gamma'',\eta''}_{p''}$ whenever $\gamma''-\gamma' = \eta''-\eta' < 0$.

Third, if $p(\zeta_2) > p'' > p({\zeta_1})$ then there exists $\gamma'\in (\zeta_2,\zeta_1)$ such that, taking $\eta'=\eta + \gamma'-\gamma$, we have $(\gamma',\eta',p(\zeta_1)) \leadsto (\gamma'',\eta'',p'')$ so that combining (2) and (1) the desired embedding follows.

Fourth, if $p'' \ge p(\zeta_2)$ then a simple recursion based on the three first cases yields the desired result.\\

Let us prove (1). We start with the local bound. Applying \eqref{Eq:Boundellp} and using the fact that $t$ is of order $2^{-2n}$ for all $z=(t,x) \in \partial \tilde\Lambda_n$, we get
\begin{align*}
\Big\| \frac{|\averag{f}{n}(z)|_\zeta}{t^{\frac{\eta'-\zeta}{2}}} \Big\|_{\ell^{p'}_{n}(\partial \tilde\Lambda_n)} \le 2^{n|\s|(\frac1{p}-\frac1{p'})} \Big\| \frac{|\averag{f}{n}(z)|_\zeta}{t^{\frac{\eta'-\zeta}{2}}} \Big\|_{\ell^{p}_{n}(\partial \tilde\Lambda_n)}\lesssim \Big\| \frac{|\averag{f}{n}(z)|_\zeta}{t^{\frac{\eta-\zeta}{2}}} \Big\|_{\ell^{p}_{n}(\partial \tilde\Lambda_n)}\;.
\end{align*}
Using the classical embedding from $\ell^p$ into $\ell^{p'}$, we deduce the following bound
\begin{align*}
\bigg(\sum_{n\ge 0} \Big\| \frac{|\averag{f}{n}(z)|_\zeta}{t^{\frac{\eta'-\zeta}{2}}} \Big\|_{\ell^{p'}_{n}(\partial \tilde\Lambda_n)}^{p'}\bigg)^{\frac1{p'}} \lesssim \bigg(\sum_{n\ge 0} \Big\| \frac{|\averag{f}{n}(z)|_\zeta}{t^{\frac{\eta-\zeta}{2}}} \Big\|_{\ell^{p}_{n}(\partial \tilde\Lambda_n)}^{p}\bigg)^{\frac1{p}}\;.
\end{align*}
We pass to the translation bound. Applying \eqref{Eq:Boundellp}, we obtain
\begin{align*}
\Big\| \frac{|\averag{f}{n}(z+h)-\Gamma_{z+h,z} \averag{f}{n}(z)|_\zeta}{2^{-n(\gamma'-\zeta)}t^{\frac{\eta'-\gamma'}{2}}} \Big\|_{\ell^{p'}_{n}(\tilde\Lambda_n)} &\lesssim 2^{n|\s|(\frac1{p}-\frac1{p'})} \Big\| \frac{|\averag{f}{n}(z+h)-\Gamma_{z+h,z} \averag{f}{n}(z)|_\zeta}{2^{-n(\gamma'-\zeta)}t^{\frac{\eta'-\gamma'}{2}}} \Big\|_{\ell^{p}_{n}(\tilde\Lambda_n)}\\
&\lesssim \Big\| \frac{|\averag{f}{n}(z+h)-\Gamma_{z+h,z} \averag{f}{n}(z)|_\zeta}{2^{-n(\gamma-\zeta)}t^{\frac{\eta-\gamma}{2}}} \Big\|_{\ell^{p}_{n}(\tilde\Lambda_n)}\;,
\end{align*}
as required. The consistency bound is obtained similarly. This completes the proof of (1).\\

Instead of proving (2) directly, we show the following:\begin{enumerate}
\item [(2')] For all $\epsilon > 0$ and all $\gamma'' \in (\zeta_2,\zeta_1)$, we have $\averag{f}{}\in \bar{\cD}^{\gamma'',\eta''}_{p(\zeta_1+\epsilon)}$ where $\eta'' = \eta + \gamma'' - \gamma$.
\end{enumerate}
If (2') holds true, then we can apply (1) and deduce that $\averag{f}{}\in \bar{\cD}^{\gamma(\epsilon),\eta(\epsilon)}_{p(\zeta_1)}$ with
$$ \gamma(\epsilon) = \gamma'' - |\s| \big(\frac1{p(\zeta_1+\epsilon)} - \frac1{p(\zeta_1)}\big)\;,\quad \eta(\epsilon) = \eta'' + \gamma(\epsilon) - \gamma''\;.$$
Since $\gamma(\epsilon) \uparrow \gamma''$ and $\eta(\epsilon) \uparrow \eta''$ as $\epsilon \downarrow 0$, Property (2) follows.\\
We are left with proving (2'). The local bound follows from exactly the same argument as in the proof of (1). Let us focus on the translation bound. We let $\averag{f}{}_{<\zeta_1}$ be the restriction of $\averag{f}{}$ to $\cT_{<\zeta_1}$. We have for all $\zeta < \zeta_1$:
\begin{align}\label{Eq:BdTranslationEmbed}
\big| \averag{f}{n}_{<\zeta_1}(z+h) - \Gamma_{z+h,z} \averag{f}{n}_{<\zeta_1}(z)\big|_\zeta \le& \big| \averag{f}{n}(z+h) - \Gamma_{z+h,z} \averag{f}{n}(z)\big|_\zeta+ \big|\Gamma_{z+h,z} \averag{f}{n}_{\zeta_1}(z)\big|_\zeta\;.
\end{align}
We will bound the contributions to the translation bound of these two terms separately. Applying \eqref{Eq:Boundellp} and since $\zeta_1+\epsilon - \gamma \le -|\s|(1/p - 1/p(\zeta_1+\epsilon))$, we get for all $h\in \cE_n$
\begin{align*}
\Big\| \frac{\big|\averag{f}{n}(z+h)-\Gamma_{z+h,z} \averag{f}{n}(z)\big|_\zeta}{|h|^{\gamma''-\zeta}t^{\frac{\eta''-\gamma''}{2}}} \Big\|_{\ell^{p(\zeta_1+\epsilon)}_{n}(\tilde{\Lambda}_n)} &\lesssim 2^{n|\s|\big(\frac1{p}-\frac1{p(\zeta_1+\epsilon)}\big)}\Big\| \frac{\big|\averag{f}{n}(z+h)-\Gamma_{z+h,z} \averag{f}{n}(z)\big|_\zeta}{|h|^{\gamma''-\zeta}t^{\frac{\eta-\gamma}{2}}} \Big\|_{\ell^{p}_{n}(\tilde{\Lambda}_n)}\\
&\lesssim 2^{n(\gamma''-\zeta_1-\epsilon)}\Big\| \frac{\big|\averag{f}{n}(z+h)-\Gamma_{z+h,z} \averag{f}{n}(z)\big|_\zeta}{|h|^{\gamma-\zeta}t^{\frac{\eta-\gamma}{2}}} \Big\|_{\ell^{p}_{n}(\tilde{\Lambda}_n)}\;,
\end{align*}
uniformly over all $n\ge 0$. This ensures that the supremum over $n$ of the l.h.s.~is bounded by a term of order $\$\bar f\$$.\\
We turn to the contribution coming from the second term of \eqref{Eq:BdTranslationEmbed}. We have
\begin{align*}
\Big\| \frac{\big|\Gamma_{z+h,z} \averag{f}{n}_{\zeta_1}(z)\big|_\zeta}{|h|^{\gamma''-\zeta}t^{\frac{\eta''-\gamma''}{2}}} \Big\|_{\ell^{p(\zeta_1+\epsilon)}_{n}(\tilde{\Lambda}_n)} \lesssim 2^{-n(\zeta_1-\gamma'')}\Big\| \frac{\big|\averag{f}{n}(z)\big|_{\zeta_1}}{t^{\frac{\eta-\gamma}{2}}} \Big\|_{\ell^{p(\zeta_1+\epsilon)}_{n}(\tilde{\Lambda}_n)}\;,
\end{align*}
uniformly over all $h\in\cE_n$ and all $n\ge 0$. At this point, we subdivide $\tilde{\Lambda}_n$ into the union of its components on $D_{n_0} = [3\cdot 2^{-n_0},3\cdot 2^{-(n_0-1)}]\times\T^d$ with $n_0=0,1,\ldots,n$, and we bound separately the corresponding $\ell^{p(\zeta_1+\epsilon)}_n$ norm:
$$ \Big\| \frac{\big|\averag{f}{n}(z)\big|_{\zeta_1}}{t^{\frac{\eta-\gamma}{2}}} \Big\|_{\ell^{p(\zeta_1+\epsilon)}_{n}(\tilde{\Lambda}_n \cap D_{n_0})}\;.$$
Fix such an $n_0$. For every $z\in \tilde{\Lambda}_{n}$, we let $v_z := \inf\{v\in \tilde{\Lambda}_{n-1}: v \ge z\}$ with respect to the lexicographic order and we use the following decomposition
$$ \averag{f}{n}_{\zeta_1}(z) = \averag{f}{n-1}_{\zeta_1}(v_z) + \averag{f}{n}_{\zeta_1}(z) - \averag{f}{n-1}_{\zeta_1}(v_z)\;.$$
We have $\| \averag{f}{n}_{\zeta_1}(z) / t^{\frac{\eta-\gamma}{2}} \|_{\ell^{p(\zeta_1+\epsilon)}_{n}(\tilde{\Lambda}_n \cap D_{n_0})} \le A_1(n) + A_2(n)$ where
\begin{align*}
A_1(n) = \Big(\sum_{z \in \tilde{\Lambda}_n \cap D_{n_0}} 2^{-n|\s|} \Big|\frac{\averag{f}{n-1}_{\zeta_1}(v_z)}{t^{\frac{\eta-\gamma}{2}}}\Big|^{p(\zeta_1+\epsilon)} \Big)^\frac{1}{p(\zeta_1+\epsilon)}\;,\\
A_2(n) = \Big(\sum_{z \in \tilde{\Lambda}_n \cap D_{n_0}} 2^{-n|\s|} \Big|\frac{\averag{f}{n}_{\zeta_1}(z)-\averag{f}{n-1}_{\zeta_1}(v_z)}{t^{\frac{\eta-\gamma}{2}}}\Big|^{p(\zeta_1+\epsilon)} \Big)^\frac{1}{p(\zeta_1+\epsilon)}\;.
\end{align*}
Since for every vertex $v =(s,y)\in \tilde{\Lambda}_{n-1}$, there are at most $2^{|\s|}$ vertices $z\in \tilde{\Lambda}_{n}$ such that $v_z = v$, and since $\eta-\gamma\le 0$, we get whenever $n>n_0$
\begin{align*}
A_1(n) &\le \Big(\sum_{v \in \tilde{\Lambda}_{n-1}\cap D_{n_0}} 2^{-(n-1)|\s|} \Big|\frac{\averag{f}{n-1}_{\zeta_1}(v)}{s^{\frac{\eta-\gamma}{2}}}\Big|^{p(\zeta_1+\epsilon)} \Big)^\frac{1}{p(\zeta_1+\epsilon)} = \Big\| \frac{\averag{f}{n-1}_{\zeta_1}(z)}{t^{\frac{\eta-\gamma}{2}}} \Big\|_{\ell^{p(\zeta_1+\epsilon)}_{n-1}(\tilde{\Lambda}_{n-1} \cap D_{n_0})}\;.
\end{align*}
Regarding $A_2(n)$, using the fact that $\zeta_1 = \max \cA_\gamma$ and \eqref{Eq:Boundellp} at the third line, we get whenever $n>n_0$:
\begin{align*}
A_2(n) &\lesssim \sup_{h\in\cE_{n}^C} \Big(\sum_{v \in \tilde{\Lambda}_{n-1}\cap D_{n_0}} 2^{-(n-1)|\s|} \Big|\frac{\averag{f}{n}_{\zeta_1}(v+h)-\averag{f}{n-1}_{\zeta_1}(v)}{s^{\frac{\eta-\gamma}{2}}}\Big|^{p(\zeta_1+\epsilon)} \Big)^\frac{1}{p(\zeta_1+\epsilon)}\\
&\lesssim 2^{-n(\gamma-\zeta_1)} \sup_{h\in\cE_{n}^C} \Big(\sum_{v \in \tilde{\Lambda}_{n-1} \cap D_{n_0}} 2^{-(n-1)|\s|} \Big|\frac{\big|\averag{f}{n}(z)-\Gamma_{z,z+h}\averag{f}{n-1}(z+h)\big|_{\zeta_1}}{2^{-n(\gamma-\zeta_1)} s^{\frac{\eta-\gamma}{2}}}\Big|^{p(\zeta_1+\epsilon)} \Big)^\frac{1}{p(\zeta_1+\epsilon)}\\
&\lesssim 2^{-n\epsilon} \sup_{h\in\cE_{n}^C} \Big(\sum_{v \in \tilde{\Lambda}_{n-1} \cap D_{n_0}} 2^{-(n-1)|\s|} \Big|\frac{\big|\averag{f}{n}(z)-\Gamma_{z,z+h}\averag{f}{n-1}(z+h)\big|_{\zeta_1}}{2^{-n(\gamma-\zeta_1)} s^{\frac{\eta-\gamma}{2}}}\Big|^{p} \Big)^\frac{1}{p}\;,
\end{align*}
Iterating this, we get
\begin{align*}
\Big\| \frac{\big|\averag{f}{n}(z)\big|_{\zeta_1}}{t^{\frac{\eta-\gamma}{2}}} \Big\|_{\ell^{p(\zeta_1+\epsilon)}_{n}(\tilde{\Lambda}_n \cap D_{n_0})} &\le \Big\| \frac{\big|\averag{f}{n_0}(z)\big|_{\zeta_1}}{t^{\frac{\eta-\gamma}{2}}} \Big\|_{\ell^{p(\zeta_1+\epsilon)}_{n_0}(\tilde{\Lambda}_{n_0} \cap D_{n_0})}\\
&+ C' \sum_{m= n_0}^{n-1} \sup_{h\in\cE_{m}^C} 2^{-m\epsilon}\Big(\sum_{v \in \tilde{\Lambda}_{m} \cap D_{n_0}} 2^{-m|\s|} \Big|\frac{\big|\averag{f}{m+1}(v)-\Gamma_{v,v+h}\averag{f}{m}(v+h)\big|_{\zeta_1}}{2^{-m(\gamma-\zeta_1)} s^{\frac{\eta-\gamma}{2}}}\Big|^{p} \Big)^\frac{1}{p}\;.
\end{align*}
Applying \eqref{Eq:Boundellp} and using the fact that $t$ is of order $2^{-n_0}$ in $D_{n_0}$, we find
$$ \Big\| \frac{\big|\averag{f}{n_0}(z)\big|_{\zeta_1}}{t^{\frac{\eta-\gamma}{2}}} \Big\|_{\ell^{p(\zeta_1+\epsilon)}_{n_0}(\tilde{\Lambda}_{n_0} \cap D_{n_0})} \lesssim 2^{-n_0\epsilon}\Big\| \frac{\big|\averag{f}{n_0}(z)\big|_{\zeta_1}}{t^{\frac{\eta-\zeta_1}{2}}} \Big\|_{\ell^{p}_{n_0}(\tilde{\Lambda}_{n_0} \cap D_{n_0})}\;,$$
uniformly over all $n_0\ge 0$. Putting everything together, we thus get
\begin{align*}
\Big\| \frac{\big|\averag{f}{n}(z)\big|_{\zeta_1}}{t^{\frac{\eta-\gamma}{2}}} \Big\|_{\ell^{p(\zeta_1+\epsilon)}_{n}(\tilde{\Lambda}_n)} \le \bigg(\sum_{n_0=0}^n \Big\| \frac{\big|\averag{f}{n}(z)\big|_{\zeta_1}}{t^{\frac{\eta-\gamma}{2}}} \Big\|_{\ell^{p(\zeta_1+\epsilon)}_{n}(\tilde{\Lambda}_n \cap D_{n_0})}^p\bigg)^{\frac1{p}} &\lesssim \$ \averag{f}{}\$\;,
\end{align*}
uniformly over all $n\ge 0$. We thus get the desired translation bound. The consistency bound is obtained similarly. This concludes the proof of (2').
\end{proof}

\subsection{Product}
%Let us recall that a sector $V$ is a `sub-regularity structure", stable under $\cG$.
%
%\begin{theorem}\label{Th:Mult}
%Let $f_i \in \mathcal{D}^{\gamma_i,\eta_i,T}_{p_i,\alpha_i}(V_i)$, $i=1,2$, where $V_i$ is a sector or regularity $\alpha_i$. Then $f:= f_1 f_2$ belongs to $\mathcal{D}^{\gamma,\eta,T}_p$, where
%$$\gamma = (\gamma_1 +\alpha_2)\wedge (\gamma_2 + \alpha_1)\;,\quad \eta = \eta_1+\eta_2\;,\quad\frac{1}{p} = \frac{1}{p_1} +\frac{1}{p_2} \;.$$
%If we are given two models $(\Pi,\Gamma)$ and $(\bar\Pi,\bar\Gamma)$, then we have the bound
%\begin{align*}
%\$ f_1 f_2 ; g_1 g_2\$ &\lesssim \|\Gamma\|^2 \| f_1-g_1\| \$f_2\$ + \|\Gamma-\bar\Gamma\| \Big(\$g_1\$ \$f_2\$ + \$g_2\$ \$f_1\$\Big) + \$f_1;g_1\$ \$f_2\$\\
%&+ \|\Gamma\| \$f_1\$ \$f_2;g_2\$ + \$g_1\$ \|f_2-g_2\| + \|\bar\Gamma\| \|f_1-g_1\| \$ g_2\$\;,
%\end{align*}
%uniformly over all $f_i \in \cD^{\gamma_i,\eta_i}_{p_i,\alpha_i}$ and all $g_i \in \bar{\cD}^{\gamma_i,\eta_i}_{p_i,\alpha_i}$.
%\end{theorem}

\begin{proof}[Proof of Theorem \ref{Th:Mult}]
Regarding the local bound, we have
$$ \Big\| \frac{|f|_\zeta(z)}{t^{\frac{\eta-\zeta}{2}}} \Big\|_{L^p((0,T)\times \T^d)} \lesssim \sum_{\zeta_1+\zeta_2=\zeta}\Big\| \frac{|f_1|_{\zeta_1}(z)}{t^{\frac{\eta_1-\zeta_1}{2}}}\frac{|f_2|_{\zeta_2}(z)}{t^{\frac{\eta_2-\zeta_2}{2}}} \Big\|_{L^p((0,T)\times \T^d)}\;,$$
so that H\"older's inequality yields the required bound.\\
We turn to the translation bound, and write
\begin{equation}\label{Eq:DecompMultipl}\begin{split}
f(z+h) - \Gamma_{z+h,z} f (z) =& -\big(f_1(z+h) - \Gamma_{z+h,z} f_1 (z))(f_2(z+h) - \Gamma_{z+h,z} f_2 (z)\big) \\
& +  \Gamma_{z+h,z}f_1(z)  \Gamma_{z+h,z} f_2(z) -  \Gamma_{z+h,z}\big(f_1(z) f_2(z)\big) \\
& + f_1(z+h) \big(f_2(z+h) - \Gamma_{z+h,z} f_2 (z)\big) \\
& + f_2(z+h) \big(f_1(z+h) - \Gamma_{z+h,z} f_1 (z)\big)\;,
\end{split}\end{equation}
and bound these four terms separately.

The bound of the first term follows from H\"older's inequality. Regarding the second term, we note that the $\gamma$-regularity of the sectors ensures the following identity:
$$\Gamma_{z+h,z}f_1(z)  \Gamma_{z+h,z} f_2(z) -  \Gamma_{z+h,z}(f_1(z) f_2(z)) = \sum_{\nu_1+\nu_2\geq \gamma} (\Gamma_{z+h,z}\cQ_{\nu_1}f_1(z)) (\Gamma_{z+h,z}\cQ_{\nu_2} f_2(z))\;.$$
Fix $\nu_1,\nu_2 < \gamma$ such that $\nu_1+\nu_2 \ge \gamma$. For any $\zeta_i \le \nu_i$ such that $\zeta=\zeta_1+\zeta_2 < \gamma$, we get
\begin{align*}
&\sup_{h\in B(0,1)} \bigg\| \frac{\big| (\Gamma_{z+h,z} \cQ_{\nu_1} f_1(z))_{\zeta_1} (\Gamma_{z+h,z}\cQ_{\nu_2} f_2(z) )_{\zeta_2} \big|}{|h|^{\gamma-\zeta_1-\zeta_2} t^{\frac{\eta-\gamma}{2}} } \bigg\|_{L^p((3|h|^2,T-|h|^2)\times\T^d)}\\
&\lesssim\sup_{h\in B(0,1)} \bigg\| \Big(\frac{|h|}{\sqrt t}\Big)^{\nu_1+\nu_2-\gamma}  \frac{\big| f_1(z)\big|_{\nu_1}}{t^{\frac{\eta_1-\nu_1}{2}}} \frac{\big| f_2(z)\big|_{\nu_2}}{t^{\frac{\eta_2-\nu_2}{2}}} \bigg\|_{L^p((3|h|^2,T-|h|^2)\times\T^d)}\\
&\lesssim \sup_{h\in B(0,1)} \Big\| \frac{|f_1(z)|_{\nu_1}}{t^{\frac{\eta_1-\zeta_1}{2}}} \Big\|_{L^{p_1}((3|h|^2,T-|h|^2)\times\T^d)} \Big\| \frac{|f_2(z)|_{\nu_2}}{t^{\frac{\eta_2-\zeta_2}{2}}} \Big\|_{L^{p_2}((3|h|^2,T-|h|^2)\times\T^d)} \\
&\lesssim \$f_1\$_{\eta_1,T}\$f_2\$_{\eta_2,T}\;,
\end{align*}
uniformly over all $f_1,f_2$.\\
We turn to the third term, we have for any $\zeta=\zeta_1+\zeta_2 < \gamma$:
\begin{align*}
&\sup_{h\in B(0,1)} \bigg\| \frac{\cQ_{\zeta_1} f_1(z+h) \cQ_{\zeta_2}(f_2(z+h) - \Gamma_{z+h,z} f_2 (z))}{|h|^{\gamma-\zeta_1-\zeta_2} t^{\frac{\eta-\gamma}{2}} } \bigg\|_{L^p((3|h|^2,T-|h|^2)\times\T^d)}\\
\leq &  \Big\| \frac{|f_1(z)|_{\zeta_1}}{t^{\frac{\eta_1-\zeta_1}{2}}} \Big\|_{L^{p_1}} \sup_{h\in B(0,1)} \bigg\| \frac{|f_2(z+h) - \Gamma_{z+h,z} f_2 (z) |_{\zeta_2} }{|h|^{\gamma_2-\zeta_2} t^{\frac{\eta_2-\gamma_2}{2}}} t^{\frac{\gamma-\gamma_2-\zeta_1}{2}} |h|^{\gamma_2+\zeta_1-\gamma}\bigg\|_{L^{p_2}((3|h|^2,T-|h|^2)\times\T^d)}\;.
\end{align*}
Since $|h|\le \sqrt{t}$ in the integral above and since $\gamma\le\gamma_2 +\alpha_1\le\gamma_2+\zeta_1$, we deduce that
$$ t^{\frac{\gamma-\gamma_2-\zeta_1}{2}} |h|^{\gamma_2+\zeta_1-\gamma} \le 1\;,$$
so that the last quantity is bounded by a term of order $\$f_1\$_{\eta_1,T}\$f_2\$_{\eta_2,T}$. By symmetry, the fourth term of \eqref{Eq:DecompMultipl} is bounded in exactly the same way as the third.\\
In the case where we have two models, the bound of the local terms derives from the same type of arguments as above. On the other hand, to control the translation term, we write
$$ f_1f_2(z+h) - g_1g_2(z+h) - \Gamma_{z+h,z}(f_1f_2)(z) + \bar\Gamma_{z+h,z}(g_1g_2)(z) = A + B\;,$$
where
\begin{align*}
A &= f_1f_2(z+h) - g_1g_2(z+h) - \Gamma_{z+h,z}f_1(z) \Gamma_{z+h,z}f_2(z) + \bar\Gamma_{z+h,z}g_1(z) \bar\Gamma_{z+h,z}g_2(z)\;,\\
B&= \Gamma_{z+h,z}f_1(z) \Gamma_{z+h,z}f_2(z) - \bar\Gamma_{z+h,z}g_1(z) \bar\Gamma_{z+h,z}g_2(z)- \Gamma_{z+h,z}(f_1f_2)(z) + \bar\Gamma_{z+h,z}(g_1g_2)(z)\;,
\end{align*}
and we bound separately $A$ and $B$. Regarding $A$, we write
\begin{align*}
A &= \big(f_1(z+h) - g_1(z+h) - \Gamma_{z+h,z} f_1(z) + \bar\Gamma_{z+h,z} g_1(z)\big) f_2(z+h)\\
&+ \Gamma_{z+h,z} f_1(z) \big(f_2(z+h)-g_2(z+h) - \Gamma_{z+h,z} f_2(z) + \bar\Gamma_{z+h,z} g_2(z)\big)\\
&+ \bar\Gamma_{z+h,z}\big(g_1(z)-f_1(z)\big) \big(\bar\Gamma_{z+h,z}g_2(z)-g_2(z+h)\big)\\
&+\big(\bar\Gamma_{z+h,z} f_1(z) - \Gamma_{z+h,z} f_1(z)\big)\big(\bar\Gamma_{z+h,z} g_2(z) - g_2(z+h)\big)\\
&+\big(g_1(z+h)-\bar\Gamma_{z+h,z} g_1(z)\big)\big(f_2(z+h)-g_2(z+h)\big)\;,
\end{align*}
and the bound of the terms on the r.h.s.~can be obtained using similar arguments as before. We turn to $B$, which can be written as the sum over $\nu_1+\nu_2 \ge \gamma$ of
\begin{equation}\label{Eq:GammaGammabar}\begin{split}
&\Big(\Gamma_{z+h,z} \cQ_{\nu_1} f_1(z)\Gamma_{z+h,z} \cQ_{\nu_2} f_2(z)-\bar\Gamma_{z+h,z} \cQ_{\nu_1} g_1(z)\bar\Gamma_{z+h,z} \cQ_{\nu_2} g_2(z)\Big)\\
&= \Gamma_{z+h,z} \cQ_{\nu_1} \big(f_1(z)-g_1(z)\big) \Gamma_{z+h,z} \cQ_{\nu_2} f_2(z)\\
&+ \big(\Gamma_{z+h,z} - \bar\Gamma_{z+h,z}\big)\cQ_{\nu_1} g_1(z) \Gamma_{z+h,z}\cQ_{\nu_2}f_2(z)\\
&- \bar\Gamma_{z+h,z} \cQ_{\nu_1} g_1(z) \big(\bar\Gamma_{z+h,z} - \Gamma_{z+h,z}\big) \cQ_{\nu_2} g_2(z)\;.
\end{split}\end{equation}
Let us bound the first term on the r.h.s. We have for all $\zeta_1+\zeta_2=\zeta < \gamma$
\begin{align*}
&\Big\| \frac{\big|\Gamma_{z+h,z} \cQ_{\nu_1} \big(f_1(z)-g_1(z)\big)\big|_{\zeta_1} \big|\Gamma_{z+h,z} \cQ_{\nu_2} f_2(z)\big|_{\zeta_2}}{t^{\frac{\eta-\gamma}{2}} |h|^{\gamma-\zeta}} \Big\|_{L^p((3|h|^2,T-|h|^2)\times\T^d)}\\
&\lesssim \Big\| \frac{\big|f_1(z)-g_1(z)\big|_{\nu_1}}{t^{\frac{\eta_1-\nu_1}{2}}} \Big\|_{L^{p_1}((3|h|^2,T-|h|^2)\times\T^d)} \Big\| \frac{\big|f_2(z)\big|_{\nu_2}}{t^{\frac{\eta_2-\nu_2}{2}}} \frac{|h|^{\nu_1+\nu_2-\gamma}}{t^{\frac{\nu_1+\nu_2-\gamma}{2}}} \Big\|_{L^{p_1}((3|h|^2,T-|h|^2)\times\T^d)}\;,
\end{align*}
which is bounded by a term of order $\|f_1-g_1\| \|f_2\|$ since $\nu_1+\nu_2-\gamma \ge 0$ and since $|h| \le \sqrt{t}$ in the integral above. The bound of the other two terms in \eqref{Eq:GammaGammabar} is obtained similarly.
\end{proof}

\subsection{Convolution with the heat kernel} 

Let us introduce some notations first. We set
\begin{equation}\label{Eq:DiffP}
P^{k,\gamma'}_{m,z,\bar z} (\cdot) := \partial^k P_m(z-\cdot) - \sum_{\ell\in\N^{d+1}:|k+\ell| < \gamma'} \frac{(z-\bar z)^\ell}{\ell!} \partial^{k+\ell} P_m(\bar z-\cdot)\;,
\end{equation}
as well as $P^{k,\gamma'}_{z,\bar z} = \sum_{m\ge 0} P^{k,\gamma'}_{m,z,\bar z}$. Recall that $\gamma'$ is not an integer. We introduce
$$ \partial \gamma' := \{\ell \in\N^{d+1}: |\ell| > \gamma', |\ell-e_{m(\ell)}| < \gamma'\}\;,$$
where $e_i$ is the unit vector of $\R^{d+1}$ in the direction $i\in\{0,\ldots,d\}$ and $m(\ell) := \inf\{i:\ell_i \ne 0\}$ for all $\ell\in\N^{d+1}$. By~\cite[Prop A.1]{Hairer2014}, there exists a signed measure $\mu^\ell(z-\bar z,du)$ on $\R^{d+1}$, supported in the set $\{u\in \R^{d+1}: u_i \in [0,z_i-\bar z_i]\}$ with total mass equal to $\frac{(z-\bar z)^\ell}{\ell!}$ and such that
\begin{equation}\label{Eq:KernelDecompo}
P^{k,\gamma'}_{m,z,\bar z} (\cdot) = \sum_{\ell:k+\ell \in \partial \gamma'} \int_{\R^{d+1}} \partial^{k+\ell} P_m(\bar z + u-\cdot) \mu^\ell(z-\bar z,du)\;.
\end{equation}
Recall also that $L^p(n_0)$ stands for the space $L^p((2^{-2n_0}\wedge T,2^{-2(n_0-1)\wedge T})\times \T^d,dz)$. Finally, we set for every $m\ge 0$:
\begin{align*}
\cP_m f(z) &:= \sum_{\zeta\in \cA_\gamma} \sum_{k\in\N^{d+1}:|k| < \zeta + 2} \frac{X^k}{k!} \langle \Pi_z \cQ_\zeta f(z),\partial^k P_m(z-\cdot)\rangle\\
&+ \sum_{k\in\N^{d+1}:|k| < \gamma + 2} \frac{X^k}{k!} \langle \cR f - \Pi_z f(z) , \partial^k P_m(z-\cdot)\rangle\;.
\end{align*}
This is convenient since for every $k\in\N^{d+1}$ we have
$$ \cQ_k \cP_+^\gamma f(z) = \sum_{m\ge 0} \cQ_k \cP_m f(z)\;.$$

\begin{proof}[Proof of Theorem \ref{Th:ConvSing}]
We subdivide the proof into three steps: first we bound the local terms of the $\cD^{\gamma,\eta,T}_{p}$-norm, second the translation terms and finally we establish the convolution identity. We only consider the case where we work with a single model. The bounds in the case where we have two models can easily be obtained using the following two identities:
\begin{align*}
\Pi_z \cQ_\zeta \tau - \bar\Pi_z \cQ_\zeta \bar\tau &= \Pi_z \cQ_\zeta (\tau-\bar\tau) + (\Pi_z-\bar\Pi_z)\cQ_\zeta \bar \tau\;,\\
\big(\Pi_{z+h} \cQ_\zeta \Gamma_{z+h,z} - \bar\Pi_{z+h} \cQ_\zeta \bar\Gamma_{z+h,z}\big) \bar \tau &= \Pi_{z+h} \cQ_\zeta (\Gamma_{z+h,z} - \bar\Gamma_{z+h,z}) \bar\tau + (\Pi_{z+h} - \bar\Pi_{z+h})\cQ_\zeta \bar\Gamma_{z+h,z} \bar\tau\;.
\end{align*}
For notational convenience, we take $T=1$ in the proof. It is plain that the proof carries through if $T$ is arbitrary.

\textit{First step: local terms.}\\
At non-integer levels $\zeta\in\cA_{\gamma}$, we have for all $z\in (0,T)\times \T^d$
$$ \frac{\big|\cP_+^\gamma f(z) \big|_{\zeta+2}}{t^{\frac{\eta'-\zeta-2}{2}}} = \frac{\big|\cI(f(z)) \big|_{\zeta+2}}{t^{\frac{\eta'-\zeta-2}{2}}} \le \frac{\big|f(z) \big|_{\zeta}}{t^{\frac{\eta-\zeta}{2}}}\;,$$
so that the required bound follows at once. Let us now consider integer levels: we fix $k\in \N^{d+1}$ such that $|k| < \gamma'$. We have
$$ \Big\| \sum_{m\ge 0} \frac{\big|\cQ_k(\cP_m f)(z)\big|}{t^{\frac{\eta'-|k|}{2}}} \Big\|_{L^p((0,1)\times\T^d)} \lesssim \bigg(\sum_{n_0\ge 1} \Big\| \sum_{m\ge 0} \frac{\big|\cQ_k(\cP_m f)(z)\big|}{2^{-n_0(\eta'-|k|)}} \Big\|_{L^p(n_0)}^p\bigg)^{\frac1{p}}\;.$$
The bound is carried out differently according to the relative values of $n_0$ and $m$. First, we assume that $n_0 +2 \ge m$ and we write
$$ k! \cQ_k(\cP_m f)(z) = \langle \cR f,\partial^k P_m(z-\cdot)\rangle - \sum_{\zeta \le |k|-2} \langle \Pi_z\cQ_\zeta f(z),\partial^k P_m(z-\cdot)\rangle \;.$$
By \eqref{Eq:ReconsConvol}, we immediately get the desired bound for the first term on the r.h.s. Regarding the second term, we only have to consider non-integer values of $\zeta$: indeed, for integer values of $\zeta$ the corresponding terms vanish since $P_m$ is assumed to annihilate polynomials. We write for all $\zeta\le |k|-2$ (necessarily $\zeta < |k|-2$ from the previous observation)
\begin{align*}
\Big\| \frac{\big|\langle \Pi_z\cQ_\zeta f(z) , \partial^k P_m(z-\cdot) \rangle\big|}{2^{-n_0(\eta'-|k|)}} \Big\|_{L^p(n_0)} &\lesssim \Big\| \frac{|f(z)|_\zeta 2^{-m(2-|k|+\zeta)}}{2^{-n_0(\eta'-|k|)}} \Big\|_{L^p(n_0)}\\
&\lesssim 2^{-(n_0-m)(|k|-2-\zeta)} \Big\| \frac{|f(z)|_\zeta}{t^{\frac{\eta-\zeta}{2}}} \Big\|_{L^p(n_0)} \;,
\end{align*}
uniformly over all $m\le n_0 +2$ and all $n_0\ge 1$. Hence
\begin{align*}
\bigg(\sum_{n_0\ge 1} \Big\| \sum_{m\le n_0+2} \frac{\big|\langle \Pi_z\cQ_\zeta f(z) , \partial^k P_m(z-\cdot) \rangle\big|}{2^{-n_0(\eta'-|k|)}} \Big\|_{L^p(n_0)}^p\bigg)^{\frac1{p}} &\lesssim \Big\| \frac{|f(z)|_\zeta }{t^{\frac{\eta-\zeta}{2}}} \Big\|_{L^p((0,1)\times\T^d,dz)}\;,
\end{align*}
as required.\\
We turn to the case where $n_0+2 < m$ and we write
$$ k! \cQ_k(\cP_m f)(z) = \langle \cR f - \Pi_z f(z),\partial^k P_m(z-\cdot)\rangle + \sum_{\zeta > |k|-2} \langle \Pi_z\cQ_\zeta f(z),\partial^k P_m(z-\cdot)\rangle \;.$$
The bound of the second term proceeds analogously to the bound of the second term in the previous case: the change of sign of $n_0+2-m$ is compensated by the change of sign of $|k|-2-\zeta$ so that the series in $m$ converges. To bound the first term, we use Theorem \ref{Th:ReconstructionW} to get
\begin{align*}
\Big\| \frac{\big|\langle \cR f - \Pi_z f(z) , \partial^k P_m(z-\cdot) \rangle\big|}{2^{-n_0(\eta'-|k|)}} \Big\|_{L^p(n_0)} &\lesssim 2^{-(m-n_0)(\gamma+2-|k|)} \Big\| \sup_{\varphi\in\ccB^r}\frac{\big|\langle \cR f - \Pi_z f(z) , \varphi^{2^{-m}}_z \rangle\big|}{2^{-m\gamma} 2^{-n_0(\eta-\gamma)}} \Big\|_{L^p(n_0)}\\
&\lesssim 2^{-(m-n_0)(\gamma'-|k|)} \$ f\$_{\eta,T,D} \;,
\end{align*}
uniformly over all $1 \le n_0 < m-2$, where $D=(2^{-(n_0+1)},2^{-(n_0-1)})\times\T^d$ and the notation $\$ f\$_{\eta,T,D}$ was introduced below \eqref{Eq:BdLoc}. Using the fact that $|k| < \gamma'$, we get the bound
\begin{align*}
\bigg(\sum_{n_0\ge 1} \Big\| \sum_{m> n_0+2} \frac{\big|\langle \cR f - \Pi_z f(z) , \partial^k P_m(z-\cdot) \rangle\big|}{2^{-n_0(\eta'-|k|)}} \Big\|_{L^p(n_0)}^p\bigg)^{\frac1{p}} &\lesssim \$ f \$\;,
\end{align*}
as required.\\

\textit{Second step: translation terms.} We first consider the case $\frac13\sqrt t \le 2^{-m}$. We write
\begin{align*}
k! \cQ_k\big((\cP_m f)(z+h) - \Gamma_{z+h,z} (\cP_m f)(z)\big) &= \langle \cR f , P^{k,\gamma'}_{m,z+h,z}\rangle - \langle \Pi_z f(z) , P^{k,\gamma'}_{m,z+h,z}\rangle\\
&- \sum_{\zeta \le |k|-2} \langle \Pi_{z+h} \cQ_\zeta(f(z+h)-\Gamma_{z+h,z} f(z)) , \partial^k P_m(z+h-\cdot)\rangle\;.
\end{align*}
We bound separately the three terms on the r.h.s. Let us introduce the convenient notation $L^p(n_0,h)$ that stands for the space
$$L^p((2^{-2n_0},2^{-2(n_0-1)})\cap(3|h|^2,1-|h|^2) \times \T^d,dz)\;.$$
We start with the second term. Using \eqref{Eq:KernelDecompo}, we find
\begin{align*}
&\Big\| \frac{\langle \Pi_z f(z) , P^{k,\gamma'}_{m,z+h,z}\rangle}{2^{-n_0(\eta'-\gamma')} |h|^{\gamma'-|k|}} \Big\|_{L^p(n_0,h)}\\
&\lesssim \sum_{\ell:k+\ell \in \partial \gamma'} \Big\| \int_{\R^{d+1}}\frac{\langle \Pi_z f(z) , \partial^{k+\ell} P_m(z + u-\cdot)\rangle}{2^{-n_0(\eta'-\gamma')} |h|^{\gamma'-|k|}}\mu^\ell(h,du) \Big\|_{L^p(n_0,h)}\\
&\lesssim \sum_{\ell:k+\ell \in \partial \gamma'} |h|^{|\ell|+|k|-\gamma'} 2^{-n_0(\gamma-\zeta)} 2^{-m(2-|k|-|\ell|+\zeta)}\Big\| \frac{|f(z)|_\zeta}{2^{-n_0(\eta-\zeta)}}\Big\|_{L^p(n_0,h)}\;,
\end{align*}
so that, since $\gamma'-|k|-|\ell| < 0$ and $|h| \le \sqrt t$, we get
\begin{align*}
\Big\| \sum_{m\le n_0+2} \frac{\langle \Pi_z f(z) , P^{k,\gamma'}_{m,z+h,z}\rangle}{2^{-n_0(\eta'-\gamma')} |h|^{\gamma'-|k|}} \Big\|_{L^p(n_0,h)} &\lesssim \Big\| \frac{|f(z)|_\zeta}{2^{-n_0(\eta-\zeta)}}\Big\|_{L^p(n_0,h)}\;,
\end{align*}
uniformly over all $n_0\ge 1$. Taking the $\ell^p(n_0\ge 1)$-norm of the latter, we find a bound of order $\$f\$$ as required. We pass to the first term. Using \eqref{Eq:KernelDecompo}, we find
\begin{align*}
&\Big\| \sum_{m\le n_0+2}\frac{\langle \cR f , P^{k,\gamma'}_{m,z+h,z}\rangle}{2^{-n_0(\eta'-\gamma')} |h|^{\gamma'-|k|}} \Big\|_{L^p(n_0,h)}\\
&\lesssim \sum_{\ell:k+\ell \in \partial \gamma'} \Big\| \int_{\R^{d+1}}\sum_{m\le n_0+2}\frac{\langle \cR f , \partial^{k+\ell} P_m(z + u-\cdot)\rangle}{2^{-n_0(\eta'-\gamma')} |h|^{\gamma'-|k|}}\mu^\ell(h,du) \Big\|_{L^p(n_0,h)}\\
&\lesssim \sum_{\ell:k+\ell \in \partial \gamma'} \Big(\frac{2^{-n_0}}{|h|}\Big)^{\gamma'-|k|-|\ell|} \Big\| \sum_{m\le n_0+2}\frac{\big|\langle \cR f , \partial^{k+\ell} P_m(z-\cdot)\rangle\big|}{2^{-n_0(\eta'-|k|-|\ell|)}} \Big\|_{L^p([2^{-2(n_0+1)},2^{-2(n_0-2)}\wedge 1]\times\T^d)}\\
&\lesssim \sum_{\ell:k+\ell \in \partial \gamma'} \Big\| \sum_{m\le n_0+2}\frac{\big|\langle \cR f , \partial^{k+\ell} P_m(z-\cdot)\rangle\big|}{2^{-n_0(\eta'-|k|-|\ell|)}} \Big\|_{L^p([2^{-2(n_0+1)},2^{-2(n_0-2)}\wedge 1]\times\T^d)}\;.
\end{align*}
Since $\gamma'-|k|-|\ell| < 0$ and $|h|\le 2^{-n_0}$, we use \eqref{Eq:ReconsConvol} to bound the $\ell^p(n_0\ge 1)$-norm of the previous quantity by a term of order $\$f\$$ as required. 
Regarding the third term, by the same argument as above we can disregard the integer values $\zeta$. Thus, for all $\zeta < |k| -2$ we have
\begin{align*}
&\Big\| \frac{\langle \Pi_{z+h} \cQ_\zeta(f(z+h)-\Gamma_{z+h,z} f(z)) , \partial^k P_m(z+h-\cdot)\rangle}{2^{-n_0(\eta'-\gamma')} |h|^{\gamma'-|k|}} \Big\|_{L^p(n_0,h)}\\
&\lesssim 2^{-m(2-|k|+\zeta)}|h|^{|k|-\zeta+\gamma-\gamma'} \Big\| \frac{\big| f(z+h)-\Gamma_{z+h,z} f(z) \big|_\zeta}{|h|^{\gamma-\zeta} 2^{-n_0(\eta-\gamma)}} \Big\|_{L^p(n_0,h)}\;,
\end{align*}
uniformly over all $n_0\ge m-2$ and all $|h| \le 2^{-n_0}$. Using the fact that $\zeta < |k|-2$, we deduce that the sum over all $m\le n_0+2$ is bounded by a term of order
$$ \Big\| \frac{\big| f(z+h)-\Gamma_{z+h,z} f(z) \big|_\zeta}{|h|^{\gamma-\zeta} 2^{-n_0(\eta-\gamma)}} \Big\|_{L^p(n_0,h)}\;,$$
so that the $\ell^p(n_0\ge 1)$-norm of the latter is bounded by a term of order $\$f\$$.\\

We now consider the case where $|h| \le 2^{-m} \le \frac13\sqrt{t}$, in which case we write
\begin{align*}
k! \cQ_k\big((\cP_m f)(z+h) - \Gamma_{z+h,z} (\cP_m f)(z)\big) &= \langle \cR f -\Pi_z f(z) , P^{k,\gamma'}_{m,z+h,z}\rangle\\
&- \sum_{\zeta \le |k|-2} \langle \Pi_{z+h} \cQ_\zeta(f(z+h)-\Gamma_{z+h,z} f(z)) , \partial^k P_m(z+h-\cdot)\rangle\;.
\end{align*}
Using \eqref{Eq:KernelDecompo} and the reconstruction bound \eqref{Eq:ReconsBound}, we get
\begin{align*}
&\Big\| \sum_{m:|h| \le 2^{-m} \le \frac13 2^{-n_0}} \frac{\langle \cR f -\Pi_{z} f(z), P^{k,\gamma'}_{m,z+h,z} \rangle}{2^{-n_0(\eta'-\gamma')} |h|^{\gamma'-|k|}} \Big\|_{L^p(n_0)}\\
&\lesssim \sum_{\ell:k+\ell \in \partial \gamma'} \Big\| \sum_{m:|h| \le 2^{-m} \le \frac13 2^{-n_0}} \int_{\R^{d+1}} \frac{\langle \cR f -\Pi_{z} f(z), \partial^{k+\ell} P_m(z + u-\cdot)\rangle}{2^{-n_0(\eta'-\gamma')} |h|^{\gamma'-|k|}}\mu^\ell(h,du) \Big\|_{L^p(n_0,h)}\\
&\lesssim \sum_{\ell:k+\ell \in \partial \gamma'} \sum_{m:|h| \le 2^{-m} \le \frac13 2^{-n_0}} 2^{-m(\gamma'-|k|-|\ell|)}|h|^{|k|+|\ell|-\gamma'} \$f\$_{(2^{-(n_0+1)},2^{-(n_0-2)}\wedge 1)\times\T^d}\\
&\lesssim \$f\$_{(2^{-(n_0+1)},2^{-(n_0-2)}\wedge 1)\times\T^d}\;,
\end{align*}
uniformly over all $n_0\ge 1$ such that $|h| \le 2^{-n_0} / 3$. Taking the $\ell^p(n_0\ge 1)$)norm, this yields a bound of order $\$f\$$ as required. We turn to the second term. By the same argument as above, we consider the non-integer values of $\zeta$ only, and get:
\begin{align*}
&\Big\| \sum_{m:|h| \le 2^{-m} \le \frac13 2^{-n_0}} \frac{\langle \Pi_{z+h} \cQ_\zeta(f(z+h)-\Gamma_{z+h,z} f(z)) , \partial^k P_m(z+h-\cdot) \rangle}{2^{-n_0(\eta'-\gamma')} |h|^{\gamma'-|k|}} \Big\|_{L^p(n_0,h)}\\
&\lesssim \sum_{m:|h| \le 2^{-m} \le \frac13 2^{-n_0}} 2^{-m(2-|k|+\zeta)} |h|^{|k|-\zeta+\gamma-\gamma'} \Big\|  \frac{|f(z+h)-\Gamma_{z+h,z} f(z)|_\zeta}{2^{-n_0(\eta-\gamma)} |h|^{\gamma-\zeta}} \Big\|_{L^p(n_0,h)}\\
&\lesssim  \Big\|  \frac{|f(z+h)-\Gamma_{z+h,z} f(z)|_\zeta}{2^{-n_0(\eta-\gamma)} |h|^{\gamma-\zeta}} \Big\|_{L^p(n_0,h)}\;,
\end{align*}
so that the $\ell^p(n_0\ge 1)$-norm of the latter is bounded by a term of order $\$f\$$.\\

Let us finally consider the case where $2^{-m} \le |h| \le \frac1{3} \sqrt{t}$. We write
\begin{align*}
k! \cQ_k\big((\cP_m f)(z+h) - \Gamma_{z+h,z} (\cP_m f)(z)\big) &= \langle \cR f -\Pi_{z+h} f(z+h), \partial^k P_m(z+h-\cdot) \rangle\\
&- \langle \cR f -\Pi_{z} f(z), \sum_{\ell:|k+\ell| < \gamma'} \frac{h^\ell}{\ell!} \partial^{k+\ell} P_m(z-\cdot) \rangle\\
&+ \sum_{\zeta > |k|-2} \langle \Pi_{z+h} \cQ_\zeta(f(z+h)-\Gamma_{z+h,z} f(z)) , \partial^k P_m(z+h-\cdot)\rangle\;.
\end{align*}
Regarding the first term, we set $D=(2^{-2(n_0+1)},2^{-2(n_0-1)})\times\T^d$ and we have
\begin{align*}
&\Big\| \sum_{m:2^{-m} \le |h|} \frac{\langle \cR f -\Pi_{z+h} f(z+h), \partial^k P_m(z+h-\cdot) \rangle}{2^{-n_0(\eta'-\gamma')} |h|^{\gamma'-|k|}} \Big\|_{L^p(n_0,h)}\\
&\lesssim \sum_{m:2^{-m} \le |h|} 2^{-m(2-|k|+\gamma)}|h|^{|k|-\gamma'} \$f\$_{\eta,T,D}\\
&\lesssim  \$f\$_{\eta,T,D}\;,
\end{align*}
so that the corresponding $\ell^p$-norm is bounded by a term of order $\$f\$$. The bound of the second term is very similar. Regarding the third term, we have
\begin{align*}
&\Big\| \sum_{m:2^{-m} \le |h|} \frac{\langle \Pi_{z+h} \cQ_\zeta(f(z+h)-\Gamma_{z+h,z} f(z)) , \partial^k P_m(z+h-\cdot)\rangle}{2^{-n_0(\eta'-\gamma')} |h|^{\gamma'-|k|}} \Big\|_{L^p(n_0,h)}\\
&\lesssim \sum_{m:2^{-m} \le |h|} 2^{-m(2-|k|+\zeta)}|h|^{|k|-\zeta+\gamma-\gamma'} \Big\| \frac{|f(z+h)-\Gamma_{z+h,z} f(z)|_\zeta}{2^{-n_0(\eta-\gamma)} |h|^{\gamma-\zeta}} \Big\|_{L^p(n_0,h)}\\
&\lesssim \Big\| \frac{|f(z+h)-\Gamma_{z+h,z} f(z)|_\zeta}{2^{-n_0(\eta-\gamma)} |h|^{\gamma-\zeta}} \Big\|_{L^p(n_0,h)}\;,
\end{align*}
so that the corresponding $\ell^p$-norm is bounded by a term of order $\$f\$$.\\

\textit{Third step: convolution identity.}
The element $\cP_+ f \in \cD^{\gamma',\eta',T}_{p}$ can always be restricted to $\cD^{\gamma'',\eta'',T}_{p}$ for any given $\gamma'' \in (0,1)$, and for $\eta''= \eta'+(\gamma''-\gamma')$. Using the uniqueness part of Theorem \ref{Th:ReconstructionW}, we deduce that the identity $P_+ * \cR f = \cR \cP_+ f$ holds as soon as we have
$$ \sup_{\lambda \in (0,1]} \Big\| \sup_{\eta \in \ccB^r} \frac{\big|\langle P_+ *\cR f - \Pi_z \cP_+ f(z) , \eta^\lambda_z\rangle\big|}{\lambda^{\gamma''} t^{\frac{\eta''-\gamma''}{2}}} \Big\|_{L^p((3\lambda^2,T-\lambda^2)\times \T^d)} < \infty\;.$$
A simple computation shows that
$$ \langle P_+ *\cR f - \Pi_z \cP_+ f(z) , \eta^\lambda_z\rangle = \sum_{m\ge 0} \langle \cR f -\Pi_z f(z) , \int \eta^\lambda_z (z+h) P^{0,\gamma''}_{m,z+h,z} dh\rangle\;,$$
One has to distinguish three cases according as $\frac13 \sqrt{t} \le 2^{-m}$, $\lambda \le 2^{-m} \le \frac13 \sqrt {t}$ and $2^{-m} \le \lambda \le \frac13 \sqrt{t}$. In every case, the bound is virtually the same as the bound of the translation terms presented above so we do not provide the details.
\end{proof}
\begin{proof}[Proof of Theorem \ref{Th:ConvSmooth}]
Recall that $P_-$ is a compactly supported, smooth function. Using the bound \eqref{Eq:ReconsConvol2} that was proved earlier, we get
$$ \Big\| \frac{\langle \cR f, \partial^k P_-(z-\cdot) \rangle}{t^{\frac{\eta'-|k|}{2}}} \Big\|_{L^p((0,T)\times\T^d,dz)} \lesssim \$f\$\;,$$
for all $k\in \N^{d+1}$. This yields the required bounds on the local terms of the $\cD^{\gamma',\eta',T}_p$-norm. Regarding the translation terms, we observe that
$$ k!\cQ_k \big(\cP_- f(z+h) - \Gamma_{z+h,z} \cP_- f(z)\big) = \langle \cR f , P^{k,\gamma'}_{-,z+h,z}\rangle\;,$$
where $P^{k,\gamma'}_{-,z,z+h}$ is the function defined in \eqref{Eq:DiffP} upon replacing $P_m$ by $P_-$. Combining \eqref{Eq:KernelDecompo} and the bound already obtained above, we easily deduce that the bound on the translation terms is satisfied.\\
To get the identity $\cR \cP_- f = P_- * \cR f$, we restrict $\cP_- f$ to $\cT_{\gamma''}$ with $\gamma'' \in (0,1)$ and we observe that
$$ \Pi_z \cP_- f(z) = (P_- * \cR f)(z)\;,\quad z\in(0,T)\times\T^d\;.$$
Since the r.h.s.~defines a smooth function, the uniqueness part of Theorem \ref{Th:ReconstructionW} ensures that $\cR \cP_- f$ coincides with $P_- * \cR f$ as required.
\end{proof}

\subsection{Convolution of the shift}
\begin{proof}[Proof of Lemma \ref{Lemma:ConvolShift}]
Recall the notation $L^2(n_0)$. Any function $h\in L^2((0,T)\times\T^d)$ can be viewed as an element of $\cB^{-\kappa/3}_{2}((-\infty,T]\times \T^d)$ for some small $\kappa > 0$. Applying Lemma \ref{Lemma:SpecificConvol} and Equation \eqref{Eq:SpecificConvol2}, we deduce that
\begin{equation}\label{Eq:BoundConvolShift}
\bigg(\sum_{n_0\ge n_T} \Big\| \sum_{m\le n_0+4} \frac{\big|\langle h,\partial^k P_m(z-\cdot) \rangle\big|}{2^{-n_0(-\frac{\kappa}{2}-|k|+2)}} \Big\|_{L^2(n_0)}^2\bigg)^{\frac1{2}} < \infty\;.
\end{equation}
and
\begin{equation}\label{Eq:BoundConvolShift2}
\bigg(\sum_{n_0\ge n_T} \Big\| \frac{\big|\langle h,\partial^k P_-(z-\cdot) \rangle\big|}{2^{-n_0(-\frac{\kappa}{2}-|k|+2)}} \Big\|_{L^2(n_0)}^2\bigg)^{\frac1{2}} < \infty\;.
\end{equation}
We now prove that $\cP h$ belongs to $\cD^{\gamma,\gamma,T}_{2}$ with $\gamma = 2-\kappa$. Regarding the local terms, first observe that
$$ k! \big|\cP h(z)\big|_k = \big| \langle h, \partial^k P_-(z-\cdot) \rangle\big| + \sum_{m\ge 0} \big| \langle h, \partial^k P_m(z-\cdot) \rangle\big|\;.$$
We distinguish the cases $n_0+2 < m$ and $n_0+2 \ge m$ where $n_0$ is the integer such that $t\in [2^{-2n_0},2^{-2(n_0-1)}]$. Assume first that $n_0+2 < m$. We have
\begin{align*}
\Big\| \frac{\langle h , \partial^k P_m(z-\cdot) \rangle}{t^{\frac{\gamma-|k|}{2}}} \Big\|_{L^2(n_0)}\lesssim \frac{2^{-m(2-|k|-\frac{\kappa}{2})}}{2^{-n_0(\gamma-|k|)}}\Big\| \frac{ \langle h , \partial^k P_m(z-\cdot) \rangle}{2^{-m(2-|k|-\frac{\kappa}{2})}} \Big\|_{L^2(n_0)}\;,
\end{align*}
so that
\begin{align*}
\bigg(\sum_{n_0\ge n_T} \Big\| \sum_{m>n_0+2} \frac{\big|\langle h , \partial^k P_m(z-\cdot) \rangle\big|}{t^{\frac{\gamma-|k|}{2}}} \Big\|_{L^2(n_0)}^2 \bigg)^\frac12 &\lesssim \sum_{m\ge 0} \bigg(\sum_{m-2 >n_0\ge n_T} \Big\|  \frac{\big|\langle h , \partial^k P_m(z-\cdot) \rangle\big|}{t^{\frac{\gamma-|k|}{2}}} \Big\|_{L^2(n_0)}^2 \bigg)^\frac12\\
&\lesssim \sum_{m\ge 0} 2^{-m\frac{\kappa}{2}} \bigg(\sum_{m-2 >n_0\ge n_T} \Big\| \frac{ \langle h , \partial^k P_m(z-\cdot) \rangle}{2^{-m(2-|k|-\frac{\kappa}{2})}} \Big\|_{L^2(n_0)}^2 \bigg)^\frac12\\
&\lesssim \sum_{m\ge 0} 2^{-m\frac{\kappa}{2}} \sup_{m\ge 0} \Big\| \sup_{\varphi\in\ccB^r} \frac{ \big|\langle h , \varphi^{2^{-m}}_z \rangle\big|}{2^{m\frac{\kappa}{2}}} \Big\|_{L^2((0,T)\times\T^d)}\;,
\end{align*}
which is bounded as required. The computation is similar for $P_-$. On the other hand, when $m \le n_0+2$, \eqref{Eq:BoundConvolShift} and \eqref{Eq:BoundConvolShift2} yield the desired bound.\\
To treat translation terms in the norm, one proceeds similarly. Actually, the proof is very similar to that of Theorem \ref{Th:ConvSing}: one has to distinguish three cases according to the relative values of $|h|$, $\sqrt t$ and $2^{-n_0}$, but the arguments are essentially the same.
\end{proof}

%\begin{thebibliography}{99}
%
%\end{thebibliography}

%\bibliographystyle{plain}
%\bibliography{library_additive}

\end{document}